\documentclass[12pt, a4paper, reqno]{amsart}
\usepackage{longtable} 
\usepackage{hyperref}
\usepackage[T1]{fontenc}
\usepackage[utf8]{inputenc}
\usepackage[english]{babel}
\usepackage{textcomp}
\usepackage{dsfont}
\usepackage{latexsym}
\usepackage{amssymb}
\usepackage{amsthm}
\usepackage{amsmath}
\DeclareMathAlphabet{\mathpzc}{OT1}{pzc}{m}{en}
\usepackage{yfonts}
\usepackage{xfrac}
\usepackage{newlfont}
\usepackage{graphicx}
\usepackage{mathtools}
\usepackage{comment}
\usepackage{indentfirst}
\usepackage{braket}
\usepackage{mathrsfs}
\usepackage{xcolor} 

\usepackage{etoolbox}

\usepackage{scalerel}[2014/03/10]
\usepackage[usestackEOL]{stackengine}
\newcommand{\dashint}{\,\ThisStyle{\ensurestackMath{%
			\stackinset{c}{.2\LMpt}{c}{.5\LMpt}{\SavedStyle-}{\SavedStyle\phantom{\int}}}%
		\setbox0=\hbox{$\SavedStyle\int\,$}\kern-\wd0}\int}

\DeclareMathOperator{\card}{Card}

\DeclareMathOperator{\supp}{Supp}

\DeclareMathOperator{\ad}{ad}

\newcommand{\bmo}{bmo}

\newcommand{\Wp}{\mathscr{W}}
\newcommand{\Lp}{\mathscr{L}}

\newcommand{\ee}{\mathrm{e}}

\newcommand{\loc}{\mathrm{loc}}
\newcommand{\vect}[1]{\mathbf{{#1}}}
\newcommand{\dd}{\mathrm{d}}

\DeclarePairedDelimiter{\abs}{\lvert}{\rvert}

\DeclarePairedDelimiter{\norm}{\lVert}{\rVert}

\let\originalleft\left
\let\originalright\right
\renewcommand{\left}{\mathopen{}\mathclose\bgroup\originalleft}
\renewcommand{\right}{\aftergroup\egroup\originalright}

\newcommand{\grado}{\Df}
\newcommand{\N}{\mathds{N}}
\newcommand{\Z}{\mathds{Z}}
\newcommand{\Q}{\mathds{Q}}

\newcommand{\C}{\mathds{C}}

\newcommand{\R}{\mathds{R}}

\newcommand{\gf}{\mathfrak{g}}

\newcommand{\Banach}{Z}
\newcommand{\BanachSpace}{Banach space}

\newcommand{\Df}{\mathfrak{D}}

\newcommand{\Lf}{\mathfrak{L}}

\newcommand{\Dc}{\mathcal{D}}

\newcommand{\Hc}{\mathcal{H}}

\newcommand{\Kc}{\mathcal{K}}
\newcommand{\Lc}{\mathcal{L}}
\renewcommand{\Mc}{\mathcal{M}}
\newcommand{\Nc}{\mathcal{N}}

\newcommand{\Pc}{\mathcal{P}}

\newcommand{\Sc}{\mathcal{S}}

\newcommand{\Sr}{\mathscr{S}}

\newcommand{\meg}{\leqslant}
\newcommand{\Meg}{\geqslant}
\newcommand{\eps}{\varepsilon}
\renewcommand{\phi}{\varphi}
\newcommand{\mi}{\mu}

\keywords{Lie groups,  Triebel--Lizorkin spaces, local Hardy spaces, bmo spaces, weighted subcoercive operators.}
\thanks{{\em Math Subject Classification 2020}: 46E30, 22E30.}
\thanks{The authors are members of the 	Gruppo Nazionale per l'Analisi
	Matematica, la Probabilit\`a e le	loro Applicazioni (GNAMPA) of
	the Istituto Nazionale di Alta Matematica (INdAM). The authors were partially funded by the INdAM-GNAMPA Project CUP\_E5324001950001. 
	A large portion of this work was completed while the second author was a post-Doc at the Università degli Studi di Milano, +Dip Eccellenza.
}

\begin{document}
	
	\title[Local Hardy and bmo Spaces]{Local Hardy and bmo Spaces on Filtered Lie Groups}
	
	\author[M.\ Calzi]{Mattia Calzi} 
	\address{Dipartimento di Matematica, Universit\`a degli Studi di
		Milano, Via C. Saldini 50, 20133 Milano, Italy}
	\email{{\tt mattia.calzi@unimi.it}}
	
	\author[E.\ Rizzo]{Elena Rizzo}
	\address{Departamento de An\'alisis Matem\'atico, Facultad de Ciencias, Universidad de M\'alaga, Bulevar Louis Pasteur, s/n, Campus de Teatinos, 29071 Málaga, Spain}
	\email{{\tt  elena.rizzo@uma.es}}
	
	\theoremstyle{definition}
	\newtheorem{deff}{Definition}[section]
	
	\newtheorem{oss}[deff]{Remark}
	
	\newtheorem{ass}[deff]{Assumptions}
	
	\newtheorem{nott}[deff]{Notation}

	\theoremstyle{plain}
	\newtheorem{teo}[deff]{Theorem}
	
	\newtheorem{lem}[deff]{Lemma}
	
	\newtheorem{prop}[deff]{Proposition}
	
	\newtheorem{cor}[deff]{Corollary}
	
	\begin{abstract}
		 We develop a theory of local Hardy and $bmo$ spaces on a connected Lie group with polynomial growth whose Lie algebra is endowed with a filtration. Under suitable assumptions, we prove that the local Hardy spaces enjoy an appropriate atomic decomposition, are compatible with Calder\'on--Zygmund operators, and can be identified with certain Triebel--Lizorkin spaces, as it happens in the classical settings.
	\end{abstract}
	
	\maketitle

\section{Introduction}

Hardy spaces play a central role in harmonic analysis, providing, particularly in the range \(0<p\meg1\), a natural substitute for \(L^p\) spaces in the study of singular integral operators and related problems. 
Originating in the classical theory of complex functions, a real-variable theory of Hardy spaces began with the work of Stein and Weiss \cite{SteinWeiss} on systems of conjugate harmonic functions in $\mathbb{R}^n$. It was then developed by Fefferman and Stein in \cite{FeffermanStein}, where $H^p(\mathbb{R}^n)$ is characterized through maximal functions, while in \cite{Fefferman} Fefferman proved the duality between $H^1$ and $BMO$. The atomic decomposition established by Coifman \cite{Coifman} and Latter \cite{Latter} and then developed by Coifman and Weiss \cite{CoifmanWeiss2} provided a flexible framework for the study of Hardy spaces and, in particular, for proving the boundedness of Calder\'on--Zygmund operators on $H^1$.
More precisely, this decomposition represents functions belonging to the Hardy space in terms of atoms, i.e., localized building blocks satisfying suitable size and vanishing moment conditions. Although such global cancellation is crucial for the boundedness of operators such as the Riesz transforms and Calder\'on--Zygmund operators, it also imposes a strong global rigidity. In particular, the spaces $H^p(\mathbb{R}^n)$ are not stable under multiplication by smooth cutoff functions, and it turns out that the Schwartz space is not contained in the Hardy space.
To overcome these limitations, Goldberg introduced in~\cite{Goldberg} the local Hardy spaces $h^p(\mathbb{R}^n)$, in which the structure of the Hardy space is only retained at small scales. In particular, one has $H^1(\mathbb{R}^n)\subsetneq h^1(\mathbb{R}^n)\subsetneq L^1(\mathbb{R}^n)$, while $h^p(\mathbb{R}^n)=L^p(\mathbb{R}^n)$ for $1<p<\infty$.
In the case \(p=1\), the resulting space \(h^1(\mathbb{R}^n)\) admits an equivalent description in terms of local Riesz transforms, and for any $0<p\meg1$ the space $h^p(\mathbb{R}^n)$ can be characterized by maximal functions and by an atomic decomposition. In the maximal function formulation, the usual supremum over $(0,\infty)$ is replaced by a supremum over $(0,1]$, while in the atomic decomposition only atoms with small support need to enjoy all the vanishing moments. This modification has important consequences. Unlike the global space \(H^1(\mathbb{R}^n)\), the local space \(h^1(\mathbb{R}^n)\) is stable under multiplication by smooth compactly supported functions, making it particularly well suited to localization arguments. 
However, a Mihlin–Hörmander type Fourier multiplier requires stronger regularity near the origin to define an endomorphism of $h^p$, $0<p\meg1$.
Moreover, in analogy with the classical duality between $H^1$ and $BMO$, the dual space of $h^1(\mathbb{R}^n)$ is the local space $bmo(\mathbb{R}^n)$. Its defining condition controls the mean oscillation on small balls, whereas on large balls it only requires a bound on the average of $|f|$.
Finally, both the global and the local Hardy spaces fit naturally into the broader framework of Triebel--Lizorkin spaces. More precisely, they correspond to the homogeneous and non-homogeneous Triebel--Lizorkin spaces, respectively, through the identifications
\[
H^p(\mathbb R^n)=\dot F^{p,2}_0(\mathbb R^n),
\qquad
h^p(\mathbb R^n)=F^{p,2}_0(\mathbb R^n).
\]
This distinction reflects the different roles played by low frequencies in the two theories.

In recent decades, substantial effort has been devoted to extending local Hardy spaces to more general contexts. In \cite{Taylor} the author introduced local $h^1$ spaces on Riemannian manifolds, while the authors of \cite{CMIM} studied local Hardy spaces of differential forms on Riemannian manifolds for $p \ge 1$. For $p=1$, local Hardy spaces have been developed on doubling metric measure spaces in \cite{DafniYue}, and on locally doubling manifolds in \cite{MartiniMedaVallarino}. Moreover, local Hardy spaces $h^1_L$ associated with specific differential operators $L$ were established on $\mathbb{R}^n$ in \cite{JiangYangZhou} and on metric measure spaces in \cite{GongLiYan}. For $p \in (0,1]$, characterizations of $h^p(\mathbb{R}^n)$ via singular integrals were studied in \cite{PelosoSecco}.
Despite these and many other advances, the extensions of local Hardy spaces $h^p$ for $p \in (0,1)$ to more general settings has, to the best of our knowledge, remained largely unexplored.
This is both due to their homogeneous nature, which imposes strong conditions on the behavior at infinity of the underlying space, and to the difficulty of understanding how to implement a vanishing moment condition in greater generality. 
In fact, while this is not necessary for all $p<1$, it becomes essential when $p$ is sufficiently small.

Conversely, Hardy spaces (in their global version) were successfully extended by Folland and Stein in~\cite{FollandStein} to homogeneous groups, that is, to simply connected nilpotent Lie groups $G$ whose Lie algebra $\gf$ is endowed with a family $(\delta_r)$ of automorphic non-isotropic dilations. Note that the existence of a family of dilations $(\delta_r)$ on a Lie algebra $\gf$ is equivalent to requiring $\gf$ to have a graduation $(\gf_\lambda)_{\lambda>0}$, in the sense that
\[
\gf=\bigoplus_{\lambda>0} \gf_\lambda\,\text{ and }\,[\gf_\lambda, \gf_\mi]\subseteq \gf_{\lambda+\mi}\text{ for every }\lambda,\mi>0.
\]
Explicitly, $\gf_\lambda$ is simply the space of $X\in \gf$ such that $\delta_r(X)= r^\lambda X$; equivalently, $\gf_\lambda$ is the eigenspace of $\log \delta_r$ relative to the eigenvalue $\lambda \log r$. 
In the same framework of homogeneous groups, a definition of homogeneous Triebel--Lizorkin spaces was recently introduced in~\cite{Jordi}, where a suitable correspondence between $H^p$ and $\dot F^{p,2}_0$ is established.

The considerations presented above provide the starting point for the present work. On the one hand, the theory of local Hardy spaces $h^p$ for $0<p<1$ remains much less developed in general settings than in the Euclidean case. On the other hand, the correspondence established in~\cite{Jordi} between global Hardy spaces and homogeneous Triebel--Lizorkin spaces suggests the possibility of an analogous identification between local Hardy spaces and non-homogeneous Triebel--Lizorkin spaces also in more general contexts.
As we mentioned earlier, the first issue is to define local Hardy spaces in a reasonable setting. Because of the above remarks, it is relatively natural to confine ourselves within the setting of Lie groups. Indeed, Triebel--Lizorkin spaces have been extended at various steps  to general Lie groups, starting with Lie groups endowed with a left-invariant Riemannian structure (see~\cite[Chapter 7]{TriebelFS2}), proceeding with Lie groups endowed with a left-invariant sub-Riemannian structure (see~\cite{BPV,BPV2,BPV3}) and, more recently, to filtered Lie groups (see~\cite{BCP,Calzi2,Calzi3,Calzi4,Calzi5}), which include both Lie groups endowed with a left-invariant sub-Riemannian structure and homogeneous groups.
In this latter case of filtered Lie groups, instead of requiring that the Lie algebra $\gf$ be endowed with a graduation, which would impose that $\gf$ be nilpotent, we simply require that $\gf$ be endowed with a suitable increasing filtration $(\gf_\lambda)_{\lambda>0}$, that is, an increasing sequence of vector subspaces of $\gf$ with $[\gf_\lambda,\gf_\mi]\subseteq \gf_{\lambda+\mi}$. One may then associate with $\gf$ a homogeneous Lie algebra $\gf_*$ -- its `contraction' -- in a natural way and, to every left-invariant differential operator $\Lc$ on $G$, a homogeneous left-invariant differential operator $P$ -- its `principal part' -- on the homogeneous group $G_*$ with Lie algebra $\gf_*$. The operator $\Lc$ is then said to be weighted subcoercive if $P+P^*$, where $P^*$ denotes the formal adjoint of $P$, is a positive Rockland operator, i.e., a homogeneous and hypoelliptic differential operator. This connection with the very well-behaved class of Rockland operators allowed ter Elst and Robinson to derive several impressive properties for weighted subcoercive operators (see~\cite{ElstRobinson}). For our purposes, a weighted subcoercive operator $\Lc$ generates a heat semigroup $(\ee^{-t\Lc})_{t>0}$ whose convolution kernel satisfies suitable Gaussian estimates. If, in addition, $\Lc$ is formally self-adjoint, then it is also essentially self-adjoint on $L^2$ with initial domain the space of test functions, hence defines a functional calculus. If $G$ has polynomial volume growth, this functional calculus is particularly interesting, since Schwartz functions of $\Lc$ turn out to have Schwartz convolution kernels with controllable bounds. Even though, in the case of a general Lie group, the properties of the heat semigroup do not seem to be sufficient to define a space $F^{p,2}_0$ for $p\meg 1$, when $G$ has polynomial growth it is actually possible to define the full scale of non-homogeneous Triebel--Lizorkin spaces (see~\cite{Calzi4,Calzi5}). Thus, it is natural to ask whether $F^{p,2}_0$ may be interpreted as a local Hardy space $h^p$ in this context, and whether $F^{\infty,2}_0$ may, by duality, be interpreted as a $bmo$ space. 

The existing work on non-homogeneous Triebel--Lizorkin spaces on Lie groups of polynomial growth therefore leads us to choose this setting for the development of a theory of local Hardy spaces. Therefore, the aim of the present paper is to introduce and study local Hardy spaces $h^p$ on filtered Lie groups of polynomial growth, investigate their basic properties, with particular emphasis on the atomic decomposition, establish the identification of $h^p$ with $F^{p,2}_0$, and characterize the dual of $h^1$ in terms of a suitable local $bmo$ space, implying its identification with $F^{\infty,2}_0$ as well.
As one may expect, we shall define the local Hardy space $h^p$ by means of a maximal function. Namely, we shall say that a tempered distribution $f$ belongs to $h^p$ if
\[
\widetilde\Mc^{(\lambda)}f\coloneqq \sup_{t\in (0,1]} \sup_{\phi\in \widetilde B_{\lambda,t}} \abs{f*\phi}
\]
belongs to $L^p$. Observe that, since we do not have dilations at our disposal, we define for each $t>0$ a set $\widetilde B_{\lambda,t}$ of smooth functions which satisfy suitable $t$-dependent estimates, instead of considering the $t$-dilates of the functions in $\widetilde B_{\lambda,1}$. In addition to that, for technical reasons, we shall actually require the elements of $\widetilde B_{\lambda,t}$ to be supported in a ball of radius $t$, instead of a sufficiently rapid decay. We shall then introduce a notion of atoms and establish the corresponding atomic decomposition for $h^p$. 
However, we shall be able to do that only if $p$ is sufficiently large or if $G$ is nilpotent. The reason, as hinted above, lies in the problem of implementing a reasonable vanishing moments condition in more general groups. If the group is nilpotent, then one may   transfer `by force' the vanishing moments condition on small balls to the Lie algebra by means of the exponential map, where it makes sense, and then essentially proceed with the usual proof. Nevertheless, one has to be careful since, by doing so, the vanishing moment condition depends on the center of the given ball. One may, in principle, do the same also for a general group. However, at the later stages of the proof one has to perform sums of `bad terms' with vanishing moments relative to different balls, hence to different centers: if the group is nilpotent, then translations happen to be polynomial and do not completely disrupt the vanishing moments, but if the group is not nilpotent, then the translations are no longer polynomial and there is little hope of reaching the conclusion.
Nonetheless, if $p$ is sufficiently large, then the only condition to be imposed is the zero average condition, which does not create any issues.

A remark on the construction of atoms is in order. In our setting, we believe that a more natural approach to defining atoms supported on small balls would be to replace the usual vanishing moment conditions with a requirement based on the possibility of `extracting derivatives', namely, of  decomposing an atom as a sum $\sum_j X_j a_j$, where the $X_j$ are non-constant left-invariant differential operators and the $a_j$ are supported in a comparable ball and satisfy appropriate size conditions. Indeed, while vanishing moments are an effective tool in the Euclidean setting, their formulation relies on the notion of polynomials, which does not have an equally natural counterpart on general groups. By contrast, a condition expressed in terms of derivatives builds on structures that are already intrinsic to the underlying group.
In fact, every atom satisfying the usual vanishing moment conditions can be decomposed in terms of derivatives, and the resulting derivative-based atomic Hardy spaces are compatible with the action of naturally defined Calder\'on--Zygmund operators. Moreover, atoms of this form also arise in the discretization theorems for Besov and Triebel--Lizorkin spaces, where they play a somewhat analogous, though conceptually different, role (see~\cite{Calzi4}). However, we have not been able to establish an atomic decomposition of the local Hardy spaces in terms of this weaker notion of atoms.

The paper is organized as follows. In Section \ref{Section:Preliminaries}, we collect some preliminaries on filtered Lie groups, weighted subcoercive operators and the definition of Triebel--Lizorkin spaces. Section \ref{Section:HardySpaces} focuses on the definition of local Hardy spaces via a maximal function and the study of some of their properties. In Section \ref{Section:AtomicDecomposition}, we first establish a Calder\'on--Zygmund decomposition for functions in $h^p$ and then prove our first main result, namely an atomic decomposition theorem. In Section \ref{Section:Identification}, we state and prove our second main theorem, establishing the equivalence between the local Hardy spaces $h^p$ and the Triebel--Lizorkin spaces $F_0^{p,2}$. Section \ref{Section:bmoSpaces} is devoted to the definition of the local $bmo$ space and the characterization of the dual of $h^1$ in terms of this space, and hence of $F^{\infty,2}_0$. Finally, Sections \ref{sec:app2} and \ref{sec:app} contain the proofs of Lemma \ref{lem:73} and Proposition \ref{prop:31}, respectively.
 while still retaining the needed properties.

\section{Preliminaries}\label{Section:Preliminaries}
 
In this section, we collect some preliminary results and introduce the notation used throughout the paper. For further details, we refer the reader to \cite{Calzi4}.

\subsection{Relatively Invariant Measures and Convolution}

Throughout the paper, we shall denote with $G$ a connected (finite-dimensional, real) Lie group  with Lie algebra $\gf$. We shall denote with $\beta$ a left Haar measure and we shall assume that $\beta$ has polynomial volume growth, that is, there is $Q_G \Meg 0$ such that, for every compact neighbourhood $V$ of the identity $e$ in $G$, one has
\[
\beta(V^n)\asymp n^{Q_G}
\]
for $n\to \infty$. Then, $\beta$ is also a right Haar measure.

We now recall some facts about convolution. Given two convolvable\footnote{We shall not provide a precise definition of this concept, since this would drag us too far away from the main topic. } distributions $f,g$ on $G$, one has
\[
\langle f*g, \phi\rangle= \langle f\otimes g, (x,y)\mapsto \phi(xy)\rangle
\]
for every $\phi\in C^\infty_c(G)$.
We shall identify each $f\in L^1_\loc(G)$ with $f\cdot \beta$, that is, the measure with density $f$ with respect to $\beta$. Given two convolvable functions $f,g$ such that $f*g$ is absolutely continuous with respect to $\beta$, we shall generally identify $f*g$ with its density with respect to $\beta$. Thus, under very mild conditions which will be always verified in the applications,
\begin{equation}\label{eq:1b}
	(f* g)(x)=\int_G f(x y^{-1}) g(y)  \,\dd \beta(y)= \int_G f(y) g(y^{-1}x) \,\dd \beta(y).
\end{equation} 
Similar formulae apply when either $f$ or $g$ is a distribution, and the convolution is still a function.
Observe that, if $f,g$ are convolvable distributions, $X$ is a left-invariant differential operator, and $Y$ is a right-invariant differential operator, then $Yf $ and $X g$ are convolvable and
\[
YX(f*g)=(Yf)*(Xg).
\]
If $f,g,h$ are distributions then, under some reasonable conditions that will always be satisfied in the applications, 
\[
\langle f * g, h \rangle =\langle f, h*  \check g \rangle =\langle g,   \check f * h\rangle,
\]
where $\langle \check f,\phi\rangle=\langle f,\check \phi \rangle$ for every $\phi\in C^\infty_c(G)$, and $\check \phi=\phi(\,\cdot\,^{-1})$.  

We now recall Young's inequality in this context. Take $p_1,p_2,p_3\in [1,\infty]$ so that $\frac{1}{p_1'}+\frac{1}{p_2'}=\frac{1}{p_3'}$. Then,
\[
\norm{f*g}_{L^{p_3}(G)}\meg \norm{f}_{L^{p_1}(G)}\norm{g}_{L^{p_2}(G)}
\] 
for every two positive $\beta$-measurable functions $f,g$, whose convolution may be defined by means of~\eqref{eq:1b}.  

In order to use clearer notation, even if somewhat more cumbersome, given a measure $\mi$ on a measurable space $X$, $p\in (0,\infty]$, and a $\mi$-measurable function $f$ on $X$, we shall also write 
$\norm{f(x)}_{L^p_x(\mi)}$ instead of $\norm{f}_{L^p(\mi)}$.
This will be particularly useful when dealing with nested (quasi-)norms.

\subsection{Differential Operators}

	We shall generally identify the complexification of the enveloping algebra $U(\gf)$ of $\gf$ with the algebra of left-invariant differential operators. We shall fix a scalar product on $\gf$, and endow $U(\gf)$ with the corresponding scalar product, namely the quotient of the natural scalar product on the complexification of the tensor algebra over $\gf$.\footnote{The actual scalar product on $U(\gf)$ will not matter in the sequel.}  We endow $\gf$ with a scalar product and $U(\gf)$ with its canonical extension (that is, the scalar product induced by the canonical scalar product on the complexified tensor algebra over $\gf$). The chosen scalar product will in fact not matter much in the sequel, but this choice has some minor advantages.

	Let $X$ be a left-invariant differential operator. We  denote with $X^R$ the right-invariant differential operator which induces the same point distribution as $X$ at $e$. In other words, $(X f)(e)=(X^R f)(e)$ for every $f\in C^\infty (G)$. We denote with $X^\dag$ the transpose of $X$ in the enveloping algebra $U(\gf)$. In other words, the mapping $X\mapsto X^\dag$ is the unique anti-automorphism of $U(\gf)$ (that is, such that $(XY)^\dag=Y^\dag X^\dag$) which extends the automorphism $X\mapsto -X$ of $\gf$. 
	Then, $X^\dag$ is the formal transpose of $X$ (with respect to $\beta$), that is, the unique left-invariant differential operator such that
	\[
	\int_G (X f) g\,\dd \beta=\int_G f X^\dag g\,\dd \beta
	\]
	for every $f,g\in C^\infty_c(G)$. We denote with $X^*$ the formal adjoint of $X$, that is, $\overline X^\dag$ and define the formal transpose and the formal adjoint of right-invariant differential operators in a similar way. If $u$ is a distribution, we then define $X u$ so that
	\[
	\langle X u,\phi\rangle =\langle u, X^\dag \phi\rangle
	\]
	for every $\phi\in C^\infty_c(G)$. In this way, if $f\in C^\infty(G)$, then $(X f)\cdot \beta=X(f\cdot \beta)$.

	In order to simplify the notation, we write $X^{R\dag}$ instead of $(X^R)^\dag$, etc.

See~\cite[Proposition 2.2]{BCP} for a proof of the following result, which will be used without citation in the sequel.
\begin{prop}
	The following hold:
	\begin{enumerate}
		\item[\textnormal{(1)}]   $X^{R\dag}  =  X^{\dag R} $ for every $X\in U(\gf)$;
		
		\item[\textnormal{(2)}] $X\delta_e= X^R\delta_e=  (X^\dag \delta_e)\check{\;}$   for every $X\in U(\gf)$;
		
		\item[\textnormal{(3)}] $X^\dag f=(X^R \check f) \check{\,}$ for every $X\in U(\gf)$ and for every $f\in C^\infty(G)$.
	\end{enumerate}
\end{prop}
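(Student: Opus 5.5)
The plan is to reduce all three identities to the case of a single vector field $X\in\gf$ and then extend to all of $U(\gf)$ by (anti)multiplicativity. The maps $X\mapsto X^R$ and $X\mapsto X^\dag$ behave simply under products. Since $X\mapsto X^R$ is characterized by $(Xf)(e)=(X^Rf)(e)$, for $X\in\gf$ we have $X^R f(x)=\frac{\dd}{\dd t}f(\exp(tX)x)|_{t=0}$. The map $X\mapsto X^R$ is then an anti-homomorphism from $U(\gf)$ onto the right-invariant operators. Indeed, $(XY)^R$ and $Y^RX^R$ give the same point distribution at $e$, because left- and right-invariant vector fields commute and $(XYf)(e)=(Y^R X^R f)(e)$.

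For (3), I first check it for $X\in\gf$. Then $X^\dag=-X$, and
\[
(X^R\check f)\check{\,}(x)=\tfrac{\dd}{\dd t}f(x^{-1}\exp(-tX))\big|_{t=0}
\]
evaluated at $x^{-1}$, i.e.\ $-\tfrac{\dd}{\dd t}f(x\exp(tX))|_{t=0}=-Xf(x)$. Next, the map $X\mapsto (f\mapsto (X^R\check f)\check{\,})$ is an anti-homomorphism composed with the algebra automorphism $f\mapsto\check f$ of operators, hence an anti-homomorphism. Likewise, $X\mapsto X^\dag$ is an anti-automorphism. Both agree on $\gf$, which generates $U(\gf)$, so they agree everywhere. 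The only bookkeeping point is to verify the order reversal carefully. I expect this sign/order check to be the main, though mild, obstacle.

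For (1), I apply (3) twice. Since $\dag$ and $R$ are both anti-multiplicative, $X\mapsto X^{R\dag}$ and $X\mapsto X^{\dag R}$ are both multiplicative, so again it suffices to treat $X\in\gf$. There $X^{\dag R}=(-X)^R=-X^R$. Also, $X^{R\dag}$ is the formal transpose of $X^R$, and this equals $-X^R$ by invariance of $\beta$ under the left translations generating $X^R$ (the Haar measure is also right-invariant here, though left-invariance suffices for right-invariant fields' flows $x\mapsto \exp(tX)x$).

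For (2), I pair with a test function $\phi$:
\[
\langle X\delta_e,\phi\rangle=(X^\dag\phi)(e),\qquad \langle X^R\delta_e,\phi\rangle=(X^{R\dag}\phi)(e)=(X^{\dag R}\phi)(e)=(X^\dag\phi)(e)
\]
by (1) and the defining property of $R$. For the last equality, $\langle (X^\dag\delta_e)\check{\,},\phi\rangle=(X^{\dag\dag}\check\phi)(e)=(X\check\phi)(e)$. By (3) applied to $X^\dag$, $X\check\phi=((X^{\dag R}\phi))\check{\,}$, whose value at $e$ is $(X^{\dag R}\phi)(e)=(X^\dag\phi)(e)$, as required.
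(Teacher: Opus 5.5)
Your proof is correct and complete. The paper gives no argument of its own here; it simply refers to \cite[Proposition 2.2]{BCP}, so there is no in-text proof to compare against.

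Your route is the natural self-contained one. You use that $X\mapsto X^R$ and $X\mapsto X^\dag$ are anti-homomorphisms (the former because left- and right-invariant operators commute) and that formal transposition is anti-multiplicative. You then verify each identity on $\gf$, where it reduces to $X^\dag=-X$, $X^Rf(x)=\frac{\dd}{\dd t}f(\exp(tX)x)|_{t=0}$, and the left invariance of $\beta$. Your derivation of (2) from (1) and (3) by pairing with test functions is also right.

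Two cosmetic points:
\begin{enumerate}
\item In your treatment of (3), the displayed expression is $(X^R\check f)(x)$, not $(X^R\check f)\check{\,}(x)$. Your phrase ``evaluated at $x^{-1}$'' repairs this, but the display should be labelled accordingly.
\item For (1), you announce that you will apply (3) twice. You then give an independent generator argument instead, and that argument is what actually proves (1). As you correctly note, it needs only the left invariance of $\beta$.
\end{enumerate}
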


Notice that, by (2), 
\[
\begin{split}
	(Xf)*g=(f*X\delta_e)*g=f*(X\delta_e*g)=f*(X^{R} \delta_e*g)=  f*(X^R g)
\end{split}
\]
under some reasonable conditions on $f$ and $g$ that are needed to grant associativity of convolution.

In addition, observe that, by our choice of the scalar product on $U(\gf)$, one has $\abs{X}=\abs{X^\dag}=\abs{\overline X}$, hence also $\abs{X}=\abs{X^*}$ for every $X\in U(\gf)$, where by $|X|$ we denote the norm of an element $X\in U(\gf)$.

\subsection{Filtrations}

Throughout the paper, $(\gf_\lambda)_{\lambda\Meg 0}$ will denote a filtration of $\gf$ such that
\begin{enumerate}
    \item it is increasing, i.e. so that $[\gf_\lambda, \gf_\mi]\subseteq \gf_{\lambda+\mi}$ for every $\lambda, \mi\Meg 0$,
    \item $\gf_\lambda=\Set{0}$ for every $\lambda<1$,
    \item $\bigcup_{\lambda\Meg 0} \gf_\lambda=\gf$,
    \item $\bigcap_{\mi>\lambda} \gf_\mi=\gf_\lambda$ for every $\lambda\Meg 0$.
\end{enumerate}
Moreover, we assume the filtration to be \textit{admissible}, namely there exists $\dd_0\ge 1$ so that, if $\lambda$ is such that $\gf_\lambda\ne\bigcup_{\mi<\lambda} \gf_\mi$, then $\lambda\in\dd_0\Q$. In other words, the set $\Lambda\coloneqq \{\lambda\Meg 1\colon \gf_\lambda \neq \bigcup_{\mi<\lambda} \gf_\mi\}$ generates a $\Q$-vector space of dimension $1$ and $\Lambda \subseteq \Q \dd_0$. The reason behind this requirement will be explained in Remark \ref{rem:1}.

For every $X\in \gf$, we define  $\deg X\coloneqq \min\Set{\lambda\Meg0\colon X\in \gf_\lambda}$ and we call it the degree of $X$.
Now set, for every $\lambda>0$,
\[
\gf_{\lambda^-}\coloneqq \bigcup_{\mi<\lambda} \gf_\mi, \quad \gf_{*,\lambda}\coloneqq\gf_\lambda/\gf_{\lambda^-}, \quad 
\gf_*\coloneqq \bigoplus_{\lambda>0} \gf_{*,\lambda}.
\]
One can define a Lie algebra structure on $\gf_*$ as follows: if $X=\sum_{\lambda>0} (X_\lambda+ \gf_{\lambda^-})$ and $Y= \sum_{\lambda>0} (Y_\lambda+ \gf_{\lambda^-})$ for some $(X_\lambda),(Y_\lambda)\in \prod_{\lambda>0} \gf_\lambda$, then
\[
[X,Y]\coloneqq \sum_{\lambda,\mi>0}\left(  [X_{\lambda},Y_{\mi}]+\gf_{(\lambda+\mi)^-}\right) .
\]
It is easily seen that $(\gf_{*,\lambda})_{\lambda>0}$ is a graduation of type $((0,+\infty),+)$ of $\gf_*$, i.e. such that 
\[
[\gf_{*,\lambda}, \gf_{*,\mi}]\subseteq \gf_{*,\lambda+\mi} \text{ for every }\lambda,\mi>0.
\]
One may then endow $\gf_*$ with the automorphic dilations $(\delta_r)_{r>0}$ defined so that $\delta_r(X)=r^\lambda X$ for every $X\in \gf_{*,\lambda}$ and for every $r,\lambda>0$. Thus, $\gf_*$ is the Lie algebra of some homogeneous group $G_*$, with homogeneous dimension $Q_*\coloneqq \sum_{\lambda>0} \dim \gf_{*,\lambda}$.

\begin{oss}
Note that we consider the full range $\lambda\Meg 0$  for notational convenience, in analogy with the filtration $(U_\lambda)$ that we introduce below, for which $U_\lambda\neq \Set{0}$ for every $\lambda\Meg 0$.
Nevertheless, we required $\gf_\lambda=\Set{0}$ for $\lambda<1$ in order for the control modulus $\abs{\,\cdot\,}_*$ (see~Definition~\ref{def:3} below) to induce a left-invariant \emph{distance} on $G$ rather than a quasi-distance. There may be also good reasons to require $\gf_1\neq \Set{0}$. We preferred to avoid imposing this condition in order to keep a natural comparison with graded (or, more generally, homogeneous) groups: if $\gf$ has a graduation $(\tilde \gf_j)_{j\in \Z_+^*}$ (with integer degrees), then it is natural to set
\[
\gf_\lambda=\bigoplus_{j=1}^{[\lambda]} \tilde \gf_j \text{ for every }\lambda \Meg 0, 
\]
but there is no guarantee that $\tilde \gf_1$ should be non-trivial.
\end{oss}

We now extend this filtration to the  enveloping algebra $U(\gf)$.	
For every $\lambda\Meg 0$, let
\[
U_\lambda=\Bigl\{X_1\cdots X_k\colon k\Meg 0, X_1,\dots, X_k\in \gf, \deg X_1+\cdots+ \deg X_k\meg \lambda\Bigr\}.
\]
Thus, the identity operator, corresponding to the case $k=0$, belongs to all $U_\lambda$. 
Then, $(U_\lambda)$ is an increasing filtration of $U(\gf)$, that is, $U_\lambda, U_\mi\subseteq U_\lambda U_\mi\subseteq U_{\lambda+\mi}$ for every $\lambda, \mi\Meg 0$.
For every $X\in U(\gf)$, we define the degree of $X$ as
$\deg X\coloneqq \min\Set{\lambda\Meg 0\colon X\in U_\lambda}$.
Notice that $\gf\cap U_\lambda=\gf_\lambda$ for every $\lambda\Meg 0$ by means of Proposition~\ref{prop:8} below, so that this definition is consistent with the previous one. In analogy with what we did before, we set
\[
U_{\lambda^-}\coloneqq \bigcup_{\mi<\lambda} U_\mi \text{ for }\lambda>0,\quad U_{0^-}\coloneqq\Set{0}, \quad U_{*,\lambda}\coloneqq U_\lambda/U_{\lambda^-} \text{ for every }\lambda\Meg 0
\]
and 
\[
U_*\coloneqq \bigoplus_{\lambda\Meg 0} U_{*,\lambda} .
\] 
We define an algebra structure on $U_*$ as follows:  if $X=\sum_{\lambda\Meg 0} (X_\lambda+ U_{\lambda^-})$ and $Y= \sum_{\lambda\Meg 0} (Y_\lambda+ U_{\lambda^-})$ for some $(X_\lambda),(Y_\lambda)\in \prod_{\lambda\Meg 0} U_\lambda$, then
\[
XY\coloneqq \sum_{\lambda,\mi\Meg 0}\left(  X_{\lambda}Y_{\mi}+U_{(\lambda+\mi)^-}\right) .
\]
It is easily seen that $U_*$ becomes a graded algebra of type $([0,+\infty),+)$ with this structure.

See~\cite[Proposition 4.2]{BCP} for a proof of the following result.

\begin{prop}\label{prop:8}
	The canonical inclusions $\gf_\lambda \subseteq U_\lambda$, $\lambda> 0$, induce a linear mapping $\pi \colon \gf_*\to U_*$. The canonical extension $U(\pi)\colon U(\gf_*)\to U_*$ of $\pi$ is an  isomorphism of graded algebras.
\end{prop}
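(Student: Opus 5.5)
We have to show two things: that the inclusions $\gf_\lambda\subseteq U_\lambda$ induce a well-defined linear map $\pi\colon\gf_*\to U_*$, and that its extension $U(\pi)$ is an isomorphism of graded algebras. The plan is to obtain surjectivity directly from the generating sets, and injectivity by a dimension count via Poincaré--Birkhoff--Witt.

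\emph{Step 1: well-definedness and the algebra map.} If $\mu<\lambda$, then $\gf_\mu\subseteq U_\mu\subseteq U_{\lambda^-}$. Hence $\gf_{\lambda^-}$ is mapped into $U_{\lambda^-}$, and we get linear maps $\pi_\lambda\colon\gf_{*,\lambda}\to U_{*,\lambda}$; we set $\pi=\bigoplus_\lambda\pi_\lambda$. Next we check that $\pi$ respects brackets, i.e.\ that $\pi([X,Y])=\pi(X)\pi(Y)-\pi(Y)\pi(X)$ in $U_*$. Take $X\in\gf_\lambda$ and $Y\in\gf_\mu$. Then $XY-YX=[X,Y]$ in $U(\gf)$, and $[X,Y]\in\gf_{\lambda+\mu}\subseteq U_{\lambda+\mu}$. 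Reducing modulo $U_{(\lambda+\mu)^-}$ gives exactly the bracket in $\gf_*$ followed by $\pi$. So $\pi$ is a Lie algebra morphism into the commutator Lie algebra of $U_*$. By the universal property, it extends uniquely to an algebra morphism $U(\pi)\colon U(\gf_*)\to U_*$. This extension preserves degrees, since the grading on $U(\gf_*)$ is the one induced by the grading of $\gf_*$.

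\emph{Step 2: surjectivity.} By definition, $U_\lambda$ is spanned by products $X_1\cdots X_k$ with $\sum\deg X_i\le\lambda$. If $\sum\deg X_i<\lambda$, such a product lies in $U_{\lambda^-}$. Hence $U_{*,\lambda}$ is spanned by the classes of products with $\sum\deg X_i=\lambda$. Each such class equals $\pi(\bar X_1)\cdots\pi(\bar X_k)$, where $\bar X_i$ is the class of $X_i$ in $\gf_{*,\deg X_i}$. Therefore $U(\pi)$ is onto in each degree.

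\emph{Step 3: injectivity, the main obstacle.} Choose a basis $(X_j)$ of $\gf$ adapted to the filtration: each $\gf_\lambda$ is spanned by the $X_j$ with $\deg X_j\le\lambda$. The classes $\bar X_j$ then form a homogeneous basis of $\gf_*$. By PBW, the ordered monomials $\bar X^\alpha$ form a basis of $U(\gf_*)$, and $\bar X^\alpha$ has degree $d(\alpha)=\sum\alpha_j\deg X_j$.

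It suffices to show that the ordered monomials $X^\alpha$ with $d(\alpha)\le\lambda$ span $U_\lambda$. Once this holds, the images of the $\bar X^\alpha$ span $U_*$, and then $\dim U_{*,\lambda}$ cannot exceed the number of $\alpha$ with $d(\alpha)=\lambda$. (Here the admissibility of the filtration makes the set of degrees discrete, so only finitely many $\alpha$ have $d(\alpha)\le\lambda$.) On the other hand, PBW in $U(\gf)$ says the $X^\alpha$ are linearly independent. Counting dimensions, $\dim U_\lambda=\#\{\alpha: d(\alpha)\le\lambda\}$, so $\dim U_{*,\lambda}=\#\{\alpha:d(\alpha)=\lambda\}$. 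Thus the surjection $U(\pi)$ is a surjection between spaces of equal finite dimension in each degree, hence bijective.

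The spanning claim is proved by induction on the (discrete) degree and on the length of products. Take a product $Y_1\cdots Y_k$ of basis elements with $\sum\deg Y_i\le\lambda$. Reordering two adjacent factors $Y_iY_{i+1}$ produces the commutator $[Y_i,Y_{i+1}]\in\gf_{\deg Y_i+\deg Y_{i+1}}$. Expanding this commutator in the basis gives products of length $k-1$ whose total degree is still at most $\lambda$. By the induction on length, these terms are in the span of ordered monomials of degree at most $\lambda$.

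The delicate point is this bookkeeping: the commutator must stay within the degree bound, which is guaranteed by $[\gf_\lambda,\gf_\mu]\subseteq\gf_{\lambda+\mu}$, and the induction must be well founded, which uses the finiteness of the set of degrees. The same bookkeeping shows that $\gf\cap U_\lambda=\gf_\lambda$.
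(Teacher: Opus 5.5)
Your proof is correct. The paper gives no argument of its own here; it simply cites \cite[Proposition 4.2]{BCP}. Your route is the standard one: check that $\pi$ is well defined and compatible with brackets, get surjectivity from the generating products, and get injectivity from PBW applied to a basis adapted to the filtration.

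Two small points in the bookkeeping need correcting.

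\begin{itemize}
\item Admissibility of the filtration plays no role in this statement. Every nonzero element of $\gf$ has degree at least $1$, so $\abs{\alpha}\meg d(\alpha)$. Hence only finitely many $\alpha$ satisfy $d(\alpha)\meg\lambda$, whether or not the degrees are commensurable.
\item The straightening argument needs only induction on the length of products. Each adjacent transposition produces correction terms of length $k-1$ whose degree sum is still at most $\lambda$. No induction on the degree, and no finiteness of the set of degrees, is required.
\end{itemize}

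There is also one step you should state explicitly, since $U_\lambda$ is spanned by products of arbitrary elements of $\gf$ rather than of basis vectors. A product $X_1\cdots X_k$ with $\sum\deg X_i\meg\lambda$ first has to be expanded, using the adaptedness of the basis, into products of basis elements satisfying the same degree bound.
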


From now on, we shall identify $U_*$ and $U(\gf_*)$ by means of $U(\pi)$.  

\subsection{Weighted Subcoercive Operators, Control Modulus, and the Schwartz Space}

\begin{deff}	
	We say that a family $(X_j)_{j\in J}$ of elements of $\gf$ is a minimal basis if, setting $Y_j\coloneqq X_j+ \gf_{(\deg X_j)^-}$, the family $(Y_j)$ induces a homogeneous basis of $\gf_*/[\gf_*,\gf_*]$.
	We denote with $\dd$ the least common multiple of the degrees of the $X_j$, $j\in J$.
\end{deff}

Observe that $\dd$ does \emph{not} depend on the choice of $(X_j)$, since it is the least common multiple of the degrees of the non-zero elements of $\gf_*/[\gf_*,\gf_*]$.
Moreover, if $(X_j)$ is a minimal basis of $\gf$, then $(X_j)$ generates $\gf$ as a Lie algebra, and also generates the filtrations $(\gf_\lambda)$ and $U_\lambda$ (see~\cite[Proposition 4.4]{BCP}). In other words, $\gf_\lambda$  is the vector space generated by the vector fields of the form $\ad(X_{j_1})\cdots \ad(X_{j_{k-1}})X_{j_k}$,
where $k\Meg 1$, $j_1,\dots, j_k\in J$, and $\deg(X_{j_1})+\cdots +\deg(X_{j_k})\meg \lambda$. Analogously, $U_\lambda$ is the vector space generated by the differential operators for the form $X_{j_1}\cdots X_{j_k}$, where $k\Meg 0$, $j_1,\dots, j_k\in J$, and $\deg(X_{j_1})+\cdots +\deg(X_{j_k})\meg \lambda$.
In particular, $(X_j)$ is a `reduced weighted algebraic basis' of $\gf$, in the terminology of~\cite{ElstRobinson}.

\begin{deff}
    An operator $\Lc \in U(\gf)$ is weighted subcoercive if $ \Lc+\Lc^*+U_{\deg(\Lc)^-}$ is a positive Rockland operator on $G_*$.
\end{deff}
As shown in~\cite[Theorem 4.7]{BCP}, the above   definition of weighted subcoercive operators is consistent with the one given in~\cite{ElstRobinson}. In particular, if $\Lc$ is weighted subcoercive, then $\deg(\Lc)/\dd$ is an even integer. 

\begin{oss}\label{rem:1}
    Notice that $\dd$ is well defined since we assumed that the set $\Lambda$ of `non-trivial degrees'  is contained in a one-dimensional $\Q$-vector space. For the same reason, weighted subcoercive operators actually exist on $G$ (see~Proposition~\ref{prop:4} below). In fact, without this assumption, there would be no weighted subcoercive operators on $G$.
\end{oss}

We recall the following example from~\cite[Proposition 4.9]{BCP} for the construction of weighted subcoercive operators.
\begin{prop}\label{prop:4}
	Let $(X_j)_{j\in J}$ be a minimal basis of $\gf$. Then, $\Lc\coloneqq \sum_{j\in J} (X_j^{\dd/d_j})^\dag X^{\dd/d_j}_j$, where $d_j=\deg X_j$ for every $j\in J$, is a real,  positive and formally self-adjoint  weighted subcoercive operator.
\end{prop}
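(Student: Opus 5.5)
The proof splits into two parts: the algebraic properties (real, positive, formally self-adjoint), which are direct computations in $U(\gf)$, and weighted subcoercivity, which is reduced via Proposition~\ref{prop:8} to a statement about Rockland operators on the homogeneous group $G_*$.

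\textbf{Algebraic properties.} Each $X_j$ is a real vector field, so $\overline{X_j}=X_j$. Since $\dagger$ is an anti-automorphism, $(X_j^{m})^\dag=(-1)^m X_j^m$. Hence $\Lc=\sum_j (-1)^{\dd/d_j}X_j^{2\dd/d_j}$ is real. Because $\dagger$ is an involution, $(Y^\dag Y)^\dag=Y^\dag Y$, so $\Lc^\dag=\Lc$, and realness then gives $\Lc^*=\Lc$. Positivity on $C^\infty_c(G)$ follows from
\[
\int_G (\Lc f)\overline f\,\dd\beta=\sum_j \int_G \abs{X_j^{\dd/d_j}f}^2\,\dd\beta\Meg 0,
\]
which uses that $X^*$ is the formal adjoint of $X$ and $(X_j^{\dd/d_j})^\dag=(X_j^{\dd/d_j})^*$.

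\textbf{Passing to $U_*$.} Each summand $(X_j^{\dd/d_j})^\dag X_j^{\dd/d_j}$ lies in $U_{2\dd}$. Its class in $U_{*,2\dd}$ is, under the identification of Proposition~\ref{prop:8}, the corresponding product $(Y_j^{\dd/d_j})^\dag Y_j^{\dd/d_j}$ computed in $U(\gf_*)$, where $Y_j=X_j+\gf_{d_j^-}$. Consequently
\[
\Lc+\Lc^*+U_{(2\dd)^-}=2\sum_j (Y_j^{\dd/d_j})^\dag Y_j^{\dd/d_j}\eqqcolon P.
\]
This element is non-zero: it is a sum of positive operators, and evaluating $\langle P\phi,\phi\rangle$ on a suitable test function shows it is not the zero operator. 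Hence $\deg\Lc=2\dd$ exactly, and the quotient above is taken at the correct level.

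\textbf{Rockland property of $P$ (main step).} $P$ is homogeneous of degree $2\dd$ under the dilations $\delta_r$. It remains to show that for every nontrivial irreducible unitary representation $\pi$ of $G_*$, $d\pi(P)$ is injective on smooth vectors. If $d\pi(P)v=0$, then
\[
0=\langle d\pi(P)v,v\rangle = 2\sum_j\norm{d\pi(Y_j)^{\dd/d_j}v}^2,
\]
so $d\pi(Y_j)^{\dd/d_j}v=0$ for all $j$. Each $d\pi(Y_j)$ is skew-adjoint, so $i\,d\pi(Y_j)$ is self-adjoint, and a power of a self-adjoint operator annihilating $v$ forces the operator itself to annihilate $v$. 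Thus $d\pi(Y_j)v=0$ for every $j$.

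The $Y_j$ induce a basis of $\gf_*/[\gf_*,\gf_*]$. Because $\gf_*$ is nilpotent and graded, they therefore generate $\gf_*$ as a Lie algebra. It follows that $d\pi(\gf_*)v=0$, so $v$ is $G_*$-invariant. Irreducibility and nontriviality of $\pi$ then force $v=0$. By the Helffer--Nourrigat theorem, $P$ is hypoelliptic, hence a positive Rockland operator, and so $\Lc$ is weighted subcoercive.

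The main obstacle is the bookkeeping in the second step: checking rigorously that the principal part of $\Lc$ is exactly $P$ through the isomorphism $U(\pi)$, including that the top-degree part does not vanish. The Rockland argument itself is standard.
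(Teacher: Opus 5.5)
Your proof is correct. The algebraic checks, the computation of the class of $\Lc$ in $U_{*,2\dd}$ as $\sum_j (-1)^{\dd/d_j}Y_j^{2\dd/d_j}$ via Proposition~\ref{prop:8} (which gives $\deg\Lc=2\dd$), and the Rockland verification all go through. For the last step, the generation of the nilpotent algebra $\gf_*$ by the $Y_j$ together with Helffer--Nourrigat suffices; Helffer--Nourrigat applies because the admissibility assumption makes $G_*$ graded after rescaling the degrees. The paper gives no argument of its own and only cites [BCP, Proposition 4.9], and your reduction to the principal part followed by the representation-theoretic Rockland check is the standard argument behind that result.
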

Here, by `positive' we mean that $\int \Lc f \overline f \,\dd \beta\Meg 0$ for every $f\in C^\infty_c(G)$.
In particular, if $(\gf_\lambda)$ is the filtration generated by $\gf_1$, then $\Lc$ is a sub-Laplacian (a Laplacian, if $\gf_1=\gf$).

\begin{nott}
	From now on, we shall fix a \emph{formally self-adjoint} weighted subcoercive operator $\Lc$ with degree $\grado$.  
\end{nott}

\vskip 0.3 cm
We shall now define a control modulus on $G$. 
\begin{deff}\label{def:3}
	Given an absolutely continuous curve $\gamma\colon [0,1]\to G$, we  define the content of $\gamma$ as 
    \[
    C(\gamma)=\inf\Set{\eps>0\colon \abs{P_\lambda \dd L_{\gamma(t)}^{-1}\gamma'(t)} \meg \min(\eps, \eps^{\lambda}) \text{ for a.e. } t\in [0,1] \text{ and for all } \lambda>0},
    \]
    where $P_\lambda$ is the orthogonal projector of $\gf$ onto $\gf_\lambda \ominus \gf_{\lambda^-}$ (this is non-zero only for finitely many $\lambda>0$) and $L_{\gamma(t)}$ is the left translation by $\gamma(t)$. 
	
	Given $x\in G$, we define
    \[
    \abs{x}_*\coloneqq\inf\Set{C(\gamma)\colon \gamma\colon [0,1]\to G \text{ absolutely continuous, } \gamma(0)=e\text{ and }\gamma(1)=x}.
    \]
\end{deff} 
It turns out that $\abs{\,\cdot\,}_*$ is a control modulus and we  endow $G$ with the induced left-invariant distance $d(x,y)\coloneqq \abs{y^{-1}x}_*$.

Choosing a different scalar product on $\gf$   gives rise to bi-Lipschitz equivalent control moduli. Equivalence near $e$ follows from~\cite[Corollary 6.5]{ElstRobinson}  whereas equivalence at infinity follows from the fact that all these control moduli are `connected moduli': in fact, every $x\in G$ may be written as $x_1\cdots x_k$, where $\abs{x_j}_*\meg 1$ for $j=1,\dots, k$, and $k\meg \abs{x}_*+1$.
Notice that here we are not requiring $\gamma$ to be `horizontal'. One may also require $\gamma$ to be horizontal with respect to some fixed weighted algebraic basis compatible with the filtration $(\gf_\lambda)$, in the terminology of~\cite{ElstRobinson}, and still get an equivalent control modulus. This would provide a better mean value theorem for the corresponding `horizontal gradient,' but we shall not need this kind of more precise estimates.

\vskip 0.3 cm
It is a well-known fact that in this context
\[
\beta(B(e,r))\asymp r^{Q_*} \qquad \text{for }r\to 0^+,
\]
while
\[
\beta(B(e,r))\asymp r^{Q_G}  \qquad \text{for }r\to+\infty,
\]
so that it makes sense to define the Schwartz space as follows.
\begin{deff}
	Let $\Banach$ be a \BanachSpace. We define $\Sr(G;\Banach)$ as the space of $f\in C^\infty(G;\Banach)$ such that the seminorms
    \[
    \norm{ (1+ \abs{\,\cdot\,}_*)^k X^R f}_{L^1(G;\Banach)}, \, k\in \N, \, X\in U(\gf),
    \]
    are finite, endowed with the corresponding topology. We will simply write $\Sr(G)$ whenever $\Banach=\C$.
	
	We denote with $\Sr'(G;\Banach)$ the space of continuous linear mappings from $\Sr(G)$ into $\Banach$, endowed with the topology of uniform convergence on the bounded subsets of $\Sr(G)$.
\end{deff}

Notice that, as in~\cite{Calzi4,Calzi5}, we denote the Schwartz space as $\Sr(G)$ instead of $\Sc(G)$ since we wish to avoid any confusion with the completely different `Schwartz space' $\Sc(G)$ which was used in~\cite{BCP, Calzi2, Calzi3}, where the polynomial decay was replaced with an exponential one in order to deal with the possibly exponential growth of the volume.

See~\cite[Theorem 2.15 and Corollary 2.16]{Calzi4} for more information on $\Sr(G)$ and $\Sr'(G)$, and~\cite{Schweitzer} for a much more detailed exposition of general Schwartz spaces in various settings.

\subsection{Estimates}

We shall now consider some results on the functional calculus associated with $\Lc$.
Observe first that, by\cite[Theorem 2.9]{Calzi4}, the operator $\Lc$, with initial domain $C^\infty_c(G)$, is essentially self-adjoint on $L^2(G)$, so that we may consider the associated functional calculus. In particular,  for every Borel function $m$ on $\R$ with polynomial growth, that is, such that $(1+\abs{\,\cdot\,})^{-k} m$ is bounded for some $k\in\N$, there is a unique  distribution $\Kc_\Lc(m)$ on $G$ such that
\[
m(\Lc)\phi=\phi*\Kc_\Lc(m)
\]
for every $\phi\in C^\infty_c(G)$. In fact, $\Kc_\Lc(m)$ is necessarily a tempered distribution, so that the previous equality may be extended to every $\phi$ in the Schwartz space.

\begin{deff}
	We define $D$ as the maximum of $Q_*$ and $Q_G$.
\end{deff}
Note that that there exists c > 1 such that
\[
\frac1cr^{Q_*}(1+r)^{Q_G-Q_*}\meg \beta(B(e,r))\meg cr^{Q_*}(1+r)^{Q_G-Q_*}.
\]
for all $r>0$. It then follows that
\[
\beta(B(e,tr))\meg c^2(1+t)^D \beta(B(e,r))
\]
for all $t, r > 0$, which implies that $\beta$ is doubling.

See~\cite[Lemmas 2.24  and 2.25]{Calzi4} for a proof of the following simple results.

\begin{lem}\label{lem:37b}
	Take $ \rho>0$. Then, there is a constant $C>0$ such that
	\[
	\norm{\chi_{B(e,\rho)\setminus B(e,t)}\abs{\,\cdot\,}_*^{-Q_*/p-\alpha}  }_{L^p(G)}\meg \left( \frac{C}{1-2^{-p\alpha}}\right) ^{1/p} t^{-\alpha}
	\]
	while
	\[
	\norm{\chi_{G\setminus B(e,s)}\abs{\,\cdot\,}_*^{-Q_G/p-\alpha}  }_{L^p(G)}\meg \left( \frac{C}{1-2^{-p\alpha}}\right) ^{1/p} s^{-\alpha}
	\]
	for every $t\in (0,\rho]$, $s\in [\rho,+\infty)$, $\alpha>0$, and $p\in (0,\infty]$.
\end{lem}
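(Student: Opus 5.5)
The plan is a dyadic decomposition into annuli. Only the volume estimates recalled just above are needed, namely $\beta(B(e,r))\asymp r^{Q_*}$ for small $r$ and $\beta(B(e,r))\asymp r^{Q_G}$ for large $r$, used with a single uniform constant. To make this uniform, I first note that since $\rho$ is fixed, the two-sided bound
\[
\tfrac1c r^{Q_*}(1+r)^{Q_G-Q_*}\meg\beta(B(e,r))\meg c\, r^{Q_*}(1+r)^{Q_G-Q_*}
\]
gives a constant $C_0$ with $\beta(B(e,r))\meg C_0 r^{Q_*}$ for all $r\in(0,2\rho]$, and $\beta(B(e,r))\meg C_0 r^{Q_G}$ for all $r\Meg\rho$.

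\textbf{First estimate, $p<\infty$.} Write $B(e,\rho)\setminus B(e,t)\subseteq\bigcup_{k\Meg0,\ 2^k t<\rho} A_k$, where $A_k\coloneqq B(e,2^{k+1}t)\setminus B(e,2^k t)$. On $A_k$ we have $\abs{x}_*\Meg 2^k t$, so
\[
\int_{A_k}\abs{x}_*^{-Q_*-p\alpha}\,\dd\beta(x)\meg (2^kt)^{-Q_*-p\alpha}\,\beta\bigl(B(e,2^{k+1}t)\bigr)\meg C_0\,2^{Q_*}(2^kt)^{-p\alpha},
\]
since $2^{k+1}t<2\rho$. Summing the geometric series in $k$ gives at most $C_0 2^{Q_*} t^{-p\alpha}/(1-2^{-p\alpha})$. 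Taking $p$-th roots yields the claim with $C=C_0 2^{Q_*}$, which is independent of $t$, $\alpha$ and $p$.

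\textbf{Second estimate, $p<\infty$.} This is identical, using $A_k=B(e,2^{k+1}s)\setminus B(e,2^ks)$ for $k\Meg0$ together with $\beta(B(e,2^{k+1}s))\meg C_0 (2^{k+1}s)^{Q_G}$, valid because $2^{k+1}s\Meg\rho$. Here the sum runs over all $k\Meg0$ and converges because $\alpha>0$.

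\textbf{The case $p=\infty$.} Interpreting $(C/(1-2^{-p\alpha}))^{1/p}$ as $1$, the bounds are trivial: on the relevant sets $\abs{\,\cdot\,}_*^{-\alpha}\meg t^{-\alpha}$ (respectively $s^{-\alpha}$). I can also enlarge $C$ to be at least $1$ so that the statement holds uniformly in $p$.

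The only step needing care is making the constant uniform in $p$ and $\alpha$. This is automatic because the exponent $-Q_*-p\alpha$ splits exactly into the volume factor, which is absorbed by $\beta$, and a pure geometric factor $2^{-kp\alpha}$. In particular, the computation never requires integrability of $\abs{\,\cdot\,}_*^{-Q_*}$ itself.
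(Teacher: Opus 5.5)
Your proof is correct: the dyadic annuli $B(e,2^{k+1}t)\setminus B(e,2^kt)$, the volume bounds $\beta(B(e,r))\meg C_0r^{Q_*}$ for $r\meg 2\rho$ and $\beta(B(e,r))\meg C_0r^{Q_G}$ for $r\Meg\rho$, and the geometric series $\sum_k 2^{-kp\alpha}$ give exactly the factor $(C/(1-2^{-p\alpha}))^{1/p}$, with $C$ uniform in $p$ and $\alpha$. The paper does not reprove this lemma but cites \cite[Lemmas 2.24 and 2.25]{Calzi4}; your argument is the standard proof that yields this form of the constant, and it is the same technique the paper itself uses for Lemma~\ref{lem:37c}.
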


\begin{lem}\label{lem:37}
	There is a constant $C>0$ such that
	\[
	\norm{(1+\abs{\,\cdot\,}_*/t)^{-\alpha-D/p}  }_{L^p(G)}\meg 3 \left( \frac{C}{1-2^{-p\alpha}}\right) ^{1/p} t^{Q_*/p}\max(1,t)^{(Q_G-Q*)/p}
	\]
	for every $t,\alpha>0$ and $p\in (0,\infty]$.
\end{lem}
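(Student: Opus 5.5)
The plan is to split $G$ into the ball $B(e,t)$ and its complement and use a different pointwise bound for the integrand on each piece. For $\abs{x}_*\meg t$ we use the trivial bound $(1+\abs{x}_*/t)^{-\alpha-D/p}\meg 1$. For $\abs{x}_*>t$ we use
\[
(1+\abs{x}_*/t)^{-\alpha-D/p}\meg (\abs{x}_*/t)^{-\alpha-D/p}.
\]
By the $p$-triangle inequality (or the plain triangle inequality when $p\Meg1$), the $L^p$ quasi-norm of the sum of the two pieces is bounded by a constant times the sum of the separate norms. The factor $3$ in the statement is meant to absorb this constant together with the three pieces described below. The case $p=\infty$ is trivial, since the function is bounded by $1$ and the right-hand side is at least $1$. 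So we assume $p<\infty$.

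\emph{Inner piece.} Here we need $\beta(B(e,t))^{1/p}$. By the volume estimate $\beta(B(e,r))\meg c\,r^{Q_*}(1+r)^{Q_G-Q_*}$ recalled before Lemma~\ref{lem:37b}, this is at most a constant times $t^{Q_*/p}\max(1,t)^{(Q_G-Q_*)/p}$. This is exactly the desired right-hand side, without the $\alpha$-dependent factor, and that factor is at least $1$ once $C\Meg1$.

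\emph{Outer piece.} Fix $\rho=1$ in Lemma~\ref{lem:37b}. The exponent $\alpha+D/p$ dominates both $Q_*/p+\alpha$ and $Q_G/p+\alpha$, because $D=\max(Q_*,Q_G)$. We split into two cases.

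1. If $t\Meg1$, then on $G\setminus B(e,t)$ we have $\abs{x}_*/t\Meg1$. We may therefore lower the exponent to $Q_G/p+\alpha$, which only increases the function. The second estimate of Lemma~\ref{lem:37b} with $s=t$ then gives
\[
t^{Q_G/p+\alpha}\,\bigl(C/(1-2^{-p\alpha})\bigr)^{1/p}\,t^{-\alpha}=\bigl(C/(1-2^{-p\alpha})\bigr)^{1/p}\,t^{Q_G/p}.
\]
This matches the right-hand side, since $t^{Q_*/p}t^{(Q_G-Q_*)/p}=t^{Q_G/p}$.

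2. If $t<1$, we split the complement further into $B(e,1)\setminus B(e,t)$ and $G\setminus B(e,1)$.
   On $B(e,1)\setminus B(e,t)$ we again have $\abs{x}_*/t\Meg1$, so we lower the exponent to $Q_*/p+\alpha$. The first estimate of Lemma~\ref{lem:37b} yields $t^{Q_*/p+\alpha}\bigl(C/(1-2^{-p\alpha})\bigr)^{1/p}t^{-\alpha}$, which is the required $t^{Q_*/p}$ bound.
   On $G\setminus B(e,1)$ we have $\abs{x}_*/t\Meg 1/t>1$. We bound the function by $t^{\alpha+D/p}\abs{x}_*^{-\alpha-Q_G/p}\abs{x}_*^{-(D-Q_G)/p}$ and use $\abs{x}_*^{-(D-Q_G)/p}\meg1$ there. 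The second estimate of Lemma~\ref{lem:37b} with $s=1$ then gives $t^{\alpha+D/p}\bigl(C/(1-2^{-p\alpha})\bigr)^{1/p}$. Since $t<1$ and $D\Meg Q_*$, we have $t^{\alpha+D/p}\meg t^{Q_*/p}$.

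\emph{Collecting.} This gives at most three pieces, each bounded by $\bigl(C/(1-2^{-p\alpha})\bigr)^{1/p}t^{Q_*/p}\max(1,t)^{(Q_G-Q_*)/p}$ after enlarging $C$. The main (mild) obstacle is obtaining the precise constant $3$ for $p<1$, where the $p$-triangle inequality gives $3^{1/p}$ rather than $3$. To get $3$ instead, we bound $\norm{f}_p\meg\bigl(\sum_i\norm{f\chi_{A_i}}_p^p\bigr)^{1/p}\meg 3^{1/p}\max_i\norm{f\chi_{A_i}}_p$, and then absorb $3^{1/p}$ into $C^{1/p}$ by replacing $C$ with $3C$. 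With that adjustment the factor $3$ is just a harmless normalization.
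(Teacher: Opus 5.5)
Your proof is correct: splitting $G$ into $B(e,t)$, $B(e,1)\setminus B(e,t)$ (only when $t<1$) and the remaining exterior region works. Bounding the first piece by $\beta(B(e,t))^{1/p}$ and the others by the two estimates of Lemma~\ref{lem:37b} with $\rho=1$ gives each piece a bound $\bigl(C/(1-2^{-p\alpha})\bigr)^{1/p}t^{Q_*/p}\max(1,t)^{(Q_G-Q_*)/p}$ with $C$ independent of $p,\alpha,t$. The paper does not prove this lemma itself and refers to \cite[Lemma 2.25]{Calzi4}; the shape of the stated constant points to exactly this reduction to Lemma~\ref{lem:37b}.

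The one point to state cleanly is where uniformity in $p$ comes from. It comes from the disjointness identity $\norm{f}_{L^p}^p=\sum_i\norm{f\chi_{A_i}}_{L^p}^p$ in your last paragraph, which also requires the volume constant to enter as a $1/p$-th power (it does). It does not come from the $p$-triangle inequality in your opening paragraph: for $p<1$ that constant depends on $p$ and could not be absorbed into the fixed factor $3$.
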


See~\cite[Corollary 2.27]{Calzi4} for a proof of the following result.

\begin{prop}\label{cor:12} 
	Take $\lambda,c\Meg 0$, $\rho>0$ and $s>c+D(1/p-1/2)_+$. There is a constant $C>0$ such that for every $p\in (0,\infty]$, $m\in B^{\infty,\infty}_s(\R)$ supported in $[-\rho,\rho]$, $r\in (0,\rho]$, and $X,Y\in U_\lambda$,
	\[
	\norm{XY^R\Kc_{r^{\grado}\Lc } (m ) (1+\abs{\,\cdot\,}_*/r)^c}_{L^p(G)} \meg C r^{-\deg(X)-\deg(Y)+(1/p-1)Q_*}\abs{X}\abs{Y}\norm{m}_{B^{\infty,\infty}_s(\R)}.
	\]
\end{prop}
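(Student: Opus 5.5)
The plan is to reduce the weighted $L^p$ bound to a pointwise bound on $XY^R\Kc_{r^{\grado}\Lc}(m)$, and then integrate that bound using Lemmas~\ref{lem:37b} and~\ref{lem:37}.

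\emph{Reduction to $Y$.} Since $Y^R$ commutes with left-invariant operators, I would write $XY^R\Kc(m)=Y^R X\Kc(m)$. By the earlier identities ($X\delta_e=X^R\delta_e$ and associativity of convolution), $Y^R$ acting on a convolution kernel amounts to convolving on the left with $Y\delta_e$. This lets me treat the contributions of $X$ and $Y$ symmetrically. It then suffices to prove the weighted bound with a \emph{single} left-invariant operator $Z=X Y'$, where $Y'$ is obtained from $Y$ by the adjoint action. Because the weight $(1+\abs{\cdot}_*/r)^c$ grows only polynomially, the adjoint action costs at most a factor absorbed by increasing $c$. Alternatively, and more cleanly, I would use $\Kc(\bar m)=\Kc(m)^*$ together with the inversion $\check{\ }$: inversion turns $Y^R$ into $Y^\dag$ and preserves $\abs{\cdot}_*$ and $\beta$.

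\emph{Scaling and the unit-scale estimate.} Next I write $m=m\,\psi^2$, where $\psi$ is a smooth bump equal to $1$ on $[-\rho,\rho]$. Then
\[
\Kc_{r^{\grado}\Lc}(m)=\Kc_{r^{\grado}\Lc}(\psi)*\Kc_{r^{\grado}\Lc}(m)*\Kc_{r^{\grado}\Lc}(\psi),
\]
so that $Y^R$ falls on the left factor $\Kc(\psi)$ and $X$ on the right factor $\Kc(\psi)$. The core input is then the standard heat-kernel/functional-calculus estimate
\[
\abs{X\Kc_{r^{\grado}\Lc}(m)(x)}\lesssim r^{-\deg X-Q_*}(1+\abs{x}_*/r)^{-N}\norm{m}_{B^{\infty,\infty}_s}
\]
for $r\le\rho$. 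I would obtain it by writing $m(\lambda)=g(\lambda)e^{-\lambda}$ and applying a Fourier representation of $g$ combined with Gaussian bounds for $X\ee^{-t\Lc}$ (ter Elst--Robinson). The $B^{\infty,\infty}_s$ norm controls the needed integrability, giving $L^\infty$ and weighted $L^2$ bounds. For the $L^2$ bound, Plancherel in $\Lc$ gives $\norm{\Kc_{r^{\grado}\Lc}(m)}_{L^2}\lesssim\norm{m}_\infty\,\beta(B(e,r))^{-1/2}\sim r^{-Q_*/2}$.

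\emph{Integration.} To pass to $L^p$, including $p<1$, I would use the weight together with Lemma~\ref{lem:37}. Writing
\[
(1+\abs{\cdot}_*/r)^{c}=(1+\abs{\cdot}_*/r)^{c+\alpha+D/p}\,(1+\abs{\cdot}_*/r)^{-\alpha-D/p},
\]
Hölder's inequality (for $p\le2$) reduces the bound to a weighted $L^2$ estimate with weight exponent $c+\alpha+D(1/p-1/2)$. Lemma~\ref{lem:37} supplies the factor $r^{Q_*(1/p-1/2)}$, valid since $r\le\rho$. Combined with the $L^2$ normalization $r^{-Q_*/2}$, this yields exactly $r^{(1/p-1)Q_*}$. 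The condition $s>c+D(1/p-1/2)_+$ is precisely what is needed for the weighted $L^2$ estimate: weighted Plancherel-type bounds with weight exponent $\gamma$ require $m\in B^{\infty,\infty}_{\gamma+}$ (Hulanicki/Mauceri--Meda type arguments via finite propagation or commutators with the distance). For $p\ge2$, I would interpolate between $L^2$ and the $L^\infty$ bound.

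\emph{Main obstacle.} The main difficulty is the weighted $L^2$ estimate with sharp dependence on $s$ only through $B^{\infty,\infty}_s$. My first attempt would be to decompose $m$ dyadically in its Besov--Littlewood--Paley pieces, bound each piece's kernel by Gaussian estimates for $\ee^{-t\Lc}$ with complex time, and sum the resulting geometric series, which converges exactly when $s$ exceeds the weight exponent.
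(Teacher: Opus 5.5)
Your proposal has a genuine gap. The paper itself gives no proof of this proposition; it quotes it from \cite[Corollary 2.27]{Calzi4}. Your outline therefore has to stand on its own, and it does not quite do so: the reduction steps are fine, but one stated ingredient is false and the central estimate is missing.

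The reductions are sound. With $\psi\in C^\infty_c(\R)$ equal to $1$ on $[-\rho,\rho]$, the factorization $XY^R\Kc_{r^{\grado}\Lc}(m)=(Y^R\Kc_{r^{\grado}\Lc}(\psi))*\Kc_{r^{\grado}\Lc}(m)*(X\Kc_{r^{\grado}\Lc}(\psi))$ puts both derivatives on smooth kernels. The weight $w\coloneqq 1+\abs{\,\cdot\,}_*/r$ satisfies $w(x)\meg w(y)\,w(y^{-1}x)$, so Young's inequality carries weighted bounds through the convolutions. Your bookkeeping $r^{-Q_*/2}\cdot r^{Q_*(1/p-1/2)}=r^{(1/p-1)Q_*}$ is correct, both via H\"older and Lemma~\ref{lem:37} for $p\meg 2$ and via log-convexity between $L^2$ and $L^\infty$ for $p\Meg 2$. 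Drop the `adjoint action' variant, though. Absorbing the variable coefficients of $Y^R$ into the weight on the rough factor raises the weight exponent, hence the required $s$, while the threshold in the statement does not depend on $X,Y$.

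The first problem is your `core input', which is false as written. For fixed $s$ the kernel of $m$ cannot decay like $(1+\abs{x}_*/r)^{-N}$ for every $N$. On $\R$, with $\Lc=-\partial_x^2$ and $m(\lambda)=\chi(\lambda)\abs{\lambda-1}^s$, $s\notin 2\N$, it decays only like $\abs{x}^{-1-s}$. With $N=c<s$ the bound is exactly the case $p=\infty$ of the statement, so it cannot serve as an input. Rapid decay is legitimate only for the smooth factors. For the $m$-factor you must derive the $L^\infty$ bound, e.g.\ from $\Kc_{r^{\grado}\Lc}(m)=\Kc_{r^{\grado}\Lc}(\psi)*\Kc_{r^{\grado}\Lc}(m)$ and Cauchy--Schwarz, which give $\norm{w^c\Kc_{r^{\grado}\Lc}(m)}_{L^\infty}\meg\norm{w^c\Kc_{r^{\grado}\Lc}(\psi)}_{L^2}\norm{w^c\Kc_{r^{\grado}\Lc}(m)}_{L^2}$.

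The real gap is the estimate on which everything then rests, namely the weighted Plancherel bound
\[
\norm{w^{\gamma}\Kc_{r^{\grado}\Lc}(m)}_{L^2(G)}\meg C\, r^{-Q_*/2}\norm{m}_{B^{\infty,\infty}_s(\R)}\qquad\text{for every } s>\gamma,
\]
uniformly in $m$ supported in $[-\rho,\rho]$ and $r\in(0,\rho]$. You leave this as the `main obstacle', but it is the whole analytic content of the proposition, and the routes you name do not give the threshold $s>\gamma$ as stated.

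\begin{itemize}
\item Finite propagation speed is unavailable for weighted subcoercive operators of order larger than $2$; $\Lc=\Delta^2$ on $\R^n$ is admissible here.
\item In the complex-time route, let $h_z$ be the kernel of $\ee^{-z\Lc}$. Inserting $\Kc(g_k\ee^{-\lambda})=\int\hat g_k(\tau)\,h_{(1-i\tau)r^{\grado}}\,\dd\tau$ directly into the weighted norm costs $\norm{\hat g_k}_{L^1}$. That quantity is controlled by $2^{k/2}\norm{g_k}_{L^2}$, not by $\norm{g_k}_{L^\infty}\lesssim 2^{-ks}$. Together with the polynomial losses in $\abs{\tau}$ of the kernel bounds, this proves the result only for $s$ exceeding $\gamma$ by a fixed amount.
\end{itemize}

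To reach $s>\gamma$ you need two further ingredients.
\begin{itemize}
\item Bounds for $h_{(1-i\tau)t}$ as $\abs{\tau}\to\infty$ whose spatial spread grows only linearly in $\abs{\tau}$, with rapid decay beyond it. The Gaussian bounds of ter Elst--Robinson hold in sectors $\abs{\arg z}\meg\theta<\pi/2$, so this requires a separate argument.
\item A device that avoids the fixed loss. One option is to use the unweighted bound $\norm{\Kc_{r^{\grado}\Lc}(g_k\ee^{-\lambda})}_{L^2}\lesssim r^{-Q_*/2}\norm{g_k}_{L^\infty}$ on the ball of radius $2^{k(1+\delta)}r$, and the complex-time representation only outside it. Another is real interpolation between the unweighted bound and weighted bounds of large integer order $N$ whose smoothness loss is independent of $N$.
\end{itemize}

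None of this is supplied. As it stands, your proposal reduces the statement to the weighted Plancherel estimate that the paper imports from \cite{Calzi4}, but does not prove that estimate.
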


We refer to ~\cite[Lemma 2.35 and Corollary 2.36]{Calzi4} to better understand how one may  use the estimates provided in Proposition~\ref{cor:12} to deal with convolution kernels associated with Schwartz multipliers, and also how one may control the convolution of convolution kernels associated with the functional calculi of two distinct formally self-adjoint weighted subcoercive operators.

\subsection{Triebel--Lizorkin Spaces}\label{subsec:TLspaces}

	Consider the set $\Psi_{2^{-\grado}}$ of $(\psi_j)\in \Sr(G)^\N$ such that there is some bounded family $(\phi_j)\in C^\infty_c(\R)^\N$ such that 
	\[
	0\not \in \overline{\bigcup_{j\Meg 1}\supp \phi_j }, \qquad 
	\inf_{\lambda\in \R} \sum_{j\in\N} \abs{\phi_j(2^{-j\grado}\lambda)}>0,
	\]
	and
	\[
	\psi_j= \Kc_{2^{-j\grado}\Lc}(\phi_j )
	\]
	for every $j\in\N$.

\begin{deff}
	Take $p,q\in (0,\infty]$, with $p<\infty$, a  \BanachSpace\ $\Banach$, $\alpha\in \R$, and $(\psi_{j})\in \Psi_{2^{-\grado}}$. We define  $F^{p,q}_\alpha(G;\Banach)$ as the space of $f\in \Sr'(G;\Banach)$ such that
	\[
	\norm*{ \norm{2^{j\grado\alpha/d} (f*\psi_{j})(x)}_{\ell^q_j(\N;\Banach)}}_{L^p_x(G)}<\infty,
	\]
	endowed with the corresponding quasi-norm.
    If $q>1$, then we define $F^{\infty,q}_\alpha(G;\Banach)$ as the elements of $\Sr'(G;\Banach)$ which are in the dual of $F^{1,q'}_{-\alpha}(G;Z')$, canonically embedded in $\Sr'(G;Z')$ by means of the transpose of the dense inclusion $\Sr(G;\Banach)\subseteq F^{1,q'}_{-\alpha}(G;\Banach)$. This is consistent with~\cite[Definition 3.11]{Calzi4} thanks to~\cite[Theorem 5.2]{Calzi4}.
\end{deff}

\begin{oss}
    It was proved in~\cite[Corollary 3.10]{Calzi4} that we can replace $2^{-\grado}$ by any $\eps\in(0,1)$, yielding an equivalent definition of Triebel--Lizorkin spaces. Moreover, the resulting definition is independent of the choice of the operator involved.
\end{oss}

\section{Local Hardy Spaces: definition and properties}\label{Section:HardySpaces}
In this section, we introduce local Hardy spaces on groups of polynomial growth via a maximal function. We then show that these spaces admit several equivalent characterizations in terms of different maximal functions.

	\begin{deff}
		Take $\lambda,a\Meg 0$ and $t>0$. Define $B_{\lambda,a,t}$ as the set of $\phi\in \Sr(G;\Banach)$ such that
		\[
		\abs{(X^R \phi)(x)}\meg\abs{X} t^{-Q_*-\deg(X)}(1+\abs{x}_*/t)^{-a}
		\]
		for every $X\in U_\lambda$  and for every $x\in G$.   Then, we  define
		\[
		\Mc^{(\lambda,a)} f\coloneqq \sup_{t\in (0,1]} \sup_{\phi\in B_{\lambda,a,t}}\abs{f*\phi}
		\]
		for every $f\in \Sr'(G;\Banach)$.
	
		In addition, for every $\lambda\Meg 0$ and $t>0$ we define $\widetilde B_{\lambda,t}$ as the set of $\phi\in \Sr(G;\Banach)$ supported in $\overline B(e,t)$ and such that
		\[
		\norm{X^R \phi}_{L^\infty(G)}\meg \abs{X}t^{-Q_*-\deg(X)}
		\]
		for every $X\in U_\lambda$. Then, for every $\kappa>0$ we define
		\[
		\widetilde\Mc^{(\lambda,\kappa)} f\coloneqq \sup_{t\in (0,\kappa]} \sup_{\phi\in B'_{\lambda,t}}\abs{f*\phi}
		\]
		for every $f\in \Sr'(G;\Banach)$.
	\end{deff}
	
	Observe that $\Mc^{(\lambda,a)}f$ and $\widetilde\Mc^{(\lambda,\kappa)} f$ are lower semi-continuous, since they are pointwise least upper bounds of the continuous functions
    \[
    \abs{f*\phi}, \text{ for }\phi\in \bigcup_{t\in (0,1]} B_{\lambda,a,t}\text{ and }\phi \in \bigcup_{t\in (0,\kappa]} \widetilde B_{\lambda,t},
    \]
    respectively.
	In addition, it is clear that, if $\phi\in \widetilde B_{\lambda,t}$, then $(1+\max(1,t))^{-a}\phi\in B_{\lambda,a,\min(1,t)}$, so that
	\[
	\widetilde \Mc^{(\lambda,\kappa)} f \meg (1+\max(1,\kappa))^a  \Mc^{(\lambda,a)}f
	\]
	for every $f\in \Sr'(G;\Banach)$ and every $a>0$.
	
	The next results show that, as long as one is only interested in the $L^p$ norm of the maximal functions, the parameter $\kappa$ in $\widetilde \Mc^{(\lambda, \kappa)}$ is irrelevant, and the maximal function $\Mc^{(\lambda, a)}$ is equivalent to a simpler one if $\lambda $ and $a$ are sufficiently large.
	We shall therefore only consider  $\widetilde \Mc^{(\lambda, 1)}$ in several results; it will nonetheless be convenient to have $\widetilde \Mc^{(\lambda, \kappa)}$ at our disposal in some situations.
	
	\begin{lem}\label{lem:85}
		Take $p\in (0,\infty]$, $\lambda \Meg 0$, and $\kappa, \kappa'>0$. There is a constant $C>0$ such that
		\[
		\norm{\widetilde \Mc^{(\lambda, \kappa)}f}_{L^p(G)}\meg C\norm{\widetilde \Mc^{(\lambda, \kappa')}f}_{L^p(G)}
		\]
		for every \BanachSpace\ $\Banach$ and for every $f\in \Sr'(G;\Banach)$.
	\end{lem}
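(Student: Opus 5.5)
The plan is to reduce to the case $\kappa>\kappa'$; if $\kappa\meg\kappa'$, the inequality is trivial with $C=1$, since the supremum defining $\widetilde\Mc^{(\lambda,\kappa)}f$ runs over a subset of the one defining $\widetilde\Mc^{(\lambda,\kappa')}f$. (Here $B'_{\lambda,t}$ in the definition is read as $\widetilde B_{\lambda,t}$.) So fix $t\in(\kappa',\kappa]$ and $\phi\in\widetilde B_{\lambda,t}$. I will decompose $\phi$ into finitely many pieces, each a left translate of a bounded multiple of an element of $\widetilde B_{\lambda,\kappa'}$. The number of pieces and the multiples must be bounded uniformly in $t$ and $\phi$.

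For the construction, I fix $\eta\in C^\infty_c(G)$ supported in $B(e,\kappa'/2)$. By compactness of $\overline B(e,\kappa)$, I pick points $y_1,\dots,y_N\in\overline B(e,\kappa)$ such that the left translates $\eta_k\coloneqq\eta(y_k^{-1}\,\cdot\,)$ satisfy $\sum_k\eta_k\equiv 1$ on $\overline B(e,\kappa)$. This partition of unity is built from a finite cover by balls $B(y_k,\kappa'/4)$ in the standard way, and both $N$ and the derivative bounds depend only on $\kappa,\kappa'$. Then $\phi=\sum_k\eta_k\phi$, and $f*(\eta_k\phi)(x)=f*\phi_k(xy_k)$ with $\phi_k\coloneqq(\eta_k\phi)(y_k\,\cdot\,)$. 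Here I use that $\phi\mapsto\phi(y\,\cdot\,)$ corresponds to a right translation of the convolution, which follows from~\eqref{eq:1b} and the unimodularity of $G$ (recall $\beta$ is also a right Haar measure).

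Next I check that $\phi_k$ is supported in $\overline B(e,\kappa'/2)$, by left-invariance of $d$. I also need $\norm{X^R\phi_k}_\infty\meg C\abs{X}\kappa'^{-Q_*-\deg X}$ for $X\in U_\lambda$, i.e.\ $\phi_k/C\in\widetilde B_{\lambda,\kappa'}$. This is where the right-invariant operators need care. Left translation commutes with left-invariant operators, not with right-invariant ones, so I rewrite $X^R$ in terms of left-invariant fields via $\ad$: we have $X^R\psi(y\,\cdot\,)=(\mathrm{Ad}(y)X)^R\psi$ evaluated at $y\,\cdot\,$, with $y$ ranging in the compact set $\overline B(e,\kappa)$. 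The operator $\mathrm{Ad}(y)X$ belongs to $U(\gf)$ and has norm bounded by $C\abs{X}$. However, it need not lie in $U_\lambda$; it lies in $U_{\lambda'}$ for finite-dimensional reasons, since $\mathrm{Ad}(y)$ preserves the filtration by the plain degree of $U(\gf)$.

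This is the main obstacle, because $\phi\in\widetilde B_{\lambda,t}$ only controls $X^R\phi$ for $X\in U_\lambda$. To get around it, I avoid translating $\phi$ at all: I note that $f*(\eta_k\phi)(x)$ can be handled directly, since I only need a pointwise bound of $\abs{f*(\eta_k\phi)}(x)$ by $\widetilde\Mc^{(\lambda,\kappa')}f$ at some point $z$ with $d(z,x)\meg\kappa$. So I instead write $\eta_k\phi=(\eta_k\phi)$ viewed around its center. By~\eqref{eq:1b}, $f*\psi(x)=f*(\psi(y\,\cdot\,))(xy)$, and with $\psi(y\,\cdot\,)$ the Leibniz rule gives
\[
X^R[\psi(y\,\cdot\,)]=(X^R\psi)(y\,\cdot\,)\ \text{ if and only if the translation is on the right}.
\]
I therefore use right translates instead: $\phi_k\coloneqq(\eta_k\phi)(\,\cdot\,y_k)$, which commutes with $X^R$. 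The supports are then right translates of balls, and I choose the partition of unity $\eta_k=\eta(\,\cdot\,y_k^{-1})$ accordingly, using continuity of $(x,y)\mapsto xy$ on compacts to keep the supports in $B(e,\kappa')$. The identity $f*\psi(\,\cdot\,y)(x)=f*\psi(xy^{-1})$ then holds with $y$ in a compact set. The Leibniz rule for $X^R(\eta_k\phi)$, where $X^R$ is a sum of products of right-invariant fields of total degree $\meg\deg X$, combined with $t\Meg\kappa'$, gives the bound with $C=C(\kappa,\kappa',\lambda)$.

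Finally, this yields $\widetilde\Mc^{(\lambda,\kappa)}f(x)\meg C\sum_{k=1}^N\widetilde\Mc^{(\lambda,\kappa')}f(xy_k^{-1})$. Taking $L^p$ quasi-norms, using right-invariance of $\beta$ and the $p$-triangle inequality (or the triangle inequality when $p\Meg1$) over the $N$ terms, gives the claim.
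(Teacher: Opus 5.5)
Your final argument is essentially the paper's proof: a partition of unity $\eta_k=\eta(\,\cdot\,y_k^{-1})$ by right translates covering $\overline B(e,\kappa)$, pulled back to $\phi_k=(\eta_k\phi)(\,\cdot\,y_k)$ so that $X^R$ commutes with the translation, and concluded via $(f*(\eta_k\phi))(x)=(f*\phi_k)(xy_k^{-1})$ and the right-invariance of $\beta$ (the paper builds the same thing from a maximal family with disjoint $B(e,\kappa'/4)x_j$ and $\tau_j=\tau(\,\cdot\,x_j^{-1})/\widetilde\tau$). You should simply delete the discarded left-translation detour: the identity $(f*(\eta_k\phi))(x)=(f*\phi_k)(xy_k)$ with $\phi_k=(\eta_k\phi)(y_k\,\cdot\,)$ is false outright, since $f*\psi(y\,\cdot\,)=f(\,\cdot\,y)*\psi$ is a convolution of a right translate of $f$, not a translate of $f*\psi$, so that route fails for a more basic reason than the $\mathrm{Ad}(y)$ issue you identified.
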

	Observe that, if $\kappa\meg \kappa'$, then it is clear that $\widetilde \Mc^{(\lambda, \kappa)}f\meg \widetilde \Mc^{(\lambda, \kappa')}f$ for every $f$ and $\lambda$.
	
	\begin{proof}
		We only need to prove the assertion for $\kappa> \kappa'$. Take $\tau\in C^\infty_c(G)$ so that $\chi_{B(e,\kappa'/2)}\meg \tau \meg \chi_{B(e,\kappa')}$, and a maximal family $(x_j)_{j\in J}$ of elements of $G$ such that $B(e,\kappa'/4)x_j\cap B(e,\kappa'/4) x_{j'}=\emptyset$ for every $j,j'\in J$ with $j\neq j'$. Then,   there is $N\in\N$ such that
\[
1\meg \sum_{j\in J} \chi_{B(e,\kappa')x_j}\meg N
\]
on $G$ (see~\cite[Lemma 4.3]{Calzi2}, applied to the opposite group of $G$). Let $J'$ be the (finite) set of $j\in J$ such that $B(e,\kappa)\cap B(e,\kappa')x_j\neq \emptyset$. In addition, define $\widetilde \tau \coloneqq \sum_{j\in J} \tau(\,\cdot\, x_j^{-1})$, so that $\widetilde \tau \in C^\infty(G)$ and $1\meg \widetilde \tau \meg N$, and define $\tau_j\coloneqq \tau(\,\cdot\,x_j^{-1})/\widetilde \tau$ for every $j\in J$, so that
\[
\chi_{B(e,\kappa)} \meg \sum_{j\in J'} \tau_j \meg 1.
\]
		
		Now, take $t\in (0,\kappa]$ and $\phi\in \widetilde B_{\lambda,t}$. If $t\meg \kappa'$, then clearly
		\[
		\abs{f*\phi}\meg \widetilde \Mc^{(\lambda, \kappa')} f
		\]
		for every \BanachSpace\ $\Banach$ and for every $f\in \Sr'(G;\Banach)$. If, otherwise, $t>\kappa'$, note that 
		\[
		\phi=\sum_{j\in J'} \tau_j \phi
		\]
		and
		\[
		(f* \phi)(x)= \sum_{j\in J'} \langle f, \phi(\,\cdot\,^{-1}x) \tau_j(\,\cdot\,^{-1}x)\rangle =\sum_{j\in J'} \bigg(f*\frac{\phi(\,\cdot\,x_j)\tau}{\widetilde \tau(\,\cdot\, x_j)}\bigg)(x x_j^{-1})
		\]
		for every \BanachSpace\ $\Banach$, $x\in G$, and $f\in \Sr'(G;\Banach)$. Now, observe that $\phi(\,\cdot\,x_j)\tau/\widetilde \tau(\,\cdot\, x_j)$ is of class $C^\infty$ on $G$ and supported in $\overline B(e,\kappa')$ by construction. In addition, it is clear that there is a constant $C_1>0$ such that 
		\[
		\norm*{X \bigg(\frac{\phi(\,\cdot\,x_j)\tau}{\widetilde \tau(\,\cdot\, x_j)}\bigg)}_{L^\infty(G)}\meg C_1 \abs{X} \kappa'^{-Q_*-\deg(X)}
		\]
		for every $X\in U_\lambda$. Consequently,
		\[
		\abs{(f*\phi)(x)}\meg C_1 \sum_{j\in J'} (\widetilde \Mc^{(\lambda, \kappa')} f)(x x_j^{-1})
		\]
		for every \BanachSpace\ $\Banach$, $x\in G$, and $f\in \Sr'(G;\Banach)$. The assertion follows by the translation-invariance of $\beta$.		
	\end{proof}

    \begin{deff}
	For every $p\in (0,+\infty]$, for every $r>0$, and for every $\beta$-measurable function $f$ from $G$ into $\Banach$, we define
	\[
	(\Nc_{p,a,r} f)(x)\coloneqq r^{-Q_*/p}\norm{ f(y) (1+d(x,y)/r)^{-a}  }_{L^p_y(G;\Banach)}
	\]
	for every $x\in G$.
\end{deff}
	
	\begin{lem}\label{lem:71}
		Take $p\in (0,\infty]$, $a>D/p$, $k\in \N$  with $k\grado>a$, and $\phi\in C^\infty_c(\R)$ so that $\chi_{[-1,1]}\meg \phi\meg \chi_{[-2,2]}$. Set $\psi_j\coloneqq \Kc_{2^{-j\grado} \Lc}(\phi)$ for every $j\in\N$. There is a constant $C>0$ such that
		\[
		2^{-a}\widetilde\Mc^{(k\grado,1)}f\meg \Mc^{(k\grado,a)}f\meg C \sup_{j\in \N} \Nc_{\infty,a,2^{-j}}(f*\psi_j)
		\]
		and
		\[
		\Bigl\lVert\sup_{j\in \N} \Nc_{\infty,a,2^{-j}}(f*\psi_j)\Bigr\rVert_{L^p(G)}\meg C\Bigl\lVert\sup_{j\in \N} \abs{f*\psi_j} \Bigr\rVert_{L^p(G)}
		\]
		for every \BanachSpace\ $\Banach$, $x\in G$, and $f\in \Sr'(G;\Banach)$. If, in addition, $\Lc=\sum_{j\in J} (-1)^{\dd/d_j} X_j^{2\dd/d_j}$ for some minimal basis $(X_j)_{j\in J}$, with $d_j\coloneqq \deg(X_j)$ (see~Proposition~\ref{prop:4}), then one may replace $\grado$ with $\dd$ in the above formulae.
	\end{lem}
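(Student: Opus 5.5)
The first inequality follows from the remark preceding Lemma~\ref{lem:85}: if $\phi\in\widetilde B_{k\grado,t}$ with $t\in(0,1]$, then $\phi$ is supported in $\overline B(e,t)$, where $(1+\abs{x}_*/t)^{-a}\Meg 2^{-a}$. Hence $2^{-a}\phi\in B_{k\grado,a,t}$, and taking suprema gives $2^{-a}\widetilde\Mc^{(k\grado,1)}f\meg\Mc^{(k\grado,a)}f$.

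For the second inequality I fix $t\in(0,1]$ and $\phi\in B_{k\grado,a,t}$, and choose $j\in\N$ with $2^{-j-1}<t\meg 2^{-j}$. The key is a reproducing identity. With $\phi$ as in the statement, $1-\phi$ vanishes on $[-1,1]$, so we may write $1-\phi(\lambda)=\lambda^k m(\lambda)$ with $m$ smooth, bounded with all derivatives, and supported away from $0$. This gives
\[
\delta_e=\psi_j+(2^{-j\grado}\Lc)^k\,\Kc_{2^{-j\grado}\Lc}(m).
\]
Since $m$ is not compactly supported, I would instead use a smooth partition $1=\phi(\lambda)+\sum_{\ell\Meg 1}(\phi(2^{-\ell\grado}\lambda)-\phi(2^{-(\ell-1)\grado}\lambda))$. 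Then
\[
f*\phi=f*\psi_j*\phi+\sum_{\ell> j}f*(\psi_\ell-\psi_{\ell-1})*\phi .
\]
Because $\psi_\ell*\psi_{\ell+1}=\psi_\ell$ when $\phi(\lambda)\phi(\lambda/2^{\grado})=\phi(\lambda)$ (true, since $\phi=1$ on $[-1,1]$ and $\supp\phi\subseteq[-2,2]$), each term can be written as $(f*\psi_\ell)*\eta_\ell$. Here $\eta_\ell=\Kc_{2^{-\ell\grado}\Lc}(\phi(2^{-\grado}\cdot)-\phi)*\phi$ for $\ell>j$ (note $\psi_{\ell}-\psi_{\ell-1}=(\psi_\ell-\psi_{\ell-1})*\psi_{\ell+1}$), and $\eta_j=\psi_{j+1}*\phi$. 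The multiplier $\phi(2^{-\grado}\lambda)-\phi(\lambda)$ vanishes near $0$, so it equals $\lambda^k\theta(\lambda)$ with $\theta\in C^\infty_c$. Hence the corresponding kernel is $2^{-\ell k\grado}\Kc(\theta)*\Lc^k$ evaluated at scale $2^{-\ell}$, and $\Lc^k$ can be moved onto $\phi$ as $(\Lc^k)^R$-type derivatives. Since $\Lc^k\in U_{k\grado}$ and $\phi\in B_{k\grado,a,t}$, Proposition~\ref{cor:12}, together with a standard convolution estimate, should give
\[
\abs{\eta_\ell(x)}\lesssim 2^{-(\ell-j)k\grado}\,2^{\ell Q_*}(1+2^\ell\abs{x}_*)^{-a}.
\]
The estimate for $\eta_j$ is similar, with no gain needed.

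Then
\[
\abs{(f*\psi_\ell)*\eta_\ell(x)}\meg \Nc_{\infty,a,2^{-\ell}}(f*\psi_\ell)(x)\, 2^{-(\ell-j)k\grado}\,\bigl\|2^{\ell Q_*}(1+2^\ell\abs{\cdot}_*)^{-a}(1+2^{\ell}\abs{\cdot}_*)^{a}\bigr\|_{L^1}.
\]
The last factor is not finite as written, so the weights must be split. I would write $(1+d/2^{-\ell})^{a}\meg$ the same weight times the decay of $\eta_\ell$, using extra decay $a+D+1$ for $\eta_\ell$; this is available by Proposition~\ref{cor:12} with larger $c$, though the decay of $\phi$ is only $a$. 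Controlling this is the main obstacle. Since $\phi$ decays only like $(1+\abs{x}_*/t)^{-a}$, I would instead use the weighted sup directly: $\abs{g*\eta(x)}\meg \sup_y\abs{g(y)}(1+2^\ell d(x,y))^{-a}\cdot\int\abs{\eta(y^{-1}x)}(1+2^\ell d(x,y))^{a}\,\dd\beta(y)$. Combined with the submultiplicativity $(1+2^\ell\abs{xy}_*)^a\meg(1+2^\ell\abs{x}_*)^a(1+2^\ell\abs{y}_*)^a$, this requires $\eta_\ell$ to decay like $(1+2^\ell\abs{\cdot}_*)^{-a-D-\eps}$. That fails for the part inherited from $\phi$, so the decay must be recovered differently. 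The remedy is to absorb the loss via $k\grado>a$: this condition exactly lets the factor $2^{-(\ell-j)k\grado}$ beat the growth $2^{(\ell-j)a}$ coming from comparing scales $t$ and $2^{-\ell}$, which is why the hypothesis appears. Summing the geometric series gives $\abs{f*\phi}\lesssim\sup_\ell\Nc_{\infty,a,2^{-\ell}}(f*\psi_\ell)$.

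The third inequality is the Peetre-type maximal estimate. Since $f*\psi_j=f*\psi_j*\psi_{j+1}$, a Plancherel–Polya/mean-value argument via Proposition~\ref{cor:12} gives, for $r<p$ and any $r>0$, the bound $\Nc_{\infty,a,2^{-j}}(f*\psi_j)\lesssim(\Mc_{HL}\abs{f*\psi_j}^r)^{1/r}$, using $a>D/r$. Then $\sup_j$ commutes into the Hardy–Littlewood maximal function of $\sup_j\abs{f*\psi_j}^r$, which is bounded on $L^{p/r}$ because $\beta$ is doubling. For the final assertion, with $\Lc=\sum(-1)^{\dd/d_j}X_j^{2\dd/d_j}$, one uses instead that $\Lc$ can be factored with order $\dd$ pieces $X_j^{\dd/d_j}$, so the derivatives extracted are $X_j^{\dd/d_j}\in U_{\dd}$ and $k\dd>a$ suffices.
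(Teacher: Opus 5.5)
Your argument for $2^{-a}\widetilde\Mc^{(k\grado,1)}f\meg\Mc^{(k\grado,a)}f$ is the paper's. So is the Peetre/Hardy--Littlewood argument for the second assertion, and your sketch of the $\dd$-variant is in line with the paper's citation. The genuine gap is the step you flag in the middle inequality, and your proposed remedy does not close it.

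To bound $(f*\psi_\ell)*\eta_\ell$ by $\Nc_{\infty,a,2^{-\ell}}(f*\psi_\ell)$, you need $\int_G\abs{\eta_\ell}(1+2^\ell\abs{\,\cdot\,}_*)^a\,\dd\beta$ to be bounded uniformly in $\phi\in B_{k\grado,a,t}$. Moving $\Lc^k$ onto $\phi$ gains the factor $(2^\ell t)^{-k\grado}$. It does not improve the spatial decay: $\eta_\ell$ only inherits the bound $t^{-Q_*}(1+\abs{x}_*/t)^{-a}$ from $\phi$, and your displayed bound at scale $2^{-\ell}$ is not justified. So the weighted integrand does not decay at infinity. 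The hypothesis $k\grado>a$ only makes the series in $\ell$ converge once each term is finite; it says nothing about the spatial integral at a fixed scale.

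This step cannot be repaired, because with the same exponent $a$ in $B_{k\grado,a,t}$ and in $\Nc_{\infty,a,\cdot}$ the pointwise inequality is false. Take $G=\R^n$ and $\Lc=-\Delta$, so that $\grado=2$ and $Q_*=Q_G=D=n$, and let $f(x)=(1+\abs{x}^2)^{a/2}$.
\begin{itemize}
\item Since $f(y-z)\meg(1+\abs{y})^a(1+\abs{z})^a$ and $\int(1+\abs{z})^a\abs{\psi_j(z)}\,\dd z\meg A$ for every $j$, one gets $\sup_j\Nc_{\infty,a,2^{-j}}(f*\psi_j)(x)\meg A(1+\abs{x})^a<\infty$.
\item Fix an even bump $\eta\Meg 0$ supported in $B(0,1/4)$ and a small $\eps>0$ independent of $R$. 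Then $\phi_R\coloneqq\eps\sum_{m\in\Z^n,\abs{m}\meg R}(1+\abs{m})^{-a}\eta(\,\cdot\,-m)$ belongs to $B_{2k,a,1}$, while $(f*\phi_R)(0)\gtrsim R^n$. Hence $(\Mc^{(2k,a)}f)(0)=+\infty$.
\end{itemize}
The same $\phi_R$ shows that the uniform bounds $\norm{(\psi'_j*\phi)(1+2^j\abs{\,\cdot\,}_*)^a}_{L^1(G)}\meg C_1(2^jt)^{a-k\grado}$ asserted in the paper's proof (via \cite{Calzi4}) also fail. The paper's argument therefore has the same hole, and you were right to be suspicious.

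What your decomposition does prove is the version with more decay on the test functions. Suppose $a'>a+D$ and $k\grado>a$. Then $\abs{\eta_\ell(x)}\lesssim(2^\ell t)^{-k\grado}t^{-Q_*}(1+\abs{x}_*/t)^{-a'}$. Since $2^\ell t\Meg 1$, one has $(1+2^\ell\abs{x}_*)^a\meg(2^\ell t)^a(1+\abs{x}_*/t)^a$. Lemma~\ref{lem:37} then gives $\int_G\abs{\eta_\ell}(1+2^\ell\abs{\,\cdot\,}_*)^a\,\dd\beta\lesssim(2^\ell t)^{a-k\grado}$, which is summable in $\ell$. This yields $\Mc^{(k\grado,a')}f\meg C\sup_j\Nc_{\infty,a,2^{-j}}(f*\psi_j)$. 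Where the lemma is invoked later (Proposition~\ref{prop:31}, Corollaries~\ref{cor:26} and~\ref{cor:28}), the decay exponent of $\Mc$ has to be enlarged accordingly.
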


	\begin{proof}
		The first inequality in the first assertion of the statement is clear, so that we only have to show the second one. Take $t\in (0,1]$ and $\phi\in B_{k\grado,a,t}$. Take $j_0\in \N$ so that $t\in (2^{-(j_0+1)},2^{-j_0}]$ and observe that, setting $\psi_j'\coloneqq \psi_{j+1}-\psi_{j}$ for every $j\Meg 1$, one has 
		\[
		\begin{split}
			f*\phi&=f*\psi_{j_0}*\phi+\sum_{j>j_0} f*\psi'_j*\phi\\
				&=(f*\psi_{j_0+1})*(\psi_{j_0}*\phi)+\sum_{j>j_0} (f*\psi_{j+2})*(\psi'_j*\phi)
		\end{split}
		\] 
		in $\Sr'(G;\Banach)$ and in $C^\infty(G;\Banach)$, thanks to~\cite[Lemma 3.7]{Calzi4}.
		Next, observe that, using~\cite[Lemma 2.35]{Calzi4} (with $\lambda=\mi=0$), \cite[Lemma 2.34]{Calzi4} and~\cite[Corollary 2.36]{Calzi4} (with $\lambda=k\grado-a$)  we see that there is a constant $C_1>0$ such that
		\[
		\norm{(\psi_{j_0}*\phi) (1+2^{j_0}\abs{\,\cdot\,}_*)^a}_{L^1(G)}\meg C_1\meg C_1 (2^{j_0}t)^{a-k\grado}
		\]
		and such that 
		\[
		\norm{(\psi'_{j}*\phi) (1+2^{j}\abs{\,\cdot\,}_*)^a}_{L^1(G)}\meg C_1(2^{j}t)^{a-k\grado} 
		\]
		for every $j>j_0$.		
		Consequently,
		\[
		\abs{f*\phi}\meg C_1\sum_{j\Meg  j_0}(2^{j}t)^{a-k\grado} \Nc_{\infty,a,2^{-j}}(f*\psi_j)\meg \frac{C_1 2^{a-k\grado}}{1-2^{a-k\grado}} \sup_{j\in \N} \Nc_{\infty,a,2^{-j}}(f*\psi_j).
		\]
		This proves the first assertion by the arbitrariness of $\phi$ and $t$. The possibility of replacing $\grado$ with $\dd$ when $\Lc$ is as in the last part of the statement follows using the second assertion of~\cite[Lemma 2.34]{Calzi4}.
		
		We turn now to the proof of the second assertion. Take $p_0\in (0,p)$ so that $a>D/p_0$. By~\cite[Lemma 2.31]{Calzi4}, there is a constant $C_2>0$ such that
		\[
		\Nc_{\infty,a,2^{-j}}(f*\psi_j)\meg C_2 \Nc_{p_0,a,2^{-j}}(f*\psi_j)
		\]
		for every $j\in\N$. The assertion then follows from~\cite[Lemma 2.29]{Calzi4} and~\cite{Singular}.
	\end{proof}
	 
Now we can present the local Hardy spaces, whose definition is well posed thanks to Lemma~\ref{lem:85}.

	\begin{deff}
		Define $\dd_M\coloneqq \max_{X\in \gf} \deg(X)$ and $\dd_0\coloneqq \min_{X\in \gf\setminus \Set{0}} \deg(X)$ and take $p\in (0,\infty]$, $\lambda\Meg 0$, and $\kappa>0$. We define \[
        h^p_\lambda(G;\Banach):=\Bigl\{f\in \Sr'(G;\Banach)\colon\, \widetilde\Mc^{(\lambda,\kappa)} f\in L^p(G)\Bigr\},
        \]
        endowed with the corresponding quasi-norm.
	\end{deff}
One can observe that the local nature of these Hardy spaces arises from the definition of the maximal function $\widetilde\Mc^{(\lambda,\kappa)}$, although it is independent of $\kappa$. Indeed, the supremum in its definition is taken over $t\in(0,\kappa]$, and thus, regardless of how large $\kappa$ is, $t$ is always restricted to vary on a bounded range.
	We shall see later that $h^p_\lambda(G;\Banach)$ does not depend on the parameter $\lambda$, provided that 
\[
\lambda> \dd_M([Q_*(1/p-1)/\dd_0]+1),\dd([D/(p \dd)]+1)
\]
and that $G$ satisfies some additional assumptions (or $p>Q_*/(Q_*+\dd_0)$).
	
	We now discuss an unfortunate issue of the generality of the above definition. One would like that $h^p_0(G;\Banach)=L^p(G;\Banach)\cdot \beta$ for $p\in (1,\infty)$ for every \BanachSpace\ $\Banach$. Unfortunately, this is not the case, in general. In fact, it may happen that a measure $\mi\in \Mc(G;\Banach)$ such that $\abs{\mi}\meg h\cdot \beta$ for some positive $h\in L^p(G)$ has no density with respect to $\beta$ (see~\cite[Example 1 of Chapter III, No.\ 1]{DiestelUhl} for the case in which $G$ is the torus and $h=1$). 
	We shall therefore suitably `enlarge' the spaces $L^p(G;\Banach)$ so as to hide this difficulty. This will be also necessary when dealing with atomic decomposition, since we cannot require that the atoms belong to $L^q(G;\Banach)$, in general. 
	Notice, however, that this issues does \emph{not} arise when $\Banach$ has the Radon--Nikod\'ym property, for example when $\Banach$ is reflexive (see~\cite[Corollary 4 to Theorem 2 of Chapter III, No.\ 3]{DiestelUhl}).

To address this issue, we introduce suitably defined $L^p$ spaces, which turn out to be the natural ones to identify with our local Hardy spaces.
	
	\begin{deff}
		Take $p\in [1,\infty]$ and a \BanachSpace\ $\Banach$. We define
        \[
        \Lp^p(G;\Banach):=\Bigl\{f\in \Mc(G;\Banach)=\Lf(C_c(G);\Banach)\colon \abs{f}\in L^p(G)\cdot \beta\Bigr\}
        \]
        endowed with the norm
        \[
        f\mapsto \norm{\abs{f}}_{L^p(G)},
        \]
        where we identified $\abs{f}$ with an element of $L^p(G)$, as we shall always do in the sequel.
		We define the space $\Lp^p_\loc(G;\Banach)$ in a similar way.
	\end{deff}

	\begin{prop}\label{prop:42}
		Take a \BanachSpace\ $\Banach$,  $a>D$, and   $\lambda\Meg 0$. There is a constant $C>0$ such that the following hold:
		\begin{enumerate}
			\item[\textnormal{(1)}] if $f\in \Sr'(G;\Banach)$ and $ \widetilde \Mc^{(\lambda,\kappa)} f\in L^1_\loc(G)$ for some $\kappa>0$, then $f\in \Lp^1_\loc(G;\Banach)$ and $\abs{f}\meg C \widetilde \Mc^{(\lambda,\kappa)} f$;
			
			\item[\textnormal{(2)}] for every $\alpha>0$
			\[
			\beta\Big(\Set{x\in G\colon ( \widetilde\Mc^{(\lambda,1)} f)(x)>2^a\alpha}\Big)\meg \beta\Big(\Set{x\in G\colon ( \Mc^{(\lambda,a)} f)(x)>\alpha}\Big)\meg C\frac{\norm{ f}_{\Lp^1(G;\Banach)}}{\alpha},
			\]
			for every $f\in \Lp^1(G;\Banach)$,
			and
			\[
			2^{-a}\norm{\widetilde \Mc^{(\lambda,1)} f}_{L^p(G)}\meg \norm{ \Mc^{(\lambda,a)} f}_{L^p(G)}\meg C p' \norm{f}_{\Lp^p(G;\Banach)}
			\]
			for every $f\in \Lp^p(G;\Banach)$ and for every $p\in (1,\infty]$;

			\item[\textnormal{(3)}] the inclusion $h^1_\lambda(G;\Banach)\subseteq \Lp^1(G;\Banach)$ holds and the embedding is continuous, moreover for every $p\in (1,\infty]$ we have that  $h^p_\lambda(G;\Banach)= \Lp^p(G;\Banach)$ with the same topology. 
		\end{enumerate}
	\end{prop}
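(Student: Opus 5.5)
For (1), I will use an approximate identity built from functions in $\widetilde B_{\lambda,t}$. Fix $\eta\in C^\infty_c(G)$, positive, supported in $\overline B(e,1)$, with $\int\eta\,\dd\beta=1$. For $t\in(0,\min(1,\kappa)]$ I want $\eta_t$ supported in $\overline B(e,t)$, positive, of integral $1$, with $c\,\eta_t\in\widetilde B_{\lambda,t}$ for a fixed $c>0$. Since no dilations are available, I would take $\eta_t\coloneqq \Kc$-free bumps of the form $\eta_t=\tau_t/\int\tau_t\,\dd\beta$. Here $\tau_t$ is a smooth cutoff of $B(e,t/2)$ inside $B(e,t)$ with $\norm{X^R\tau_t}_\infty\lesssim t^{-\deg X}$ for $X\in U_\lambda$. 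Such cutoffs exist by the local comparison with the homogeneous group $G_*$ (ter Elst--Robinson), and $\int\tau_t\asymp t^{Q_*}$ for $t\meg 1$. Then $\abs{f*\eta_t}\meg C\,\widetilde\Mc^{(\lambda,\kappa)}f$ pointwise. For any $\Banach'$-free argument, I pair with $\phi\in C_c(G)$: the family $f*\eta_t$ is bounded in $L^1_\loc(G;\Banach)$, dominated by $C\widetilde\Mc f\in L^1_\loc$. Moreover $f*\eta_t\to f$ in $\Sr'$ as $t\to 0$. Hence, for $\phi\in C^\infty_c$,
\[
\abs{\langle f,\phi\rangle}=\lim\abs{\langle f*\eta_t,\phi\rangle}\meg C\int\abs{\phi}\,\widetilde\Mc f\,\dd\beta .
\]
This bound extends $f$ to a continuous map $C_c(G)\to\Banach$, that is, a measure $\mi$ with $\abs{\mi}\meg C\,\widetilde\Mc f\cdot\beta$. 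This is exactly $f\in\Lp^1_\loc(G;\Banach)$ with $\abs f\meg C\widetilde\Mc f$. The point is that we never need a density in $\Banach$, which is why the enlarged spaces $\Lp$ were introduced.

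For (2), the first inequalities come from the remark $\widetilde\Mc^{(\lambda,1)}f\meg 2^a\Mc^{(\lambda,a)}f$. For the second, I bound $\Mc^{(\lambda,a)}f$ by the Hardy--Littlewood maximal function of $\abs f$. If $\phi\in B_{\lambda,a,t}$ then $\abs\phi\meg t^{-Q_*}(1+\abs{\cdot}_*/t)^{-a}$, so
\[
\abs{f*\phi}(x)\meg\int t^{-Q_*}(1+d(x,y)/t)^{-a}\,\dd\abs f(y).
\]
Splitting into dyadic annuli $d(x,y)\sim 2^kt$ gives $\sum_k 2^{-ka}t^{-Q_*}\abs f(B(x,2^{k}t))$. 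Using $\beta(B(x,2^kt))\meg c^2(1+2^k)^D\beta(B(x,t))$ and $\beta(B(x,t))\asymp t^{Q_*}$ for $t\meg1$, this is at most $C\sum_k 2^{-k(a-D)}M\abs f(x)$. The sum converges because $a>D$. Since $\beta$ is doubling, the Hardy--Littlewood maximal operator is of weak type $(1,1)$ with a constant of the form $C$, and is bounded on $L^p$ with norm $\lesssim p'$ (Marcinkiewicz interpolation with $L^\infty$). This yields both displayed estimates, applied to $\abs f\in L^p$.

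For (3), if $f\in h^1_\lambda$ then $\widetilde\Mc^{(\lambda,\kappa)}f\in L^1\subseteq L^1_\loc$. By (1), $f\in\Lp^1$ with $\norm f_{\Lp^1}\meg C\norm{\widetilde\Mc f}_{L^1}$, which gives continuity. For $p\in(1,\infty]$ I argue the same way. If $f\in h^p_\lambda$, then $\widetilde\Mc f\in L^p\subseteq L^1_\loc$, so by (1) $\abs f\meg C\widetilde\Mc f$ and $f\in\Lp^p$. Conversely, (2) gives $\norm{\widetilde\Mc^{(\lambda,1)}f}_{L^p}\lesssim\norm f_{\Lp^p}$, and Lemma~\ref{lem:85} transfers this to any $\kappa$. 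I also need $\Lp^p\subseteq\Sr'$, which holds by polynomial growth since $\abs f\in L^p$ integrates against Schwartz functions. The main obstacle I expect is step (1): the existence of the uniformly normalised bumps $\eta_t$ in $\widetilde B_{\lambda,t}$ without dilations, and the convergence $f*\eta_t\to f$ in $\Sr'$ for vector-valued $f$. For the latter I would dualise with $\phi*\check\eta_t\to\phi$ in $\Sr(G)$.
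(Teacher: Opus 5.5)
Your proposal is correct and follows essentially the same route as the paper. In (1) you bound $f$ using a normalised approximate identity lying, up to a constant, in $\widetilde B_{\lambda,t}$ and pass to the limit in $\Sr'$; in (2) you dominate $\Mc^{(\lambda,a)}f$ by the Hardy--Littlewood maximal function of $\abs{f}$ through dyadic annuli using $a>D$; and (3) combines the two. Your restriction to $t\in(0,\min(1,\kappa)]$, the explicit extension of $f$ to a continuous map on $C_c(G)$, and the appeals to Lemma~\ref{lem:85} and to $\Lp^p\subseteq\Sr'$ fill in details the paper leaves implicit.
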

	
Before we pass to the proof, we need some technical lemmas. We begin with a definition that will be useful later on, as we will often move to the setting of the algebra $\gf$ and then go back to the group $G$, where some arguments cannot be handled directly.

	\begin{deff}
		Let $\mathrm{inj}(G)$ be the least upper bound of the set of $r>0$ such that $ \exp_G$  induces a diffeomorphism of $B_\gf(0,r)$ onto an open neighbourhood of $e$ in $G$, so that $\exp_G$ induces a diffeomorphism of $B_\gf(0,\mathrm{inj}(G))$ (interpreted as $\gf$ when $\mathrm{inj}(G)=+\infty$) onto an open neighbourhood $\mathbf U$ of $e$ in $G$. 	
	\end{deff}
	
From now on we will consider an \emph{ordered} orthonormal basis $(X_j)_{j\in J}$ of $\gf$ which induces a homogeneous basis $(Y_j)_{j\in J}$ of $\gf_*$, so that $Y_j=X_j+\gf_{\deg(X_j)^-}$ for every $j\in J$.  We set $d_j\coloneq \deg(X_j)$ for every $j\in J$,
\[
    \vect X^\alpha=\prod_{j\in J} X_j^{\alpha_j}, \, \vect Y^\alpha=\prod_{j\in J} Y_j^{\alpha_j}\text{ and } d_\alpha\coloneqq \sum_j \alpha_j \deg(X_j)\text{ for every }\alpha\in \N^J.
\]
We can identify $G_*$ with $\gf_*$ by means of $\exp_{G_*}$, and $\gf$ with $\gf_*$ (as vector spaces) by means of the orthonormal bases $(X_j)$ and $(Y_j)$. In addition, we identify $B(0,\mathrm{inj}(G))$ with $\mathbf U$ by means of $\exp_G$.  
	\begin{lem}\label{lem:72}
		Take $m\in\N$ and a compact neighbourhood $K$ of $e$ in $\mathbf U$. There are families $(a_{\ell,\alpha,\gamma})_{\abs{\gamma}\meg \abs{\alpha}\meg m}$, $\ell=1,2,3,4$, of real analytic functions on $\mathbf U$ and a constant $C>0$ such that
		\[
		\abs{a_{\ell,\alpha,\gamma}(x)}\meg C \abs{x}_*^{(d_\gamma-d_\alpha)_+}
		\]
		for every $x\in K$, for every $\ell=1,2,3,4$, and for every $\abs{\gamma}\meg\abs{\alpha}\meg m$,
		\begin{equation}\label{eq:16}
		\begin{aligned}
			\vect X^\alpha&=\sum_{\abs{\gamma}\meg \abs{\alpha}} a_{1,\alpha,\gamma} \vect Y^\gamma, & \vect Y^\alpha&=\sum_{\abs{\gamma}\meg \abs{\alpha}} a_{2,\alpha,\gamma} \vect X^\gamma,
		\end{aligned}
		\end{equation}
		and
		\begin{equation}\label{eq:17}
		\begin{aligned}
			\vect X^\alpha&=\sum_{\abs{\gamma}\meg \abs{\alpha}} a_{3,\alpha,\gamma} \vect (\vect X^\gamma)^R, & \vect (\vect X^\alpha)^R&=\sum_{\abs{\gamma}\meg \abs{\alpha}} a_{4,\alpha,\gamma} \vect X^\gamma
		\end{aligned}	
		\end{equation}
		for every $\abs{\alpha}\meg m$. In addition, $a_{\ell,\alpha,0}\neq 0$ if and only if $\alpha=0$.
	\end{lem}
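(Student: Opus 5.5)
The plan is to reduce everything to first-order statements about the frames $(X_j)$, $(Y_j)$, $(X_j^R)$ written in exponential coordinates on $\mathbf U\cong B_\gf(0,\mathrm{inj}(G))$, and to measure the size of coefficients by their \emph{homogeneous order} with respect to the dilations $\delta_r$ of $\gf_*$, transported to $\gf$ by the identification of the bases. Call a real analytic function $a$ near $0$ of order $\Meg s$ if its Taylor series in the coordinates $x=\sum_j x_j X_j$ contains only monomials $x^\beta$ with $d_\beta\Meg s$. The link with the statement is the following claim. Near $e$, $\abs{\exp_G(x)}_*$ is comparable with the homogeneous norm $\sum_j \abs{x_j}^{1/d_j}$; this is the local ball--box comparison of~\cite{ElstRobinson}, cf.\ their Corollary 6.5. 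Consequently, a function of order $\Meg s$ is $O(\abs{x}_*^{s})$ on a small neighbourhood of $e$. On the rest of the compact set $K$ one has $\abs{x}_*$ bounded from below, and the coefficients are bounded because they are analytic on $\mathbf U$. Hence the required estimate on $K$ follows from an order bound.

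First-order step. The differential of $\exp_G$ gives, at the point $\exp_G(x)$,
\[
X_j=\sum_k \Bigl(\tfrac{\ad x}{1-e^{-\ad x}}X_j\Bigr)_k\partial_k,
\qquad
X_j^R=\sum_k\Bigl(\tfrac{\ad x}{e^{\ad x}-1}X_j\Bigr)_k\partial_k .
\]
Here the power series converge on $B_\gf(0,\mathrm{inj}(G))$ and $(\,\cdot\,)_k$ denotes the $X_k$-component.

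The $\partial_k$-component of $(\ad x)^nX_j$ is a sum of monomials $x_{i_1}\cdots x_{i_n}$ multiplying the $X_k$-component of $[X_{i_1},[\dots,X_j]]$. Since $[\gf_\lambda,\gf_\mi]\subseteq\gf_{\lambda+\mi}$ and the basis is orthonormal and adapted to the filtration, this bracket lies in $\gf_{d_{i_1}+\dots+d_{i_n}+d_j}$. Its $X_k$-component therefore vanishes unless $d_{i_1}+\dots+d_{i_n}\Meg d_k-d_j$. So the coefficient of $\partial_k$ in $X_j$ and in $X_j^R$ has order $\Meg d_k-d_j$, and equals $\delta_{jk}$ at $0$.

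On $G_*$ the same computation is exactly homogeneous. We get $Y_j=\partial_j+\sum_k c_{jk}\partial_k$ with $c_{jk}$ homogeneous polynomials of degree $d_k-d_j>0$. The inverse matrix has the same structure, since it is unipotent and triangular with respect to degree. Hence $\partial_k=\sum_i e_{ki}Y_i$ with $e_{ki}$ of order $\Meg d_i-d_k$.

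The matrices $(m_{jk})$ whose entries have order $\Meg d_k-d_j$ form an algebra, as one sees by conjugating with $\mathrm{diag}(r^{d_j})$. Composing therefore gives
\[
X_j=\sum_i b_{ji}Y_i,\qquad b_{ji}\ \text{of order}\Meg (d_i-d_j)_+ ,
\]
where the order is automatically $\Meg 0$, and similarly $X_j=\sum_i b'_{ji}X^R_i$.

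For the reverse expansions of $Y$ in terms of $X$, and of $X^R$ in terms of $X$, we invert these matrices. Both sides are global analytic frames on $\mathbf U$, so the inverse matrices are analytic on $\mathbf U$. Near $0$ the inverse is given by a Neumann series $I-N+N^2-\cdots$, and each term stays in the algebra above. Hence the entries again have order $\Meg(d_i-d_j)_+$.

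Higher order. We argue by induction on $\abs\alpha$, writing $\vect X^\alpha=X_{j}\vect X^{\alpha'}$. Assume $\vect X^{\alpha'}=\sum_{\abs\gamma\meg\abs{\alpha'}}a_{\alpha',\gamma}\vect Y^\gamma$ with $a_{\alpha',\gamma}$ of order $\Meg d_\gamma-d_{\alpha'}$. Then
\[
X_j\vect X^{\alpha'}=\sum_i b_{ji}\bigl((Y_ia_{\alpha',\gamma})\vect Y^\gamma+a_{\alpha',\gamma}Y_i\vect Y^\gamma\bigr).
\]
Two facts keep the order bookkeeping consistent. First, $Y_i$ lowers the order by exactly $d_i$, since it is $\delta_r$-homogeneous of degree $d_i$. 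Second, $Y_i\vect Y^\gamma$ must be re-expressed in the ordered PBW basis $\vect Y^{\gamma'}$. This reordering uses only commutators in $U(\gf_*)$, which is graded, so $d_{\gamma'}=d_i+d_\gamma$ and $\abs{\gamma'}\meg\abs\gamma+1$.

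With these facts every resulting coefficient has order $\Meg d_{\gamma'}-d_\alpha$ and $\Meg 0$ after taking positive parts, using $\max(s,0)$ and superadditivity. The same induction applies to the other three expansions in~\eqref{eq:16} and~\eqref{eq:17}. For~\eqref{eq:17} the reordering of right-invariant fields is governed by $\gf$ itself rather than $\gf_*$. There the commutators only satisfy the filtered inclusion, which still gives order at least the required value.

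Finally, for $\alpha\neq0$ every term produced contains at least one vector field, so no $\gamma=0$ term ever appears and $a_{\ell,\alpha,0}=0$. For $\alpha=0$ the coefficient is $1$.

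The main obstacle is the order bookkeeping under derivatives and reordering. Also delicate is the comparison between $\abs{\,\cdot\,}_*$ and the homogeneous norm of exponential coordinates, which is needed to convert order bounds into the pointwise bound $C\abs{x}_*^{(d_\gamma-d_\alpha)_+}$. For the latter I would rely on~\cite[Corollary 6.5]{ElstRobinson}.
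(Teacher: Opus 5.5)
Your proof is correct, but it takes a different route from the paper's. The paper never computes first-order coefficients. For $\ell=1$ it quotes the contraction theorem \cite[Corollary 3.7]{ElstRobinson}, $t^{d_\alpha}(t\,\cdot\,)^*\vect X^\alpha\to\vect Y^\alpha$, i.e.\ $t^{d_\alpha-d_\gamma}a_{1,\alpha,\gamma}(t\,\cdot\,)\to\delta_{\alpha,\gamma}$, and reads off the weighted order by splitting the analytic function $a_{1,\alpha,\gamma}$ into homogeneous components. For $\ell=2$ it inverts the full matrix $(a_{1,\alpha,\gamma})_{\abs{\alpha},\abs{\gamma}\meg m}$ by cofactors, using $\sum_{\alpha'\neq\alpha}(d_{\sigma(\alpha')}-d_{\alpha'})=d_\alpha-d_\gamma$. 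For $\ell=3,4$ it composes with the exact homogeneous polynomial relations between $\vect Y^\alpha$ and $(\vect Y^\gamma)^R$ on $G_*$.

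You instead take three steps. You derive the first-order weighted bounds directly from the $d\exp$ formula and $[\gf_\lambda,\gf_\mi]\subseteq\gf_{\lambda+\mi}$. You invert only the first-order frame matrices, inside the algebra of weighted-triangular matrices. You reach higher orders by Leibniz plus PBW reordering. Your argument replaces the citation of the contraction theorem by a direct computation. It also treats all four expansions by one mechanism and handles $(\vect X^\gamma)^R$ on $G$ directly. By contrast, the paper's brief treatment of $\ell=3,4$ implicitly also needs the $\ell=1,2$ bounds for right-invariant operators, i.e.\ for the opposite group. The paper's version is shorter given the citation, and it gets the inverse at all orders at once.

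Your claim that ``the same induction'' handles $\ell=2,3,4$ needs three small points made explicit.
\begin{enumerate}
\item $X_j$ and $X_j^R$ lower the weighted order by at most $d_j$. This is immediate from your first-order step, since $\partial_k$ lowers it by exactly $d_k$.
\item Reordering in $U(\gf)$ only produces $\vect X^{\gamma'}$ with $d_{\gamma'}\meg d_j+d_\gamma$ and $\abs{\gamma'}\meg\abs{\gamma}+1$. This follows, for instance, because the $\vect X^{\gamma'}$ with $d_{\gamma'}\meg\lambda$ form a basis of $U_\lambda$, by Proposition~\ref{prop:8}.
\item For $\ell=4$ you should peel off the last factor, since $(XY)^R=Y^RX^R$.
\end{enumerate}

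Convergence of the power series on all of $B_\gf(0,\mathrm{inj}(G))$ is not needed. It does hold here, because groups of polynomial growth are of type (R), but that requires an argument. Analyticity on $\mathbf U$ already follows from $(X_j)$, $(X_j^R)$ and $(\partial_k)$ being analytic frames.

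Finally, the comparison of $\abs{\,\cdot\,}_*$ with the homogeneous norm in exponential coordinates is \cite[Proposition 6.1]{ElstRobinson}, which is the form the paper uses.
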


	\begin{proof}
		We shall endow $\gf$, and then $\mathbf U$, with the dilations induced by $\gf_*$. 
		The existence (and uniqueness) of the families $(a_{\ell,\alpha,\gamma})$ so that~\eqref{eq:16} and~\eqref{eq:17} hold is clear. It is also clear that $a_{\ell,\alpha,0}\neq 0$ if and only if $\alpha=0$, since no $X_j$ and no $Y_j$ has constant terms. It will therefore suffice to prove that they satisfy the desired estimates on $K$. In fact, it will suffice to show that such estimates hold in some neighbourhood of $e$ in $G$.
		 
		Then,  observe that~\cite[Corollary 3.7]{ElstRobinson}  shows that
		\[
		\lim_{t\to 0^+} t^{d_\alpha} (t\,\cdot\,)^*\vect X^\alpha= \vect Y^\alpha 
		\] 
		on $B_\gf(0,\eta)$ for some $\eta\in (0,\mathrm{inj}(G)]$, that is,
		\begin{equation}\label{eq:1}
		\lim_{t\to 0^+}t^{d_\alpha-d_\gamma}a_{1,\alpha,\gamma} (t\,\cdot\,)= \delta_{\alpha,\gamma}
		\end{equation}
		for every $\alpha,\gamma$. Notice that $a_{1,\alpha,\gamma}$ is real analytic, so that it is equal to the sum of its homogeneous components in a neighbourhood of $0$  (where homogeneity is defined using the dilations of $\gf_*$). Therefore, \eqref{eq:1} proves that for every $\alpha$ there is a constant $c_\alpha>0$ such that
\[
\abs{a_{1,\alpha,\gamma}}\meg c_{\alpha} \abs{\,\cdot\,}_*^{(d_\gamma-d_\alpha)_+}
\]
on $B_\gf(0,\eta)$ for every $ \gamma$ (see~\cite[Proposition 6.1]{ElstRobinson}). This concludes the proof of the estimates  for the $a_{1,\alpha,\gamma}$.
		
		Notice that, clearly, $(a_{2,\alpha,\gamma}(x))$ (considered as a square matrix with $a_{2,\alpha,\gamma}=0$ when $\abs{\gamma}>\abs{\alpha}$) is the inverse of  $(a_{1,\alpha,\gamma}(x))$ for every $x\in \mathbf U$. Moreover, $x\mapsto\det(a_{1,\alpha,\gamma}(x))$ is a nowhere vanishing real analytic function on $\mathbf U$, and $a_{2,\gamma,\alpha}/\det(a_{1, \alpha,\gamma})$ is a linear combination of $\prod_{\alpha'\neq \alpha} a_{1,\alpha',\sigma(\alpha')}$, where
\[
\sigma\colon \Set{\alpha'\in \N^J\colon \abs{\alpha'}\meg m, \alpha'\neq \alpha}\to\Set{\gamma'\in \N^J\colon \abs{\gamma'}\meg m, \gamma'\neq \gamma}
\]
is a bijection. Observe that
		\[
		\abs*{\prod_{\alpha'\neq \alpha} a_{1,\alpha',\sigma(\alpha')}(x)}\meg \prod_{\alpha'\neq \alpha}c_{\alpha'}\abs{x}_*^{(\dd_{\sigma(\alpha')}-\dd_{\alpha'})_+}\meg \prod_{\alpha'\neq \alpha} c_{\alpha'} \abs{x}_*^{[\sum_{\alpha'\neq \alpha}(\dd_{\sigma(\alpha')}-\dd_{\alpha'})]_+}
		\]
		for every $x\in B_\gf(0,\eta)$ (assuming that $\eta$ is so small that $\abs{x}_*\meg 1$ for every $x\in B_\gf(0,\eta)$).
		Since $\sum_{\alpha'\neq \alpha}(\dd_{\sigma(\alpha')}-\dd_{\alpha'})= \dd_\alpha-\dd_{\gamma}$, the estimates for $a_{2,\gamma,\alpha}$ follow.
		
		 The last estimates follow from the previous ones and from the fact that, by homogeneity, there are two families of polynomials $(p_{\ell,\alpha,\gamma})_{\abs{\gamma}\meg \abs{\alpha}}$, $\ell=1,2$, with $p_{\ell,\alpha,\gamma}$ homogeneous of degree $d_\gamma-d_\alpha$ (hence $0$ if $d_\gamma<d_\alpha$), such that
		\[
		\vect Y^\alpha=\sum_\gamma p_{1,\alpha,\gamma} (\vect Y^\gamma)^R\qquad \text{and} \qquad (\vect Y^\alpha)^R =\sum_\gamma p_{2,\alpha,\gamma} \vect Y^\gamma
		\]
		on $\gf_*$.
	\end{proof} 
	
	\begin{lem}\label{lem:55}
		There is a family $(\phi_t)_{t\in (0,1]}$ of positive  elements of $C^\infty_c(G)$ such that the following hold:
		\begin{enumerate}
			\item[\textnormal{(1)}] there are $\kappa>0$ and $C>1$ such that
\[
\frac{t^{-Q_*}}{C}\chi_{B(e,\kappa t)}\meg \phi_t\meg C t^{-Q_*}\chi_{B(e,t)}
\]
 for every $t\in (0,1]$;
			
			\item[\textnormal{(2)}] $\int_G \phi_t\,\dd \beta=1$ for every $t\in (0,1]$;

			\item[\textnormal{(3)}] for every $\lambda>0$ there is a constant $C_\lambda>0$ such that
\[
\abs{X\phi_t}\meg C_\lambda \abs{X}t^{-Q_*-\deg(X)}
\]
 for every $t\in (0,1]$ and $X\in U_\lambda$.
		\end{enumerate}
	\end{lem}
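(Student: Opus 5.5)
The plan is to build $\phi_t$ by transporting a fixed bump from the homogeneous group $G_*$ to $G$ through the exponential chart $\mathbf U$, using the dilations of $\gf_*$. Fix $\eta\in(0,\mathrm{inj}(G))$ small, and a positive $\theta\in C^\infty_c(\gf)$ supported in a small Euclidean ball around $0$ with $\theta>0$ near $0$. For $t\in(0,1]$ set $\theta_t(x)\coloneqq\theta(\delta_{1/t}x)$ on $\gf$, identified with a function on $\mathbf U$ via $\exp_G$ and extended by $0$. Then define $\phi_t\coloneqq\theta_t/\int_G\theta_t\,\dd\beta$, which gives (2) and positivity at once.

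For (1), I will compare $\abs{\,\cdot\,}_*$ on $\mathbf U$ with a homogeneous quasi-norm $\abs{\,\cdot\,}_{\gf_*}$ on $\gf_*$. By~\cite[Corollary 6.5 and Proposition 6.1]{ElstRobinson}, near $e$ one has $c^{-1}\abs{x}_{\gf_*}\meg\abs{x}_*\meg c\abs{x}_{\gf_*}$. Hence $\supp\theta_t=\delta_t(\supp\theta)$ lies in $B(e,t)$ if $\supp\theta$ is small. Also $\theta_t\Meg c'$ on $\delta_t(V)\supseteq B(e,\kappa t)$, where $V$ is a neighbourhood of $0$ on which $\theta\Meg c'$. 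Next, $\int\theta_t\,\dd\beta\asymp t^{Q_*}$: in exponential coordinates $\beta$ has a smooth positive density $J$ with $J(0)>0$, and the change of variables $x=\delta_t y$ has Jacobian $t^{Q_*}$, so $\int\theta_t\,\dd\beta=t^{Q_*}\int\theta(y)J(\delta_ty)\,\dd y$, and the last integral is bounded above and below uniformly in $t\in(0,1]$. This gives the two-sided bound in (1).

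For (3), it is enough to treat $X=\vect X^\alpha$ with $d_\alpha\meg\lambda$, since these span $U_\lambda$ by the PBW theorem applied to the ordered basis, with norms comparable. I will also need that every $X\in U_\lambda$ expands with coefficients bounded by $C\abs{X}$; this is finite-dimensional linear algebra. Lemma~\ref{lem:72} (with $K$ a compact neighbourhood containing all supports) gives $\vect X^\alpha=\sum_{\abs\gamma\meg\abs\alpha}a_{1,\alpha,\gamma}\vect Y^\gamma$ with $\abs{a_{1,\alpha,\gamma}(x)}\meg C\abs{x}_*^{(d_\gamma-d_\alpha)_+}$. Here $\vect Y^\gamma$ acts as a left-invariant operator of $G_*$, homogeneous of degree $d_\gamma$, so $\vect Y^\gamma\theta_t=t^{-d_\gamma}(\vect Y^\gamma\theta)\circ\delta_{1/t}$, which is bounded by $Ct^{-d_\gamma}$. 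On $\supp\theta_t$ we have $\abs{x}_*\meg t\meg1$, so each term is bounded by $Ct^{(d_\gamma-d_\alpha)_+-d_\gamma}\meg Ct^{-d_\alpha}$. When $d_\gamma<d_\alpha$ the exponent is $-d_\gamma\Meg-d_\alpha$, which is still fine since $t\meg1$. Dividing by $\int\theta_t\asymp t^{Q_*}$ gives $\abs{\vect X^\alpha\phi_t}\meg Ct^{-Q_*-d_\alpha}$. Since $\deg(\vect X^\alpha)\meg d_\alpha$, and I would argue that a basis of $U_\lambda$ can be chosen among monomials $\vect X^\alpha$ with $d_\alpha=\deg$ via Proposition~\ref{prop:8}, this yields (3).

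The main obstacle is the last bookkeeping point: matching $\deg(X)$ with $d_\alpha$ for a general $X\in U_\lambda$, uniformly in $\abs X$. I will handle it by choosing, through the isomorphism $U_*\cong U(\gf_*)$, a basis of $U_\lambda$ of the form $\vect X^\alpha$, $d_\alpha\meg\lambda$, adapted to the filtration. Then for any $X\in U_\lambda$ only monomials with $d_\alpha\meg\deg X$ appear, and since $U_\lambda$ is finite-dimensional the coefficients are bounded by $C\abs X$.
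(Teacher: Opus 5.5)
Your proposal is correct and follows essentially the same route as the paper: you transport a fixed bump to $\mathbf U$ via $\exp_G$ and the dilations of $\gf_*$ and normalize it in $L^1(\beta)$. Properties (1)--(2) then come from the ter Elst--Robinson comparison of $\abs{\,\cdot\,}_*$ with a homogeneous norm and the Jacobian $t^{Q_*}$, and (3) comes from Lemma~\ref{lem:72} combined with the homogeneity of the $\vect Y^\gamma$. Your use of the filtration-adapted PBW basis $\{\vect X^\alpha\colon d_\alpha\meg\lambda\}$ of $U_\lambda$, which gives the sharp exponent $\deg(X)$ for a general $X\in U_\lambda$ with coefficients bounded by $C\abs{X}$, simply makes explicit what the paper leaves to ``the previous remarks''.
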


	\begin{proof}
		By~\cite[Proposition 6.1]{ElstRobinson}, we may take $\eta\in (0,\mathrm{inj}(G)]$ and a constant $C_1>1$ such that 
		\[
		\frac{1}{C_1}\abs{\exp_G(x)}_*\meg \max_{j\in J}\abs{\langle x,X_j\rangle}^{1/d_j} \meg C_1 \abs{\exp_G(x)}_*
		\]
		for every $x\in B_\gf(0,\eta)$. We denote with $\theta$ the bijection from $B_\gf(0,\mathrm{inj}(G))$ onto $ \mathbf U$ induced by $\exp_G$. For every $r >0$, let
        \[
        B_r\coloneqq \Bigl\{x\in \gf\colon  \max_{j\in J}\abs{\langle x,X_j\rangle}^{1/d_j}\meg r\Bigr\}
        \]
        and take $\eta'>0$ so that $B_{\eta'}\subseteq B_\gf(0,\eta)$.
		Take a positive non-zero $\phi\in C^\infty_c(B_{\eta'})$ such that $\phi=1$ on $B_{\eta'/2}$, and define $\phi_t\in C^\infty_c(G)$ so that 
\[
\phi_t(\exp_G(x))=a_t t^{-Q_*}\phi((ct)^{-1} \cdot x )
\]
 for every $x\in B_{ct\eta'}$ (where $\cdot$ refers to the dilations of $\gf_*$, identified with $\gf$ by means of the basis $(X_j)$ and the corresponding homogeneous basis of $\gf_*$), while $\phi_t=0$ on $G\setminus \exp_G (B_{ct \eta'})$, and where $a_t$ is chosen so that $\int_G \phi_t \,\dd \beta=1$ for every $t\in (0,1]$, while $c\in (0,1]$ is a constant to be chosen later. 
		Observe that $a_t$ depends continuously on $t$ and has a finite non-zero limit for $t\to 0^+$ since $\theta^*(\beta)$ has a nowhere vanishing real analytic density with respect to the Lebesgue measure on $\gf$, so that $0<\inf_t a_t\meg \sup_t a_t <+\infty$. 
		Define $b_t\coloneqq \sup_{\phi_t(x)\neq 0} \abs{x}_*$, and observe that  $b_t\meg C_1 \eta' t$ for every $t\in (0,1]$, provided that $c$ is sufficiently small (so that $B(e,b_1)\subseteq \exp_G(B_{\eta'})$).   Thus, (1) and (2) hold. Finally, (3) is a consequence of Lemma~\ref{lem:72} and the previous remarks. 
	\end{proof}
	
	We also state the following simple corollary which will be useful later on.
	
	\begin{cor}\label{cor:21}
		Take $\kappa_2>\kappa_1>0$. There is a family $(\tau_t)_{t>0}$ of elements of $C^\infty_c(G)$ such that the following hold:
		\begin{enumerate}
			\item[\textnormal{(1)}] $\chi_{B(e,\kappa_1 t)}\meg \tau_t\meg \chi_{B(e,\kappa_2t)}$ for every $t\in (0,1]$;
			
			\item[\textnormal{(2)}] for every $\lambda>0$ there is a constant $C_\lambda>0$ such that 
\[
\abs{X Y^R \tau_t}\meg C_\lambda\abs{X}\abs{Y} \min(1,t)^{-\deg(X)-\deg(Y)}
\]
for every $X,Y\in U_\lambda$ and $t>0$.
		\end{enumerate}
	\end{cor}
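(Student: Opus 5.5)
The plan is to build $\tau_t$ from the functions $\phi_s$ of Lemma~\ref{lem:55} by convolving an indicator with a normalized bump. This is the standard way to get smooth cutoffs with controlled derivatives. We treat $t\meg 1$ and $t>1$ separately. For $t>1$ the bounds only need to be uniform in $t$, because $\min(1,t)=1$.

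\emph{Case $t\in(0,1]$.} Set $\eps\coloneqq(\kappa_2-\kappa_1)/2$. Let $s\coloneqq \min(\eps,1)t\meg 1$, taking $\eps t$ instead if $\eps\meg1$. Define
\[
\tau_t\coloneqq \chi_{B(e,(\kappa_1+\eps)t)}*\phi_{s}.
\]
Since $\phi_s\Meg0$, $\int\phi_s\,\dd\beta=1$ and $\supp\phi_s\subseteq \overline B(e,s)$, we have $0\meg\tau_t\meg1$. For $x\in B(e,\kappa_1t)$ and $y\in\supp\phi_s$ we have $xy^{-1}\in B(e,(\kappa_1+\eps)t)$, so $\tau_t(x)=1$. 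Conversely $\tau_t(x)\neq0$ forces $\abs{x}_*<(\kappa_1+\eps)t+s\meg \kappa_2 t$, using left invariance of $d$. This gives (1).

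For (2), use left invariance: $X(f*g)=f*Xg$ and $Y^R(f*g)=(Y^Rf)*g$. Moving $Y^R$ onto the second factor is awkward for an indicator, so I move both operators onto $\phi_s$. I write
\[
\tau_t=\chi_{B}*(\phi_{s}*\phi_{s}')
\]
for a second bump $\phi'_s$ in place of $\phi_s$. Alternatively I use Lemma~\ref{lem:72} to express $Y^R$ locally on $\mathbf U$ through left-invariant fields, with coefficients bounded by $\abs{x}_*^{(d_\gamma-d_\alpha)_+}$. The cleanest route is to observe that $Y^R$ applied to $f*\phi_s$ equals $f*(Y^R\phi_s)$ when $Y^R$ acts on the right variable, since right-invariant operators act from the left: $Y^R(f*g)=(Y^Rf)*g$. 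I avoid this by considering $\tau_t=\phi_s*\chi_B*\phi_s$. Then
\[
XY^R\tau_t=(Y^R\phi_s)*\chi_B*(X\phi_s),
\]
and Young's inequality gives
\[
\abs{XY^R\tau_t}\meg \norm{Y^R\phi_s}_{L^1}\norm{X\phi_s}_{L^\infty}\beta(B)\lesssim \norm{Y^R\phi_s}_{L^1}\norm{X\phi_s}_{L^1}\,\cdots .
\]
More carefully, $\norm{\cdot}_{L^\infty}\meg\norm{Y^R\phi_s}_{L^1}\norm{\chi_B}_{L^\infty}\norm{X\phi_s}_{L^1}$. Support considerations, namely $\beta(B(e,s))\asymp s^{Q_*}$, together with Lemma~\ref{lem:55}(3) give $\norm{X\phi_s}_{L^1}\lesssim\abs{X}s^{-\deg X}$. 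The same bound for $Y^R\phi_s$ follows from Lemma~\ref{lem:72} (equation~\eqref{eq:17}) applied near $e$. The coefficients $a_{4,\alpha,\gamma}$ are bounded by $\abs{x}_*^{(d_\gamma-d_\alpha)_+}\meg s^{(d_\gamma-d_\alpha)_+}$ on the support, which converts $t^{-d_\gamma}$ into at most $t^{-d_\alpha}$. Finally, (1) is rechecked with the slightly enlarged support $2s$, adjusting $s$ to $\eps t/2$.

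\emph{Case $t>1$.} Use the same formula $\phi_{s}*\chi_{B(e,(\kappa_1+\eps)t)}*\phi_{s}$ with $s$ fixed, equal to $\min(\eps,1)/2$ independently of $t$. Then (1) holds as before, and the derivative bounds are uniform constants, since $\norm{\chi_B}_{L^\infty}=1$.

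The main obstacle is the right-invariant derivative estimate, that is, bounding $Y^R\phi_s$ with the homogeneous scaling $s^{-\deg Y}$. This is exactly what the coefficient bounds in Lemma~\ref{lem:72} are designed for. Also, the derivative bounds of Lemma~\ref{lem:55} are stated only for $X\in U_\lambda$ of bounded degree; this is fine since $\lambda$ is fixed.
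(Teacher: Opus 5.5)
Your final construction, $\tau_t=\phi_s*\chi_{B(e,(\kappa_1+\kappa_2)t/2)}*\phi_s$ with $\phi_s$ from Lemma~\ref{lem:55}, is the paper's construction. The paper only writes down the formula and leaves the checks implicit; your verification fills them in correctly, namely the identity $XY^R\tau_t=(Y^R\phi_s)*\chi_B*(X\phi_s)$, Young's inequality in the form $\norm{Y^R\phi_s}_{L^1}\norm{\chi_B}_{L^\infty}\norm{X\phi_s}_{L^1}$, and Lemma~\ref{lem:72} for the right-invariant derivatives of $\phi_s$. Your choice of $s$ proportional to $\eps t/2$, capped so that $s\meg 1$, accounts for the support growing by $2s$ under the two convolutions, and is slightly more careful than the paper's $s=\min((\kappa_2-\kappa_1)t/2,1)$.
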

	
	\begin{proof}
		It suffices to take $(\phi_t)$ as in Lemma~\ref{lem:55} and to set 
		\[
		\tau_t\coloneqq \phi_{\min((\kappa_2-\kappa_1)t/2 ,1)}*\chi_{B(e, (\kappa_1+\kappa_2) t/2)}*\phi_{\min((\kappa_2-\kappa_1)t/2 ,1)}
		\]
		for every $t>0$.
	\end{proof}

    We are now ready to prove Proposition~\ref{prop:42}.
	\begin{proof}[Proof of Proposition~\ref{prop:42}.]
		(1) Take $(\phi_t)_{t\in (0,1]}$ as in Lemma~\ref{lem:55}, so that there is a constant $C_1>0$ such that $C_1\phi_t \in \widetilde B_{\lambda,t}$ for every $t\in (0,1]$. Consequently, if $f\in \Sr'(G;\Banach)$ and $\widetilde \Mc^{(\lambda,\kappa)} f\in L^1_\loc(G;\Banach)$ for some $\kappa>0$, then
        \[
        \abs{f*\phi_t}\meg C_1^{-1} \widetilde \Mc^{(\lambda,\kappa)} f\text{ for every }t\in (0,1].
        \]
         Since $f*\phi_t$ converges to $f$ in $\Sr'(G;\Banach)$ and since $ \widetilde \Mc^{(\lambda,\kappa)} f\in L^1_\loc(G;\Banach)$, this proves that $f\in \Lp^1_\loc(G;\Banach)$ and that $\abs{f}\meg C_1^{-1} \widetilde \Mc^{(\lambda,\kappa)} f$.
		
		(2) Take $C_2>1$ such that 
\[
C_2^{-1} r^{Q_*}(1+r)^{Q_G-Q_*}\meg \beta(B(e,r))\meg C_2 r^{Q_*}(1+r)^{Q_G-Q_*}
\]
 for every $r>0$.
		Take $t\in (0,1]$, $\phi\in B_{\lambda,a, t}$, and $x\in G$, and observe that, if $f\in \Sr'(G;\Banach)\cap \Lp^1_\loc(G;\Banach)$, then 
		\[
		\begin{split}
		\abs{(f*\phi)(x)}&\meg \int_G \abs{f(y)} \abs{\phi(y^{-1}x)}\,\dd \beta(y)\\
			&\meg t^{-Q_*} \sum_{j\in\N} \int_{B'_{x,t,j}} \frac{\abs{f(y)}}{(1+d(y,x)/t)^{a}}\,\dd \beta(y) \\ 
			&\meg C_2 \sum_{j\in \N}  2^{jQ_*}(1+ t 2^j)^{Q_G-Q_*} (1+2^{j-1})^{-a}\dashint_{B(x,2^j t)} \abs{f(y)}\,\dd \beta(y)  \\
			&\meg  C_2 \sum_{j\in \N }  2^{jQ_*}(1+   2^j)^{D-Q_*} (1+2^{j-1})^{-a} (\Mc_{HL} \abs{f})(x) 
		\end{split}
		\]
		where $B'_{x,t,0}=B(x,t)$ and $B'_{x,t,j}=B(x,2^j t)\setminus B(x,2^{j-1}t)$ for $j\Meg 1$, and where $\Mc_{HL}$ denotes the Hardy--Littlewood maximal function.
		Consequently, by the arbitrariness of $\phi$ there is a constant $C_3>0$ such that
		\[
		\Mc^{(\lambda,a)} f\meg C_3 \Mc_{HL} \abs{f}.
		\]
		By the properties of the Hardy--Littlewood maximal function $\Mc_{HL}$ (see, e.g.,~\cite[Theorem 2.1 of Chapter III]{CoifmanWeiss}), the assertion follows.\footnote{Notice that we are applying the Hardy--Littlewood maximal function to $\abs{f}$, and not to $f$, so that the usual `scalar' theory applies.}
		
		(3) By (1), it is clear that $h^p_\lambda(G;\Banach)\subseteq \Lp^p(G;\Banach)$ continuously for every $p\in [1,\infty]$. By (2), it is also clear that $\Lp^p(G;\Banach)\subseteq h^p_\lambda(G;\Banach)$ continuously for every $p\in (1,\infty]$, whence the conclusion.
	\end{proof}

The following proposition collects some basic and classical properties of the spaces $h^p_\lambda(G;\Banach)$, such as completeness, the embedding into the tempered distributions, and a useful summability result.
	\begin{prop}\label{prop:41}
		Let $\Banach$ be a \BanachSpace, $\lambda \Meg 0$ and $p\in (0,1]$. The following hold,
        \begin{enumerate}
            \item $h^p_\lambda(G;\Banach)$ is complete and embeds continuously into $\Sr'(G;\Banach)$;
            \item if $(f_j)$ is a sequence of elements of $h^p_\lambda(G;\Banach)$ such that the sum $\sum_{j\in \N} \norm{f_j}_{h^p_\lambda(G;\Banach)}^p$ is finite, then the sum $\sum_{j\in \N} f_j$ converges in $h^p_\lambda(G;\Banach)$.	
        \end{enumerate}
	\end{prop}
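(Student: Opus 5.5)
The plan is to follow the classical scheme for maximal-function Hardy spaces. Everything rests on a pointwise estimate of $\langle f,\phi\rangle$ for $\phi\in\Sr(G)$ by the maximal function on a ball. Throughout, we use $\widetilde\Mc^{(\lambda,\kappa)}$ with a fixed $\kappa$, say $\kappa=1$; by Lemma~\ref{lem:85} this changes the quasi-norm only up to constants. Since $p\le1$, the map $f\mapsto \norm{\widetilde\Mc^{(\lambda,1)}f}_{L^p}^p$ is $p$-subadditive: $\widetilde\Mc$ is sublinear and $\abs{u+v}^p\le \abs u^p+\abs v^p$. Hence $d(f,g)\coloneqq\norm{\widetilde\Mc^{(\lambda,1)}(f-g)}_{L^p}^p$ is a translation-invariant metric. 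It separates points thanks to the embedding into $\Sr'$ proved below, or alternatively because $f*\phi_t\to f$ with $\phi_t$ as in Lemma~\ref{lem:55}.

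\emph{Step 1 (embedding).} Fix $\phi\in\Sr(G)$. Write $\langle f,\phi\rangle=(f*\check\phi)(e)$. Decompose $\check\phi$ with a partition of unity $\check\phi=\sum_{j\in J}\tau_j\check\phi$ subordinate to a covering by right translates $B(e,1)x_j$, exactly as in the proof of Lemma~\ref{lem:85}. Then
\[
(f*\tau_j\check\phi)(x)=\bigl(f*\psi_j\bigr)(xx_j^{-1}),
\qquad
\psi_j\coloneqq(\tau_j\check\phi)(\,\cdot\,x_j),
\]
with $\psi_j$ supported in $\overline B(e,1)$. Its $U_\lambda$-derivatives are bounded by $C(1+\abs{x_j}_*)^{-N}$ times Schwartz seminorms of $\phi$, using that right translation by $x_j$ distorts $X^R$ only polynomially in $\abs{x_j}_*$ (polynomial growth). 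Thus $\abs{(f*\tau_j\check\phi)(x)}\le C(1+\abs{x_j}_*)^{-N}\,\widetilde\Mc f(y)$ for every $y$ with $yx_j^{-1}$ near $x x_j^{-1}$.

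To pass from a pointwise value to an $L^p$ norm, we replace $x=e$ by nearby points. The cut-off $\psi_j$ may be enlarged to radius $2$ so that $\psi_j(\,\cdot\,z)$, for $z\in B(e,1)$, is still admissible at scale $2$ (again Lemma~\ref{lem:85}). This gives $\abs{\langle f,\tau_j\check\phi\rangle}\le C(1+\abs{x_j}_*)^{-N}\inf_{B(x_j^{-1},1)}\widetilde\Mc^{(\lambda,2)}f$. Then
\[
\inf_{B}\widetilde\Mc\le \beta(B)^{-1/p}\norm{\widetilde\Mc f}_{L^p},
\]
and $\beta(B(\cdot,1))$ is a fixed constant. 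Summing over $j$, with $N>D$ so that $\sum_j(1+\abs{x_j}_*)^{-N}<\infty$, yields
\[
\abs{\langle f,\phi\rangle}\le C\,\norm{\phi}_{\Sr}\,\norm{f}_{h^p_\lambda}.
\]
This proves the continuous embedding. The main technical point is uniformly controlling the derivatives of the translated pieces $\psi_j$. I expect this to be handled by Lemma~\ref{lem:72} locally, together with the connectedness of the modulus globally, which bounds $\mathrm{Ad}(x_j)$ polynomially in $\abs{x_j}_*$.

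\emph{Step 2 (completeness and summability).} Take a series with $\sum_j\norm{f_j}^p<\infty$. By Step 1, $\sum_jf_j$ converges absolutely in $\Sr'$, since the seminorm bounds are $\norm{f_j}\le\norm{f_j}^p$ eventually. Call the limit $f$. For each $t\in(0,1]$ and $\phi\in\widetilde B_{\lambda,t}$, convergence in $\Sr'$ gives pointwise convergence $f*\phi=\sum_j f_j*\phi$. Hence
\[
\widetilde\Mc(f-\textstyle\sum_{j<n}f_j)\le\sum_{j\ge n}\widetilde\Mc f_j
\]
pointwise, and by $p$-subadditivity $\norm{f-\sum_{j<n}f_j}^p\le\sum_{j\ge n}\norm{f_j}^p\to0$. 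This proves (2). Completeness follows in the standard way: a Cauchy sequence has a subsequence with $\sum_k\norm{f_{n_{k+1}}-f_{n_k}}^p<\infty$, so it converges by (2), and then the whole sequence converges.
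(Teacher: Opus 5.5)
Your proposal is correct. Step 1 is the paper's argument in mirrored form. The paper writes $\langle f,\phi\rangle=[f*(\phi(x\,\cdot\,^{-1}))](x)$ and splits $\phi$ by a partition of unity made of left translates $\tau(x_j^{-1}\,\cdot\,)$. It then bounds each piece by $C(1+\abs{x_j}_*)^{-a}\inf_{B(x_j,1)}\widetilde\Mc^{(\lambda,4)}f$ and sums $p$-th powers with $a>D/p$. Since $y\mapsto\phi(xy^{-1})$ is $\check\phi(\,\cdot\,x^{-1})$, this is exactly your right-translate decomposition of $\check\phi$, with the evaluation point moved into $B(x_j^{-1},1)$. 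Summing first powers with $N>D$, as you do, works equally well.

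The technical point you flag does not actually arise. $X^R$ commutes with right translations, so $X^R\psi_j=(X^R(\tau_j\check\phi))(\,\cdot\,x_j)$ exactly. The $X^R$-derivatives of $\tau_j$ are uniformly bounded because the partition of Lemma~\ref{lem:85} consists of right translates of a single function. The decay factor comes from $\abs{yx_j}_*\Meg\abs{x_j}_*-\abs{y}_*$. So no bound on $\mathrm{Ad}(x_j)$ is needed. The only left/right conversion is $X^R\check\phi=(X^\dag\phi)\check{\,}$, i.e.\ rapid decay of left-invariant derivatives of elements of $\Sr(G)$, which the paper also uses tacitly.

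Where you genuinely differ is Step 2. The paper proves completeness first: a Cauchy sequence converges in $\Sr'(G;\Banach)$, and the limit lies in $h^p_\lambda$ and is the $h^p_\lambda$-limit by lower semicontinuity of $f\mapsto(\widetilde\Mc f)(x)$ on $\Sr'$ together with Fatou's lemma. It then obtains (2) from completeness and $p$-subadditivity. You instead prove (2) directly, from the pointwise countable subadditivity $\widetilde\Mc(\sum_{j\Meg n}f_j)\meg\sum_{j\Meg n}\widetilde\Mc f_j$ and $(\sum_j a_j)^p\meg\sum_j a_j^p$, and deduce completeness by the rapidly-Cauchy-subsequence argument. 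Both routes use the same two inputs: the continuous embedding, and sequential completeness of $\Sr'(G;\Banach)$, which you use implicitly when asserting that the series converges in $\Sr'$. Yours trades Fatou for the subsequence argument and gives the explicit tail estimate $\norm{f-\sum_{j<n}f_j}^p\meg\sum_{j\Meg n}\norm{f_j}^p$ for free.
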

	
	\begin{proof}
		(1) Let us first show that $h^p_\lambda(G;\Banach)$ embeds continuously into $\Sr'(G;\Banach)$. To this aim, let $B$ be a bounded subset of $\Sr(G)$, and observe that
		\[
		\langle f,\phi\rangle=[f*(\phi(x\,\cdot\,^{-1}))](x)
		\]
		for every $x\in G$, $f\in \Sr'(G;\Banach)$ and $\phi\in B$. Fix a maximally $2$-separated family $(x_j)_{j\in J}$ of elements of $G$, so that the $B(x_j,2)$ cover $G$, while the $B(x_j,1)$ are pairwise disjoint.  Take $\tau\in C^\infty_c(G)$ so that $\chi_{B(e,2)}\meg \tau \meg \chi_{B(e,3)}$. Observe that, by~\cite[Lemma 4.3]{Calzi2}, there is $N\in\N$ so that $ \sum_{j\in J} \chi_{B(x_j,3)}\meg N$, so that
\[
1\meg \widetilde \tau\coloneqq \sum_{j\in J} \tau(x_j^{-1}\,\cdot\,)\meg N.
\]
 Consequently, if we set $\tau_j\coloneqq\tau(x_j^{-1}\,\cdot\,)/\widetilde \tau$, then $\sum_{j\in J} \tau_j=1$ and the $\tau_j(x_j\,\cdot\,)$ are uniformly bounded in $C^\infty_c(G)$. 
		Take $a>D/p$ and observe that there is a constant $C_1>0$ such that
		\[
		\abs{(X \phi)(x)}\meg \frac{C_1}{(1+\abs{x}_*)^a}
		\]
		for every $\phi\in B$, for every $x\in G$, and for every $X\in U_{\lambda}$ with $\abs{X}\meg 1$. Consequently, there is a constant $C_2>0$ such that
		\[
		\begin{split}
		\abs{X^R((\phi\tau_j)(x\,\cdot\,^{-1}))(y)}&\meg  C_2 \chi_{B(x_j,3)}(x y^{-1})\sup_{ Y\in U_\lambda, \abs{Y}\meg 1 } \abs{(Y \phi)(x y^{-1})}\\
			&\meg C_1C_2 \chi_{B(x_j,3)}(x y^{-1}) \frac{1}{(1+\abs{x y^{-1}}_*)^a}\\
			&\meg   \frac{4^aC_1C_2}{(1+\abs{x_j}_*)^a}
		\end{split}
		\]
		for every $x,y\in G$, $j\in J$, and $\phi \in B$. In addition, $(\phi\tau_j)(x\,\cdot\,^{-1})$ is supported in $\overline B(e,3) x_j^{-1}x \subseteq B(e,4)$ as long as $x\in B(x_j,1)$. It then follows that
		\begin{align*}
		\abs{\langle f,\phi\rangle}
\meg &\sum_{j\in J}  \inf_{y\in B(e,1)}\abs{[f*((\phi \tau_j)(x_j y\,\cdot\,^{-1}))](x_j y)}\\
\meg& \sum_{j\in J} \frac{4^aC_1C_2}{(1+\abs{x_j}_*)^a}\inf_{y\in B(e,1)}(\widetilde \Mc^{(\lambda,4)} f)(x_j y),
		\end{align*}
		so that
		\[
		\abs{\langle f,\phi\rangle}^p \meg \sum_{j\in J}\frac{(4^aC_1C_2)^p}{(1+\abs{x_j}_*)^{ap}} \beta(B(e,1))^{-1}\norm{\widetilde \Mc^{(\lambda,4)} f}_{L^p(G)}^p
		\]
		for every $f\in \Sr'(G;\Banach)$ and $\phi\in B$.  The first assertion follows since $\sum_{j\in J}\frac{1}{(1+\abs{x_j}_*)^{ap}} $ is finite (see~Lemma~\ref{lem:37}).
		
        Next, let $(g_j)$ be a Cauchy sequence in $h^p_\lambda(G;\Banach)$. Then, $(g_j)$ is a Cauchy sequence in $\Sr'(G;\Banach)$, so that it converges to some $g$ in $\Sr'(G;\Banach)$, thanks to~\cite[Proposition 12 of Chapter III, \S\ 3, No.\ 8]{BourbakiTVS}. Now, clearly the mapping $f\mapsto (\widetilde \Mc^{(\lambda,4)}f)(x)$ is lower semi-continuous on $\Sr'(G;\Banach)$ for every $x\in G$, so that by Fatou's theorem also the mapping $f\mapsto \norm{\widetilde \Mc^{(\lambda,4)}f}_{L^p(G)}$ is \emph{sequentially} lower semi-continuous on $\Sr'(G;\Banach)$. As a consequence, $g\in h^p_\lambda(G;\Banach)$ and $(g_j)$ also converges to $g$ in $h^p_\lambda(G;\Banach)$, so that $h^p_\lambda(G;\Banach)$ is complete. 
		
        (2) Let $(f_j)$ be as in the statement. By the subadditivity of the mapping $f\mapsto \norm{f}_{h^p_\lambda(G;\Banach)}^p$, we then see that the section filter of the family of the finite sums $\sum_{j\in J} f_j$ is a Cauchy filter, so that it converges in $h^p_\lambda(G;\Banach)$.
	\end{proof}

    \begin{prop}\label{prop:25}
		Let $\Banach$ be a \BanachSpace, $p\in (0,1]$, $q\in (1,\infty]$, $\lambda \Meg 0$  and $a> Q_G(1/p-1/q)$. The following inclusion holds
        \[
        (1+\abs{\,\cdot\,}_*)^{-a}\Lp^q(G;\Banach)\subseteq h^p_\lambda(G;\Banach)
        \]
        and the embedding is continuous. In particular, $\Sr(G;\Banach)\subseteq h^p_\lambda(G;\Banach)$ continuously.
	\end{prop}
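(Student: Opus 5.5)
We need to show: if $f=(1+|\cdot|_*)^{-a}g$ with $g\in\Lp^q(G;\Banach)$, then $\widetilde\Mc^{(\lambda,1)}f\in L^p(G)$ with norm $\lesssim \norm{g}_{\Lp^q}$. The plan is to bound the maximal function pointwise by a localized Hardy--Littlewood maximal function, and then use Hölder's inequality with the polynomial weight.

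First we prove the pointwise bound. Take $t\in(0,1]$ and $\phi\in\widetilde B_{\lambda,t}$. Then $\norm{\phi}_\infty\meg t^{-Q_*}$ and $\phi$ is supported in $\overline B(e,t)$. Hence
\[
|(f*\phi)(x)|\meg \int_{B(x,t)}|f(y)|\,t^{-Q_*}\,\dd\beta(y)\meg C\dashint_{B(x,t)}|f|\,\dd\beta,
\]
using $\beta(B(e,t))\asymp t^{Q_*}$ for $t\meg 1$. For $y\in B(x,1)$ we have $1+|y|_*\Meg \frac12(1+|x|_*)$, so $|f(y)|\meg 2^a(1+|x|_*)^{-a}|g(y)|$. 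Consequently
\[
\widetilde\Mc^{(\lambda,1)}f(x)\meg C 2^a(1+|x|_*)^{-a}\,(\Mc_{HL}|g|)(x).
\]
No smoothness of $\phi$ is needed here, so $\lambda$ plays no role.

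Next we integrate. By Hölder's inequality with exponents $q/p$ and $r$, where $1/r=1-p/q$ (if $q=\infty$ we take $r=1$),
\[
\norm{\widetilde\Mc^{(\lambda,1)}f}_{L^p}^p\meg C\norm{(1+|\cdot|_*)^{-ap}}_{L^{r}}\,\norm{\Mc_{HL}|g|}_{L^q}^p .
\]
The Hardy--Littlewood maximal theorem on the doubling space $(G,d,\beta)$ (as in the proof of Proposition~\ref{prop:42}) bounds the second factor by $C\norm{g}_{\Lp^q}^p$, since $q>1$. The first factor is finite exactly when $apr>Q_G$, i.e.\ $a>Q_G(1/p-1/q)$. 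This follows from Lemma~\ref{lem:37} with $t=1$, or from Lemma~\ref{lem:37b} together with the boundedness of the weight near $e$. This proves the continuous inclusion.

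For the final assertion, take $\phi\in\Sr(G;\Banach)$ and choose $q=\infty$ and $a>Q_G/p$. The Schwartz seminorms control $\sup_x(1+|x|_*)^a|\phi(x)|$; this follows from $L^1$ bounds on derivatives via a local Sobolev embedding, as encoded in \cite[Theorem 2.15]{Calzi4}. So $\phi=(1+|\cdot|_*)^{-a}g$ with $\norm{g}_{\Lp^\infty}$ bounded by a continuous seminorm, and the first part applies.

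The main obstacle is this last step: the paper defines $\Sr(G)$ through weighted $L^1$ norms of $X^R\phi$, so converting these into a weighted sup bound requires the equivalence of seminorm systems, which I would take from \cite{Calzi4}. Everything else is routine.
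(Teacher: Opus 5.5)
Your proof is correct, but it is organized differently from the paper's.

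\textbf{The paper's route.} It splits $f$ into dyadic pieces $f_j=\chi_{B_j}f$, with $B_0=B(e,1)$ and $B_j=B(e,2^j)\setminus B(e,2^{j-1})$. It then argues as follows:
\begin{itemize}
\item it uses Proposition~\ref{prop:42} only as a black box, namely that $\widetilde\Mc^{(\lambda,1)}$ is bounded on $\Lp^q$;
\item it notes that $\widetilde\Mc^{(\lambda,1)}f_j$ is supported in $\overline B(e,2^{j+1})$;
\item it applies H\"older on that ball to get $\norm{\widetilde\Mc^{(\lambda,1)}f_j}_{L^p(G)}\meg C2^{-j(a-Q_G(1/p-1/q))}$, using $\beta(B(e,r))\meg Cr^{Q_G}$ for $r\Meg 1$ and doubling;
\item it sums with the $p$-triangle inequality (Proposition~\ref{prop:41}).
\end{itemize}

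\textbf{Your route.} You exploit the same locality ($t\meg 1$) pointwise instead. The weight changes by at most a factor $2^a$ on the unit-scale balls seen by the maximal function, so $\widetilde\Mc^{(\lambda,1)}f\meg C(1+\abs{\,\cdot\,}_*)^{-a}\Mc_{HL}\abs{g}$, and a single H\"older inequality finishes.

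\textbf{Comparison.} Your argument is shorter, and the threshold $a>Q_G(1/p-1/q)$ appears directly as the integrability of $(1+\abs{\,\cdot\,}_*)^{-apr}$. The paper's annular, molecular-type decomposition never needs a pointwise comparison with $\Mc_{HL}$. It only needs $\Lp^q$-boundedness of the maximal operator and the fact that supports grow by at most $1$.

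\textbf{Two small remarks.}
\begin{itemize}
\item Lemma~\ref{lem:37} only gives $(1+\abs{\,\cdot\,}_*)^{-s}\in L^1(G)$ for $s>D=\max(Q_*,Q_G)$. It therefore misses the sharp range when $Q_*>Q_G$, for instance for compact $G$, where $Q_G=0$. Your alternative, the second estimate of Lemma~\ref{lem:37b} together with boundedness of the weight on $B(e,1)$, is the correct justification.
\item The paper dismisses the Schwartz case as a ``simple consequence''. The weighted sup bound you invoke from \cite{Calzi4} can also be obtained inside the paper. Apply Lemma~\ref{lem:76} on the opposite group with $q=1$, and localize with right translates $\tau(\,\cdot\,x_0^{-1})$ of a fixed cutoff. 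These cutoffs have right-invariant derivatives bounded uniformly in $x_0$, and $1+\abs{\,\cdot\,}_*$ varies by at most a factor $2$ on their supports.
\end{itemize}
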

	
	\begin{proof}
		Let $\eps>0$ be such that $B(e,\eps)\subseteq \mathbf U$, $\rho\in (0,\min(\eps,1))$,   and $f\in (1+\abs{\,\cdot\,}_*)^{-a}\Lp^q(G;\Banach)$ with $\norm{(1+\abs{\,\cdot\,}_*)^{a} f}_{\Lp^q(G;\Banach)}=1$. Define $f_j\coloneqq \chi_{B_j} f$ for every $j\in\N$, where
        \[
        B_0\coloneqq B(e,1)\text{ and }B_j\coloneqq B(e,2^{j})\setminus B(e,2^{j-1})\text{ for every }j\Meg 1.
        \]
        Observe that there is a constant $C_1>0$ such that $\beta(B(e,r))\meg C_1 r^{Q_G}$ for every $r\Meg 1$, so that, setting $b\coloneqq a-Q_G(1/p-1/q)$,
		\[
		\norm{f_j}_{\Lp^q(G;\Banach)}\meg 2^{-a(j-1)}\meg 2^a C_1^{1/p-1/q} 2^{-j b}\beta(B(e,2^j))^{1/q-1/p}
		\]
		for every $j\in\N$. Now, observe that clearly $\widetilde \Mc^{(\lambda,1)} f_j$ is supported in $\overline B(e,2^j+1)\subseteq \overline B(e, 2^{j+1})$, so that by Proposition~\ref{prop:42} there is a constant $C_2>0$ such that
		\[
		\begin{split}
			\norm*{ \widetilde \Mc^{(\lambda,1)} f_j}_{L^p(G)} &\meg \norm*{ \widetilde \Mc^{(\lambda,1)} f_j}_{L^q(G)} \beta(B(e,2^{j+1}))^{1/p-1/q}\\
			&\meg C_{2} \norm{f_j}_{\Lp^q(G;\Banach)} \beta(B(e,2^{j+1}))^{1/p-1/q}\\
			&\meg   C_{2}2^a C_1^{1/p-1/q} 2^{-j b} \beta(B(e,2^j))^{1/q-1/p}\beta(B(e,2^{j+1}))^{1/p-1/q}\\
			&\meg C_{2}^2 2^{-j b}
		\end{split}
		\]
		since $q>1$ and $\beta $ is doubling.
		Therefore,
		\[
		\norm{f}_{h^p_\lambda(G;\Banach)}^p\meg \sum_{j\in \N} \norm{f_j}_{h^p_\lambda(G;\Banach)}^p\meg \frac{C_2^2}{1-2^{-b}}.
		\]
		The first assertion follows from Proposition~\ref{prop:41}. The second assertion is a simple consequence of the first one.
	\end{proof}

\section{Atomic decomposition for Local Hardy Spaces}\label{Section:AtomicDecomposition}
    In order to introduce atoms, we need some analog of the vanishing moments condition that has to be imposed in the classical situation and on homogeneous groups. One possible way to do this relies on providing a suitable notion of polynomials on small balls and then defining moments in terms of these polynomials. As we shall see later, this approach leads to the desired results when $G$ is nilpotent, whereas in the general case we only get partial results. We shall briefly discuss later a weaker notion of atoms which may in principle lead to better results for arbitrary groups.
    That being said, we shall define polynomials on small balls simply by tranferring them from the Lie algebra by means of the exponential map. Even though this approach does not lead to complete results, it still allows us to prove the density of $h^p_\lambda \cap h^1_\lambda$ in $h^p_\lambda$ in full generality.
    
	\begin{deff}
		We endow the dual  $\gf^*$ of $\gf$ with the increasing filtration $(\gf_\lambda^{ \perp\circ})_{\lambda\Meg 0 }$, where $\gf_\lambda^{\perp\circ}$ denotes the polar in $\gf^*$ of the orthogonal complement of $\gf_\lambda$ in $\gf$, and then extend it to a filtration $(\Pc_\lambda)_{\lambda \Meg 0}$ of the space $\Pc$ of polynomials on $\gf$ (identified with the symmetric algebra over $\gf^*$, see~\cite[Remark 2 of Chapter IV, \S\ 5, No.\ 11]{BourbakiA2}). We define the degree of $P\in \Pc$ as
        \[
        \deg P\coloneqq \min\Set{\lambda \Meg 0\colon P\in \Pc_\lambda}.
        \]
		
		For every $x\in G$, let $\Pc^{(x)}$ be the space of functions defined on $x\mathbf U$ given by
        \[
        \Pc^{(x)}=\Bigl\{(P\circ \exp_G^{-1})(x^{-1}\,\cdot\,) : P\in\Pc\Bigr\}.
        \]
        We then endow $\Pc^{(x)}$ 
		with the filtration $(\Pc^{(x)}_\lambda)$ induced by $(\Pc_\lambda)$, and define a degree accordingly.
		We also define $\Pc_{\lambda^-}\coloneqq \sum_{\lambda'<\lambda}\Pc_{\lambda'}$ and then $\Pc^{(x)}_{\lambda^-}$ accordingly.
	\end{deff}
	Note that  $\Pc^{(x)}=L_x \Pc^{(e)}$, where $L_x$ denotes left translation by $x$.
	
	Observe that we will often work with $\Pc^{(x)}_{\lambda^-}$ instead of $\Pc^{(x)}_\lambda$ since it will allow us to avoid technical issues related to the fact that there are only countably many $\lambda$ for which $\Pc^{(x)}_{\lambda^-}\neq\Pc^{(x)}_\lambda$ (issues  which force several unpleasant references to the set of those $\lambda$ in~\cite{FollandStein}).
	
	\begin{oss}
        If $(x_j)$ is the  basis of $\gf^*$ corresponding to $(X_j)$, then $\gf_\lambda^{\perp \circ}$ is the vector space generated by the $x_j$ with $d_j\meg \lambda$. In other words, if we identify $\gf$ with $\gf_*$ by means of the basis $(X_j)$ and the corresponding homogeneous basis $(Y_j)$ of $\gf_*$, then $\Pc_\lambda$ becomes the space of polynomials of homogeneous degree $\meg \lambda$ with respect to the dilation structure of $\gf_*$. The above definition, albeit more abstract, has the advantage of being `invariant', that is, of depending only on the filtration  and the fixed scalar product on~$\gf$.  In particular, the spaces $\Pc^{(x)}$ and their filtrations do not depend on the chosen orthonormal basis $(X_j)$ of $\gf$.
	\end{oss}

	\begin{deff}
		Take $\rho>0$ so that $\overline B(e,\rho)\subseteq \mathbf U$, a \BanachSpace\ $\Banach$, $p\in (0,1]$, $q\in (1,\infty]$, and $\lambda \Meg0$.   We say that an $a\in \Lp^1_\loc(G;\Banach)$ is a ($\Banach$-valued) $(p,q,\rho,\lambda)$-atom if the following hold:
		\begin{enumerate}
			\item $a$ is concentrated in $B(x,r)$ for some $x\in G$ and $r>0$;\footnote{This means that $\abs{a}$ is concentrated in $B(x,r)$.}
			
			\item $\norm{a}_{\Lp^q(G;\Banach)}\meg \beta(B(x,r))^{1/q-1/p}$;
			
			\item if $r\meg \rho$, then $\langle a, P\rangle=0$ for every $P\in \Pc^{(x)}_{\lambda^-}$.
		\end{enumerate}
	\end{deff}

    From now on we set $q(\lambda)\coloneqq [\lambda/\dd_0]+1$.
	The following results shows that we may ``extract'' derivatives from an atom supported in a small ball. This links the usual theory, which is based on vanishing moments, with the approach more commonly used in the current setting, which is based on extracting derivatives. We postpone its proof to Section~\ref{sec:app2}.

	\begin{lem}\label{lem:73} 
		Take $p\in [1,\infty]$ and $\lambda > 0$.  There are $\eps,c,C>0$ such that the following hold:
		\begin{itemize}
			\item $B(e,\eps)\subseteq \mathbf U$;
			
			\item for every $x\in G$, $\rho\in (0,\eps]$, \BanachSpace\ $\Banach$, and  $f\in \Lp^p(G;\Banach)$ with $\supp f \subseteq B(x,\rho)$ such that
			\[
			\langle f, P\rangle =0
			\]
			for every $P\in \Pc^{(x)}_{\lambda^-}$,  there is a family $(f_\alpha)_{0<\abs{\alpha}\meg q(\lambda)}$ of elements of $\Lp^p(G;\Banach)$, supported in $B(x,c\rho)$, such that
			\[
			f= \sum_{0<\abs{\alpha}\meg q(\lambda)} \vect X^\alpha f_\alpha
			\]
			and such that
			\[
			\norm{f_\alpha  }_{\Lp^p(G;\Banach)}\meg C\rho^{\max(\lambda,d_\alpha)} \norm{f}_{\Lp^p(G;\Banach)}
			\]
			for every $\alpha$ such that $0<\abs{\alpha}\meg q(\lambda)$, where $d_\alpha=\sum_{j\in J} \alpha_j \deg(X_j)$. 
		\end{itemize}
	\end{lem}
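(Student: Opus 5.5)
By left invariance of $\beta$, of the operators $\vect X^\alpha$, of the spaces $\Pc^{(x)}_{\lambda^-}=L_x\Pc^{(e)}_{\lambda^-}$ and of the distance $d$, we may assume $x=e$. We then transfer the problem to $\gf$ by means of $\theta=\exp_G|_{B_\gf(0,\mathrm{inj}(G))}$ and rescale with the dilations of $\gf_*$. Since $\theta^*\beta$ has a nowhere vanishing real-analytic density $w$, the function $\tilde f\coloneqq (f\circ\theta) w$ is orthogonal in $L^2(\gf)$ to every polynomial $P\in\Pc_{\lambda^-}$. By~\cite[Proposition 6.1]{ElstRobinson} (as in the proof of Lemma~\ref{lem:55}), $\tilde f$ is supported in the homogeneous box $B_{C_1\rho}$. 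Setting $g\coloneqq \tilde f(\rho\,\cdot\,)$, which is supported in a fixed box $B_{C_1}$, the orthogonality is preserved because $\Pc_{\lambda^-}$ is dilation invariant. The problem is thus reduced to a decomposition on the fixed model space $\gf_*\cong\gf$.

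\textbf{Step 1 (Euclidean-type decomposition at unit scale).} We prove that every $g\in L^p(B_{C_1};\Banach)$ (in the $\Lp^p$ sense) annihilating $\Pc_{\lambda^-}$ can be written as $g=\sum_{0<\abs{\alpha}\meg q(\lambda)}\partial^\alpha g_\alpha$, with $g_\alpha$ supported in a fixed box $B_{c'}$ and $\norm{g_\alpha}\meg C\norm{g}$. Here $\partial^\alpha$ are coordinate derivatives in the basis $(X_j)$, and we only use $\alpha$ with $d_\alpha\Meg\lambda$, i.e.\ ``homogeneous order $\Meg\lambda$''; note that $\abs{\alpha}\meg q(\lambda)$ suffices since $d_\alpha\Meg\dd_0\abs{\alpha}$. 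The construction is the standard one. Fix $\eta\in C^\infty_c(B_{c'})$ and a dual basis $(\eta_\gamma)$ of the monomials. The projection $g-\sum_{d_\gamma<\lambda}\langle g,x^\gamma\rangle\eta_\gamma=g$ already has vanishing moments. We then peel off one variable at a time: $g=\partial_{j}G$ with $G(x)=\int_{-\infty}^{x_j}(g-\eta\otimes\int g\,\dd x_j)$, which is compactly supported thanks to the vanishing moment, and we induct on the moment conditions. This yields bounded operators $g\mapsto g_\alpha$, linear and acting on $\Banach$-valued measures coordinatewise, so the vector-valued case follows from the scalar estimates by Minkowski's inequality. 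Rescaling back produces the factor $\rho^{d_\alpha}$, with $d_\alpha\Meg\lambda$. In the final bound, $\max(\lambda,d_\alpha)=d_\alpha$ when $\rho\meg1$, so this is the exponent we aim for.

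\textbf{Step 2 (return to $G$).} Undoing the dilation and $\theta$, the Euclidean derivatives $\partial^\alpha=\vect Y^\alpha+(\text{polynomial corrections})$ must be rewritten in terms of $\vect X^\alpha$. Lemma~\ref{lem:72} gives $\vect Y^\alpha=\sum_{\abs\gamma\meg\abs\alpha}a_{2,\alpha,\gamma}\vect X^\gamma$ with $\abs{a_{2,\alpha,\gamma}(y)}\meg C\abs{y}_*^{(d_\gamma-d_\alpha)_+}$. To move the coefficient inside, we use $a\,\vect X^\gamma h=\sum(\text{Leibniz})\,\vect X^{\gamma'}(b\,h)$, pushing derivatives onto $h$ and producing derivatives of the analytic coefficients. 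These derivatives lose homogeneity in a controlled way, again by Lemma~\ref{lem:72}, because each $X_j$ lowers degree by at most $d_j$. On $B(e,c\rho)$ we have $\abs{y}_*\lesssim\rho$, so every resulting term has the form $\vect X^{\gamma}f_{\alpha,\gamma}$ with $\norm{f_{\alpha,\gamma}}\lesssim\rho^{d_\alpha+(d_{\gamma}-d_\alpha)_+}\cdots$.

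The bookkeeping must yield $\rho^{\max(\lambda,d_\gamma)}$. When $d_\gamma\Meg d_\alpha$ the exponent is $\Meg d_\gamma$. When $d_\gamma<d_\alpha$ we still have $\rho^{d_\alpha}\meg\rho^{\max(\lambda,d_\gamma)}$ since $d_\alpha\Meg\lambda$. Terms with $\gamma=0$ cannot occur, because $a_{\ell,\alpha,0}=0$ for $\alpha\ne0$ by Lemma~\ref{lem:72}; and after the Leibniz rearrangement we keep only terms where at least one derivative remains outside. We must ensure this, for instance by always leaving the leftmost vector field outside.

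\textbf{Main obstacle.} The hard step is Step 2, namely commuting the variable coefficients past the $\vect X^\gamma$ without creating a zeroth-order term and while tracking the powers of $\rho$. My fallback is to avoid Leibniz altogether by writing the adjoint form: the moments condition is equivalent to $f$ annihilating $\Pc_{\lambda^-}$, which is spanned, near $e$, by the solutions of $\vect X^{\alpha\dag}$-type systems. Then we can construct $f_\alpha$ by duality as $f_\alpha=T_\alpha f$ using a Taylor formula on $G$ with remainder expressed through left-invariant derivatives, as in Folland--Stein, with its homogeneous error estimate from Lemma~\ref{lem:72}. Choosing $\eps$ small ensures that all sets stay inside $\mathbf U$ and that $c\rho$-balls are comparable to the homogeneous boxes.
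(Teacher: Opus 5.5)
Your reduction to $x=e$, the transfer to $\gf$ and the rescaling are fine. A Euclidean decomposition with anisotropic moments could in principle replace the paper's Steps I--III. The genuine gap is in Step 2, exactly where you locate the main obstacle.

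You write $\partial^\alpha=\sum_\gamma b_{\alpha,\gamma}\vect X^\gamma$ and then push the coefficients to the right of the derivatives with the Leibniz rule, acting on the measures $g_\alpha$. Each commutation $b\,Xh=X(bh)-(Xb)h$ creates a term with one derivative fewer outside. After the last commutation you are left with a term of order zero in $h$. Such terms cannot be discarded, and ``leaving the leftmost vector field outside'' does not remove them; it only moves them to the next commutation. Moreover, the identity $\partial^\alpha=\sum_\gamma b_{\alpha,\gamma}\vect X^\gamma$ holds on functions. However, the Euclidean distributional derivative of a measure on $\gf$, pushed forward by $\theta$, is not that operator applied to the $\beta$-density. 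The density of $\beta$ in exponential coordinates enters, through the $\beta$-divergence of the coordinate fields, and your bookkeeping ignores it. The zeroth-order terms do cancel in total once this is accounted for, but proving that is precisely the computation you have not done. The Taylor-formula fallback is too vague to close the gap.

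The missing idea is to change frame on the test function, by duality, as in Step IV of Section~\ref{sec:app2}. One has $\langle f,\phi\rangle=\langle g,\phi\circ\theta\rangle=\sum_\alpha(-1)^{\abs{\alpha}}\langle g_\alpha,\partial^\alpha(\phi\circ\theta)\rangle$. Also $\partial^\alpha(\phi\circ\theta)=\sum_{0<\abs{\gamma}\meg\abs{\alpha}}b_{\alpha,\gamma}\,(\vect X^\gamma\phi)\circ\theta$, with no $\gamma=0$ term because both sides annihilate constants. Setting $f_{\alpha,\gamma}\coloneqq\theta_*(b_{\alpha,\gamma}g_\alpha)$ gives $f=\sum(-1)^{\abs{\alpha}}\vect X^{\gamma\dag}f_{\alpha,\gamma}$ exactly, with no Leibniz rule. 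The bound $\abs{b_{\alpha,\gamma}}\lesssim\abs{\,\cdot\,}_*^{(d_\gamma-d_\alpha)_+}$ from Lemma~\ref{lem:72} and \cite[Proposition 1.29]{FollandStein} gives $\rho^{\max(d_\alpha,d_\gamma)}\meg\rho^{\max(\lambda,d_\gamma)}$. Rewriting $\vect X^{\gamma\dag}$ in ordered monomials creates no constant term and does not increase $d_\gamma$.

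Step 1 also needs repair in two places:
\begin{enumerate}
\item $g=\partial_jG$ is false as written. Only $g-\eta\otimes\int g\,\dd x_j=\partial_jG$ holds, and the correction term must be fed back into the induction.
\item The inequality $d_\alpha\Meg\dd_0\abs{\alpha}$ does not bound $\abs{\alpha}$ from above. What gives $\abs{\alpha}\meg q(\lambda)$ is arranging the induction over the order ideal $\Set{\gamma\colon d_\gamma<\lambda}$ so that it produces only minimal $\alpha$ with $d_\alpha\Meg\lambda$.
\end{enumerate}

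The paper avoids this anisotropic Euclidean bookkeeping altogether. It proves only the zero-mean case in $\R^n$ and converts $\partial_k$ into the homogeneous fields $Y_j$ on $G_*$ by transposition. It then inducts on the homogeneous degree, using the duality between the $\vect Y^\alpha$ with $d_\alpha=\lambda$ and homogeneous polynomials of degree $\lambda$.
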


	This lemma is a convenient  version of~\cite[Lemma 1.60]{FollandStein}. The main differences are related to the fact the we work in a non-homogeneous setting (which imposes less precise decompositions) and that we wish to deal with compactly supported $L^p$  functions (or, rather, elements of $\Lp^p(G;\Banach)$) instead of Schwartz functions. The proof is therefore based on an elementary one-dimensional result instead of the Fourier transform. 
	\\Notice that, when $G$ is a homogeneous group endowed with the induced filtration (so that $G\cong G_*$ canonically), then the possibility of writing $f$ as $\sum_{d_\alpha> \lambda} \vect X^\alpha f_\alpha$ implies that $\langle f , P\rangle=0$ for every polynomial $P\in \Pc_\lambda$, and conversely (the direct implication is clear, even if the $f_\alpha$ are arbitrary distributions, whereas the converse implication follows from the proof of the above result). In this more general setting, there does not seem to be a converse to the above result.

    This result suggests a notion of weak $(p,q,\rho, \lambda)$-atoms, namely the  $a\in\Lp^1_\loc(G;\Banach)$ such that:
		\begin{enumerate}
			\item $a$ is concentrated in $B(x,r)$ for some $x\in G$ and $r>0$; 
			
			\item $\norm{a}_{\Lp^q(G;\Banach)}\meg \beta(B(x,r))^{1/q-1/p}$;
			
			\item if $r\meg \rho$, then there is a family $(a_\alpha)_{0<\abs{\alpha}\meg q(\lambda)}$ of elements of $\Lp^p(G;\Banach)$, supported in $B(x,r)$, such that
			\[
			a= \sum_{0<\abs{\alpha}\meg q(\lambda)} \vect X^\alpha a_\alpha
			\]
			and such that
			\[
			\norm{a_\alpha  }_{\Lp^p(G;\Banach)}\meg  \rho^{\max(\lambda,d_\alpha)}\beta(B(x,r))^{1/q-1/p}.
			\]
		\end{enumerate}
        In fact, if $a$ is a $(p,q,\rho,\lambda)$-atom (associated with the ball $B(x,r)$), then $ a/C$ is a weak $(p,q,c\rho, \lambda)$-atom (associated with the ball $B(x,cr)$) for  suitable $c,C>0$ independent of $a$. The proof of Proposition~\ref{prop:31} below shows that Calder\'on--Zygmund operators interact nicely with these weak atoms. In fact, weak atoms appear to be a little more natural, since they do not require the polynomials -- locally transplanted from the Lie algebra -- to be defined, and are somewhat similar in spirit to the atoms used in the atomic decomposition of Besov and Triebel--Lizorkin spaces (see, e.g.~\cite[Section 4]{Calzi4}).

\subsection{Calder\'on--Zygmund decomposition}
The proof of the atomic decomposition theorem relies on a number of preliminary results, in particular on a suitable Calder\'on--Zygmund decomposition, which we now establish following the approach of~\cite[Chapter 3]{FollandStein}.
    
	\begin{lem}\label{lem:80}
		Take $\rho>0$ so that $\overline B(e,\rho)\subseteq \mathbf U$, and take $\lambda,\lambda'\Meg 0$ and $\kappa>1$. There is a constant $C>0$ such that
		\[
		\norm{X  P}_{L^p(B(x,t),\beta)}\meg C \abs{X}t^{Q_*(1/p-1/q)-\deg(X)} \norm{P}_{L^q(B(x,t'),\beta)}
		\]
		for every $p,q\in [1,\infty]$, $X\in U_\lambda$, $t,t'\in (0,\rho]$ such that $t\meg \kappa t'$, $P\in \Pc^{(x)}_{\lambda'}$, and $x\in G$.
	\end{lem}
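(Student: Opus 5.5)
The plan is to reduce everything, by left invariance, to the case $x=e$, and then to a finite-dimensional rescaling argument on the Lie algebra. Since left translation $L_x$ preserves $\beta$, commutes with left-invariant operators $X\in U(\gf)$, and maps $\Pc^{(e)}_{\lambda'}$ onto $\Pc^{(x)}_{\lambda'}$ and $B(e,t)$ onto $B(x,t)$, it suffices to treat $x=e$. Also, by linearity in $X$ and finite dimensionality of $U_\lambda$, it suffices to consider $X=\vect X^\alpha$ with $d_\alpha\meg\lambda$ (up to the norm comparison $\abs{X}$, which is harmless since all norms on $U_\lambda$ are equivalent and $\deg$ is attained on a basis adapted to the filtration). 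Then $\Pc^{(e)}_{\lambda'}$ corresponds, via $\exp_G$ and the identification of $\gf$ with $\gf_*$, to the finite-dimensional space of polynomials of homogeneous degree $\meg\lambda'$ on $\gf_*$.

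Next I would rescale. Writing $P=\sum_{d_\beta\meg \lambda'} c_\beta x^\beta$ in the coordinates given by $(X_j)$, set $P_t\coloneqq P(t\cdot\,\,)$ (homogeneous dilation of $\gf_*$), so $P_t(y)=\sum c_\beta t^{d_\beta} y^\beta$. By \cite[Proposition 6.1]{ElstRobinson} (as used in Lemma~\ref{lem:55}), $\exp_G^{-1}(B(e,t))$ is comparable, uniformly for $t\in(0,\rho]$, to the homogeneous box $B_t=t\cdot B_1$, and $\theta^*(\beta)$ has a smooth density bounded above and below on $\exp_G^{-1}(\overline B(e,\rho))$. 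Hence
\[
\norm{P}_{L^q(B(e,t'),\beta)}\asymp t'^{Q_*/q}\norm{P_{t'}}_{L^q(B_{c})}
\]
for suitable fixed boxes, and on the finite-dimensional space $\Pc_{\lambda'}$ all norms $\norm{\,\cdot\,}_{L^q(B_c)}$, $\norm{\,\cdot\,}_{L^\infty(B_{C})}$ and the coefficient norm are equivalent, with constants uniform in $q\in[1,\infty]$ (by Hölder between $L^1$ and $L^\infty$ on a set of fixed measure). Since $t\meg\kappa t'$, passing from $t'$ to $t$ costs only a factor depending on $\kappa$ (the coefficients of $P_t$ are those of $P_{t'}$ multiplied by $(t/t')^{d_\beta}$, with $t/t'\meg\kappa$, or, more safely, bound $\sup_{B(e,t)}\abs{\cdot}\meg \sup_{B(e,\kappa t')}\abs{\cdot}$ and use equivalence of norms for the box of size $\kappa$). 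This yields $\norm{P}_{L^\infty(B(e,\kappa t'))}\meg C t'^{-Q_*/q}\norm{P}_{L^q(B(e,t'),\beta)}$.

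The main step, which I expect to be the real obstacle, is controlling $\vect X^\alpha P$ on $B(e,t)$ by $t^{-d_\alpha}\sup_{B(e,\kappa t')}\abs{P}$ uniformly in small $t$. Here I would use Lemma~\ref{lem:72}: $\vect X^\alpha=\sum_{\abs{\gamma}\meg\abs{\alpha}} a_{1,\alpha,\gamma}\vect Y^\gamma$ with $\abs{a_{1,\alpha,\gamma}(y)}\meg C\abs{y}_*^{(d_\gamma-d_\alpha)_+}\meg C t^{(d_\gamma-d_\alpha)_+}$ on $B(e,t)$. Each $\vect Y^\gamma$ is a homogeneous left-invariant operator on $G_*\cong\gf_*$ of degree $d_\gamma$, mapping $\Pc_{\lambda'}$ into itself; by homogeneity, $\sup_{B_t}\abs{\vect Y^\gamma P}\meg C t^{-d_\gamma}\sup_{B_{\kappa' t}}\abs{P}$ (rescale to $t=1$ and use finite dimensionality). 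Combining, each term is bounded by $C t^{(d_\gamma-d_\alpha)_+-d_\gamma}\sup\abs{P}\meg Ct^{-d_\alpha}\sup\abs{P}$ since $t\meg\rho$ is bounded. Finally $\norm{\vect X^\alpha P}_{L^p(B(e,t))}\meg \beta(B(e,t))^{1/p}\sup_{B(e,t)}\abs{\vect X^\alpha P}\meg C t^{Q_*/p-d_\alpha}\sup_{B(e,\kappa t')}\abs{P}$, and inserting the previous bound and $t'\Meg t/\kappa$ (together with $t'^{-Q_*/q}\meg \kappa^{Q_*/q}t^{-Q_*/q}$) gives the claim. One needs care that the comparison of $B(e,\kappa t')$ with Euclidean boxes stays inside $\mathbf U$; if $\kappa\rho$ exceeds the region, I would replace $\kappa t'$ by $\min(\kappa t',\rho)$, which suffices since $t\meg\rho$.
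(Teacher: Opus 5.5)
Your proposal is correct and follows essentially the same route as the paper. Both arguments reduce to $x=e$, transfer to $\gf$ via $\exp_G$ using the homogeneous boxes from \cite[Proposition 6.1]{ElstRobinson}, control $\vect Y^\gamma P$ through finite-dimensionality of $\Pc_{\lambda'}$ combined with homogeneous rescaling, and use Lemma~\ref{lem:72} to pass from $\vect X^\alpha$ to the $\vect Y^\gamma$ with coefficients $O(t^{(d_\gamma-d_\alpha)_+})$. The paper differs only in applying H\"older at the outset to reduce to $p=\infty$, $q=1$. It also disposes of the non-small range of $t,t'$, where the box comparison is only local, by the same finite-dimensionality argument; this is the precise version of your closing remark about staying inside $\mathbf U$.
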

	
	Notice that this lemma would be trivial if  $t,t'$ were fixed, essentially because $\Pc^{(e)}_{\lambda'}$ is finite-dimensional. 
	
	\begin{proof}
		By left-invariance, we may reduce to proving the assertion for $x=e$. In addition, by H\"older's inequality, we may assume that $p=\infty$ and $q=1$.  By~\cite[Proposition 6.1]{ElstRobinson}, we may take $\rho'\in (0,\min(1,\rho)]$  and $C_1>1$ so that
		\[
		\frac{1}{C_1}\abs{\exp_G(x)}_*\meg\max_{j\in J} \abs{\langle x , X_j\rangle}^{1/\deg(X_j)}\meg C_1 \abs{\exp_G(x)}_*
		\] 
		for every  $x\in \gf$ with $\max_{j\in J} \abs{\langle x , X_j\rangle}^{1/\deg(X_j)}\meg\rho'$ or $\abs{\exp_G(x)}_*\meg \rho'$. 
		In addition, observe that, since $\Pc^{(e)}_{\lambda'}$ is finite-dimensional (and consists of real analytic functions on the connected set $\mathbf U$), we may find a constant $C_2>0$ such that
		\[
		\norm{X  P}_{L^\infty(B(e,\rho),\beta)}\meg C_2 \norm{X P}_{L^\infty(\exp_G( B_{\rho'/C_1}),\beta  )}
		\]
		for every $P\in \Pc^{(e)}_{\lambda'}$ and for every $X\in U_\lambda$, 
		where for every $r>0$
        \[
        B_r\coloneqq \Bigl\{x\in \gf\colon \max_{j\in J} \abs{\langle x , X_j\rangle}^{1/\deg(X_j)}<r\Bigr\}.
        \]
		We may then assume that $t,t'\meg \rho'/C_1$, transfer the problem to $\gf$ by means of the exponential map, and replace $\beta$ with the Lebesgue measure $\beta_\gf$ on $\gf$, since the density of $\beta$ with respect to $\beta_\gf$ is a nowhere-vanishing real analytic function.  
		Now, take $k\in \N$ so that, if $d_\alpha\meg \lambda$, then $\abs{\alpha}\meg k$.
		Since $\Pc_{\lambda'}$ is finite-dimensional (and consists of real analytic functions), we then may find a constant $C_3>0$ such that
		\[
		\norm{\vect Y^\alpha P}_{L^\infty( B_{C_1} ,\beta_\gf)}\meg C_3\norm{P}_{L^1(B_{1/(\kappa C_1)},\beta_\gf)}
		\]
		for every $P\in \Pc_{\lambda'}$ and for every $\alpha$ with length $\meg k$. Applying the previous inequality to $P(t\,\cdot\,)$ we then see that
		\[
		\begin{split}
		\norm{\vect Y^\alpha P}_{L^\infty( B(e,t) ,\beta)}&\meg \norm{\vect Y^\alpha P}_{L^\infty(B_{C_1t} ,\beta_\gf)}\\
			&\meg C_3 t^{ -d_\alpha-Q_*}\norm{P}_{L^1( B_{t/(\kappa C_1)},\beta_\gf)}\\
			&\meg C_3 t^{ -d_\alpha-Q_*}\norm{P}_{L^1(B(e,t'),\beta_\gf)}
		\end{split}
		\]
		since $t\meg \kappa t'$.  By Lemma~\ref{lem:72} we may find a  constant $C_4>0$ and a family $(a_{\alpha,\gamma})_{\abs{\gamma}\meg \abs{\alpha}\meg k}$ so that $\vect X^\alpha=\sum_{\abs{\gamma}\meg \abs{\alpha}} a_{\alpha,\gamma} \vect Y^\gamma$ and $\abs{a_{\alpha,\gamma}(x)}\meg C_4 \abs{x}_*^{(d_\gamma-d_\alpha)_+}$ for every $x\in B(e,\rho)$. Then,
		\[
		\begin{split}
			t^{d_\alpha}\norm{\vect X^\alpha P}_{L^\infty( B(e,t) ,\beta)}&\meg C_4 t^{d_\alpha}\sum_{\abs{\gamma}\meg \abs{\alpha}} t^{(d_\gamma-d_\alpha)_+ }\norm{\vect Y^\gamma P}_{L^\infty( B(e,t) ,\beta)}\\
				&\meg C_3C_4 \sum_{\abs{\gamma}\meg \abs{\alpha}} t^{-Q_* }\norm{P}_{L^1(B(e,t'),\beta_\gf)}
		\end{split}
		\]
		since $t\meg \rho'/C_1\meg 1$. The assertion follows.
	\end{proof}
	
	\begin{cor}\label{cor:22}
		Take $\rho>0$ so that $\overline B(e,\rho)\subseteq \mathbf U$, and take $\lambda,\lambda'\Meg 0$ and $\kappa_1,\kappa_2>0$. 
		Take a family $(\tau_{x,t})_{x\in G, t>0}$ of elements of $C^\infty(G)$ such that $\tau_{x,t}$ is supported in $\overline B(x,\kappa_1 t)$ and
		\[
		\max_{X\in U_{\lambda'}, \abs{X}\meg 1} \norm{ X \tau_{x,t} }_{L^\infty(G)}\meg 1
		\]
		for every $x\in G$ and $t>0$.
		Then, for every \BanachSpace\ $\Banach$, $f\in \Sr'(G;\Banach)$, $x\in G$, and $t\in (0,\rho/\kappa_1]$, there is a unique $P_{f,x,t}\in \Pc^{(x)}_{\lambda^-}\otimes \Banach$ so that  \[
        \langle f, Q\tau_{x,t}\rangle= \langle P_{f,x,t}, Q\tau_{x,t}\rangle
        \]
        for every $Q\in \Pc^{(x)}_{\lambda^-}$.

		In addition, there is a constant $C>0$ such that 
		\[
		\norm{P_{f,x,t}\tau_{x,t}}_{L^\infty(G)}\meg C (\widetilde\Mc^{(\lambda',\kappa_1+\kappa_2)} f)(y)
		\]
		for every $x,y\in G$ with $d(x,y)\meg \kappa_2 t$, \BanachSpace\ $\Banach$, $f\in \Sr'(G;\Banach)$, and $t>0$ such that $t\in(0,\rho/\kappa_1]$.
	\end{cor}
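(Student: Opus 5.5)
Existence and uniqueness of $P_{f,x,t}$ come from finite-dimensional linear algebra; the bound then follows from an $L^1$--$L^\infty$ duality on the finite-dimensional space $\Pc^{(x)}_{\lambda^-}$, combined with Lemma~\ref{lem:80}.

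\textbf{Step 1: existence and uniqueness.} Consider the sesquilinear (or bilinear) form $(P,Q)\mapsto \int_G P Q\,\tau_{x,t}\,\dd\beta$ on $\Pc^{(x)}_{\lambda^-}$. This space is finite-dimensional: it is the image under $L_x$ of a finite-dimensional space of real analytic functions on $\mathbf U$. Since $t\meg \rho/\kappa_1$, the support of $\tau_{x,t}$ is contained in $x\mathbf U$, so the form is well defined. Here I need $\tau_{x,t}$ to be nondegenerate, for instance nonnegative and not identically zero on an open set. In the intended application $\tau_{x,t}$ equals $1$ on a ball $B(x,ct)$, so I would add this as an implicit hypothesis (or take it from the construction of Corollary~\ref{cor:21}). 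Under it, the form is an inner product, because a real analytic polynomial vanishing on an open set vanishes identically. Then $Q\mapsto\langle f,Q\tau_{x,t}\rangle$ is a $\Banach$-valued linear functional on a finite-dimensional inner product space, so it is represented by a unique $P_{f,x,t}\in\Pc^{(x)}_{\lambda^-}\otimes\Banach$. Concretely, take an orthonormal basis $(Q_k)$ of $\Pc^{(x)}_{\lambda^-}$ for the weight $\tau_{x,t}\beta$ and set $P_{f,x,t}=\sum_k \langle f,Q_k\tau_{x,t}\rangle Q_k$.

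\textbf{Step 2: the estimate.} By left translation we may assume $x=e$. Taking the basis above, it suffices to show that
\[
\norm{Q_k}_{L^\infty(B(e,\kappa_1 t))}\lesssim t^{-Q_*/2}
\]
and that $\abs{\langle f,Q_k\tau_{e,t}\rangle}\lesssim t^{Q_*/2}\,(\widetilde\Mc f)(y)$. The first bound comes from Lemma~\ref{lem:80} with $X=1$, $p=\infty$, $q=2$, comparing $B(e,\kappa_1t)$ with the ball $B(e,ct)$ where $\tau=1$. Normalisation gives $\norm{Q_k}_{L^2(B(e,ct))}\meg1$, hence $\norm{Q_k}_{L^\infty}\lesssim t^{-Q_*/2}$.

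For the second bound, write
\[
\langle f,Q_k\tau_{e,t}\rangle=(f*\phi)(y),\qquad \phi\coloneqq (Q_k\tau_{e,t})(y\,\cdot\,^{-1}),
\]
so that $\phi$ is supported in $\overline B(e,(\kappa_1+\kappa_2)t)$ because $d(e,y)\meg\kappa_2 t$. We then need
\[
\abs{X^R\phi}\lesssim t^{Q_*/2}\,t^{-Q_*-\deg X}\qquad\text{for }X\in U_{\lambda'},
\]
that is, a bound on left-invariant derivatives of $Q_k\tau_{e,t}$ after using Proposition (3) on $X^\dag$ versus $X^R$. Expand by the Leibniz rule. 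Derivatives falling on $Q_k$ are controlled by Lemma~\ref{lem:80} by $t^{-Q_*/2-\deg}$. Derivatives falling on $\tau$ are controlled by the hypothesis.

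\textbf{Main obstacle.} The hypothesis bounds $X\tau_{x,t}$ only by $1$, with no factor $t^{-\deg X}$. With that normalisation the desired scaling holds only if $\tau_{x,t}$ is read as rescaled, i.e.\ with $\abs{X\tau_{x,t}}\meg t^{-\deg X}$; otherwise one loses powers of $t$. I would interpret the hypothesis in the scaled sense, as in Corollary~\ref{cor:21}(2), which is what the application requires. Granting this, $C^{-1}t^{-Q_*/2}\phi\in\widetilde B_{\lambda',(\kappa_1+\kappa_2)t}$, so
\[
\abs{\langle f,Q_k\tau\rangle}\lesssim t^{Q_*/2}\,\widetilde\Mc^{(\lambda',\kappa_1+\kappa_2)}f(y)
\]
for $t\meg1$; for larger $t$ the range is bounded and constants absorb it. Summing over the boundedly many $k$ gives $\norm{P_{f,x,t}\tau_{x,t}}_\infty\lesssim \widetilde\Mc f(y)$, with $\norm{\tau}_\infty\meg1$ used at the end.
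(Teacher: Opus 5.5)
Your argument is correct for $t\meg 1$ and rests on the same ingredients as the paper's proof. The first is Lemma~\ref{lem:80} at scale $t$, used to compare $L^\infty$ norms with $L^2(\tau_{x,t}\cdot\beta)$ norms on $\Pc^{(x)}_{\lambda^-}$ and to bound derivatives. The second is the identity $\langle f,Q\tau_{x,t}\rangle=(f*\phi)(y)$ with $\phi=(Q\tau_{x,t})(y\,\cdot\,^{-1})$ supported in $\overline B(e,(\kappa_1+\kappa_2)t)$, so that a suitable multiple of $\phi$ is admissible in $(\widetilde\Mc^{(\lambda',\kappa_1+\kappa_2)}f)(y)$. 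Only the assembly differs.

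The paper tests $f$ against the single function $\overline{P_{f,x,t}}\,\tau_{x,t}$. Since $\langle f,\overline{P_{f,x,t}}\,\tau_{x,t}\rangle=\norm{P_{f,x,t}}^2_{L^2(\tau_{x,t}\cdot\beta)}$, it obtains $\norm{P_{f,x,t}\tau_{x,t}}^2_{L^\infty(G)}\lesssim t^{-Q_*}\norm{P_{f,x,t}}^2_{L^2(\tau_{x,t}\cdot\beta)}\lesssim\norm{P_{f,x,t}\tau_{x,t}}_{L^\infty(G)}(\widetilde\Mc^{(\lambda',\kappa_1+\kappa_2)}f)(y)$ and then divides. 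This needs no basis, but the complex conjugation forces a preliminary reduction to scalar-valued $f$ through the functionals $v'\in\Banach'$. You instead bound each coefficient $\langle f,Q_k\tau_{x,t}\rangle\in\Banach$ and sum over the $\dim\Pc_{\lambda^-}$ basis elements, which handles $\Banach$-valued $f$ directly.

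The hypotheses you make explicit ($\tau_{x,t}\Meg 0$, bounded below on a ball of radius comparable to $t$, derivatives of size $t^{-\deg(X)}$) are exactly what the paper uses tacitly and what its applications provide. There $\tau_{x,t}=\zeta_{\alpha,j}$ is only $\Meg 1/N$ on the inner ball rather than $=1$, which changes nothing in your argument.

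Two of your side remarks are wrong, however.

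First, the ``main obstacle'' is misdiagnosed. For $t\meg 1$ the uniform bound $\abs{X\tau_{x,t}}\meg\abs{X}$ is \emph{stronger} than $\abs{X\tau_{x,t}}\meg\abs{X}t^{-\deg(X)}$, so no powers of $t$ are lost. The real defect of the literal hypothesis is different: once $\lambda'\Meg\dd_0$, it is incompatible for small $t$ with $\tau_{x,t}$ being bounded below on $B(x,ct)$. That is why the scaled reading is the intended one.

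Second, for $t>1$ nothing is absorbed by constants. Your $\phi$ is then supported in a ball of radius $(\kappa_1+\kappa_2)t$, beyond the scales seen by $\widetilde\Mc^{(\lambda',\kappa_1+\kappa_2)}$, and the inequality can actually fail. For instance, on $\R^n$ with the Euclidean filtration one has $\mathbf U=G$, so $\rho$ may be large. With $\lambda=1$ one has $P_{f,0,t}=\langle f,\tau_{0,t}\rangle/\int\tau_{0,t}\,\dd\beta$. Take a nonzero bump $f\Meg 0$ supported where $\tau_{0,t}>0$ but at distance more than $\kappa_1+\kappa_2$ from $0$. Then $(\widetilde\Mc^{(\lambda',\kappa_1+\kappa_2)}f)(0)=0$ while $P_{f,0,t}\neq 0$.

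The paper's proof tacitly needs $t\meg 1$ as well. This is harmless, since in the applications one is free to take $\rho$ small.
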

	
	This result replaces~\cite[Lemma 3.8]{FollandStein} in our setting. The proof is slightly simpler.
	
	\begin{proof}
		Observe first that $P_{f,x,t}$ is unique and well defined since the bilinear mapping
        \[
        (P,Q)\mapsto \langle P\vert Q \tau_{x,t} \rangle
        \]
        is non-degenerate on $\Pc^{(x)}_{\lambda^-}$. In fact, if $(p_j)$ is an orthonormal basis of $\Pc_{\lambda^-}^{(x)}$, endowed with the scalar product induced by $L^2(\tau_{x,t}\cdot \beta)$, then $P_{f,x,t}=\sum_j  \langle f, p_j\tau_{x,t}\rangle p_j$. In addition, observe that, if we prove the assertion for $\langle f, v'\rangle$ for every $v'\in \overline B_{\Banach'}(0,1)$ (so that $P_{\langle f, v'\rangle,x,t}=\langle P_{f,x,t},v'\rangle$), then we get the assertion for $f$. Consequently, we may reduce to the case $\Banach=\C$ (or $\R$). 
		Then, observe that
		\[
		\langle f, \overline{P_{f,x,t}}\tau_{x,t}\rangle=\langle P_{f,x,t},\overline{ P_{f,x,t}}\tau_{x,t}\rangle=\norm{P_{f,x,t} }_{L^2(\tau_{x,t}\cdot \beta)}^2.
		\]
		Moreover, 
		\[
		\langle f,\overline{P_{f,x,t}}\tau_{x,t}\rangle= (f* (\overline{P_{f,x,t}}\tau_{x,t})(y\,\cdot\,^{-1}))(y)
		\]
		and by Lemma~\ref{lem:80}   there is a constant $C_1>0$ such that
		\[
		\norm{  P_{f,x,t}\tau_{x,t}}_{L^\infty(G)}\meg C_1 t^{-Q_*/2}\norm{P_{f,x,t}  }_{L^2(\tau_{x,t}\cdot \beta)}
		\]
		and such that, for every $X\in U_{\lambda'}$ with $\abs{X}\meg 1$ and for every $y,z\in G$,
		\[
		\begin{split}
			\abs{ X^R[(P_{f,x,t}\tau_{x,t})(y\,\cdot\,^{-1})](z)}&= \abs{ [X^\dag(P_{f,x,t}\tau_{x,t})](y z^{-1})}\\
				&\meg  \norm{ X^\dag(P_{f,x,t}\tau_{x,t})}_{L^\infty(G)}\\
				&\meg C_1 t^{-\deg(X)} \norm{  P_{f,x,t}\tau_{x,t}}_{L^\infty(G)}.
		\end{split}
		\]
		In addition, observe that $\abs{z}_*\meg \abs{y^{-1}x}_*+\kappa_1 t\meg (\kappa_1+\kappa_2)t$ for every $z\in G$ such that $y z^{-1}\in \supp \tau_{x,t}\subseteq \overline B(x,\kappa_1 t)$.   Consequently, by the definition of $\widetilde \Mc^{(\lambda',\kappa_1+\kappa_2)}$ we see that
		\[
		\abs{(f* (\overline{P_{f,x,t}}\tau_{x,t})(y\,\cdot\,^{-1}))(y)}\meg  C_1(\kappa_1+\kappa_2)^{Q_*+\deg(X)} t^{Q_*} \norm{  P_{f,x,t}\tau_{x,t}}_{L^\infty(G)}(\widetilde \Mc^{(\lambda',\kappa_1+\kappa_2)} f)(y),
		\]
		so that there is a constant $C_2>0$ such that
		\[
		\begin{split}
		\norm{  P_{f,x,t}\tau_{x,t}}_{L^\infty(G)}^2&\meg C_1^2 t^{-Q_*}\norm{P_{f,x,t}  }_{L^2(\tau_{x,t}\cdot \beta)}^2\\
			&\meg C_2 \norm{  P_{f,x,t}\tau_{x,t}}_{L^\infty(G)}(\widetilde \Mc^{(\lambda',\kappa_1+\kappa_2)} f)(y),
		\end{split}
		\]
		whence the conclusion since $\norm{  P_{f,x,t}\tau_{x,t}}_{L^\infty(G)}$ is finite.
	\end{proof}
	
	\begin{lem}\label{lem:82}
		Take $\kappa>0$ and $\lambda\Meg 0$, and set $k\coloneqq \max\Set{\abs{\alpha}\colon \alpha \in \N^J, d_\alpha\meg \lambda}$.
		 Then,  
		\[
		\widetilde \Mc^{(\lambda,\kappa)}(fg)(x) \meg 2^k  (\widetilde \Mc^{(\lambda,\kappa)}f)(x)  \max_{d_\alpha \meg \lambda} t^{d_\alpha}\norm{\vect X^\alpha g}_{L^\infty(G)} 
		\]
		for every $t\in (0, \kappa ]$, $x\in G$, \BanachSpace\ $\Banach$, 
		$f\in \Sr'(G;\Banach)$, and $g\in C^\infty(G)$, provided that   $g$ is supported in $\overline B(x , t)$ when $t<  \kappa$. 
	\end{lem}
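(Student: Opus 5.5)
The plan is to prove the inequality by testing $fg$ against an arbitrary element of $\widetilde B_{\lambda,s}$ and transferring the factor $g$ onto the test function. Fix $x\in G$, $t\in(0,\kappa]$, $s\in(0,\kappa]$ and $\phi\in \widetilde B_{\lambda,s}$. We have
\[
((fg)*\phi)(x)=\langle f, g\,\phi(\,\cdot\,^{-1}x)\rangle=(f*\psi)(x),
\qquad \psi(y)\coloneqq g(xy^{-1})\phi(y).
\]
So it suffices to show that $\psi/(2^k M)\in \widetilde B_{\lambda,s'}$ for some $s'\in(0,\kappa]$, where $M\coloneqq \max_{d_\alpha\meg\lambda} t^{d_\alpha}\norm{\vect X^\alpha g}_{L^\infty(G)}$. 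Then $\abs{((fg)*\phi)(x)}\meg 2^kM(\widetilde\Mc^{(\lambda,\kappa)}f)(x)$, and taking the supremum over $s$ and $\phi$ gives the claim.

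For the support, note that $\psi$ is supported in $\overline B(e,s)$ because $\phi$ is. When $t<\kappa$, $g$ is supported in $\overline B(x,t)$, and then $g(xy^{-1})\neq0$ forces $\abs{y}_*\meg t$. Hence $\psi$ is supported in $\overline B(e,\min(s,t))$, and we take $s'\coloneqq\min(s,t)$. When $t=\kappa$, we simply take $s'=s$.

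For the derivatives, we need $\abs{X^R\psi}\meg 2^kM\abs{X} s'^{-Q_*-\deg X}$ for $X\in U_\lambda$. It is enough to check this on the spanning monomials $(\vect X^\alpha)^R$ with $d_\alpha\meg\lambda$, and it is here that the norm normalisation on $U(\gf)$ enters. Since $y\mapsto g(xy^{-1})$ is a left translate of $\check g$, right-invariant derivatives act on it via $X^R\check g=(X^\dag g)\check{}$. Using Leibniz' rule for $(\vect X^\alpha)^R=X_{j_1}^R\cdots X_{j_m}^R$, we get a sum over the $2^{\abs{\alpha}}\meg 2^k$ ways of splitting the factors. Each term has the form $\pm\bigl((\vect X^{\beta})^\dag g\bigr)(xy^{-1})\,(\vect X^{\gamma})^R\phi(y)$, where $\vect X^\beta,\vect X^\gamma$ are ordered subwords with $d_\beta+d_\gamma=d_\alpha$.

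We bound each factor separately. The $g$ factor is at most $M t^{-d_\beta}$, and the $\phi$ factor is at most $s^{-Q_*-d_\gamma}$. In the case $s'=t\meg s$, this gives
\[
t^{-d_\beta}s^{-Q_*-d_\gamma}\meg s'^{-Q_*-d_\alpha},
\]
and also in the case $s'=s\meg t$, using $s^{-d_\beta}\Meg t^{-d_\beta}$. For $t=\kappa$ and $s\meg\kappa$ the same holds.

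The main obstacle is bookkeeping rather than substance. Subwords of $\alpha$ are not ordered monomials $\vect X^\beta$, so I would define $M$ via the maximum over all words in the $X_j$ of total degree $\meg\lambda$, or reorder using commutators, which lowers the degree and is harmless. I also need to control the passage from monomials to general $X\in U_\lambda$ with the factor $\abs{X}$, using orthonormality of the PBW-type basis in the chosen scalar product.
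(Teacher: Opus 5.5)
Your argument is essentially the paper's proof: the identity $((fg)*\phi)(x)=(f*(g(x\,\cdot\,^{-1})\phi))(x)$, the support of the new test function in $\overline B(e,\min(s,t))$, and the Leibniz expansion of $(\vect X^\alpha)^R$ into $2^{\abs{\alpha}}\meg 2^k$ terms, each bounded exactly as you do. The paper also glosses over the bookkeeping you flag: it writes $\vect X^{\alpha_1}g$ where $(\vect X^{\alpha_1})^\dag g$ is meant, and it checks only the monomials $\vect X^\alpha$. Handling this properly costs a constant depending on $\lambda$ and $\kappa$, so neither argument literally gives $2^k$, but only the existence of some constant is used later. Three corrections to your last paragraph: subwords of an ordered monomial are themselves ordered, and the real issue is the order reversal caused by $\dag$. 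Also, $y\mapsto g(xy^{-1})$ is a right, not left, translate of $\check g$; right-invariant operators do commute with it, so your formula is correct. Finally, the $\vect X^\alpha$ are not orthonormal for the quotient scalar product on $U(\gf)$, so you should invoke equivalence of norms on the finite-dimensional space $U_\lambda$ instead.
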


	\begin{proof}
		For simplicity, we may reduce to the case in which $ \max_{d_\alpha \meg \lambda}t^{d_\alpha}\norm{\vect X^\alpha g}_{L^\infty(G)}= 1$. 
		Take $t'\in (0,\kappa]$ and $\phi\in \widetilde B_{\lambda, t'}$.
		Observe first that, for every $x\in G$, 
		\[
		[(fg)*\phi](x)=\langle f, g \phi(\,\cdot\,^{-1}x)\rangle= [f*(g(x\,\cdot\,^{-1}) \phi)](x).
		\]
		Set $B\coloneqq \overline B(e, t)$ if $t< \kappa $, and $B\coloneqq G$ otherwise, so that $g$ is supported in $x B$ and $B$ is symmetric. In particular, $g(x\,\cdot\,^{-1})\phi$ is supported in $\overline B(e,t')\cap B\subseteq \overline B(e,\min( t,t'))$.
		In addition, given $\alpha$ with $d_\alpha\meg \lambda$ and $z\in G$, we have
		\[
		\begin{split}
			& \min( t,t')^{Q_*+d_\alpha} \abs{(\vect X^\alpha)^R(g(x\,\cdot\,^{-1}) \phi)(z)}\\
				&\qquad \meg  \min( t,t')^{Q_*+d_\alpha} \sum_{\alpha_1+\alpha_2=\alpha} \frac{\alpha!}{\alpha_1!\alpha_2!} \abs{(\vect X^{\alpha_1} g)(x z^{-1}) (\vect X^{\alpha_2})^R\phi(z) }\\
				&\qquad \meg \min( t,t')^{Q_*+d_\alpha} \sum_{\alpha_1+\alpha_2=\alpha} \frac{\alpha!}{\alpha_1!\alpha_2!} t^{-d_{\alpha_1}} t'^{-Q_*-d_{\alpha_2}}\\
				&\qquad\meg    2^{\abs{\alpha}}.  
		\end{split}
		\]
		The assertion follows.
	\end{proof}
	
	\begin{cor}\label{cor:23}
		Keep the hypotheses and the notation of Corollary~\ref{cor:22}, take $\kappa>0$, and define $P_{f,x,t}\coloneqq 0$ whenever $\kappa_1 t> \rho$.  Then, there is a constant $C>0$ such that
		\[
		\widetilde \Mc^{(\lambda',\kappa)} ((f-P_{f,x,t})\tau_{x,t})(y)\meg C(\widetilde \Mc^{(\lambda', \max(\kappa,\kappa_1+\kappa_2))} f)(y)
		\]
		for every \BanachSpace\ $\Banach$, $f\in \Sr'(G;\Banach)$, $x,y\in G$ with $d(x,y)\meg \kappa_2 t$, and $t>0$.
	\end{cor}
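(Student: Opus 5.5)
The plan is to split $(f-P_{f,x,t})\tau_{x,t}=f\tau_{x,t}-P_{f,x,t}\tau_{x,t}$ and bound the two maximal functions separately at the point $y$, using the subadditivity of $\widetilde\Mc^{(\lambda',\kappa)}$. When $\kappa_1 t>\rho$ we have $P_{f,x,t}=0$, so only the first term is present. Throughout we use that $\widetilde\Mc^{(\lambda',\kappa)}\meg \widetilde\Mc^{(\lambda',\max(\kappa,\kappa_1+\kappa_2))}$, since enlarging the range of $t$ only increases the supremum.

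\textbf{The term $f\tau_{x,t}$.} I would apply Lemma~\ref{lem:82} with $g=\tau_{x,t}$ and scale $s\coloneqq\min(\kappa_1 t,\kappa)$.
\begin{itemize}
\item If $\kappa_1 t<\kappa$, then $g$ is supported in $\overline B(x,s)$. The hypothesis $\max_{X\in U_{\lambda'},\abs{X}\meg1}\norm{X\tau_{x,t}}_\infty\meg 1$ is scale-free, so the bound $s^{d_\alpha}\norm{\vect X^\alpha\tau_{x,t}}_\infty\meg C$ is immediate only when $s\meg 1$.
\item If $\kappa_1 t\Meg\kappa$, take $s=\kappa$; no support condition is needed, and $\kappa^{d_\alpha}$ is a constant.
\end{itemize}
In both cases Lemma~\ref{lem:82} gives a bound by $C\,\widetilde\Mc^{(\lambda',\kappa)}f(x)$, evaluated at the point of the lemma.

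There is a mismatch to address here: Lemma~\ref{lem:82} evaluates at the centre $x$ of the support, while we need the bound at $y$ with $d(x,y)\meg\kappa_2 t$. I would fix this by redoing the lemma's computation at $y$. For $\phi\in\widetilde B_{\lambda',t'}$ with $t'\meg\kappa$, we have $[(f\tau_{x,t})*\phi](y)=[f*(\tau_{x,t}(y\,\cdot\,^{-1})\phi)](y)$, and the function $\tau_{x,t}(y\,\cdot\,^{-1})\phi$ is supported in $\overline B(e,t')$. Its right derivatives are controlled by the Leibniz rule exactly as in the proof of Lemma~\ref{lem:82}, so it is a constant multiple of an element of $\widetilde B_{\lambda',t'}$. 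In effect we simply use the case ``$t=\kappa$'' of Lemma~\ref{lem:82}, which imposes no support condition, at the point $y$.

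The main obstacle is that this only works if $\kappa^{d_\alpha}\norm{\vect X^\alpha\tau_{x,t}}_\infty$ is bounded. That holds under the literal hypothesis $\norm{X\tau_{x,t}}_\infty\meg1$, so we obtain $\widetilde\Mc^{(\lambda',\kappa)}(f\tau_{x,t})(y)\meg C\widetilde\Mc^{(\lambda',\kappa)}f(y)$. If the intended normalization were instead $t^{-\deg X}$, one would use $\min(t',\kappa_1t)$ in the Leibniz estimate, exactly as in Lemma~\ref{lem:82}, and the same conclusion follows.

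\textbf{The term $P_{f,x,t}\tau_{x,t}$ when $\kappa_1t\meg\rho$.} Here I would use Corollary~\ref{cor:22}, which gives $\norm{P_{f,x,t}\tau_{x,t}}_\infty\meg C\,\widetilde\Mc^{(\lambda',\kappa_1+\kappa_2)}f(y)$. For $\phi\in\widetilde B_{\lambda',t'}$ we have $\norm{\phi}_{L^1}\meg Ct'^{-Q_*}\beta(B(e,t'))\meg C'$, since $t'\meg\kappa$ is bounded and $\beta(B(e,r))\asymp r^{Q_*}$ locally. Hence
\[
\abs{(P_{f,x,t}\tau_{x,t})*\phi}\meg \norm{P_{f,x,t}\tau_{x,t}}_\infty\norm{\phi}_{L^1}\meg C''\,\widetilde\Mc^{(\lambda',\kappa_1+\kappa_2)}f(y)
\]
uniformly in $\phi$ and in the evaluation point. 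The sup-norm bound is what makes this work: the evaluation point is irrelevant, so the pointwise bound at $y$ passes to the supremum. Summing the two estimates gives the claim.
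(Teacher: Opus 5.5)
Your proposal is correct and follows the paper's proof. Both split off $P_{f,x,t}\tau_{x,t}$, control $f\tau_{x,t}$ by the Leibniz computation behind Lemma~\ref{lem:82}, and bound $(P_{f,x,t}\tau_{x,t})*\phi$ by $\norm{P_{f,x,t}\tau_{x,t}}_{L^\infty(G)}\norm{\phi}_{L^1(G)}$ together with Corollary~\ref{cor:22}. One small correction, which only affects constants: when you run the computation at $y$ with a scale-dependent normalization of $\tau_{x,t}$, the function $\tau_{x,t}(y\,\cdot\,^{-1})\phi$ is supported in $\overline B(e,\min(t',(\kappa_1+\kappa_2)t))$, not in the ball of radius $\min(t',\kappa_1 t)$ (the paper's appeal to Lemma~\ref{lem:82} with $\min(\kappa_1 t,\kappa)$ has the same harmless imprecision).
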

	
	This result replaces~\cite[Lemma 3.12]{FollandStein} in our setting.

	\begin{proof}
		Take $t'\in (0,1]$ and $\phi\in \widetilde B_{\lambda',t'}$, we wish to estimate $ \abs{[((f-P_{f,x,t})\tau_{x,t})*\phi](y)}$. The term $\abs{[( f \tau_{x,t})*\phi](y)}$ may be estimated by means of Lemma~\ref{lem:82}, applied with $t=\min(\kappa_1 t, \kappa)$  and $g=\tau_{x,t}$. 
		Then, observe that, if  $P_{f,x,t}\neq 0$ (so that $\kappa_1 t\meg \rho$), by Corollary~\ref{cor:22} we may find a constant $C_1>0$ such that
		\[
		\begin{split}
			\abs*{\int_G P_{f,x,t}(z) \tau_{x,t}(z) \phi(z^{-1}y)\,\dd \beta(z)}&\meg C_1(\widetilde \Mc^{(\lambda',\kappa_1+\kappa_2)} f)(y)\int_{B(x,\kappa_1 t)} \phi(z)\,\dd \beta(z)\\
				&\meg C_1^2 (\widetilde \Mc^{(\lambda',\max(\kappa,\kappa_1+\kappa_2))} f)(y).
		\end{split}
		\]
		The assertion follows.
	\end{proof}

Recall that $q(\lambda)\coloneqq [\lambda/\dd_0]+1$.
	\begin{lem}\label{lem:77}
		Take $\rho>0$  such that $\overline B(e,\rho)\subseteq \mathbf U$, and take $\lambda \Meg 0$. 
		There are  $c,C>0$ such that, for every $x\in G$ and $\phi\in C^{q(\lambda)}(G)$ supported in $\overline B(x,\rho)$, there is $Q\in \Pc^{(x)}_{\lambda^-}$ such that
		\[
		\abs{ \vect X^\alpha(\phi-Q)(y)  }\meg C\max_{ \abs{\gamma}\meg q(\lambda) } d(x,y)^{[\max(\lambda,d_\gamma)-d_\alpha]_+} \sup_{z\in B(x,cd(x,y))}\abs{(\vect X^\gamma   \phi)(z)}
		\]
		for every $y\in B(x,\rho)$ and for every $\alpha$ with length $\meg q(\lambda)$, while
		\[
		\sup_{y\in B(x,t)} \abs{Q(y)}\meg C \max_{d_\alpha<\lambda} t^{d_\alpha}\abs{(\vect X^\alpha   \phi)(x)}
		\]
		for every $t\in (0,\rho]$.
	\end{lem}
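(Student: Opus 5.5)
The plan is to build $Q$ as the homogeneous Taylor polynomial of $\phi$ at $x$, computed in exponential coordinates, and to turn the remainder into group derivatives through Lemma~\ref{lem:72}.

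By left translation we may take $x=e$, since $\Pc^{(x)}=L_x\Pc^{(e)}$ and the $\vect X^\alpha$ are left-invariant. We then work in the chart $\theta=\exp_G$ on $B_\gf(0,\mathrm{inj}(G))$. As in the proofs of Lemmas~\ref{lem:55} and~\ref{lem:80}, $\gf$ is identified with $\gf_*$ through the bases $(X_j)$ and $(Y_j)$, so that $\Pc_{\lambda^-}$ consists of the polynomials of homogeneous degree $<\lambda$. Set $\Phi\coloneqq\phi\circ\exp_G$ and let
\[
Q\circ\exp_G\coloneqq\sum_{d_\alpha<\lambda}\frac{1}{\alpha!}(\vect Y^\alpha\Phi)(0)\, x^\alpha ,
\]
the Folland--Stein homogeneous Taylor polynomial of $\Phi$ at $0$ relative to the left-invariant fields $Y_j$ of $G_*$. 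A variant uses coordinate derivatives $\partial^\alpha\Phi(0)$ with $d_\alpha<\lambda$. At $0$ both are finite combinations of the values $(\vect X^\gamma\phi)(e)$, by Lemma~\ref{lem:72}; the combination is exact at $e$, so the polynomial weights $\abs{\,\cdot\,}_*^{(d_\gamma-d_\alpha)_+}$ disappear. Also $d_\alpha<\lambda$ forces $\abs{\alpha}\meg[\lambda/\dd_0]<q(\lambda)$, so only $C^{q(\lambda)}$ regularity is used.

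The bound on $Q$ follows by homogeneity. The monomial $x^\alpha$ is bounded by $C t^{d_\alpha}$ on $\exp_G^{-1}(B(e,t))$, thanks to the comparison $\abs{\exp_G(x)}_*\asymp\max_j\abs{\langle x,X_j\rangle}^{1/d_j}$ from~\cite[Proposition 6.1]{ElstRobinson}. It remains to re-express $(\vect Y^\alpha\Phi)(0)$ in terms of $(\vect X^\gamma\phi)(e)$ with $d_\gamma<\lambda$. If this is awkward, I will instead define $Q$ directly by Taylor coefficients $(\vect X^\alpha\phi)(e)$ through a triangular change of basis. The homogeneity of Lemma~\ref{lem:72} makes the change upper triangular with respect to degree, which is exactly what gives the stated bound.

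For the derivative estimate, $R\coloneqq\Phi-Q\circ\exp_G$ satisfies $\vect Y^\alpha R(0)=0$ for $d_\alpha<\lambda$. The main step is a homogeneous Taylor inequality~\cite[Theorem 1.37]{FollandStein}, adapted to the non-nilpotent setting. I would apply the stratified mean value theorem along a piecewise path made of segments in the directions $X_j$. Each segment in direction $X_j$ contributes a factor $d(e,y)^{d_j}$, and we integrate $q(\lambda)$ times, stopping once the accumulated degree reaches $\lambda$. This yields
\[
\abs{\vect X^\alpha R(y)}\meg C\sum_{\gamma}d(e,y)^{[\max(\lambda,d_\gamma)-d_\alpha]_+}\sup_{B(e,c\,d(e,y))}\abs{\vect X^\gamma R},
\]
with $\abs{\gamma}\meg q(\lambda)$. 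When $d_\alpha\Meg\lambda$ there is nothing to prove: one simply bounds $\vect X^\alpha\phi$ and $\vect X^\alpha Q$ separately, and the exponent becomes $d_\gamma-d_\alpha\Meg0$ after the conversions of Lemma~\ref{lem:72}. Replacing $\vect X^\gamma R$ by $\vect X^\gamma\phi$ requires estimating $\vect X^\gamma Q$. Here the bound on $Q$ combined with Lemma~\ref{lem:80} gives $\abs{\vect X^\gamma Q}\lesssim\sum_{d_\beta<\lambda} d^{d_\beta-d_\gamma}\abs{\vect X^\beta\phi(e)}$, and these terms are absorbed into the right-hand side.

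I expect the main obstacle to be the Taylor inequality for $\vect X^\alpha R$ with the correct mixed exponents $[\max(\lambda,d_\gamma)-d_\alpha]_+$. The vector fields $X_j$ are not homogeneous in exponential coordinates, and the error terms $a_{\ell,\alpha,\gamma}$ of Lemma~\ref{lem:72} generate lower-order derivatives. The plan is to run the Euclidean Taylor formula in exponential coordinates with anisotropic weights, converting coordinate derivatives to $\vect X^\gamma$ through Lemma~\ref{lem:72}, whose weights $\abs{\,\cdot\,}_*^{(d_\gamma-d_\alpha)_+}$ restore the right powers. The bookkeeping of these powers against the order-$q(\lambda)$ remainder is the delicate point.
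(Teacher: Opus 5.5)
Your choice of $Q$ (in its $\partial^\alpha\Phi(0)$ form) and your bound on $Q$ are fine, but the central estimate does not follow from what you propose. The step that fails as written is the passage from $\vect X^\gamma R$ to $\vect X^\gamma\phi$.

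Write $d=d(e,y)$. For $d_\alpha<\lambda$, your intermediate inequality has terms $d^{d_\gamma-d_\alpha}\sup\abs{\vect X^\gamma R}$ with $d_\gamma\Meg\lambda$ on the right. On $G$, unlike on $G_*$, these operators need not annihilate $Q$, because $\vect X^\gamma$ is not homogeneous in exponential coordinates. Your bound via Lemma~\ref{lem:80} is $\abs{\vect X^\gamma Q}\lesssim\sum_{d_\beta<\lambda}d^{d_\beta-d_\gamma}\abs{(\vect X^\beta\phi)(e)}$. It leaves the term $d^{d_\beta-d_\alpha}\abs{(\vect X^\beta\phi)(e)}$ with $d_\beta<\lambda$, and the right-hand side of the lemma does not control it. For $\beta=0$ and $\phi\equiv1$ near $e$, this term is $d^{-d_\alpha}$, whereas the right-hand side is $Cd^{\lambda-d_\alpha}$. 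So the claimed absorption is false: Lemma~\ref{lem:80} only sees $\deg(\vect X^\gamma)$ and ignores that operators of degree $\Meg\lambda$ nearly annihilate $\Pc_{\lambda^-}$.

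It can be repaired by writing $\vect X^\gamma=\sum_{\gamma'}a_{1,\gamma,\gamma'}\vect Y^{\gamma'}$ via Lemma~\ref{lem:72}. Only the terms with $d_{\gamma'}<\lambda$ survive, and these are bounded by the jet of $\phi$ at $e$. This gives $\abs{\vect X^\gamma Q}=O(\max_{d_\beta<\lambda}\abs{(\vect X^\beta\phi)(e)})$. Even so, the iterated mean value inequality on $G$ is only sketched, and you yourself call it ``the delicate point''.

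Separately, your explicit formula $\sum_{d_\alpha<\lambda}\frac{1}{\alpha!}(\vect Y^\alpha\Phi)(0)x^\alpha$ is not the Folland--Stein Taylor polynomial when $\gf_*$ is not abelian. Take the Heisenberg algebra with $Y_1=\partial_1-\frac{x_2}{2}\partial_3$, $Y_2=\partial_2+\frac{x_1}{2}\partial_3$, $Y_3=\partial_3$ (degrees $1,1,2$), $\Phi=x_3$ near $0$, and $\lambda\in(2,3]$. Then $(Y_1Y_2\Phi)(0)=\frac12$, so your formula gives $x_3+\frac12x_1x_2$. Consequently $\Phi-Q$ is a non-zero polynomial of degree $2<\lambda$, which violates the estimate.

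The missing observation is that nothing has to be adapted to $G$. After transferring to $\gf\cong\gf_*\cong G_*$, as you do, $\Phi$ is just a $C^{q(\lambda)}$ function on the homogeneous group $G_*$. The paper takes $Q$ to be the unique element of $\Pc_{\lambda^-}$ with $(\vect Y^\alpha Q)(0)=(\vect Y^\alpha\Phi)(0)$ for $d_\alpha<\lambda$. It then checks that $\vect Y^\alpha Q$ is the Taylor polynomial of $\vect Y^\alpha\Phi$ of degree $<\lambda-d_\alpha$, and that the orders involved satisfy $\abs{\gamma}+\abs{\alpha}\meg q(\lambda)$.

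Next, \cite[Theorem 1.37]{FollandStein}, applied verbatim on (the opposite group of) $G_*$, bounds $\abs{\vect Y^\alpha(\Phi-Q)(y)}$ by $\norm{y}^{d_\gamma}\sup_{\norm{z}\meg c'\norm{y}}\abs{(\vect Y^\gamma\vect Y^\alpha\Phi)(z)}$ with $d_\gamma+d_\alpha\Meg\lambda$. These homogeneous operators do kill $Q$, so nothing has to be absorbed. When $d_\alpha\Meg\lambda$, $\vect Y^\alpha(\Phi-Q)$ is simply $\vect Y^\alpha\Phi$.

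The non-homogeneity of the $X_j$ enters only at the two ends, through Lemma~\ref{lem:72}. On the left one writes $\vect X^\alpha$ in terms of the $\vect Y^\gamma$; on the right one writes $\vect Y^\gamma\vect Y^\alpha$ in terms of the $\vect X^{\gamma'}$. The weights $\abs{\,\cdot\,}_*^{(d_{\gamma'}-d_\gamma)_+}$ then produce exactly the exponents $[\max(\lambda,d_\gamma)-d_\alpha]_+$.

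Your fallback would also work, for the same reason: the anisotropic Euclidean Taylor formula in exponential coordinates, with $\partial^\gamma Q=0$ for $d_\gamma\Meg\lambda$, followed by Lemma~\ref{lem:72}. It must replace the inequality in terms of $\vect X^\gamma R$, not feed into it.
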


	\begin{proof}
		We may reduce to the case $x=e$ and transfer the problem to $\gf$ by means of $\exp_G$. In addition,  identify $\gf$ with $\gf_*$ by means of the bases $(X_j)$ and $(Y_j)$, and identify  $\gf_*$ with $G_*$ by means of $\exp_{G_*}$.  
		Furthermore, note that $\max_{j\in J} \abs{\langle \,\cdot\, , X_j\rangle}^{1/\deg(X_j)}$ is a homogeneous norm on $\gf_*$ and, by~\cite[Proposition 6.1]{ElstRobinson}, we may find a constant $C_1>1$ such that
		\[
		\frac{1}{C_1}\abs{x}_*\meg \max_{j\in J} \abs{\langle x , X_j\rangle}^{1/\deg(X_j)}\meg C_1 \abs{x}_*
		\]
		for every $x\in \overline B(e,\rho)$.\footnote{Notice that the cited reference only ensures the existence of such a $C_1$ in some neighbourhood of $e$. The assertion may then be extended to the whole compact set $\overline B(e,\rho)$ by continuity, possibly for a larger $C_1$.}
		Set  $\lambda'\coloneqq \max \Set{\mi< \lambda\colon U_{\mi}\neq U_{\mi^-}}$ and let $Q$ be the unique element of $\Pc_{\lambda^-}=\Pc_{\lambda'}$ such that
        \[
        (\vect Y^\alpha Q)(0)=(\vect Y^\alpha \phi)(0)
        \]
        for every $\alpha$ with $d_\alpha <\lambda$, that is, $d_\alpha\meg \lambda'$.  Observe that,   if $d_\alpha \meg \lambda'$, then $\abs{\alpha}\meg q(\lambda')-1$.
		Let us prove that, if $\alpha\in \N^J$ and $d_\alpha \meg\lambda'$, then
        \[
        (\vect Y^\gamma \vect Y^\alpha Q)(0)=(\vect Y^\gamma \vect Y^\alpha \phi)(0)
        \]
        for every $\gamma$ with $d_\gamma\meg \lambda'- d_\alpha$. 
		Observe that $\vect Y^\gamma \vect Y^\alpha =\sum_{d_{\gamma'}=d_\gamma+d_\alpha} a_{\gamma'} \vect Y^{\gamma'}$ for some family $(a_\gamma')$ of real numbers. Consequently,
		\[
		(\vect Y^\gamma \vect Y^\alpha Q)(0)=\sum_{\dd_{\gamma'}=d_\gamma+d_\alpha} a_{\gamma'}  (\vect Y^{\gamma'}Q)(0)=\sum_{\dd_{\gamma'}=d_\gamma+d_\alpha} a_{\gamma'}  (\vect Y^{\gamma'}\phi)(0)=(\vect Y^\gamma \vect Y^\alpha \phi)(0),
		\]
		whence our claim.   Observe also that, if $d_\gamma\meg \lambda'-d_\alpha$, then $\abs{\gamma}\meg q(\lambda'-d_\alpha)-1$. 
		Since clearly $\abs{\alpha}\meg d_\alpha/\dd_0$, we have
        \[
        \frac{\lambda'-d_\alpha}{\dd_0} \meg \frac{\lambda'}{\dd_0}-\abs{\alpha},
        \text{ whence }
        q(\lambda'-d_\alpha)-1\meg q(\lambda')-1-\abs{\alpha},
        \]
        so that $\abs{\gamma}+\abs{\alpha}\meg q(\lambda')-1$.
		Consequently,~\cite[Theorem 1.37]{FollandStein}, applied to the opposite group of $G_*$, shows that there are $c',C_2>0$ such that
		\[
		\abs{  \vect Y^\alpha (\phi-Q)(y)}\meg C_2 \max_{\abs{\gamma}\meg q(\lambda'-d_\alpha),d_\gamma>\lambda'-d_\alpha  }\norm{y}^{d_\gamma} \sup_{\norm{z}\meg c'\norm{y}}\abs{(\vect Y^\gamma \vect Y^\alpha \phi)(z)}
		\]
		for every $\alpha$ with $d_\alpha\meg \lambda'$, that is, $d_\alpha< \lambda$. Observe that, if $d_\gamma> \lambda'-d_\alpha$, then $d_\gamma\Meg \lambda-d_\alpha$. 		
		Applying Lemma~\ref{lem:72} we then see that there is a constant $C_3>0$ such that
		\[
		\abs{  \vect Y^\alpha (\phi-Q)(y)}\meg C_3 \max_{\abs{\gamma}\meg q(\lambda')}\abs{y}_*^{\max(\lambda,d_\gamma)-d_\alpha} \sup_{z\in B(e, C_1c'\abs{y}_*)}\abs{(\vect X^\gamma   \phi)(z)}
		\]
		for every $\alpha$ with $d_\alpha < \lambda$. If, otherwise, $d_\alpha>\lambda$ (but $\abs{\alpha}\meg q(\lambda)$), then arguing as before we see that
		\[
		\abs{  \vect Y^\alpha (\phi-Q)(y)}=\abs{  (\vect Y^\alpha \phi)(y)}\meg  \max_{\abs{\gamma}\meg q(\lambda)}\abs{y}_*^{[\max(\lambda,d_\gamma)-d_\alpha]_+} \sup_{z\in B(e,  \abs{y}_*)}\abs{(\vect X^\gamma   \phi)(z)}.
		\]
		Another application of Lemma~\ref{lem:72} leads to the first assertion. For what concerns the second assertion, observe that there is a constant $C_4>0$ such that
		\[
		\sup_{y\in B(e,\rho)} \abs{Q(y)}\meg C_4 \max_{d_\alpha \meg \lambda'} \abs{(\vect Y^\alpha Q)(0)}=C_4 \max_{d_\alpha \meg \lambda'} \abs{(\vect Y^\alpha \phi)(0)},
		\]
		so that the conclusion follows arguing as in the proof of Lemma~\ref{lem:80}.
	\end{proof}

	\begin{lem}\label{lem:84}
	Keep the hypotheses and the notation of Corollary~\ref{cor:22} and define $P_{f,x,t}\coloneqq 0$ whenever $\kappa_1t> \rho$. Take $a\Meg 0$ and assume that  $\lambda \Meg a-Q_*$ and that  $\lambda' \Meg \dd_M ([\lambda/\dd_0]+1)$, where $\dd_M=\max\Set{\deg(X)\colon X\in \gf}$. 
	Take   $\kappa_3>\kappa_1 $ and set  $\kappa\coloneqq \max\big(  1,\frac{\kappa_1+\kappa_2 }{\kappa_3-\kappa_1},\kappa_1+\kappa_2\big)$. There is a constant $C>0$ such that
	\[
	\widetilde \Mc^{(\lambda',1)} ((f-P_{f,x,t})\tau_{x,t})(y)\meg C\chi_{B(x,\kappa_1 t+1)}(y)\frac{(\widetilde \Mc^{(\lambda',\kappa)} f)(z)}{(1+\abs{y^{-1}x}/t)^a}
	\]
	for every \BanachSpace\ $\Banach$, $f\in \Sr'(G;\Banach)$, $x,y,z\in G$  with $d(x,y)\Meg \kappa_3 t$ and $d(x,z)\meg \kappa_2 t$, and $t>0$.
\end{lem}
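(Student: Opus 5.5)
The estimate is the analogue of~\cite[Lemma 3.12/3.13]{FollandStein}, and I would prove it by fixing $t'\in(0,1]$ and $\phi\in\widetilde B_{\lambda',t'}$ and bounding $\abs{[((f-P_{f,x,t})\tau_{x,t})*\phi](y)}$. \emph{Support.} Since $(f-P_{f,x,t})\tau_{x,t}$ is supported in $\overline B(x,\kappa_1 t)$ and $\phi$ in $\overline B(e,t')$, the convolution vanishes unless $d(x,y)\meg\kappa_1 t+t'\meg \kappa_1 t+1$, which gives the factor $\chi_{B(x,\kappa_1 t+1)}(y)$. Moreover, $d(x,y)\Meg\kappa_3 t$ forces $t'\Meg (\kappa_3-\kappa_1)t$. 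So only ``large'' test functions, compared with $t$, contribute. The quantity to bound is
\[
\langle f,\tau_{x,t}\,\phi(\,\cdot\,^{-1}y)\rangle-\langle P_{f,x,t},\tau_{x,t}\,\phi(\,\cdot\,^{-1}y)\rangle.
\]

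\emph{Case $\kappa_1 t>\rho$.} Here $P_{f,x,t}=0$ and $t\asymp 1$, so $(1+d(x,y)/t)^{-a}$ is bounded below on the support. It therefore suffices to have $\abs{(f\tau_{x,t})*\phi(y)}\lesssim(\widetilde\Mc^{(\lambda',\kappa)}f)(z)$. I would write $\tau_{x,t}\phi(\,\cdot\,^{-1}y)$ as a translate $\psi(z\,\cdot\,^{-1})$ with $\psi$ supported in a ball of radius $\meg d(z,x)+\kappa_1 t\meg(\kappa_1+\kappa_2)t\meg\kappa t$. Its derivatives are controlled by Corollary~\ref{cor:21}-type bounds, so up to a constant $\psi\in\widetilde B_{\lambda',\kappa t}$.

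\emph{Main case $\kappa_1 t\meg\rho$.} Set $g\coloneqq\phi(\,\cdot\,^{-1}y)$ restricted near $\overline B(x,\kappa_1t)$. By Lemma~\ref{lem:77}, applied to a cutoff of $g$ supported in $\overline B(x,\rho)$, I get $Q\in\Pc^{(x)}_{\lambda^-}$. By the defining property of $P_{f,x,t}$ (Corollary~\ref{cor:22}),
\[
\langle f-P_{f,x,t},\tau_{x,t}Q\rangle=0,
\]
so the quantity equals $\langle f-P_{f,x,t},\tau_{x,t}(g-Q)\rangle$. I then estimate $\vect X^\alpha(g-Q)$ on $B(x,\kappa_1 t)$ by Lemma~\ref{lem:77}. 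Since $\phi\in\widetilde B_{\lambda',t'}$ and $\lambda'\Meg\dd_M q(\lambda)$, every $\vect X^\gamma$ with $\abs{\gamma}\meg q(\lambda)$ lies in $U_{\lambda'}$, so $\abs{\vect X^\gamma g}\lesssim t'^{-Q_*-d_\gamma}$ (converting $X^R$ to $X$ via Lemma~\ref{lem:72}). This yields
\[
t^{d_\alpha}\abs{\vect X^\alpha(g-Q)}\lesssim t'^{-Q_*}(t/t')^{\lambda}
\]
on $B(x,\kappa_1 t)$, using $t\lesssim t'$. Hence $(t/t')^{-\lambda}t'^{Q_*}t^{-Q_*}\tau_{x,t}(g-Q)$, translated, lies (up to constants) in $\widetilde B_{\lambda',\kappa t}$. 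Combining Corollary~\ref{cor:22}, Lemma~\ref{lem:82} and Corollary~\ref{cor:23} (for the $P_{f,x,t}$ part and the $f$ part, evaluated at $z$ with $d(x,z)\meg\kappa_2 t$), I obtain
\[
\abs{\cdots}\lesssim (t/t')^{\lambda+Q_*}(\widetilde\Mc^{(\lambda',\kappa)}f)(z).
\]
Finally, $d(x,y)\meg\kappa_1 t+t'$ and $d(x,y)\Meg\kappa_3 t$ give $t'\gtrsim d(x,y)$, so
\[
(t/t')^{\lambda+Q_*}\lesssim(1+d(x,y)/t)^{-(\lambda+Q_*)}\meg(1+d(x,y)/t)^{-a},
\]
because $\lambda\Meg a-Q_*$.

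\emph{Main obstacle.} The hardest step is the bookkeeping in the main case. One must check that the cut-off Taylor remainder $\tau_{x,t}(g-Q)$ really has all $U_{\lambda'}$-derivatives with the right $t$-scaling: the $Q$ term needs the second estimate of Lemma~\ref{lem:77}, and the commutation $X^R$ versus $X$ uses Lemma~\ref{lem:72}. One must also check that the relevant radius is at most $\kappa t$ with $\kappa$ as in the statement. The choice $\kappa\Meg(\kappa_1+\kappa_2)/(\kappa_3-\kappa_1)$ covers rescaling in terms of $t'$ when $t'$ is the natural scale, so I would split further according to whether $t'\meg t$ or not and use the appropriate scale in each subcase.
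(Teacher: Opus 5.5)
Your proposal is correct and is essentially the paper's argument: the support restriction $t'\Meg(\kappa_3-\kappa_1)t$, subtracting the Taylor polynomial $Q\in\Pc^{(x)}_{\lambda^-}$ of Lemma~\ref{lem:77} via the orthogonality that defines $P_{f,x,t}$, reading the $f$-term off $\widetilde\Mc^{(\lambda',\kappa)}f$ at $z$, and bounding the $P$-term by Corollary~\ref{cor:22}; the only difference is that the paper splits at $t'=t\kappa_1/\rho$ and handles comparable scales with no Taylor expansion, whereas you split at $\kappa_1 t=\rho$ and run the Taylor argument for every admissible $t'$, which works because your bound $(t/t')^{\lambda+Q_*}$ only uses $t\lesssim 1$, $t'\meg 1$ and $t\lesssim t'$. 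Two details need tightening: use the radius $(\kappa_1+\kappa_2)t\meg\frac{\kappa_1+\kappa_2}{\kappa_3-\kappa_1}t'\meg\kappa$ rather than $\kappa t$ (since $t$ need not be $\meg 1$), and for $d_\alpha\meg\lambda'$ with $\abs{\alpha}>q(\lambda)$ bound $\vect X^\alpha Q$ through the second estimate of Lemma~\ref{lem:77} on a ball of fixed radius (the paper takes $t''=\kappa_4$ with $\kappa_4>\rho$ and $\overline B(e,\kappa_4)\subseteq\mathbf U$, which also lets your cutoff of $\phi(\,\cdot\,^{-1}y)$ equal $1$ on $\overline B(x,\kappa_1 t)$ even when $\kappa_1 t=\rho$), because at radius $\kappa_1 t$ you would only get $t^{d_\alpha}\abs{\vect X^\alpha Q}\lesssim t'^{-Q_*}$, losing the factor $(t/t')^{\lambda}$.
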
	

This replaces~\cite[Lemma 3.13]{FollandStein} in our setting. 

\begin{proof}
	\textsc{Step I} Take $t'\in (0,1]$ with $t'\meg t\kappa_1/\rho$  and $\phi\in \widetilde B_{\lambda',t'}$.  
	Arguing as in the proof of Corollary~\ref{cor:23}, we see that, for every $y,z\in G$,
	\[
	[((f-P_{f,x,t})\tau_{x,t})*\phi](y)=[f*( \tau_{x,t}(z\,\cdot\,^{-1}) \phi(\,\cdot\, z^{-1} y)  )](z) - \int_G P_{f,x,t}(z') \tau_{x,t}(z') \phi(z'^{-1}y)\,\dd \beta(z').
	\]
	In addition, by Corollary~\ref{cor:22} there is a constant $C_1>0$ such that, if $\kappa_1 t\meg \rho$, then
	\[
	\norm{P_{f,x,t}  \tau_{x,t}}_{L^\infty(G)}\meg C_1 (\widetilde \Mc^{(\lambda',\kappa_1+\kappa_2)} f)(z)
	\]
	for every $x\in G$ and for every $z\in B(x,\kappa_2 t)$. Consequently,  there is a constant $C_2>0$ such that, 
	\[
	\begin{split}
		\abs*{ \int_G P_{f,x,t}(z') \tau_{x,t}(z') \phi(z'^{-1}y)\,\dd \beta(z') }&\meg C_1 (\widetilde \Mc^{(\lambda',\kappa_1+\kappa_2)} f)(z) t'^{-Q_*}\beta(B(x,\kappa_1 t)\cap B(y,t'))\\
			&\meg C_2 (\widetilde \Mc^{(\lambda',\kappa_1+\kappa_2)} f)(z)\chi_{[0,\kappa_1 t+t')}(d(x,y))\\
			&\meg  C_2(1+\kappa_1(1+1/\rho))^a \frac{(\widetilde \Mc^{(\lambda',\kappa_1+\kappa_2)} f)(z)}{(1+\abs{y^{-1}x}_*/t)^a}
	\end{split}
	\]
	for every $x,y,z\in G$ such that $d(x,y)\Meg \kappa_3 t$ and $d(x,z)\meg \kappa_2 t$. 
	Observe that, if $d(x,z)\meg \kappa_2 t$, $d(x,y)\Meg \kappa_3 t$ and $\tau_{x,t}(z z'^{-1}) \phi(z' z^{-1}y)\neq 0$, then $z z'^{-1}\in \overline B(x,\kappa_1 t)\cap \overline B(y,t')$ so that
	\[
	\abs{z'}_*\meg \abs{z' z^{-1}x}_* + \abs{z^{-1} x}_* \meg \kappa_1 t+ \kappa_2 t
	\]
	and  
	\[
	t'\Meg \abs{z' z^{-1} y}_*\Meg \abs{x^{-1}y}_*-\abs{z' z^{-1}x}_* \Meg (1-\kappa_1/\kappa_3)\abs{y^{-1}x}_*.
	\]
	This shows that $t\kappa_1/\rho > t'\Meg (\kappa_3-\kappa_1) t$. Consequently, $\tau_{x,t}(z\,\cdot\,^{-1}) \phi(\,\cdot\, z^{-1} y)$ is supported in
    \[
    \overline B(e, (\kappa_1+\kappa_2)t)\subseteq \overline B\big(e, \frac{\kappa_1+\kappa_2 }{\kappa_3-\kappa_1} t'\big).
    \]
	Furthermore,  by Corollary~\ref{cor:21} there is a constant $C_3>0$ such that, for every $\alpha$ with $d_\alpha \meg \lambda'$,
	\[
	\begin{split}
		& \abs{(\vect {X}^\alpha)^R  [\tau_{x,t}(z\,\cdot\,^{-1}) \phi(\,\cdot\, z^{-1} y)  ](z') }\\
			&\qquad\meg  \sum_{\alpha_1+\alpha_2=\alpha} \frac{\alpha!}{\alpha_1!\alpha_2!} \abs{(\vect {X}^{\alpha_1} \tau_{x,t})(zz'^{-1}) (\vect X^{\alpha_2})^R\phi(z' z^{-1} y)  }\\
			&\qquad\meg C_3   \sum_{\alpha_1+\alpha_2=\alpha} \frac{\alpha!}{\alpha_1!\alpha_2!} t^{-d_{\alpha_1}} t'^{-Q_*-d_{\alpha_2}}  \\
			&\qquad \meg C_3^2 \bigg(\frac{\kappa_1+\kappa_2 }{\kappa_3-\kappa_1} t'\bigg)^{-Q_*-d_\alpha}.
	\end{split}
	\]
    Thus,  $\tau_{x,t}(z\,\cdot\,^{-1}) \phi(\,\cdot\, z^{-1} y)\in \widetilde B_{\lambda', (\kappa_1+\kappa_2 )/(\kappa_3-\kappa_1)}$.
	Using that if $\tau_{x,t}(z\,\cdot\,^{-1}) \phi(\,\cdot\, z^{-1} y)  \neq 0$, then
    \[
    t\kappa_1/\rho\Meg t'\Meg (1-\kappa_1/\kappa_3) \abs{y^{-1}x}_*\Meg (\kappa_3-\kappa_1) t,
    \]
    we see that there is a constant $C_4>0$ such that
	\[
	\abs{[((f-P_{f,x,t})\tau_{x,t})*\phi](y)}\meg C_4 \frac{(\widetilde \Mc^{(\lambda',\kappa)} f)(z)}{(1+\abs{y^{-1}x}/t)^a}
	\]
	for every $x,y,z\in G$ such that $d(x,y)\Meg \kappa_3 t$ and $d(x,z)\meg \kappa_2 t$.
	
	\textsc{Step II} Take $t'\in (0,1]$ with $t'> t\kappa_1/\rho  $ and $\phi\in \widetilde B_{\lambda',t'}$. In particular, $\kappa_1 t< \rho$.  
	Take $(\tau'_t)_{t>0}$ as in Corollary~\ref{cor:21} with $\kappa_1=\rho$ and $\kappa_2=\kappa_4$, where $\kappa_4>\rho$ is chosen so that $\overline B(e, \kappa_4)\subseteq \mathbf U$. Consequently, if we set $\tau'_{x,t}\coloneqq \tau_t'(x^{-1}\,\cdot\,)$, then
	\[
	[((f-P_{f,x,t})\tau_{x,t})*\phi](y)=\langle (f-P_{f,x,t})\tau_{x,t}, \phi(\,\cdot\,^{-1}y)\rangle= \langle (f-P_{f,x,t})\tau_{x,t}, \tau'_{x,1}\phi(\,\cdot\,^{-1}y)\rangle.
	\]	 
	By Lemma~\ref{lem:77} (and Lemma~\ref{lem:80}), we may find  $c,C_5>0$  and $Q\in \Pc_{\lambda^-}^{(x)}$ so that
	\[
	\begin{split}
		\abs{\vect X^\alpha (\tau_{x,1}' \phi(\,\cdot\,^{-1}y)-Q)(y')}&\meg C_5 \max_{\abs{\gamma}\meg q(\lambda)} d(x,y')^{[\max(\lambda,d_\gamma)-d_\alpha]_+} \sup_{z'\in B(x, c d(x,y'))}\abs{\vect X^\gamma (\tau_{x,1}' \phi(\,\cdot\,^{-1}y))(z')}\\
			&\meg C_5^2\max_{\abs{\gamma}\meg q(\lambda)} t^{[\max(\lambda,d_\gamma)-d_\alpha]_+}  t'^{-Q_*-d_{\gamma}} 
	\end{split}
	\]
	for every $y'\in B(x, \kappa_1 t)$ and for every $\alpha$ with length $\meg q(\lambda)$, and such that 
	\[
	\sup_{B(e,t'')} \abs{\vect X^\alpha Q}\meg C_5 \max_{d_\gamma<\lambda} t''^{d_\gamma-d_\alpha} \abs{\vect X^\gamma (\tau_{x,1}'\phi(\,\cdot\,^{-1}y))(x)}=C_5 \max_{d_\gamma<\lambda} t''^{d_\gamma-d_\alpha} \abs{(\vect X^\gamma)^R \phi(x^{-1}y)}
	\]
	for every $t''\in (0,  \kappa_4]$ and for every $\alpha$ with $d_\alpha \meg \lambda'$. In particular, combining the cases $t''=\kappa_1 t$ and $t''=\kappa_4$, we see that 
	\[
	\sup_{B(e,\kappa_1 t)} \abs{\vect X^\alpha Q}\meg C_5\max(\kappa_1^\lambda, \kappa_1^{-d_\alpha},\kappa_4^\lambda, \kappa_4^{-d_\alpha}) \max_{d_\gamma<\lambda} t^{(d_\gamma-d_\alpha)_+} \abs{(\vect X^\gamma)^R \phi(x^{-1}y)},
	\]
	for every $\alpha$ with $d_\alpha \meg \lambda'$. Consequently, there is a constant $C_6>0$ such that
	\[
	\begin{split}
	\abs{\vect X^\alpha (\tau_{x,1}' \phi(\,\cdot\,^{-1}y)-Q)(y')}&\meg \sum_{\alpha_1+\alpha_2=\alpha} \abs{(\vect X^{\alpha_1} \tau_{x,1}')(y')}\abs{(\vect X^{\alpha_2})^R \phi(y'^{-1}y)}+ \abs{(\vect X^{\alpha} Q)(y') }\\
		&\meg C_6  t'^{-Q_*-d_\alpha} + C_6   \max_{d_\gamma< \lambda} t^{(d_\gamma-d_\alpha)_+}  t'^{-Q_*-d_\gamma} 
	\end{split}
	\]
	for every $\alpha$ with $d_\alpha \meg \lambda'$ and $\abs{\alpha}>q(\lambda )$, and for every $y'\in B(e, \kappa_1 t )$. 
	Taking into account the fact that, if $\abs{\gamma}\meg q(\lambda )$, then $d_\gamma\meg \dd_M([\lambda /\dd_0]+1)\meg \lambda'$, we see that  the previous estimates imply that there is a constant $C_7>0$ such that
	\[
	\abs{\vect X^\alpha (\tau_{x,1}' \phi(\,\cdot\,^{-1}y)-Q)(y')}\meg C_7 \max_{d_\gamma\meg \lambda'} t^{[\max(\lambda ,d_\gamma)-d_\alpha]_+}   t'^{-Q_*-d_{\gamma}} 
	\]
	for every $\alpha$ with $d_\alpha\meg \lambda'$ and for every $y'\in B(x,\kappa_1 t)$.
	
	Set $R\coloneqq \tau_{x,1}' (\phi(\,\cdot\,^{-1}y)-Q)$, so that
	\[
	[((f-P_{f,x,t})\tau_{x,t})*\phi](y)= \langle (f-P_{f,x,t})\tau_{x,t}, R\rangle=[f*((\tau_{x,t}R)(z\,\cdot\,^{-1}) )](z)-\int_G P_{f,x,t}\tau_{x,t} R\,\dd \beta(z').
	\]
	Now, observe that $\tau_{x,t}R$ is supported in $\overline B(x,\kappa_1 t)$ and that, by Corollary~\ref{cor:21} and the previous computations, there is a constant $C_8>0$ such that
	\[
	\begin{split}
		\abs{ \vect X^\alpha (\tau_{x,t}R)(z')} &\meg  \sum_{\alpha_1+\alpha_2=\alpha} \frac{\alpha!}{\alpha_1!\alpha_2!} \abs{(\vect X^{\alpha_1}  \tau_{x,t} )(z')} \abs{(\vect X^{\alpha_2} R)(z')}\\
			&\meg C_8 \max_{ \alpha_1+\alpha_2=\alpha }   t^{-d_{\alpha_1}} \max_{d_\gamma\meg \lambda'} t^{[\max(\lambda ,d_\gamma)-d_{\alpha_2}]_+} t'^{-Q_*-d_\gamma}
	\end{split}
	\]
	for every $z'\in G$ and for every $\alpha$ with $d_\alpha \meg \lambda'$. In addition, since $1\Meg t'\Meg t \kappa_1/\rho $ and $\lambda \Meg a-Q_*$,
	\[
	t'^{a-Q_*-d_\gamma}\meg t'^{a-Q_*-\max(\lambda ,d_{\alpha_2},d_\gamma)}\meg (\kappa_1/\rho)^{a-Q_*-\max(\lambda ,d_{\alpha_2},d_\gamma)} t^{a-Q_*-\max(\lambda ,d_{\alpha_2},d_\gamma)}
	\]
	for every $\gamma$ and $\alpha_2$, so that 
	\[
	t^{-d_{\alpha_1}+(\max(\lambda ,d_\gamma)-d_{\alpha_2})_+} t'^{a-Q_*-d_\gamma}\meg (\kappa_1/\rho)^{a-Q_*-\max(\lambda ,d_{\alpha_2},d_\gamma)} t^{a-Q_*-d_{\alpha_1}-d_{\alpha_2}}
	\]
	for every $\gamma$ and $\alpha_1,\alpha_2$. Moreover, observe that, if $\abs{y^{-1}x}_*> \kappa_1 t+ t'$, then $[((f-P_{f,x,t})\tau_{x,t})* \phi](y)=0$, so that we may assume that $\kappa_3 < \abs{y^{-1}x}_*/t\meg \kappa_1+ t'/t$.
	Consequently, there is a constant $C_9>0$ such that
	\[
	\abs{ \vect X^\alpha (\tau_{x,t}R)(z')}\meg C_9   \frac{ t^{-Q_*-d_\alpha} }{(\abs{x^{-1}y}_*/t)^a} \meg C_9^2   \frac{ t^{ -Q_*-d_\alpha} }{(1+\abs{x^{-1}y}_*/t)^a}
	\]
	for every $z'\in G$ and for every $\alpha$ with $d_\alpha \meg \lambda'$. Using Corollary~\ref{cor:22}, we then see that there is a constant $C_{10}>0$ such that
	\[
    \begin{split}
	\abs*{\int_G P_{f,x,t}\tau_{x,t} R\,\dd \beta(z')}&\meg C_{10} (\widetilde \Mc^{(\lambda',\kappa)} f)(z) \frac{t^{-Q_*}\beta(B(x,\kappa_1 t))}{(1+\abs{x^{-1}y}_*/t)^a}\\
        &\meg C_{10}^2 (\widetilde \Mc^{(\lambda',\kappa)} f)(z) \frac{1}{(1+\abs{x^{-1}y}_*/t)^a}.
    \end{split}
	\]
	Analogously,  
	\[
	\begin{split}
		\abs{(\vect{X}^\alpha)^R ((\tau_{x,t}R)(z\,\cdot\,^{-1}))(z')}&=\abs{\vect{X}^\alpha (\tau_{x,t}R)(z z'^{-1})}\\
			&\meg  \chi_{B(x,\kappa_1 t)}(z z'^{-1}) C_9^2   \frac{ t^{ -Q_*-d_\alpha} }{(1+\abs{x^{-1}y}_*/t)^a}\\
			&\meg \chi_{B(e,(\kappa_1+\kappa_2)t)} C_9^2   \frac{ t^{ -Q_*-d_\alpha} }{(1+\abs{x^{-1}y}_*/t)^a}
	\end{split}
	\]
	for every $z'\in G$ and $\alpha$ with $d_\alpha \meg \lambda'$, since $\abs{z'}_*\meg \abs{z' z^{-1}x}_*+\abs{z^{-1}x}_*\meg \kappa_1 t+ \kappa_2 t$ for every $z'\in G$ such that $z z'^{-1} \in B(x,\kappa_1 t)$.
	Consequently,
	\[
	\abs*{[f*((\tau_{x,t}R)(z\,\cdot\,^{-1}) )](z)}\meg C_9^2  \frac{  (\widetilde \Mc^{(\lambda',\kappa)} f)(z)  }{(1+\abs{x^{-1}y}_*/t)^a}   ,
	\]
	so that the assertion follows.	
\end{proof}

	The next result is a convenient version of Whitney's covering lemma.
	\begin{lem}\label{lem:81}
		Let $\kappa_1,\kappa_2,\kappa_3>0$ be so that $2< \kappa_1\meg \kappa_2<\kappa_3$. There are $N\in\N$ and $c>1$ such that for every proper open subset $\Omega$ of $G$ with $\beta(\Omega)<\infty$ there is a countable family $(x_j,r_j)_{j\in J}$ of elements of $G\times (0,+\infty)$ such that the following hold:
		\begin{itemize}
			\item the $B(x_j,r_j)$, $j\in J$, are pairwise disjoint;
			
			\item the $B(x_j, \kappa_1 r_j)$, $j\in J$, cover $\Omega$;
			
			\item the $B(x_j,\kappa_2 r_j)$, $j\in J$, are contained in $\Omega$;
			
			\item $B(x_j,\kappa_3 r_j)\setminus \Omega \neq \emptyset$ for every $j\in J$;
			
			\item $\sum_{j\in J} \chi_{B(x_j,\kappa_2 r_j)}\meg N\chi_\Omega$ on $G$;
			
			\item for every $j,j'\in J$ such that $B(x_j,\kappa_2 r_j)\cap B(x_{j'},\kappa_2 r_{j'})\neq \emptyset$ one has $r_j/c \meg r_{j'}\meg c r_j$.
		\end{itemize}
	\end{lem}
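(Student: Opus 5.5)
The plan is the classical Whitney construction in a doubling metric space, with constants adapted to $\kappa_1,\kappa_2,\kappa_3$. For $x\in\Omega$ set $\delta(x)\coloneqq d(x,G\setminus\Omega)$; this is positive and finite because $\Omega$ is a proper open subset. It is $1$-Lipschitz, and it is bounded on $\Omega$. To see boundedness, note that $B(x,\delta(x))\subseteq\Omega$, so $\beta(B(e,\delta(x)))\meg\beta(\Omega)<\infty$, and $\beta(B(e,r))\asymp r^{Q_G}\to\infty$ when $Q_G>0$. (If $Q_G=0$, then $G$ is compact and $\delta$ is bounded by the diameter.) Fix $\theta\in(0,1)$ small, and for every $x\in\Omega$ consider the radius $r(x)\coloneqq \theta\,\delta(x)$. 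The radii are uniformly bounded, so Vitali's covering lemma (valid since $\beta$ is doubling and the metric is left-invariant) applies to the family $\{B(x,r(x))\colon x\in\Omega\}$. It yields a pairwise disjoint subfamily $B(x_j,r_j)$ with $\Omega\subseteq\bigcup_j B(x_j,\kappa_1 r_j)$. For the enlargement constant we do not use the crude factor $5$; we use a variant with factor $2+\eta<\kappa_1$. This is possible precisely because $\kappa_1>2$: we select maximal disjoint subfamilies among balls with radii in dyadic-type bands $(\sup r/(1+\eta/2)^{k+1},\sup r/(1+\eta/2)^k]$. Countability follows because disjoint balls of positive measure in a $\sigma$-finite space are countably many.

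Next we check the containment and escape conditions. Since $d(x_j,G\setminus\Omega)=\delta(x_j)=r_j/\theta$, we get $B(x_j,\kappa_2 r_j)\subseteq\Omega$ as soon as $\kappa_2\theta\meg 1$. We also get $B(x_j,\kappa_3r_j)\setminus\Omega\neq\emptyset$ as soon as $\kappa_3\theta>1$. Both hold with $\theta\in(1/\kappa_3,1/\kappa_2]$, which is possible since $\kappa_2<\kappa_3$. This is the only place where the choice of $\theta$ is forced.

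For the comparability of radii, suppose $B(x_j,\kappa_2 r_j)\cap B(x_{j'},\kappa_2 r_{j'})\neq\emptyset$. Then $d(x_j,x_{j'})<\kappa_2(r_j+r_{j'})$. The $1$-Lipschitz property gives
\[
r_j/\theta\meg r_{j'}/\theta+\kappa_2(r_j+r_{j'}),
\]
that is, $r_j(1-\kappa_2\theta)\meg r_{j'}(1+\kappa_2\theta)$. This requires $\kappa_2\theta<1$ strictly, so we choose $\theta\in(1/\kappa_3,1/\kappa_2)$. We then get $c=(1+\kappa_2\theta)/(1-\kappa_2\theta)$.

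For bounded overlap, fix $y\in G$ and let $J_y$ be the set of $j$ with $y\in B(x_j,\kappa_2r_j)$. All these balls contain $y$, so they pairwise intersect, and by the previous step all radii $r_j$, $j\in J_y$, lie within a factor $c$ of a fixed $r$. The disjoint balls $B(x_j,r_j)$ with $j\in J_y$ are contained in $B(y,(\kappa_2+1)cr)$ and have measure at least $\beta(B(e,r/c))$, since left-invariance makes this independent of the centre. The doubling estimate $\beta(B(e,sr))\meg c^2(1+s)^D\beta(B(e,r))$ then bounds $\card J_y$ by a constant $N$ depending only on $\kappa_2$, $c$ and $D$. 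Moreover $J_y$ is empty unless $y\in\Omega$, which gives the required bound by $N\chi_\Omega$.

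I expect the main obstacle to be the covering step with the sharp enlargement factor $\kappa_1$, which can be arbitrarily close to $2$. There the selection has to be carried out carefully by radius bands, with ratio close to $1$, and every point must be shown to be covered. Given $x\in\Omega$, the ball $B(x,r(x))$ meets some selected ball $B(x_j,r_j)$ with $r_j>r(x)/(1+\eta/2)$. Then $d(x,x_j)<r(x)+r_j<(2+\eta)r_j\meg\kappa_1 r_j$. For this to work we need the bands to be chosen so that the selected ball is processed no later than the band containing $r(x)$, and we need the maximal disjointness to hold across all earlier bands. Everything else is routine bookkeeping.
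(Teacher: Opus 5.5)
Your proposal is correct and follows essentially the paper's route. Both arguments take radii proportional to $d(x_j,G\setminus\Omega)$ with a factor in $(1/\kappa_3,1/\kappa_2)$, use a Vitali selection with enlargement factor in $(2,\kappa_1)$, get comparability of radii from the $1$-Lipschitz property of the distance to the complement, and get bounded overlap from disjointness plus doubling. The remaining differences are cosmetic: the paper cites Vitali for balls of radius $\eta\, r'(x)$ and then sets $r_j=\eta' r'(x_j)$ with $\kappa_1\eta'=\kappa\eta$, whereas you use a single factor $\theta$ and prove the sharp-factor Vitali lemma yourself via radius bands (note also that your letter $c$ is used both for the comparability constant and for the doubling constant).
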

	
	 Notice that, when $G$ is not compact, the assumption that $\beta(\Omega)<\infty$ implies that $\Omega \neq G$, so that in this case it is not necessary to assume $\Omega$ to be a proper subset of $G$.  
	
	\begin{proof}
		For every $x\in \Omega$, define
        \[
        r'(x)\coloneqq d(x,G\setminus \Omega),
        \]
        so that $r'(x)\in (0,+\infty)$ for every $x\in \Omega$ since $G\setminus \Omega$ is a non-empty closed subset of $G$. In addition, define $R$ as the diameter of $G$ when $G$ is compact, and as $\sup \Set{r>0\colon \beta(B(e,r))\meg \beta(\Omega)}$ if $G$ is not compact,  and observe that $r'(x)\meg R$ for every $x\in G$. Take $\kappa\in (2,\kappa_1)$ and $\eta,\eta'>0$ so that   $\kappa_1 \eta'= \kappa \eta$, and $\eta'\kappa_2< 1<\eta'\kappa_3$. 
		Then, by Vitali's covering lemma (see, e.g.,~\cite[Lemma 4]{Singular}), we may find a countable family $(x_j)_{j\in J}$ of elements of $\Omega$ such that the $B(x_j,\eta r'(x_j))$ are pairwise disjoint, while the $B(x_j, \kappa \eta r'(x_j))$ cover $\Omega$. Set $r_j\coloneqq \eta' r'(x_j)$ for every $j\in J$. Since $r_j\meg \eta r'(x_j)$ for every $j\in J$, it is clear that the $B(x_j, r_j)$, $j \in J$, are pairwise disjoint, while since $\kappa_1r_j= \kappa \eta r'(x_j)$ for every $j\in J$, it is clear that the $B(x_j, \kappa_1 r_j)$, $j\in J$, cover $\Omega$. 
		Furthermore, since $\kappa_2 r_j\meg r'(x_j)$ for every $j\in J$, it is  clear that $B(x_j,\kappa_2 r_j)\subseteq \Omega$ for every $j\in J$, while since $\kappa_3 r_j> r'(x_j)$, it is also clear that  $B(x_j,\kappa_3 r_j)\setminus \Omega \neq \emptyset$ for every $j\in J$. Let us then prove that there is $N\in\N$ such that $\sum_{j\in J} \chi_{B(x_j,\kappa_2 r_j)} \meg N$.
		Observe that, if $x\in \Omega$ and  $J_x\coloneqq \Set{j\in J\colon d(x,x_j)<\kappa_2 r_j}$, then for every $j\in J_x$
		\[
		r_j/\eta'= d(x_j,G\setminus \Omega)\meg d(x,G\setminus \Omega)+d(x,x_j)<r'(x)+ \kappa_2 r_j
		\]
		and, analogously,
		\[
		r'(x)=d(x,G\setminus \Omega)\meg d(x,x_j)+ d(x_j,G\setminus \Omega)<\kappa_2 r_j+  r_j/\eta',
		\]
		so that
		\[
		(1/\eta'-\kappa_2) r_j<r'(x)<(1/\eta'+\kappa_2) r_j.
		\] 
		This proves, by the way, the last assertion.
		Consequently,
		\[
		\bigcup_{j\in J_x}B(x_j,r_j)\subseteq \bigcup_{j\in J_x}B(x,(\kappa_2+1)r_j)\subseteq B(x,c_1 r'(x))
		\]
		and
		\[
		\bigcap_{j\in J_x} B(x_j, c_2 r_j)\supseteq \bigcap_{j\in J_x}B(x, (c_2-\kappa_2)r_j)\supseteq B(x,c_1 r'(x))
		\]
		where $c_1\coloneqq \frac{\kappa_2+1}{1/\eta'-\kappa_2}$ and $c_2\coloneqq \kappa_2+ c_1(1/\eta'+\kappa_2)$.
		Now, since $\beta$ is doubling, there are $C>0$ and $k\in \N$ so that $\beta(B(e, tr))\meg C t^k \beta(B(e,r))$ for every $r>0$ and for every $t\Meg 1$. Therefore,
		\[
		\begin{split}
			\beta(B(x,c_1 r'(x))&\Meg \sum_{j\in J_x} \beta(B(x_j, r_j))\\ 
				&\Meg \frac{1}{C c_2^k}\sum_{j\in J_x} \beta(B(x_j, c_2 r_j )  )\\
				&\Meg \card(J_x) \frac{1}{C c_2^k} \beta(B(x, c_1 r'(x) )),
		\end{split}
		\]
		so that $\card(J_x)\meg C c_2^k$ for every $x\in \Omega$.
	\end{proof}
 
    The next result provides the Calder\'on--Zygmund decomposition necessary to deal with atomic decomposition in this context.
    Recall that $\dd_M= \max\{\deg(X)\colon X\in \gf\}$ and $\dd_0=\min\{\deg(X)\colon X\in \gf, X\neq 0\}$.
     
	\begin{teo}\label{prop:50}
		Take $\rho>0$ so that $\overline B(e,\rho)\subseteq \mathbf U$ and take $\lambda, \lambda'\Meg 0$, $p\in (0,1]$, $a>Q_*/p$, and $\kappa_1,\kappa_2,\kappa_3>0$  so that  $\lambda'\Meg \dd_M([\lambda/\dd_0]+1)$,   $\lambda \Meg a-Q_* $, $\kappa_1>2$, and $3 \kappa_1< \kappa_2<\kappa_3$. Set 
        \[
        \kappa=\max\bigg(1, \frac32 \kappa_1+\kappa_2, \frac{3 \kappa_1+(3c+1) \kappa_3}{\kappa_2-3\kappa_1}, \frac3 2\kappa_1+[(3c+1)/2]\kappa_3\bigg),
        \]
        where $c$ is as in Lemma~\ref{lem:81}.
		Take a \BanachSpace\ $\Banach$, $f\in h^p_{\lambda'}(G;\Banach)$, and $\alpha>0$ such that the open set $\Omega_\alpha\coloneqq \Set{x\in G\colon (\widetilde \Mc^{(\lambda',\kappa)} f)(x)>\alpha}$ is different from $G$. Let $(x_{\alpha,j},r_{\alpha,j})_{j\in \N}$ be a sequence of elements of $G\times (0,+\infty)$ satisfying the properties of Lemma~\ref{lem:81}, and take $(\tau_\lambda)_{\lambda>0}$ as in Corollary~\ref{cor:21} corresponding to $\kappa_1 = \kappa_1$ and $\kappa_2=(3/2)\kappa_1 $; set $\tau\coloneqq \sum_{j\in \N} \tau_{r_{\alpha,j}}(x_{\alpha,j}^{-1}\,\cdot\,)$, so that $\tau\in C^\infty(\Omega_\alpha)$ and $1\meg \tau \meg N$ on $\Omega_\alpha$, with the notation of Lemma~\ref{lem:81}. Set $\zeta_{\alpha,j}\coloneqq \tau_{r_{\alpha,j}}(x_{\alpha,j}^{-1}\,\cdot\,)/\tau$, and define  
		\[
		b_{\alpha,j}\coloneqq (f-P_{\alpha,j})\zeta_{\alpha,j},
		\] 
		where $P_{\alpha,j}=0$ if $\kappa_2 r_{\alpha,j} >\rho$, while  $P_{\alpha,j}$ is the unique element of $\Pc^{(x_{\alpha,j})}_{\lambda^-}\otimes \Banach$ such that $\langle f, Q \zeta_{\alpha,j}\rangle=\langle P_{\alpha,j}, Q \zeta_{\alpha,j}\rangle$ for every $Q\in  \Pc^{(x_{\alpha,j})}_{\lambda^-}$ otherwise. There is a constant $C>0$ such that the following hold
\begin{enumerate}
\item the sum $\sum_{j\in \N} b_{\alpha,j}$ converges in $h^p_{\lambda'}(G;\Banach)$ and
		\[
		\sum_{j\in \N} \norm{\widetilde \Mc^{(\lambda',1)}b_{\alpha,j}}_{L^p(G)}^p\meg C\norm{\chi_{\Omega_\alpha}\widetilde \Mc^{(\lambda',\kappa)} f}_{L^p(G)} ^p;
		\]
\item  if we set $g_\alpha\coloneqq f-\sum_{j\in J} b_{\alpha,j}$, then
		\[
		(\widetilde \Mc^{(\lambda',1)} g_\alpha)(x)\meg C \chi_{G\setminus \Omega_\alpha}(x)(\widetilde \Mc^{(\lambda',\kappa)} f)(x)+C\alpha\sum_{j\in \N} \frac{\chi_{B(x_{\alpha,j},1+(3/2)\kappa_1 r_{\alpha,j})}(x)}{(1+\abs{x^{-1}x_{\alpha,j}}_*/r_{\alpha,j})^a}
		\]
		for every $x\in G$;
\item 	\[
		\norm{\widetilde \Mc^{(\lambda',1)} g_\alpha}_{L^1(G)}\meg C \alpha^{1-p} \norm{\widetilde \Mc^{(\lambda',\kappa)} f}_{L^p(G)}^p.
		\]
\end{enumerate}
	\end{teo}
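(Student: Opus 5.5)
The plan is to follow the scheme of \cite[Lemma 3.14 and Proposition 3.15]{FollandStein}, with Corollaries~\ref{cor:22}, \ref{cor:23} and Lemma~\ref{lem:84} replacing the homogeneous-group lemmas. Write $x_j\coloneqq x_{\alpha,j}$ and $r_j\coloneqq r_{\alpha,j}$. First we observe that $\zeta_j\coloneqq\zeta_{\alpha,j}$ satisfies the hypotheses of Corollary~\ref{cor:22}, up to a constant, with $\tau_{x_j,r_j}=\zeta_j$. Indeed, $\zeta_j$ is supported in $\overline B(x_j,(3/2)\kappa_1 r_j)$. Moreover, by the last property in Lemma~\ref{lem:81}, only boundedly many $\tau_{r_{j'}}(x_{j'}^{-1}\,\cdot\,)$ with comparable radii meet $\supp\zeta_j$, and $\tau\Meg 1$. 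Hence Corollary~\ref{cor:21} gives $\abs{X\zeta_j}\meg C\abs{X}\min(1,r_j)^{-\deg X}$ for $X\in U_{\lambda'}$. By the fourth property of Lemma~\ref{lem:81}, there is $z_j\notin\Omega_\alpha$ with $d(x_j,z_j)<\kappa_3 r_j$. This is the point where we will evaluate $\widetilde\Mc^{(\lambda',\kappa)}f(z_j)\meg\alpha$.

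For (1), we split $\widetilde\Mc^{(\lambda',1)}b_{\alpha,j}(y)$ according to whether $d(x_j,y)$ is smaller or larger than a multiple of $r_j$ exceeding $(3/2)\kappa_1$. Near $x_j$, Corollary~\ref{cor:23} (with $\kappa_2$ replaced by a suitable multiple covering both $y$ and $x_j$) bounds it by $C\widetilde\Mc^{(\lambda',\kappa)}f(y)$. Since $y\in B(x_j,\kappa_2 r_j)\subseteq\Omega_\alpha$ there, this contributes $C\norm{\chi_{B(x_j,\kappa_2r_j)}\widetilde\Mc f}_p^p$. Far from $x_j$, Lemma~\ref{lem:84} with $z$ ranging in $B(x_j,r_j)$ bounds it by
\[
C\chi_{B(x_j,1+(3/2)\kappa_1r_j)}(y)\,\frac{\widetilde\Mc^{(\lambda',\kappa)}f(z)}{(1+\abs{y^{-1}x_j}_*/r_j)^{a}}.
\]
Taking the $L^p$ norm in $y$ and using $ap>Q_*$ (for $r_j\meg1$ the truncation at $1+r_j$ keeps only the local homogeneous dimension; for large $r_j$ the cutoff $B(x_j,1+Cr_j)$ restricts $y$ to a ball comparable to $B(x_j,r_j)$) gives $C\beta(B(x_j,r_j))\inf_{z\in B(x_j,r_j)}\widetilde\Mc f(z)^p\meg C\norm{\chi_{B(x_j,r_j)}\widetilde\Mc f}_p^p$. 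Summing over $j$ with bounded overlap yields (1), and Proposition~\ref{prop:41}(2) gives convergence.

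For (2), if $x\notin\Omega_\alpha$ we apply the same two estimates to each $b_j$, now with $z=z_j$ far from $x$ and $z=x$ near it. Together with $\widetilde\Mc^{(\lambda',1)}f(x)$, this produces the first term plus $C\alpha\sum_j(\dots)$. If $x\in\Omega_\alpha$, we write $g_\alpha=f\chi'+\sum_j P_{\alpha,j}\zeta_j$, where $\chi'=1-\sum\zeta_j$ vanishes on $\Omega_\alpha$. The main obstacle is this case: $g_\alpha$ near $x$ is $\sum_j P_{\alpha,j}\zeta_j$ plus $f$ restricted to the complement, and the clean route is to write $g_\alpha=f-\sum_{j}b_j$ directly. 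Then for the finitely many $j$ with $x\in B(x_j,\kappa_2 r_j)$ we use Corollary~\ref{cor:23} in the form $\widetilde\Mc(f\zeta_j-b_j)=\widetilde\Mc(P_{\alpha,j}\zeta_j)$. We bound this by $C\norm{P_{\alpha,j}\zeta_j}_\infty\meg C\widetilde\Mc f(z_j)\meg C\alpha$ via Corollary~\ref{cor:22} at $z_j$, after checking that the smallness $\kappa$ was chosen to accommodate $d(x_j,z_j)\meg\kappa_3r_j$. The remaining term $f(1-\sum_{x\in B_j}\zeta_j)$ must be handled by testing against $\phi\in\widetilde B_{\lambda',t}$ and moving to a point $z\notin\Omega_\alpha$ at distance $\lesssim r_j$. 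If $t\lesssim r_j$, then $\phi(\cdot^{-1}x)$ only sees the region where $1-\sum\zeta=0$ locally. If $t\gtrsim r_j$, we rewrite the pairing as a convolution at $z$ with a bump in $\widetilde B_{\lambda',\kappa t}$ via Lemma~\ref{lem:82}. Here the constants in $\kappa$, namely $(3c+1)\kappa_3$ and $\kappa_2-3\kappa_1$, are exactly what make the supports fit. The far $b_j$ are treated by Lemma~\ref{lem:84} as before.

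For (3), we integrate (2). The first term gives $\int_{G\setminus\Omega_\alpha}\widetilde\Mc f\meg\alpha^{1-p}\norm{\widetilde\Mc f}_p^p$. For the second, each summand has $L^1$ norm $\lesssim\beta(B(x_j,r_j))$ since $a>Q_*$, with the same large-$r_j$ remark as in (1). Hence the sum is $\lesssim\alpha\sum_j\beta(B(x_j,r_j))\meg\alpha\beta(\Omega_\alpha)\meg\alpha^{1-p}\norm{\widetilde\Mc f}_p^p$ by disjointness and Chebyshev.
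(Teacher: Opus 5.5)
Your arguments for (1), for (3), and for (2) at points $x\notin\Omega_\alpha$ are sound and follow the paper. In (1) you take the infimum of $\widetilde\Mc^{(\lambda',\kappa)}f$ over $B(x_{\alpha,j},r_{\alpha,j})$ directly, whereas the paper evaluates Lemma~\ref{lem:84} at a point of $B(x_{\alpha,j},\kappa_3 r_{\alpha,j})\setminus\Omega_\alpha$ and then uses $\alpha\meg\inf_{B(x_{\alpha,j},\kappa_2 r_{\alpha,j})}\widetilde\Mc^{(\lambda',\kappa)}f$; both work.

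The gap is in (2) for $x\in\Omega_\alpha$, exactly at the step you call the main obstacle. For $t\gtrsim r_{\alpha,j}$ you keep the cutoff $1-\sum_{j\in J_x}\zeta_{\alpha,j}$ attached to $f$. You then claim that $\langle f,(1-\sum_{j\in J_x}\zeta_{\alpha,j})\phi(\,\cdot\,^{-1}x)\rangle=(f*\psi)(z)$ for some $z\notin\Omega_\alpha$, with $\psi$ a bounded multiple of an element of $\widetilde B_{\lambda',\kappa t}$. This fails when $t\gg r_{\alpha,j}$, which does occur: $t$ ranges over $(0,1]$, while the Whitney radii can be arbitrarily small. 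The cutoff varies at scale $r_{\alpha,j}$ and equals $1$ far away. So $X^R\psi$ can contain terms of size $r_{\alpha,j}^{-\deg(X_1)}t^{-Q_*-\deg(X_2)}$, and these are not $O(t^{-Q_*-\deg(X)})$ uniformly in $\phi\in\widetilde B_{\lambda',t}$.

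Lemma~\ref{lem:82} cannot supply this. Its bound carries the factor $\max_{d_\gamma\meg\lambda'}s^{d_\gamma}\norm{\vect X^\gamma g}_{L^\infty(G)}$, with $g$ supported in $\overline B(x,s)$ unless $s=\kappa$. For the non-compactly supported $g=1-\sum_{j\in J_x}\zeta_{\alpha,j}$ this forces $s=\kappa$ and a factor of order $\max_{d_\gamma\meg\lambda'}(\kappa/r_{\alpha,j})^{d_\gamma}$, which is unbounded. Moreover, the lemma bounds things by $\widetilde\Mc^{(\lambda',\kappa)}f(x)$, which exceeds $\alpha$ for $x\in\Omega_\alpha$.

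The missing idea is to detach $f$ from the cutoff before moving the evaluation point. The paper takes $j_x$ with $x\in B(x_{\alpha,j_x},\kappa_1 r_{\alpha,j_x})$, and $J_x$ the (at most $N$) indices whose $(\kappa_2/2)$-balls meet $B(x_{\alpha,j_x},(\kappa_2/2) r_{\alpha,j_x})$. It uses the cutoff only when $t\meg r_{\alpha,j_x}$, where $(f\zeta_{\alpha,x})*\phi(x)=0$. For $t>r_{\alpha,j_x}$ it writes
\[
\Bigl(\Bigl(f-\sum_{j\in J_x} b_{\alpha,j}\Bigr)*\phi\Bigr)(x)=[f*\phi(\,\cdot\,z^{-1}x)](z)-\sum_{j\in J_x}(b_{\alpha,j}*\phi)(x),\qquad z\in B(x_{\alpha,j_x},\kappa_3 r_{\alpha,j_x})\setminus\Omega_\alpha .
\]
Since $X^R$ commutes with right translations, $\phi(\,\cdot\,z^{-1}x)$ satisfies the same bounds as $\phi$. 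It is supported in $\overline B(e,t+d(x,z))\subseteq\overline B(e,\kappa t)$, so the first term is $\meg C\alpha$. The at most $N$ terms $(b_{\alpha,j}*\phi)(x)$ are then bounded by $C\alpha$ one by one.

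When $x$ is close to $x_{\alpha,j}$ (e.g.\ $j=j_x$), Lemma~\ref{lem:84} does not apply as such. In that case split $b_{\alpha,j}=f\zeta_{\alpha,j}-P_{\alpha,j}\zeta_{\alpha,j}$ and bound the second part by Corollary~\ref{cor:22}. Move the first part to $z$ with the bump $\zeta_{\alpha,j}(z\,\cdot\,^{-1})\phi(\,\cdot\,z^{-1}x)$. Here the cutoff is itself supported at scale $r_{\alpha,j}$, so its large derivatives are compensated by its small support. Using $d(x,z)\lesssim r_{\alpha,j_x}<t$, this bump is a bounded multiple of an element of $\widetilde B_{\lambda',s}$ with $s=\min(Cr_{\alpha,j},\kappa t)$.

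In your notation, the repair is to split $f(1-\sum_{j\in J_x}\zeta_{\alpha,j})=f-\sum_{j\in J_x}f\zeta_{\alpha,j}$ in the large-$t$ regime. Then treat $f$ at scale $t$ and each $f\zeta_{\alpha,j}$ at scale comparable to $r_{\alpha,j}$.
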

	
	This replaces~\cite[Theorems 3.17 and 3.20 (i), and Lemma 3.19]{FollandStein} in our setting.
	Observe, by the way, that $\alpha^p\beta(\Omega_\alpha)\meg \norm{\widetilde \Mc^{(\lambda',\kappa)} f}_{L^p(G)}^p$ by Chebychev's inequality, so that $\beta(\Omega_\alpha)<\infty$; hence, Lemma~\ref{lem:81} may be applied. 
    Before we pass to the proof, we need a lemma.

	\begin{lem}\label{lem:37c}
		Take $\alpha>0$, $\kappa>1$, and $p\in (0,+\infty]$. There is a constant $C>0$ such that
		\[
		\norm{ \chi_{B(e,1+\kappa t)}(1+\abs{\,\cdot\,}_*/t)^{-Q_*/p-\alpha}  }_{L^p(G)}\meg C t^{Q_*/p}(1+t)^{(Q_G-Q_*)/p}
		\]
		for every   $t>0$.
	\end{lem}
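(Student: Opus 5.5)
The plan is to split the integral according to the size of $\abs{x}_*$ relative to $t$ and to $1$, and then use the volume estimates $\beta(B(e,r))\asymp r^{Q_*}$ for small $r$ and $\beta(B(e,r))\asymp r^{Q_G}$ for large $r$. Throughout, $p<\infty$; the case $p=\infty$ is trivial, since the integrand is bounded by $1$. We treat two regimes of $t$ separately.

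\emph{Case $t\meg 1$.} The support $B(e,1+\kappa t)$ is contained in $B(e,1+\kappa)$, a fixed bounded set. We bound $(1+\abs{x}_*/t)^{-Q_*/p-\alpha}$ by $1$ on $B(e,t)$, which contributes at most $\beta(B(e,t))^{1/p}\lesssim t^{Q_*/p}$. On $B(e,1+\kappa)\setminus B(e,t)$ we bound the integrand by $t^{Q_*/p+\alpha}\abs{x}_*^{-Q_*/p-\alpha}$. The first estimate of Lemma~\ref{lem:37b}, with $\rho=1+\kappa$, gives
\[
\norm{\chi_{B(e,1+\kappa)\setminus B(e,t)}\abs{\,\cdot\,}_*^{-Q_*/p-\alpha}}_{L^p(G)}\lesssim t^{-\alpha}.
\]
Hence this piece is $\lesssim t^{Q_*/p}$. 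Since $(1+t)^{(Q_G-Q_*)/p}\asymp 1$ in this range, the claim holds for $t\meg 1$.

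\emph{Case $t>1$.} Now $1+\kappa t\meg (1+\kappa)t$, so it suffices to bound the integrand by $1$ on $B(e,(1+\kappa)t)$. This gives
\[
\beta(B(e,(1+\kappa)t))^{1/p}\lesssim t^{Q_G/p}=t^{Q_*/p}\,t^{(Q_G-Q_*)/p}\asymp t^{Q_*/p}(1+t)^{(Q_G-Q_*)/p}.
\]
Here we used the two-sided volume bound $\beta(B(e,r))\asymp r^{Q_*}(1+r)^{Q_G-Q_*}$ recalled before Lemma~\ref{lem:37b}, together with doubling. The decay exponent $\alpha$ is not even needed in this regime; the truncation by $\chi_{B(e,1+\kappa t)}$ does all the work. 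This truncation is precisely why the estimate is better than Lemma~\ref{lem:37}, which only gives $t^{Q_*/p}\max(1,t)^{(D-Q_*)/p}$-type bounds with the global exponent $D$.

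The only point requiring care is the first case. There one must check that Lemma~\ref{lem:37b} applies uniformly for $t\in(0,1]$ with the fixed $\rho=1+\kappa$, which holds because $t\meg 1\meg\rho$. Everything else is a direct volume computation, so no substantial obstacle is expected.
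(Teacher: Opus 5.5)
Your proof is correct, and it is organized differently from the paper's.

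\emph{The paper's route.} It does a single explicit dyadic computation. It decomposes $B(e,1+\kappa t)$ into the annuli $B(e,2^{j+1}t)\setminus B(e,2^jt)$ with $2^j\meg 1/t+\kappa$ and splits the resulting sum at $2^jt\approx 1$. Below that scale it uses $\beta(B(e,r))\lesssim r^{Q_*}$, above it $\beta(B(e,r))\lesssim r^{Q_G}$, and it estimates both partial sums separately for $t\Meg 1$ and $t<1$. Before this, it disposes of the case $Q_G\meg Q_*$ via Lemma~\ref{lem:37}, since then $D=Q_*$.

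\emph{Your route.} You split by $t$ first. For $t\meg 1$ the support lies in the fixed ball $B(e,1+\kappa)$, so only the local growth $r^{Q_*}$ matters. Lemma~\ref{lem:37b} with $\rho=1+\kappa$ then absorbs the whole annular sum, including the shell $1\lesssim\abs{x}_*\meg 1+\kappa$ that the paper treats with $Q_G$-growth and only afterwards shows to be $O(t^{Q_*})$. For $t>1$ you bound everything by $\beta(B(e,(1+\kappa)t))^{1/p}$. This is in effect what the paper's $t\Meg 1$ computation does too, since there it discards the decay factor.

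\emph{Comparison.} Your version is shorter and needs no case distinction between $Q_G$ and $Q_*$. It also makes clear that the decay is only used below a fixed scale. The paper's version is self-contained and tracks explicit constants. Adding the two pieces is legitimate also for $p<1$: their supports are disjoint, so the $p$-th powers simply add.

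One small correction to your closing remark. Lemma~\ref{lem:37} does not give a worse bound: it gives $t^{Q_*/p}\max(1,t)^{(Q_G-Q_*)/p}$, but it requires the stronger decay exponent $D/p+\alpha$. The truncation by $\chi_{B(e,1+\kappa t)}$ is what lets the weaker exponent $Q_*/p+\alpha$ suffice.
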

	\begin{proof}
		The case $p=\infty$ is trivial, so that we may assume that $p<\infty$. In addition, the assertion follows from Lemma~\ref{lem:37} when $Q_G\meg Q_*$ (so that $D=Q_*$), so that we may assume that $Q_G>Q_*$.
		Then,  take a constant $C_1>0$ such that  $\beta(B(e,r))\meg C_1 r^{Q_*}$ for every $r\meg 2$ and $\beta(B(e,r))\meg C_1 r^{Q_G}$ for every $r\Meg 1/2$ and		
		define $B_{t,j}\coloneqq B(e,2^{j+1} t)\setminus B(e, 2^j t)$ for every $t>0$ and for every $j\in\Z$. In addition, set $N_t\coloneqq [\log_2(1/t+\kappa)]$, so that $2^{N_t}\meg1/t+\kappa\meg 2^{N_t+1}$. Therefore,
		\[
		\begin{split}
			&\norm{ \chi_{B(e,1+\kappa t)}(1+\abs{\,\cdot\,}_*/t)^{-Q_*/p-\alpha}  }_{L^p(G)}^p \meg \sum_{j\meg N_t}\int_{B_{t,j}} \frac{1}{(1+\abs{x}_*/t)^{Q_*+p\alpha}}\,\dd \beta(x)\\
				&\qquad\qquad\qquad\meg  C_1 \sum_{j\meg [\log_2(1/t)]} \frac{( 2^{j+1} t)^{Q_*}}{(1+2^j)^{Q_*+p\alpha}} + C_1 \sum_{[\log_2(1/t)]< j \meg N_t} \frac{( 2^{j+1} t)^{Q_G}}{(1+2^j)^{Q_*+p\alpha}} .
		\end{split}
		\]
		If $t\Meg 1$, then
		\[
		\sum_{j\meg [\log_2(1/t)]} \frac{( 2^{j+1} t)^{Q_*}}{(1+2^j)^{Q_*+p\alpha}}\meg  (2t)^{Q_*} \sum_{j\meg [\log_2(1/t)]} 2^{jQ_*}\meg \frac{2^{Q_*}}{1-2^{-Q_*}}\meg \frac{2^{Q_*}}{1-2^{-Q_*}}t^{Q_G}
		\]
		and
		\[
		 \sum_{[\log_2(1/t)]< j \meg N_t} \frac{( 2^{j+1} t)^{Q_G}}{(1+2^j)^{Q_*+p\alpha}} \meg t^{Q_G} \sum_{j\meg N_t+1} 2^{jQ_G}\meg t^{Q_G}\frac{(2+2\kappa)^{Q_G}}{1-2^{-Q_G}} .
		\]
		If, otherwise, $t<1$, then
		\[
		\sum_{j\meg [\log_2(1/t)]} \frac{( 2^{j+1} t)^{Q_*}}{(1+2^j)^{Q_*+p\alpha}}\meg t^{Q_*}\sum_{j\in \Z} \frac{ 2^{(j+1)Q_*}  }{(1+2^j)^{Q_*+p\alpha}},
		\]
		and
		\[
		\sum_{[\log_2(1/t)]< j \meg N_t} \frac{( 2^{j+1} t)^{Q_G}}{(1+2^j)^{Q_*+p\alpha}}\meg (2t)^{Q_G}\sum_{0\meg j \meg N_t}  2^{j(Q_G-Q_*)}\meg 2^{Q_G} t^{Q_G}\frac{(1/t+\kappa)^{Q_G-Q_*}}{1-2^{Q_*-Q_G}},
		\]
		which is bounded by a constant  times $t^{Q_*}$. The assertion follows.
	\end{proof}
    
	\begin{proof}[Proof of Theorem~\ref{prop:50}.]
	(1) Observe first that by Corollary~\ref{cor:21} there is a constant $C_1>0$ such that
		\[
		\norm{\vect X^\alpha \zeta_{\alpha,j}}_{L^\infty(G)}\meg C_1 \min(1,r_{\alpha,j})^{-d_\alpha}
		\]
		for every $\alpha$ with $d_\alpha \meg \lambda'$.
		Consequently, by Corollary~\ref{cor:23} and Lemma~\ref{lem:84},  there is a constant $C_2>0$ such that
		\[
		\widetilde \Mc^{(\lambda',1)}b_{\alpha,j}\meg C_2 \widetilde \Mc^{(\lambda',\kappa)} f
		\]
		on $B(x_{\alpha,j},\kappa_2 r_{\alpha,j})$, while
		\[
		(\widetilde \Mc^{(\lambda',1)}b_{\alpha,j})(x)\meg C_2\chi_{B(x_{\alpha,j},1+(3/2)\kappa_1 r_{\alpha,j})} (x)\frac{(\widetilde \Mc^{(\lambda',\kappa)} f)(y)}{(1+\abs{x^{-1}x_{\alpha,j}}_*/r_{\alpha,j})^a}
		\]
		for every $x\in G\setminus B(x_{\alpha,j}, (\kappa_2/2) r_{\alpha,j})$  and for every $y\in B(x_{\alpha,j},[(3c+1)/2]\kappa_3 r_{\alpha,j})$. Since we may choose $y\in B(x_{\alpha,j},\kappa_3 r_{\alpha,j})\setminus \Omega_\alpha$, we then see that
		\[
		\begin{split}
		(\widetilde \Mc^{(\lambda',1)}b_{\alpha,j})(x)&\meg C_2\chi_{B(x_{\alpha,j},1+(3/2)\kappa_1 r_{\alpha,j})} (x)\frac{\alpha}{(1+\abs{x^{-1}x_{\alpha,j}}_*/r_{\alpha,j})^a}\\
			&\meg C_2 \chi_{B(x_{\alpha,j},1+(3/2)\kappa_1 r_{\alpha,j})} (x)\frac{1}{(1+\abs{x^{-1}x_{\alpha,j}}_*/r_{\alpha,j})^a} \inf_{  B(x_{\alpha,j}, \kappa_2 r_{\alpha,j})} \widetilde \Mc^{(\lambda',\kappa)} f,
		\end{split}
		\]
		so that by Lemma~\ref{lem:37c}  there is a constant $C_2'>0$ such that
		\[
		\begin{split}
			&\norm{\chi_{G\setminus B(x_{\alpha,j}, \kappa_2 r_{\alpha,j})}\widetilde \Mc^{(\lambda',1)}b_{\alpha,j}}_{L^p(G)}\\
				&\qquad\meg  C_2' r_{\alpha,j}^{Q_*/p}(1+r_{\alpha,j})^{(Q_G-Q_*)/p} \beta(B(e,\kappa_2 r_{\alpha,j}))^{-1/p} \norm{\chi_{B(x_{\alpha,j}, \kappa_2 r_{\alpha,j})}  \widetilde \Mc^{(\lambda',\kappa)} f}_{L^p(G)}  ,
		\end{split}
		\]
		and then there is a constant $C_3>0$ such that
		\[
		\norm{\widetilde \Mc^{(\lambda',1)}b_{\alpha,j}}_{L^p(G)}\meg C_3\norm{\chi_{B(x_{\alpha,j}, \kappa_2 r_{\alpha,j})}  \widetilde \Mc^{(\lambda',\kappa)} f}_{L^p(G)} .
		\]
		Since there is $N\in\N$ such that $\sum_{ j\in\N}\chi_{B(x_{\alpha,j}, \kappa_2 r_{\alpha,j})}\meg N\chi_{\Omega_\alpha}$, we then infer that 
		\[
		\sum_{j\in \N} \norm{\widetilde \Mc^{(\lambda',1)}b_{\alpha,j}}_{L^p(G)}^p\meg C_3^p N \norm{\chi_{\Omega_\alpha}\widetilde \Mc^{(\lambda',\kappa)} f}_{L^p(G)} ^p,
		\]
		whence the first assertion by Proposition~\ref{prop:41}.
		
		(2) If $x\in G\setminus \Omega_\alpha$, then the above estimates show that
		\[
		\begin{split}
			(\widetilde \Mc^{(\lambda',1)}g_\alpha)(x)&\meg (\widetilde \Mc^{(\lambda',1)}f)(x)+ \sum_{j\in \N} (\widetilde \Mc^{(\lambda',1)}b_{\alpha,j})(x)\\
				&\meg  (\widetilde \Mc^{(\lambda',\kappa)}f)(x)+ C_2\alpha \sum_{j\in\N}\chi_{B(x_{\alpha,j},1+(3/2)\kappa_1 r_{\alpha,j})}(x) \frac{1}{(1+\abs{x^{-1}x_{\alpha,j}}_*/r_{\alpha,j})^a}
		\end{split}
		\]
		since $x\in G\setminus B(x_{\alpha,j},\kappa_2 r_{\alpha,j})$ for every $j\in\N$. Then, take $x\in \Omega_\alpha$, and take $j_x\in \N$ so that $x\in B(x_{\alpha,j_x},\kappa_1 r_{\alpha,j_x})$. Set
        \[
        J_x\coloneqq \Set{j\in \N\colon B(x_{\alpha,j_x},(\kappa_2/2) r_{\alpha,j_x})\cap B(x_{\alpha,j},(\kappa_2/2) r_{\alpha,j})\neq \emptyset},
        \]
        so that $\card(J_x)\meg N$ and
		\[
		\sum_{j\in \N\setminus J_x} (\widetilde \Mc^{(\lambda',1)}b_{\alpha,j})(x)\meg   C_2\alpha \sum_{j\in\N\setminus J_x}\chi_{B(x_{\alpha,j},1+(3/2)\kappa_1 r_{\alpha,j})}(x)  \frac{1}{(1+\abs{x^{-1}x_{\alpha,j}}_*/r_{\alpha,j})^a}
		\]
		by the previous estimates. Here we use the fact that
        \[
        d(x,x_{\alpha,j})\Meg d(x_{\alpha,j},x_{\alpha,j_x})-d(x_{\alpha,j_x},x)>\Big( \frac{\kappa_2}{2}-\kappa_1\Big) r_{\alpha,j_x}+ \frac{\kappa_2}{2}r_{\alpha,j}\Meg \frac{\kappa_2}{2}r_{\alpha,j}
        \]
        for every $j\in N\setminus J_x$.
        Consequently, it only remains  to estimate $\widetilde \Mc^{(\lambda',1)}(f-\sum_{j\in J_x} b_{\alpha,j})(x)$. Observe that   $1/c\meg r_{\alpha,j}/r_{\alpha,j'}\meg c$ for $j,j'$ such that $B(x_j,r_{\alpha,j})\cap B(x_{j'},r_{\alpha,j'})\neq \emptyset$, so that
        \[
            d(x,x_{\alpha,j})\meg d(x,x_{\alpha,j_x})+d(x_{\alpha,j_x},x_{\alpha,j})< \kappa_1 r_{\alpha,j_x}+\frac{\kappa_2}{2}(r_{\alpha,j_x}+r_{\alpha,j})  \meg \Big( c\kappa_1+\frac{\kappa}{2}(c+1)\Big)r_{\alpha,j}      
        \]
        and then 
        \[
		1 \meg   \frac{(1+c\kappa_1+\kappa_2(1+c)/2)^a}{(1+\abs{x^{-1}x_{\alpha,j}}_*/r_{\alpha,j})^a }
		\]
		for every $j\in J_x$. Consequently, it will suffice to show that  $\widetilde \Mc^{(\lambda',1)}(f-\sum_{j\in J_x} b_{\alpha,j})(x)$ is controlled by 
        \[
            \alpha \sum_{j\in J_x}\chi_{B(x_{\alpha,j},1+(3/2)\kappa_1 r_{\alpha,j})}(x) .
        \]
		Let $\zeta_{\alpha,x}\coloneqq 1-\sum_{j\in J_x } \zeta_{\alpha,j}$, so that $\zeta_{\alpha,x}=0$ on $B(x_{\alpha,j_x},(\kappa_2/2)r_{\alpha,j_x})$ and
        \[
        f-\sum_{j\in J_x} b_{\alpha,j}= \zeta_{\alpha,x} f+\sum_{j\in J_x} \zeta_{\alpha,j} P_{\alpha,j}.
        \]
		Thus, take $t\in (0,1]$ and $\phi\in \widetilde B_{\lambda',t}$ and observe that, if we take $j\in J_x$ and  $z_{j}\in B(x_{\alpha,j},\kappa_3 r_{\alpha,j})\setminus \Omega_\alpha$, then Corollary~\ref{cor:22}  shows that there is a constant $C_4>0$ such that
		\[
		\begin{split}			
			\abs*{ [(\zeta_{\alpha,j} P_{\alpha,j})*\phi](x)}&\meg \norm{ \zeta_{\alpha,j} P_{\alpha,j} }_{L^\infty(G)} t^{-Q_*}\beta(B(x_{\alpha,j}, (3/2) \kappa_1 r_{\alpha,j})\cap B(x,t))\\
				&\meg C_4\chi_{B(x_{\alpha,j},1+(3/2)\kappa_1 r_{\alpha,j})}(x)(\widetilde \Mc^{(\lambda',\kappa)} f)(z_{j}) \\
				&\meg C_4 \alpha\chi_{B(x_{\alpha,j},1+(3/2)\kappa_1 r_{\alpha,j})}(x).
		\end{split} 
		\]
		In addition, if $t\meg r_{\alpha,j_x}$, then
		\[
		((f\zeta_{\alpha,x})*\phi)(x)= \langle f, \zeta_{\alpha,x} \phi(\,\cdot\,^{-1}x)\rangle =0
		\]
		since $\zeta_{\alpha,x} \phi(\,\cdot\,^{-1}x)$ is supported in $G\setminus B(x_{\alpha,j_x}, (\kappa_2/2) r_{\alpha,j})\cap \overline B(x,t)$, which is empty. In fact, if $y \in \overline B(x,t)\setminus B(x_{\alpha,j_x}, (\kappa_2/2) r_{\alpha,j_x})$, then
		\[
		t\Meg \abs{y^{-1} x}_* \Meg \abs{y ^{-1}x_{\alpha,j_x}}_*- \abs{x_{\alpha,j_x}^{-1}x}_*\Meg  (\kappa_2/2)r_{\alpha,j_x}- \kappa_1 r_{\alpha,j_x} > (\kappa_1/2) r_{\alpha,j_x}>r_{\alpha,j_x},
		\]
		which is absurd. 
		Therefore, 
		\[
        \begin{split}
		\abs*{\Big(f-\sum_{j\in J_x} b_{\alpha,j'}\Big)*\phi)(x)}&
            \meg C_4 \alpha \sum_{j\in J_x}\chi_{B(x_{\alpha,j},1+(3/2)\kappa_1 r_{\alpha,j})}(x)
        \end{split}
		\]
		provided that $t\meg r_{\alpha,j_x}$.
		If, otherwise, $t>r_{\alpha,j_x}$, then observe that
		\[
		\bigg(\Big(f-\sum_{j\in J_x} b_{\alpha,j}\Big)*\phi\bigg)(x)= [f*\phi(\,\cdot\,z_{j_x}^{-1}x)](z_{j_x})+ \sum_{j\in J_x} (b_{\alpha,j}*\phi)(x).
		\]
		On the one hand, $\phi(\,\cdot\,z_{j_x}^{-1}x)$ is supported in $\overline B(e,t) x^{-1}z_{j_x} \subseteq \overline B(e, t+ \kappa_3 r_{\alpha,j_x})\subseteq B(e,\kappa t)$, so that clearly 
		\[
		\abs{[f*\phi(\,\cdot\,z_{j_x}^{-1}x)](z_{j_x})}\meg (\widetilde \Mc^{(\lambda',\kappa)} f)(z_{j_x})\meg   \alpha= \alpha \chi_{B(x_{\alpha,j_x}, 1+(3/2)\kappa_1 r_{\alpha,j_x})}(x).
		\]
		In addition, for every $j\in J_x$ one has
        \[
        d(z_{j_x},x_{\alpha,j})\meg d(z_{j_x}, x_{\alpha,j_x})+d(x_{\alpha,j_x},x_{\alpha,j})\meg \kappa_3 r_{\alpha,j_x}+(\kappa_2/2) (r_{\alpha,j_x}+r_{\alpha,j})\meg [(3c+1)/2]\kappa_3 r_{\alpha,j},
        \]
        so that the previous estimates show that
		\[
		\begin{split}
		 (b_{\alpha,j}*\phi)(x)&\meg (\widetilde  \Mc^{(\lambda',1)}b_{\alpha,j})(x)\\
		 	&\meg C_1\chi_{B(x_{\alpha,j}, 1+(3/2)\kappa_1 r_{\alpha,j})}(x) \frac{(\widetilde  \Mc^{(\lambda',\kappa)}f)(z_{j_x})}{(1+\abs{x^{-1}x_{\alpha,j}}_*/r_{\alpha,j})^a}\\
		 	&\meg C_1 \alpha \chi_{B(x_{\alpha,j}, 1+(3/2)\kappa_1 r_{\alpha,j})}(x).
		 \end{split}
		\]
		This completes the proof of the second assertion. 

	(3) By~(2), there is a constant $C_6>0$ such that
	\[
	(\widetilde \Mc^{(\lambda',1)} g_\alpha)(x)\meg C_6 \chi_{G\setminus \Omega_\alpha}(x)(\widetilde \Mc^{(\lambda',\kappa)} f)(x)+C_6\alpha\sum_{j\in \N} \frac{\chi_{B(x_{\alpha,j}, 1+(3/2)\kappa_1 r_{\alpha,j})}(x)}{(1+\abs{x^{-1}x_{\alpha,j}}_*/r_{\alpha,j})^a}
	\]
	for every $x\in G$. Observe that
	\[
	\int_{G\setminus \Omega_\alpha} (\widetilde \Mc^{(\lambda',\kappa)} f)(x)\,\dd \beta(x)\meg \alpha^{1-p} \int_{G\setminus \Omega_\alpha} (\widetilde \Mc^{(\lambda',\kappa)} f)(x)^p\,\dd \beta(x)\meg \alpha^{1-p} \norm{\widetilde \Mc^{(\lambda,\kappa)} f}_{L^p(G)}^p
	\]
	since $\widetilde \Mc^{(\lambda',\kappa)} f\meg \alpha$ on $G\setminus \Omega_\alpha$. In addition, using  Lemma~\ref{lem:37c}, we see that  there is a constant $C_7>0$ such that
	\[
	\begin{split}
		\int_G \sum_{j\in \N}  \chi_{B(x_{\alpha,j}, 1+(3/2)\kappa_1 r_{\alpha,j})}(x)\frac{1}{(1+\abs{x^{-1}x_{\alpha,j}}_*/r_{\alpha,j})^a}\,\dd \beta(x) &\meg  C_7\sum_{j\in \N} r_{\alpha,j}^{Q_*}(1+r_{\alpha_j})^{Q_G-Q_*}\\
			&\meg C_7^2\sum_{j\in \N} \beta(B(x_{\alpha,j},r_{\alpha,j}))\\
			&\meg C_7^2 \beta(\Omega_\alpha)\\
			&\meg \frac{C_7^2}{\alpha^p}\norm{\widetilde \Mc^{(\lambda',\kappa)} f}_{L^p(G)}^p,
	\end{split}
	\]
	whence the conclusion.
	\end{proof}

	\begin{cor}\label{cor:24}
		Take $p\in (0,1]$, $\lambda'\Meg \dd_M([\lambda/\dd_0]+1)$ for some $\lambda>Q_*(1/p-1)$ and a \BanachSpace\ $\Banach$. The intersection $h^1_{\lambda'}(G;\Banach)\cap h^p_{\lambda'}(G;\Banach)$ is dense in $h^p_{\lambda'}(G;\Banach)$.
	\end{cor}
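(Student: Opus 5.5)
We shall use the Calder\'on--Zygmund decomposition of Theorem~\ref{prop:50}, and let the level $\alpha$ tend to $+\infty$. Fix $f\in h^p_{\lambda'}(G;\Banach)$. Choose $a$ with $Q_*/p<a\meg \lambda+Q_*$; this is possible because $\lambda>Q_*(1/p-1)$. Then fix $\rho$, the $\kappa_j$ and $\kappa$ as in Theorem~\ref{prop:50}. By Lemma~\ref{lem:85}, $\widetilde\Mc^{(\lambda',\kappa)}f\in L^p(G)$. Chebyshev's inequality then gives $\beta(\Omega_\alpha)\meg \alpha^{-p}\norm{\widetilde\Mc^{(\lambda',\kappa)}f}_{L^p(G)}^p<\infty$. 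If $G$ is not compact, this already shows $\Omega_\alpha\neq G$ for every $\alpha>0$. If $G$ is compact, $\beta(\Omega_\alpha)\to0$, so $\Omega_\alpha\neq G$ as soon as $\alpha$ is large. For every such $\alpha$ we write $f=g_\alpha+\sum_j b_{\alpha,j}$ as in Theorem~\ref{prop:50}.

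First, $g_\alpha\in h^1_{\lambda'}\cap h^p_{\lambda'}$. Theorem~\ref{prop:50}(3) gives $\widetilde\Mc^{(\lambda',1)}g_\alpha\in L^1(G)$, so $g_\alpha\in h^1_{\lambda'}(G;\Banach)$; by Lemma~\ref{lem:85} the choice $\kappa=1$ in the definition of $h^1_{\lambda'}$ is harmless. Moreover $\sum_j b_{\alpha,j}$ converges in $h^p_{\lambda'}(G;\Banach)$ by Theorem~\ref{prop:50}(1), so $g_\alpha=f-\sum_jb_{\alpha,j}\in h^p_{\lambda'}(G;\Banach)$.

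Second, $g_\alpha\to f$ in $h^p_{\lambda'}$. Since $p\meg1$, the map $h\mapsto\norm{\widetilde\Mc^{(\lambda',1)}h}_{L^p}^p$ is subadditive. Combining this with Theorem~\ref{prop:50}(1) gives
\[
\norm{\widetilde\Mc^{(\lambda',1)}(f-g_\alpha)}_{L^p(G)}^p\meg\sum_j\norm{\widetilde\Mc^{(\lambda',1)}b_{\alpha,j}}_{L^p(G)}^p\meg C\norm{\chi_{\Omega_\alpha}\widetilde\Mc^{(\lambda',\kappa)}f}_{L^p(G)}^p .
\]
The constant $C$ does not depend on $\alpha$. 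Passing the maximal function through the infinite sum is justified by lower semicontinuity and Fatou: the partial sums converge in $h^p_{\lambda'}$, hence in $\Sr'$ by Proposition~\ref{prop:41}. Since $\beta(\Omega_\alpha)\to0$ and $(\widetilde\Mc^{(\lambda',\kappa)}f)^p\in L^1(G)$, dominated convergence (absolute continuity of the integral) shows that the right-hand side tends to $0$ as $\alpha\to\infty$. Lemma~\ref{lem:85} then transfers this to the quasi-norm of $h^p_{\lambda'}$, and density follows.

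The main obstacle is checking that the hypotheses of Theorem~\ref{prop:50} can be met. The condition $\lambda\Meg a-Q_*$ with $a>Q_*/p$ is exactly why the corollary needs $\lambda>Q_*(1/p-1)$; the remaining conditions are inherited from the statement, together with $\lambda'\Meg\dd_M([\lambda/\dd_0]+1)$. One must also make sure that the constants in Theorem~\ref{prop:50} are uniform in $\alpha$; the theorem states this explicitly.
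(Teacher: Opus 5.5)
Your proposal is correct and follows the paper's proof: apply the Calder\'on--Zygmund decomposition of Theorem~\ref{prop:50} at level $\alpha$, get $g_\alpha\in h^1_{\lambda'}(G;\Banach)$ from part (3), and get $g_\alpha\to f$ in $h^p_{\lambda'}(G;\Banach)$ from part (1) plus dominated convergence as $\alpha\to+\infty$. Your explicit choice of $a\in(Q_*/p,\lambda+Q_*]$, the check that $\Omega_\alpha\neq G$, and the lower-semicontinuity justification fill in details that the paper leaves implicit.
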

	
	This replaces~\cite[Corollary 3.22]{FollandStein} in our setting.
	
	\begin{proof}
		Take $f\in h^p_{\lambda'}(G;\Banach)$. By Theorem~\ref{prop:50}, there are $\kappa\Meg 1$ and a constant $C>0$ such that for every $\alpha>\norm{ \widetilde \Mc^{(\lambda',\kappa)} f }_{L^p(G)}/\beta(G)^{1/p}$ we may find $g_\alpha\in  h^p_{\lambda'}(G;\Banach)$ such that $\widetilde \Mc^{(\lambda',\kappa)} g_\alpha\in L^1(G)$ and
		\[
		\norm{\widetilde \Mc^{(\lambda',1)}(f-g_\alpha)}_{L^p(G)}\meg C  \norm{ \chi_{\Omega_\alpha} \widetilde \Mc^{(\lambda',\kappa)} f }_{L^p(G)},
		\]
		where $\Omega_\alpha=\big\{x\in G\colon (\widetilde \Mc^{(\lambda',\kappa)} f)(x)>\alpha\big\}$. Letting $\alpha \to +\infty$, by dominated convergence we see that $g_\alpha$ converges to $f$ in $h^p_{\lambda'}(G;\Banach)$. Since $\widetilde \Mc^{(\lambda',1)} g_\alpha\in L^1(G)$, we obtain that $g_\alpha\in h^1_{\lambda'}(G;\Banach) $ for every $\alpha$, whence the conclusion.
	\end{proof}

    The following result provides some additional information on the previous Calder\'on--Zygmund decomposition when considering an integrable function (or, more precisely, with a measure whose total variation is absolutely continuous).
    
	\begin{teo}\label{lem:83}
		Keep the hypotheses and the notation of Theorem~\ref{prop:50}, but assume only that $f\in \Lp^1(G;\Banach)$. There is a constant $C>0$ such that
		\[
		\sum_{j\in \N} \norm{b_{\alpha,j}}_{\Lp^1(G;\Banach)}\meg  C \norm{ f}_{\Lp^1(G;\Banach)}
		\]
		and
		\[
		\norm{g_\alpha}_{\Lp^\infty(G;\Banach)}\meg C\alpha.
		\]
	\end{teo}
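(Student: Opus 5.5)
The plan follows the classical argument of \cite[Lemma 3.19 and Theorem 3.20]{FollandStein}, using the pointwise bound of Proposition~\ref{prop:42}, (1), namely $\abs{f}\meg C\widetilde\Mc^{(\lambda',\kappa)}f$, together with the $L^\infty$ control of the polynomials $P_{\alpha,j}$ given by Corollary~\ref{cor:22}. First, note that $\widetilde\Mc^{(\lambda',\kappa)}f$ is finite only in a weak sense when $f\in\Lp^1$. By Proposition~\ref{prop:42}, (2) (and Lemma~\ref{lem:85}), $\beta(\Omega_\alpha)\meg C\norm{f}_{\Lp^1}/\alpha$, so Lemma~\ref{lem:81} still applies and the $b_{\alpha,j}$ are well defined.

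For the first estimate, I will write $\norm{b_{\alpha,j}}_{\Lp^1}\meg \int \abs{f}\zeta_{\alpha,j}\,\dd\beta+\norm{P_{\alpha,j}\zeta_{\alpha,j}}_{L^\infty}\beta(B(x_{\alpha,j},(3/2)\kappa_1r_{\alpha,j}))$. Since $\sum_j\zeta_{\alpha,j}\meg 1$, the first terms sum to at most $\norm{f}_{\Lp^1}$. For the second term, I will use the explicit formula $P_{\alpha,j}=\sum_k\langle f,p_k\zeta_{\alpha,j}\rangle p_k$, where $(p_k)$ is an orthonormal basis of $\Pc^{(x_{\alpha,j})}_{\lambda^-}$ in $L^2(\zeta_{\alpha,j}\cdot\beta)$. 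Lemma~\ref{lem:80} applied at scale $r_{\alpha,j}\meg\rho$ gives $\norm{p_k\zeta_{\alpha,j}}_{L^\infty}\meg C r_{\alpha,j}^{-Q_*/2}$. One difficulty is that $\zeta_{\alpha,j}$ is not bounded below on a comparable ball. Here I use that $\zeta_{\alpha,j}\Meg 1/N$ on $B(x_{\alpha,j},\kappa_1r_{\alpha,j})$, so the $L^2(\zeta_{\alpha,j}\beta)$ norm dominates $N^{-1/2}$ times the $L^2$ norm on that ball, and Lemma~\ref{lem:80} still applies. Hence $\norm{P_{\alpha,j}\zeta_{\alpha,j}}_{L^\infty}\meg C r_{\alpha,j}^{-Q_*}\int\abs f\zeta_{\alpha,j}\,\dd\beta$. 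Since $r_{\alpha,j}\meg\rho/\kappa_2$ whenever $P_{\alpha,j}\neq0$, we have $\beta(B(x_{\alpha,j},(3/2)\kappa_1r_{\alpha,j}))\asymp r_{\alpha,j}^{Q_*}$. The second terms are therefore bounded by $C\int\abs f\zeta_{\alpha,j}$, and summing over $j$ gives the first inequality. This also shows that $\sum_j b_{\alpha,j}$ converges in $\Lp^1$. Its limit agrees with the $h^p$-limit of Theorem~\ref{prop:50}, since both convergences imply convergence in $\Sr'$ (Proposition~\ref{prop:41}).

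For the second estimate, on $G\setminus\Omega_\alpha$ we have $g_\alpha=f$, and $\abs f\meg C\widetilde\Mc^{(\lambda',\kappa)}f\meg C\alpha$ there by Proposition~\ref{prop:42}, (1). The bound from Proposition~\ref{prop:42}, (1) only requires local integrability of the maximal function, so I will instead use the Lebesgue differentiation form, $\abs{f}(x)=\lim_t\abs{f*\phi_t}(x)$ a.e., which holds for $\Lp^1$ elements. On $\Omega_\alpha$ we have $\sum_j\zeta_{\alpha,j}=1$, so $g_\alpha=\sum_j P_{\alpha,j}\zeta_{\alpha,j}$, where at each point at most $N$ terms are nonzero. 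Each term is bounded in $L^\infty$ by $C(\widetilde\Mc^{(\lambda',\kappa)}f)(z_j)\meg C\alpha$ by Corollary~\ref{cor:22}, with $z_j\in B(x_{\alpha,j},\kappa_3r_{\alpha,j})\setminus\Omega_\alpha$. Here $\kappa_1+\kappa_2$ in Corollary~\ref{cor:22} is compared with the current $\kappa$, which dominates it by its definition. Thus $\abs{g_\alpha}\meg CN\alpha$.

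I expect the main obstacle to be the bookkeeping on $\Omega_\alpha$: one must ensure that the identity $g_\alpha=\sum_jP_{\alpha,j}\zeta_{\alpha,j}$ holds as measures, using the $\Lp^1$ convergence, and that the constants from Corollary~\ref{cor:22} apply with the right dilation parameters.
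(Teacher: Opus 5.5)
Your proposal is correct. For the bound on $g_\alpha$ it follows the paper's argument. On $\Omega_\alpha$ one writes $g_\alpha=\sum_j P_{\alpha,j}\zeta_{\alpha,j}$ and applies Corollary~\ref{cor:22} at a point $z_j\in B(x_{\alpha,j},\kappa_3r_{\alpha,j})\setminus\Omega_\alpha$, using bounded overlap. Off $\Omega_\alpha$ one uses $\abs{f}\meg C\widetilde\Mc^{(\lambda',\kappa)}f\meg C\alpha$.

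Your caution there is justified. Proposition~\ref{prop:42}, (1) is stated under the hypothesis $\widetilde\Mc^{(\lambda',\kappa)}f\in L^1_\loc(G)$, which can fail for $f\in\Lp^1(G;\Banach)$, yet the paper cites it anyway. Your differentiation argument fills this gap. If $\Banach$ lacks the Radon--Nikod\'ym property, run it on each scalar measure $\langle f,v'\rangle$ with $\norm{v'}_{\Banach'}\meg 1$. Since $\langle f,v'\rangle*\phi_t$ converges a.e.\ to the density of $\langle f,v'\rangle$, you get $\abs{\langle f,v'\rangle}\meg C\alpha\,\beta$ on $G\setminus\Omega_\alpha$. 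Then $\abs{f}\meg C\alpha\,\beta$ there follows directly from the definition of the total variation as a supremum over partitions.

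For the first estimate you take a different route from the paper. The paper never compares $P_{\alpha,j}$ with $f$ locally. It reuses the uniform bound $\norm{P_{\alpha,j}\zeta_{\alpha,j}}_{L^\infty}\meg C\alpha$ from Corollary~\ref{cor:22}. It then controls $\alpha\sum_j\beta(B(x_{\alpha,j},(3/2)\kappa_1r_{\alpha,j}))\meg N\alpha\beta(\Omega_\alpha)$ by the weak type $(1,1)$ inequality of Proposition~\ref{prop:42}, (2).

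You instead prove $L^1$-stability of the local polynomial projection, $\norm{P_{\alpha,j}\zeta_{\alpha,j}}_{L^1}\meg C\int\abs{f}\zeta_{\alpha,j}\,\dd\beta$. This rests on three facts:
\begin{itemize}
\item the orthonormal-basis formula for $P_{\alpha,j}$;
\item Lemma~\ref{lem:80}, which applies because $\kappa_2 r_{\alpha,j}\meg\rho$ whenever $P_{\alpha,j}\neq 0$;
\item $\zeta_{\alpha,j}\Meg 1/N$ on $B(x_{\alpha,j},\kappa_1 r_{\alpha,j})$.
\end{itemize}
One small correction: to bound $\abs{\langle f,\overline{p_k}\zeta_{\alpha,j}\rangle}$ by $\int\abs f\zeta_{\alpha,j}$, what you need is $\sup_{\supp\zeta_{\alpha,j}}\abs{p_k}\meg C r_{\alpha,j}^{-Q_*/2}$, not a bound on $\norm{p_k\zeta_{\alpha,j}}_{L^\infty}$. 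Lemma~\ref{lem:80} gives this with $t=(3/2)\kappa_1 r_{\alpha,j}$ and $t'=\kappa_1 r_{\alpha,j}$.

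Your route is the classical one. It is purely local and gives the slightly sharper bound $C\int_{\Omega_\alpha}\abs{f}\,\dd\beta$. It uses the maximal theorem only to ensure $\beta(\Omega_\alpha)<\infty$. It also makes explicit the $\Lp^1$-convergence of $\sum_j b_{\alpha,j}$, which is what defines $g_\alpha$ when $f\notin h^p_{\lambda'}(G;\Banach)$. The paper's route is more economical, since a single $L^\infty$ bound on $P_{\alpha,j}\zeta_{\alpha,j}$ serves both assertions.
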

	
	This replaces~\cite[Theorems 3.18 and 3.20 (ii)]{FollandStein} in our setting.
	
	\begin{proof}
		Observe that, by Corollary~\ref{cor:22}, there is a constant $C_1>0$ such that
		\[
		\norm{P_{\alpha,j} \zeta_{\alpha,j}}_{L^\infty(G;\Banach)}\meg C_1 (\widetilde \Mc^{(\lambda',\kappa)} f)(x)
		\]
		for every $x\in B(x_{\alpha,j},\kappa_3 r_{\alpha,j})$, whence
		\[
		\norm{P_{\alpha,j} \zeta_{\alpha,j}}_{L^\infty(G;\Banach)}\meg C_1 \alpha
		\]
		taking $x\in B(x_{\alpha,j},\kappa_3 r_{\alpha,j})\setminus \Omega_\alpha$. In addition, by Lemma~\ref{lem:81} there is $N\in\N$ such that
		\[
		\sum_{j\in \N} \chi_{B(x_{\alpha,j},(3/2)\kappa_1)} \meg N\chi_{\Omega_\alpha}.
		\]
		Consequently,
		\[
		\begin{split}
			\sum_{j\in \N} \norm{b_{\alpha,j}}_{\Lp^1(G;\Banach)}&\meg \sum_{j\in \N} \int_G \abs{f} \zeta_{\alpha,j}\,\dd \beta+ \sum_{j\in \N} \int_G \abs{P_{\alpha,j}} \zeta_{\alpha,j}\,\dd\beta\\
				&\meg 	 \int_{\Omega_\alpha} \abs{f}  \,\dd \beta+ C_1 \alpha \sum_{j\in \N} 	 \beta(B(x_{\alpha,j},(3/2)\kappa_1 r_{\alpha,j}))\\
				&\meg  \int_{\Omega_\alpha} \abs{f}  \,\dd \beta+ C_1 N \alpha \beta(\Omega_\alpha).
		\end{split}
		\]
		Now, by Proposition~\ref{prop:42},  there is a constant $C_2>0$ such that
		\[
		\alpha \beta(\Omega_\alpha)\meg C_2 \norm{f}_{\Lp^1(G;\Banach)},
		\]
		whence the first assertion.
		
		Concerning the second assertion, observe that
		\[
		\begin{split}
		\abs{g_\alpha }&=\abs*{f -\sum_{j\in \N} b_{\alpha,j}} \meg \chi_{G\setminus \Omega_\alpha}\abs{f}+\sum_{j\in \N} \abs{P_{\alpha,j} \zeta_{\alpha,j}}  \meg \chi_{G\setminus \Omega_\alpha}\abs{f}+ C_1 N \alpha 
		\end{split}
		\]
		since $\sum_{j\in \N} \zeta_{\alpha,j}=1-\chi_{\Omega_\alpha}=\chi_{G\setminus \Omega_\alpha}$. Then, by Proposition~\ref{prop:42}, observe that there is a constant $C_3>0$ such that 
		\[
		\abs{f(x) }\meg C_3 (\widetilde \Mc^{(\lambda',\kappa)} f)(x) \meg  C_3\alpha
		\]
		for $\beta$-almost every $x\in G\setminus \Omega_\alpha$. The second assertion follows.
	\end{proof}

    \subsection{Calder\'on--Zygmund operators}
    \begin{deff}
		Take two \BanachSpace s  $\Banach_1, \Banach_2 $,  $\lambda, a\Meg 0$, and $q\in(1,\infty]$. We say that a distribution $K\in \Sr'(G;\Lf(\Banach_1;\Banach_2))$ is a $(\lambda,a,q,\Banach_1,\Banach_2)$-standard Calder\'on--Zygmund kernel, or simply a $(\lambda,a,q,\Banach_1,\Banach_2)$-standard kernel, if the following conditions are satisfied:
		\begin{itemize}
			\item the restriction of $X^R K$ to $G\setminus \Set{e}$ belongs to $\Lp^1_\loc(G\setminus \Set{e}; \Lf(\Banach_1;\Banach_2))$ for every $X\in U(\gf)$ with order $\meg q(\lambda)$;
			
			\item for $\beta$-almost every $x\in B(e,1)\setminus \Set{e}$ and for every $X\in U(\gf)$ with order $\meg q(\lambda)$,
			\[
			\abs{ X^R K }(x)\meg \frac{\abs{X}}{\abs{x}_*^{\deg(X)+Q_*}  };
			\]
			
			\item for $\beta$-almost every $x\in G\setminus B(e,1)$ and for every $X\in U(\gf)$ with order $\meg q(\lambda)$,
			\[
			\abs{  X^R K }(x)\meg \frac{\abs{X}}{ \abs{x}_* ^{a}  };
			\]
			
			\item for every $f\in \Sr(G;\Banach_1)$,
			\[
			\norm{f*K}_{L^q(G;\Banach_2)}\meg   \norm{f}_{L^q(G;\Banach_1)}.
			\]
		\end{itemize} 
	\end{deff}

    The next result is useful for proving that Hardy spaces are preserved under the action of Calder\'on--Zygmund operators. Its proof is postponed to Section~\ref{sec:app}.
	\begin{prop}\label{prop:31}
		Take two \BanachSpace s $\Banach_1,\Banach_2$, $p\in (0,1]$, $q\in (1,\infty]$, $a,a'>D/p$, $\lambda\Meg Q_*(1/p-1)$ and $\lambda'\Meg \dd([a/\dd]+1)$. Let $\rho>0$ be so that $\overline B(e,\rho)\subseteq \mathbf U$. There exists a constant $C>0$ such that  
		\[
		\norm{\Mc^{(\lambda',a)} (f*K)}_{L^p(G)}\meg C   
		\]
		for every $\Banach_1$-valued $(p,q,\rho,\lambda)$-atom $f$, and for every $(\lambda,a',q, \Banach_1,\Banach_2)$-standard kernel $K$.
	\end{prop}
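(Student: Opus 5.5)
Fix an atom $f$, concentrated in $B(x,r)$ with $\norm{f}_{\Lp^q(G;\Banach_1)}\meg \beta(B(x,r))^{1/q-1/p}$. By left-invariance of $\beta$, of the distance and of convolution on the left, we may assume $x=e$. The plan is the classical one: split $G$ into a neighbourhood $B(e,c_0 r)$ of the ball, for a suitable $c_0>1$, and its complement. On $B(e,c_0 r)$ we use H\"older and the $L^q$ boundedness of $K$. Off it we get pointwise decay of $\Mc^{(\lambda',a)}(f*K)$, using the cancellation of $f$ when $r\meg\rho$ and the size of $f$ and $K$ when $r>\rho$.

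\textbf{Local part.} By Proposition~\ref{prop:42}(2), $\Mc^{(\lambda',a)}$ is bounded on $L^q$, since $a>D$ when $p\meg1$. The $L^q$ bound of $K$ extends from $\Sr$ to $\Lp^q$ by density, possibly after approximating $f$. Then
\[
\norm{\chi_{B(e,c_0r)}\Mc^{(\lambda',a)}(f*K)}_{L^p}\meg \beta(B(e,c_0r))^{1/p-1/q}\norm{f}_{\Lp^q},
\]
and this is bounded by doubling. Note that $\Mc^{(\lambda',a)}$ takes the supremum over all $t\in(0,1]$ with kernels only decaying like $(1+\abs{\cdot}_*/t)^{-a}$.

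\textbf{Far part.} For $\phi\in B_{\lambda',a,t}$ we write $(f*K*\phi)(y)=\langle f, (K*\phi)(\,\cdot\,^{-1}y)\rangle$ and estimate $K*\phi$ away from $e$.

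Consider first $r\meg \rho$ and $\abs{y}_*\Meg c_0 r$. We use the vanishing moments. The cleanest route is Lemma~\ref{lem:73}, which writes $f=\sum_{0<\abs{\alpha}\meg q(\lambda)}\vect X^\alpha f_\alpha$ with $\norm{f_\alpha}_{\Lp^q}\lesssim r^{\max(\lambda,d_\alpha)}\norm{f}_{\Lp^q}$ and $f_\alpha$ supported in $B(e,cr)$. Then
\[
f*K*\phi=\sum_\alpha f_\alpha*(\vect X^\alpha)^R(K*\phi)=\sum_\alpha f_\alpha*((\vect X^\alpha)^R K)*\phi .
\]
We split $\phi$ into the part where $\abs{\cdot}_*$ is small relative to $\abs{y}_*$ and the tail. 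On the first part, $(\vect X^\alpha)^RK$ at distance $\sim\abs{y}_*$ is bounded by $\abs{y}_*^{-Q_*-d_\alpha}$ when $\abs{y}_*\meg 1$ and by $\abs{y}_*^{-a'}$ otherwise. On the tail of $\phi$, we use the decay $(1+\abs{\cdot}_*/t)^{-a}$ together with the $L^q$ boundedness of $K$; moving derivatives onto $\phi$ costs $t^{-d_\alpha}$, which is why $\lambda'\Meg\dd([a/\dd]+1)$ is needed. This yields
\[
\Mc^{(\lambda',a)}(f*K)(y)\lesssim \beta(B(e,r))^{-1/p}\,r^{Q_*+\lambda}\abs{y}_*^{-Q_*-\lambda}
\]
for $c_0r\meg\abs{y}_*\meg1$, and decay of order $\abs{y}_*^{-\min(a,a')}$ beyond $1$. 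Since $\lambda\Meg Q_*(1/p-1)$, we have $(Q_*+\lambda)p\Meg Q_*$. I expect the endpoint $\lambda=Q_*(1/p-1)$ to require using $\max(\lambda,d_\alpha)$ with $d_\alpha>\lambda$ strictly, so that only degrees strictly above $\lambda$ appear, giving a strict inequality. Lemma~\ref{lem:37b} and Lemma~\ref{lem:37c} then integrate the $p$-th powers, and $a,a'>D/p$ handles the region at infinity.

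Consider next $r>\rho$. No cancellation is available, but $\beta(B(e,r))\gtrsim r^{Q_G}$. The size bound of $K$, its convolution with $\phi$ at scale $t\meg1$, and Young's inequality give decay $\abs{y}_*^{-\min(a,a')}$ for $\abs{y}_*\Meg c_0r$. This is $p$-integrable against the normalization $\beta(B(e,r))^{1/q-1/p}\norm{\cdot}$, because $\min(a,a')p>D\Meg Q_G$.

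\textbf{Main obstacle.} The hardest step is the far-field kernel estimate for $((\vect X^\alpha)^RK)*\phi$ with $\phi$ only polynomially decaying. The region where $\phi$ lives near $e$ sees the singularity of $K$, which is controlled only through $L^q$ boundedness and not pointwise. I would handle it by cutting $K=K\tau+K(1-\tau)$ with $\tau$ from Corollary~\ref{cor:21} at scale $\abs{y}_*/2$. The piece $K(1-\tau)$ is smooth with the stated bounds. The piece $K\tau$ is paired with the tail of $\phi$, where the factor $(1+\abs{y}_*/t)^{-a}$ compensates, using the $L^q$ bound after moving the $\vect X^\alpha$ derivatives onto $\phi$.
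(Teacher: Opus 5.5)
Your outline (local part by H\"older and $L^q$ bounds; far part via Lemma~\ref{lem:73} if $r\meg\rho$ and via size if $r>\rho$) matches the second half of the paper's proof. However, the step you yourself call the main obstacle is left open, and the mechanism you sketch for it fails under the stated hypotheses.

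Take $\phi\in B_{\lambda',a,t}$ and $t\meg\abs{w}_*\meg 1$. Moving $\vect X^\alpha$ onto the part of $\phi$ near $w$ costs
$t^{-Q_*-d_\alpha}(1+\abs{w}_*/t)^{-a}=\abs{w}_*^{-Q_*-d_\alpha}(t/\abs{w}_*)^{a-Q_*-d_\alpha}$.
This is unbounded as $t\to0^+$ as soon as $d_\alpha>a-Q_*$. Only $a>D/p$ is assumed, while $d_\alpha$ ranges up to $q(\lambda)\dd_M$, so the factor $(1+\abs{y}_*/t)^{-a}$ does not compensate in general. For instance, $p=1$, $D=Q_*$ and $a$ slightly above $Q_*$ make it fail for every $\alpha$.

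Moreover, an $L^q$ bound for $g\mapsto g*K$ gives by itself no pointwise control of the pairing of $K\tau$ with a test function. Some regularity device is needed; a Sobolev embedding, for example, costs derivatives of full order $>\dim G/q$ on top of the $q(\lambda)$ coming from the atom. The class $B_{\lambda',a,t}$ only controls derivatives in $U_{\lambda'}$. The condition $\lambda'\Meg\dd([a/\dd]+1)$ is unrelated to the degrees $d_\alpha$, so it is not what makes this step work. Your case $r>\rho$ has the same defect: Young's inequality cannot be used near $e$, where $K$ is not locally integrable.

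The missing idea is that the paper never estimates $K*\phi$ for a general $\phi\in B_{\lambda',a,t}$.

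\textbf{Reduction.} The paper first applies Lemma~\ref{lem:71}; this is exactly where $a>D/p$ and $\lambda'\Meg\dd([a/\dd]+1)$ enter. It reduces everything to bounding $\norm{\sup_j\abs{f*K*\psi_j}}_{L^p(G)}$, where $\psi_j=\Kc_{2^{-j\grado}\Lc}(m)$ for a fixed cut-off $m$. By Proposition~\ref{cor:12}, these kernels have all derivatives with arbitrarily fast decay at scale $2^{-j}$.

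\textbf{Uniform kernel bounds.} Step~I then shows that the $K*\psi_j$ satisfy the standard-kernel bounds, for all right-invariant derivatives of order $\meg q(\lambda)$, uniformly in $j$. The kernel $K$ is cut off at scale $2^{-j}$, and the part away from $e$ is handled by its pointwise bounds. The singular part is rewritten as the value at $e$ of a right convolution of $K$ with a cut-off, translated copy of (derivatives of) $\psi_j$. That value is controlled by three ingredients:
\begin{itemize}
\item the Sobolev embedding of Lemma~\ref{lem:76} on the opposite group;
\item the fact that right-invariant derivatives commute with $\,\cdot\,*K$;
\item the $\Lp^q$ bound of Proposition~\ref{prop:43}.
\end{itemize}
Proposition~\ref{prop:43} is obtained by mollification, since $\Sr$ need not be dense in $\Lp^q$; your ``by density'' should be replaced by this. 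With these uniform bounds, the atom estimates go through as you describe.

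\textbf{Endpoint.} Your hope that only degrees strictly above $\lambda$ survive is unfounded. Lemma~\ref{lem:73} does produce terms with $d_\alpha=\lambda$. When $\lambda=Q_*(1/p-1)$ is itself a degree, these give a genuine $\log(1/r)$ loss, already for mean-zero atoms on $\R^n$ with $p=n/(n+1)$. The paper's appeal to Lemma~\ref{lem:37b} at that point likewise needs $\lambda>Q_*(1/p-1)$, which is the case used in Theorem~\ref{teo:18}.
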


	\begin{cor}\label{cor:26}
		Take a \BanachSpace\ $\Banach$, $p\in (0,1]$, $q\in (1,\infty]$,   $a>D/p$, $\lambda \Meg Q_*(1/p-1)$, and $\lambda'\Meg \dd([a/\dd]+1)$. Let $\rho>0$ be so that $\overline B(e,\rho)\subseteq \mathbf U$.  There exists a constant $C>0$ such that
        \[
        \norm{\Mc^{(\lambda',a)}f}_{L^p(G)}\meg C
        \]
        for every $\Banach$-valued $(p,q,\rho, \lambda)$-atom  $f$.
	\end{cor}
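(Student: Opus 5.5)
The natural route is to deduce Corollary~\ref{cor:26} from Proposition~\ref{prop:31} by choosing a kernel $K$ for which $f*K=f$. Concretely, I would take $\Banach_1=\Banach_2=\Banach$ and $K\coloneqq \delta_e\otimes I_\Banach$, where $I_\Banach$ is the identity of $\Banach$. Then $f*K=f$ for every atom $f$, and the corollary follows at once, with $C$ the constant of Proposition~\ref{prop:31} for the parameters $a'=a$, $\lambda$, $\lambda'$, provided that $K$ is (up to a harmless constant) a $(\lambda,a,q,\Banach,\Banach)$-standard kernel.

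The main step is therefore to check the four conditions in the definition of a standard kernel for $K=\delta_e I_\Banach$.

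\begin{itemize}
\item For every $X\in U(\gf)$, the distribution $X^R K=(X^R\delta_e) I_\Banach$ is supported in $\Set{e}$. Its restriction to $G\setminus\Set{e}$ is therefore $0$, which lies in $\Lp^1_\loc(G\setminus\Set{e};\Lf(\Banach;\Banach))$.
\item The two pointwise size estimates, on $B(e,1)\setminus\Set{e}$ and on $G\setminus B(e,1)$, hold trivially, since the left-hand sides vanish there.
\item The $L^q$ boundedness holds because $\phi*K=\phi$ for every $\phi\in\Sr(G;\Banach)$, so $\norm{\phi*K}_{L^q(G;\Banach)}=\norm{\phi}_{L^q(G;\Banach)}$ with constant exactly $1$.
\end{itemize}

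Membership $K\in\Sr'(G;\Lf(\Banach;\Banach))$ is clear, since $K$ acts by $\phi\mapsto \phi(e) I_\Banach$. In the definition, the size conditions involve the orders $\meg q(\lambda)$, but they are vacuous here, so no restriction on the parameters arises.

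The only genuine issue, and the step I would watch most carefully, is whether Proposition~\ref{prop:31} silently uses $K\in\Lp^1_\loc$ near $e$ or some integrability that $\delta_e$ lacks. The definition only asks for local integrability away from $e$, and the $L^q$ bound is what controls the local part of an atom. So I expect the proof of Proposition~\ref{prop:31} to apply verbatim to $\delta_e$, using $\norm{f*K}_{\Lp^q}=\norm{f}_{\Lp^q}$ for atoms.

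If one worries that atoms are only in $\Lp^q$ rather than $L^q$, I would note that convolution with $\delta_e$ is the identity on $\Lp^q(G;\Banach)$ as well. Should that still be a concern, the fallback is to prove the corollary directly by repeating the atom estimate, split into $B(x,2r)$ and its complement:
\begin{itemize}
\item on $B(x,2r)$, use Proposition~\ref{prop:42} and H\"older's inequality;
\item off $B(x,2r)$ with $r\meg\rho$, Taylor-expand $\phi$ via Lemma~\ref{lem:77}, using the vanishing moments of the atom, and conclude with Lemma~\ref{lem:37b};
\item off $B(x,2r)$ with $r>\rho$, use support considerations, since $\Mc^{(\lambda',a)}$ only involves scales $t\meg 1$, together with the decay $(1+\abs{\cdot}_*/t)^{-a}$ and $a>D/p$.
\end{itemize}
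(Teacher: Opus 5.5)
Your proposal is exactly the paper's proof: apply Proposition~\ref{prop:31} with $\Banach_1=\Banach_2=\Banach$ and $\langle K,\phi\rangle=\phi(e)I$. Your worry about hidden local integrability is unfounded, since that proof goes through verbatim for $\delta_e$: the kernel bounds of its Step~I reduce to bounds on $X^R\psi_j$ from Proposition~\ref{cor:12}, and the $L^q$ bound is trivial, so no fallback is needed.

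One caveat, shared with the paper: the endpoint $\lambda=Q_*(1/p-1)$ should really be excluded. The footnote in the proof of Proposition~\ref{prop:31} already flags the case $\lambda=0$, $p=1$; moreover, on $\R$ with $p=1/2$ and $\lambda=1$, the atoms $(2r)^{-2}\mathrm{sgn}\,\chi_{(-r,r)}$ have $\norm{\Mc^{(\lambda',a)}f}_{L^{1/2}(\R)}$ unbounded as $r\to 0^+$. This is harmless, because Theorem~\ref{teo:18} only invokes the corollary with $\lambda>Q_*(1/p-1)$.
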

	
	\begin{proof}
		Simply apply Proposition~\ref{prop:31} with $K\in \Sr'(G;\Lf(\Banach;\Banach))$ defined so that $\langle K,\phi\rangle= \phi(e) I$ for every $\phi\in \Sr(G)$.
	\end{proof}

  \subsection{Atomic decomposition}
  We are now able to provide, under suitable assumptions, the atomic decomposition of the elements of $h^p_{\lambda'}(G;Z)$.

	\begin{teo}\label{teo:18} 
		Take $p\in (0,1]$, $q\in (1,\infty]$, $\lambda > Q_*(1/p-1)$, and a \BanachSpace\ $\Banach$.  Assume that either $\lambda \meg \dd_0$, in which case we set $\mi\coloneqq \lambda$, or that $\lambda>\dd_0$ and $G$ is  $\ell$-step nilpotent for some $\ell\in\N$, in which case we set $\mi\coloneqq \dd_M([\lambda/\dd_0]\min(1,\ell-1))$ and let (in both cases) $\lambda' \Meg \dd_M([\mi/\dd_0]+1)$.
		
		There are constants $\rho, C>0$ such that for every $f\in h^p_{\lambda'}(G;\Banach)$ there are a sequence $(f_j)_{j\in\N}$ of $(p,q,\rho,\lambda)$-atoms and $(a_j)\in \ell^p(\N)$ such that
        \[
        f=\sum_{j\in \N} a_j f_j
        \]
        with convergence in $h^p_{\lambda'}(G;\Banach)$, and
		\[
		\sum_{j\in \N} a_j^p \meg C \norm{f}_{h^p_{\lambda'}(G;\Banach)}^p.
		\]
	\end{teo}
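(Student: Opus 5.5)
The plan is to follow the Folland--Stein scheme~\cite[Theorem 3.30]{FollandStein}, with Theorem~\ref{prop:50} playing the role of the Calder\'on--Zygmund decomposition. By Corollary~\ref{cor:24} and Proposition~\ref{prop:41}, it is enough to treat $f\in h^1_{\lambda'}\cap h^p_{\lambda'}$, provided the constants are uniform. Indeed, for general $f$ one writes $f=\sum_k g_k$ with $g_k\in h^1\cap h^p$ and $\sum_k\norm{g_k}^p\meg 2\norm{f}^p$, and then concatenates the atomic decompositions of the $g_k$.

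\textbf{Step 1: level sets and telescoping.} For such $f$ and every $k\in\Z$, apply Theorem~\ref{prop:50} with $\alpha=2^k$ and with $\mi$ in place of $\lambda$. This gives $f=g_k+\sum_j b_{k,j}$, where $b_{k,j}=(f-P_{k,j})\zeta_{k,j}$ lives on the Whitney balls of $\Omega_k=\Set{\widetilde\Mc^{(\lambda',\kappa)}f>2^k}$. Since $f\in h^1$, Theorem~\ref{prop:50}(2)--(3) together with Proposition~\ref{prop:42} give $g_k\to f$ as $k\to+\infty$ and $g_k\to 0$ as $k\to-\infty$ (when $\beta(G)<\infty$ one stops at the first $k$ with $\Omega_k\neq G$ and treats $g_k$ as a single large-ball atom). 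Hence
\[
f=\sum_k (g_{k+1}-g_k)
\]
in $\Sr'$. Following~\cite{FollandStein}, write
\[
g_{k+1}-g_k=\sum_j A_{k,j},\qquad A_{k,j}\coloneqq b_{k,j}-\sum_i\big(b_{k+1,i}\zeta_{k,j}-P_{k+1,i,j}\zeta_{k+1,i}\big),
\]
where $P_{k+1,i,j}$ is the projection of $(f-P_{k+1,i})\zeta_{k,j}$ onto $\Pc^{(x_{k+1,i})}_{\mi^-}$ with respect to $\zeta_{k+1,i}$. One then shows, exactly as in~\cite[Lemmas 3.25--3.27]{FollandStein}, that $A_{k,j}$ is supported in $B(x_{k,j},C r_{k,j})$ and satisfies $\abs{A_{k,j}}\meg C2^k$. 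For the sup bound on $\Omega_{k+1}^c$ use $\abs{f}\meg C\widetilde\Mc f\meg C2^{k+1}$ (Proposition~\ref{prop:42}(1)); elsewhere use Corollary~\ref{cor:22} for the polynomial pieces.

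\textbf{Step 2: moments (the main obstacle).} When $C r_{k,j}\meg\rho$, the function $A_{k,j}$ must annihilate $\Pc^{(x_{k,j})}_{\lambda^-}$. The piece $b_{k,j}$ annihilates $\Pc^{(x_{k,j})}_{\mi^-}$ by construction. The correction terms, however, are orthogonal to $\Pc^{(x_{k+1,i})}_{\mi^-}$, that is, to polynomials centred at different points. One must therefore check that translation maps $\Pc^{(x_{k+1,i})}_{\lambda^-}$ restricted to the relevant ball into $\Pc^{(x_{k,j})}_{\mi^-}$.

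If $\lambda\meg\dd_0$, then $\Pc_{\lambda^-}$ consists of the constants only, and $\mi=\lambda$ suffices trivially.

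If $G$ is $\ell$-step nilpotent, $\exp_G$ is a global diffeomorphism and, by the Baker--Campbell--Hausdorff formula, $P\mapsto P\circ\log(y\exp(\cdot))$ is polynomial. I expect its filtered degree to be at most $\dd_M([\lambda/\dd_0]\min(1,\ell-1))$, which is precisely $\mi$: a polynomial of degree $<\lambda$ has ordinary degree $\meg[\lambda/\dd_0]$, the composition with BCH multiplies this by at most $\ell-1$, and each ordinary degree costs at most $\dd_M$. This degree count is where I expect the real work, and I would verify it carefully through the $\ad$-nilpotency. A further point in this step is handling large balls ($r>\rho$), where no moments are required; the cutoff $P=0$ for $\kappa_2r>\rho$ fits this situation.

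\textbf{Step 3: summation.} Set $f_{k,j}\coloneqq A_{k,j}/a_{k,j}$ with $a_{k,j}\coloneqq C2^k\beta(B(x_{k,j},Cr_{k,j}))^{1/p}$. Then each $f_{k,j}$ is a $(p,\infty,\rho,\lambda)$-atom, hence also a $(p,q,\rho,\lambda)$-atom by H\"older's inequality. Moreover
\[
\sum a_{k,j}^p\meg C\sum_k 2^{kp}\beta(\Omega_k)\meg C\norm{\widetilde\Mc f}_p^p,
\]
using doubling and the bounded overlap of the Whitney balls from Lemma~\ref{lem:81}, followed by Lemma~\ref{lem:85}. Finally, convergence in $h^p_{\lambda'}$ follows from Corollary~\ref{cor:26}, which gives $\norm{\widetilde\Mc f_{k,j}}_p\meg C$, combined with Proposition~\ref{prop:41}(2); the limit coincides with $f$ because the series already converges to $f$ in $\Sr'$.
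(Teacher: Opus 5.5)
Your outline follows the paper's proof step by step. It uses the reduction through Corollary~\ref{cor:24}, Theorem~\ref{prop:50} at levels $2^k$ with $\mi$ in place of $\lambda$, the same pieces $A_{k,j}$ (the paper's $h_{k,j}$), BCH for the moments, and summation via Lemma~\ref{lem:81} and Corollary~\ref{cor:26}. However, two of your justifications do not hold as written.

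\textbf{The limits in Step~1.} The convergence $g_k\to f$ as $k\to+\infty$ comes from item~(1) of Theorem~\ref{prop:50}, not from (2)--(3). As $k\to-\infty$, item~(3) only gives $\norm{\widetilde\Mc^{(\lambda',1)}g_k}_{L^1(G)}\meg C2^{k(1-p)}\norm{\widetilde\Mc^{(\lambda',\kappa)}f}_{L^p(G)}^p$, which tends to $0$ only when $p<1$. For $p=1$ you need the bound $\norm{g_k}_{\Lp^\infty(G;\Banach)}\meg C2^k$ of Theorem~\ref{lem:83}. This applies since $f\in h^1_{\lambda'}\subseteq\Lp^1(G;\Banach)$ by Proposition~\ref{prop:42}, and it is what the paper uses.

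\textbf{The moment step (the more serious point).}
\begin{enumerate}
\item What you need is that every $Q\in\Pc^{(x_{k,j})}_{\lambda^-}$ agrees near the relevant balls with some $Q'\in\Pc^{(x_{k+1,i})}_{\mi^-}$. You wrote this inclusion in the opposite direction, which is harmless by symmetry.
\item You also need $\Pc_{\lambda^-}\subseteq\Pc_{\mi^-}$, so that $b_{k,j}$ itself has the required moments.
\item Your degree count only shows that the recentred polynomial has filtered degree \emph{at most} $\dd_M\max(1,\ell-1)[\lambda/\dd_0]$. That places it in $\Pc_\mi$, whereas the projections $P_{k,j}$ and $P_{k+1,i,j}$ only see $\Pc_{\mi^-}$. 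You need a strict inequality, together with $\mi\Meg\lambda$.
\end{enumerate}
With $\mi$ as in the statement both requirements can fail. Take $G$ nilpotent with the trivial filtration $\gf_1=\gf$ (so $\dd_0=\dd_M=1$) and $\lambda=3/2$. Then $\mi\meg1$, so $\Pc_{\mi^-}$ contains at most the constants, while the atoms must annihilate all affine functions in exponential coordinates. Note also that $\min(1,\ell-1)$ must be read as $\max(1,\ell-1)$; otherwise $\mi=0$ for abelian $G$.

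Your own count shows that any $\mi\Meg\lambda$ with $\mi>\dd_M\max(1,\ell-1)[\lambda/\dd_0]$ works, for example $\mi=\dd_M\max(1,\ell-1)q(\lambda)$. This comes at the cost of raising the threshold $\lambda'\Meg\dd_M([\mi/\dd_0]+1)$ accordingly. The paper's proof asserts $Q'\in\Pc^{(x_{k+1,j'})}_{\mi^-}$ with the same $\mi$, so the slip is shared, but your argument as written does not close it.

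\textbf{Cutoff radius versus atom threshold.} Decouple the cutoff radius used in Theorem~\ref{prop:50} from the moment threshold for atoms. Since $r_{k+1,i}\meg c'r_{k,j}$ with $c'>1$ whenever the supports meet, the paper runs Theorem~\ref{prop:50} with a cutoff $\rho'$ and declares atoms with $\rho=\rho'/c'$. This guarantees that whenever $A_{k,j}$ must have moments, none of the $P_{k+1,i}$ or $P_{k+1,i,j}$ entering it has been set to $0$. Alternatively, take the support constant in $B(x_{k,j},Cr_{k,j})$ with $C\Meg c'\kappa_2$.
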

 
	\begin{proof}
		\textsc{Step I} Observe first that we may reduce to the case in which $f\in h^1_{\lambda'}(G;\Banach)\cap h^p_{\lambda'}(G;\Banach)$. Indeed, by Corollary~\ref{cor:24}, we may find a sequence $(g_j)_{j\in \N}$ of elements of $h^1_{\lambda'}(G;\Banach)\cap h^p_{\lambda'}(G;\Banach)$ which converges to $f$ in $h^p_{\lambda'}(G;\Banach)$. We may also assume that $g_0=0$ and that 
        \[
        \norm{g_{j+1}-g_j}_{h^p_{\lambda'}(G;\Banach)}\meg 2^{-j/p} \norm{f}_{h^p_{\lambda'}(G;\Banach)}
        \]
        for every $j\in \N$ (if $f=0$, one may simply take $g_j=0$ for every $j\in\N$) so that
        \[
        \sum_{j\in \N} \norm{g_{j+1}-g_j}_{h^p_{\lambda'}(G;\Banach)}^p\meg 2 \norm{f}_{h^p_{\lambda'}(G;\Banach)}^p
        \]
        and that the assertion follows decomposing each $g_{j+1}-g_j$ into $(p,q,\rho,\lambda)$-atoms.
		
		Moreover, notice that by H\"older's inequality it is sufficient to prove the assertion for $q=\infty$.
		
		\textsc{Step II} Take   $\kappa_1,\kappa_2,\kappa_3,\kappa$ as in Theorem~\ref{prop:50}, but subject to the additional condition  $\kappa_2(\kappa_2-(3/2)\kappa_1)\Meg (3/2) \kappa_1(3\kappa_3+(3/2) \kappa_1)$. Fix $\rho'>0$ so that $\overline B(e,\rho')\subseteq \mathbf U$.  
		Let
        \[
        K=\Bigl\{k\in \Z\colon 2^k> \norm{\widetilde \Mc^{(\lambda',\kappa)}f}_{L^p(G)}/\beta(G)^{1/p}\Bigr\}
        \]
        so that $K=\Z$ unless $G$ is compact. For every $k\in K$, take $\Omega_k$, $(x_{k,j},r_{k,j})_{j\in \N}$, $\zeta_{k,j}$, $b_{k,j}$, $P_{k,j}$, and $g_k$ as in Theorem~\ref{prop:50}, with the choice $\alpha=2^k$, $\rho=\rho'$,  and $\lambda=\mi$.
		Given $k\in K$ and $j'\in\N$, define 
        \[
        J_{k,j'}\coloneqq \Set{j\in \N\colon B(x_{k+1,j'},(3/2)\kappa_1 r_{k+1,j'})\cap B(x_{k,j},(3/2)\kappa_1r_{k,j})\neq \emptyset}.
        \]
        Observe that, if $j\in J_{k,j'}$, then $d(x_{k+1,j'},x_{k,j})\meg (3/2)\kappa_1 (r_{k+1,j'}+r_{k,j})$ and, since $\Omega_{k+1}\subseteq \Omega_k$,
		\[
		\begin{split}
		\kappa_2 r_{k+1,j'}&\meg d(x_{k+1,j'},G\setminus \Omega_{k+1})\\
			&\meg d(x_{k+1,j'},G\setminus \Omega_k)\\
			&\meg d(x_{k+1,j'},x_{k,j})+ d(x_{k,j},G\setminus \Omega_k)\\
			&\meg (3/2)\kappa_1 (r_{k+1,j'}+r_{k,j})+\kappa_3 r_{k,j},
		\end{split}	
		\]
		so that
		\[
		r_{k+1,j'}\meg c' r_{k,j} 
		\]
		with $c'\coloneqq \frac{\kappa_3+(3/2)\kappa_1}{\kappa_2-(3/2)\kappa_1}$. In particular, $B(x_{k+1,j'},(3/2)\kappa_1 r_{k+1,j'})\subseteq B(x_{k,j},\kappa_2 r_{k,j})$ by the additional condition on $\kappa_2$. Consequently, if we take $N$ as in Lemma~\ref{lem:81}, then $\card(J_{k,j'})\meg N$ for every $j'\in \N$ and for every $k\in K$.

		\textsc{Step III} Take $k\in K$ and $j\in \N$. For every $j'\in \N$, define $P_{k,j,j'}\in \Pc^{(x_{k+1,j'})}_{\mi^-}\otimes \Banach$ as follows. Set $P_{k,j,j'}\coloneqq0 $ when $j\not \in J_{k,j'}$ or $\kappa_2 r_{k+1,j'}>\rho'$. If, otherwise, $j\in J_{k,j'}$ and $\kappa_2 r_{k+1,j'}\meg \rho'$,  then take $P_{k,j,j'}$ as the unique element of $\Pc^{(x_{k+1,j'})}_{\mi^-}\otimes \Banach$ such that
		\[
		\langle (f-P_{k+1,j'})\zeta_{k,j}, Q \zeta_{k+1,j'}\rangle= \langle P_{k,j,j'}, Q \zeta_{k+1,j'}\rangle
		\]
		for every $Q\in \Pc^{(x_{k+1,j'})}_{\mi^-}$ (see~Corollary~\ref{cor:22} for the existence). 
		By Corollary~\ref{cor:22}, Lemma~\ref{lem:82}, and Corollary~\ref{cor:23}, we then see that there is a constant $C_1>0$ such that 
		\[
		\begin{split}
        \norm{P_{k,j,j'}\zeta_{k+1,j'}}_{L^\infty(G;\Banach)}&\meg C_1  \widetilde \Mc^{(\lambda',\kappa)} ((f-P_{k+1,j'})\zeta_{k+1,j'}\zeta_{k,j})(x )\\
				&\meg C_1^2 \widetilde \Mc^{(\lambda',\kappa)} ((f-P_{k+1,j'} )\zeta_{k+1,j'})(x )\\
				&\meg C_1^3 ( \widetilde \Mc^{(\lambda',\kappa)} f)(x)\\
				&\meg C_1^3 2^{k+1}
		\end{split}
		\]
		for every $x\in B(x_{k+1,j'}, \kappa_3 r_{k+1,j'})\setminus \Omega_{k+1}$ (since $\kappa\Meg 2\kappa_3\Meg \kappa_2+\kappa_3$).
		
		\textsc{Step IV} Let us now prove that, for every $k\in K$,   
		\[
		\sum_{j,j'\in\N}   P_{k,j,j'}\zeta_{k+1,j'}=0
		\]
		pointwise and in $\Sr'(G;\Banach)$. Observe first that, for every $x\in G$, there are at most $N$ elements $j'\in \N$ such that $\zeta_{k+1,j'}(x)\neq 0$, and that for every such $j'$ there at most $N$ elements $j\in\N$ such that $P_{k,j,j'}\neq 0$, by~\textsc{step III}. Consequently, the sum is pointwise finite. Furthermore, again by~\textsc{step III},
		\[
		\sum_{j,j'\in \N} \abs{P_{k,j,j'}(x)\zeta_{k+1,j'}(x)}\meg N^2 C_1^3 2^k,
		\]
		so that the sum converges also in $\Sr'(G;\Banach)$. Therefore, 
        \[
        \sum_{j,j'} P_{k,j,j'}\zeta_{k+1,j'}=\sum_{j'} \bigg(\sum_{j\in J_{k,j'}} P_{k,j,j'}\bigg)\zeta_{k+1,j'}.
        \]
        Note that, for every $Q\in \Pc^{(x_{k+1,j'})}_{\mi^-}$,
		\[
		\begin{split}
				\bigg\langle  \sum_{j\in J_{k,j'}} P_{k,j,j'}, Q\zeta_{k+1,j'}\bigg\rangle&= \sum_{j\in J_{k,j'}} \langle P_{k,j,j'} , Q\zeta_{k+1,j'} \rangle\\
					&=\sum_{j\in J_{k,j'}}\langle (f-P_{k+1,j'})\zeta_{k,j}, Q \zeta_{k+1,j'}\rangle\\
					&=\bigg\langle (f-P_{k+1,j'})   \sum_{j\in J_{k,j'}} \zeta_{k,j}, Q \zeta_{k+1,j'}\bigg\rangle\\
					&=\langle f-P_{k+1,j'}, Q \zeta_{k+1,j'}\rangle\\
					&=0
		\end{split}
		\]
		since $\sum_{j\in J_{k,j'}} \zeta_{k,j}=1$ on the support of $\zeta_{k+1,j'}$.  Consequently, $\sum_{j\in J_{k,j'}} P_{k,j,j'}=0$. 
		
		\textsc{Step V} Observe that, by Theorem~\ref{prop:50}, there is a constant $C_2>0$ such that
		\[
		\norm{f-g_k}_{h^p_{\lambda'}(G;\Banach)}^p\meg \sum_{j\in \N} \norm{b_{k,j}}_{h^p_{\lambda'}(G;\Banach)}^p \meg C_2 \norm{\chi_{\Omega_k} \widetilde \Mc^{(\lambda',\kappa)}f}_{L^p(G)}^p,
		\]
		so that $g_k\to f$ in $h^p_{\lambda'}(G;\Banach)$ for $k\to +\infty$. In addition, if $G$ is not compact, then by Theorem~\ref{lem:83} we see that $g_k \to 0 $ in $\Lp^\infty(G;\Banach)$ for $k\to -\infty$. If, otherwise, $G$ is compact and $k_0\coloneqq \min K$, then $2^{k_0-1}\meg \norm{\widetilde \Mc^{(\lambda',\kappa)} f}_{L^p(G;\Banach)}/\beta(G)^{1/p}$, so that by means of Theorem~\ref{lem:83} we see that we may find a constant $C_3>0$ such that
		\[
		\norm{g_{k_0}}_{\Lp^\infty(G;\Banach)}\meg C_3 2^{k_0}\meg 2C_3\frac{\norm{\widetilde \Mc^{(\lambda',\kappa)} f}_{L^p(G;\Banach)}}{\beta(G)^{1/p}},
		\]
		so that $  g_{k_0}= 2C_3 \norm{\widetilde \Mc^{(\lambda',\kappa)} f}_{L^p_{\lambda'}(G;\Banach)} \widetilde f$ for some $(p,\infty,\rho,\lambda)$-atom $\widetilde f$, where $\rho\coloneqq \rho'/c'$. Up to replacing $f$ with $f-g_{k_0}$, we may therefore assume that $f=\sum_{k\in K} (g_{k+1}-g_k)$ in $\Sr'(G;\Banach)$. Now, observe that
		\[
		\begin{split}
			g_{k+1}-g_k&= f-\sum_{j\in\N} b_{k+1,j}- f+\sum_{j\in \N} b_{k,j}\\
				&=\sum_{j\in \N}b_{k,j}-\sum_{j\in \N } b_{k+1,j}\\
				&=\sum_{j\in \N}b_{k,j}-\sum_{j,j'\in \N } b_{k+1,j'} \zeta_{k,j}+\sum_{j,j'\in\N} P_{k,j,j'}\zeta_{k+1,j'}\\
				&=\sum_{j\in \N} h_{k,j},
		\end{split}
		\]
		where
		\[
		h_{k,j}\coloneqq b_{k,j}- \sum_{j'\in \N } (b_{k+1,j'} \zeta_{k,j}- P_{k,j,j'}\zeta_{k+1,j'})
		\]
		for every $k\in K$ and for every $j\in\N$,
		since 
        \[
        \sum_{ j\in\N }\zeta_{k,j}=1 \quad \text{on} \quad \Omega_k\supseteq \Omega_{k+1} \supseteq B(x_{k+1,j'}, (3/2)\kappa_1 r_{k+1,j'})
        \]
        for every $j'\in\N$.
		Observe that the final sum converges (at least) in $\Sr'(G;\Banach)$, and that $h_{k,j}$ is concentrated in $B(x_{k,j}, \kappa_2 r_{k,j})$ for every $k\in K$ and for every $j\in\N$, thanks to~\textsc{step II}.
		Next, observe that
		\[
		\begin{split}
		h_{k,j}&= f \zeta_{k,j}- P_{k,j}\zeta_{k,j}- \sum_{j'\in \N}(f \zeta_{k+1,j'} \zeta_{k,j} - P_{k+1,j'}\zeta_{k+1,j'} \zeta_{k,j}- P_{k,j,j'} \zeta_{k+1,j'})\\
			&= f\chi_{G\setminus \Omega_{k+1}} \zeta_{k,j} -P_{k,j}\zeta_{k,j}-\sum_{j'\in \N}( P_{k+1,j'}\zeta_{k+1,j'} \zeta_{k,j}- P_{k,j,j'} \zeta_{k+1,j'})
		\end{split}
		\]
		and that, by Corollary~\ref{cor:22} and~\textsc{step III}, we see that there is a constant $C_4>0$ such that
		\[
		\norm{h_{k,j}-f\chi_{G\setminus \Omega_{k+1}} \zeta_{k,j}}_{L^\infty(G;\Banach)}\meg C_4 2^{k+1}.
		\]
		In addition, by Proposition~\ref{prop:42} there is a constant $C_5>0$ such that
		\[
		\abs{f}(x)\meg C_5 (\widetilde \Mc^{(\lambda',\kappa)} f)(x)\meg C_5 2^{k+1}
		\]
		for $\beta$-almost every $x\in G\setminus \Omega_{k+1}$, so that
		\[
		\norm{h_{k,j}}_{\Lp^\infty(G;\Banach)}\meg (C_4+C_5)2^{k+1}
		\]
		for every $k\in K$ and for every $j\in\N$. 
		
		Now, let us prove that, if $\kappa_2 r_{k,j}<\rho$, then $\langle h_{k,j}, Q\rangle=0$ for every $Q\in \Pc^{(x_{k,j})}_{\lambda^-}$. Observe that, since
        \[
        \langle f, Q \zeta_{k,j}\rangle=\langle P_{k,j},Q\zeta_{k,j}\rangle
        \]
        for every $Q\in \Pc^{(x_{k,j})}_{\mi^-}\supseteq \Pc^{(x_{k,j})}_{\lambda^-}$, it will suffice to show that
        \[
        \langle (f-P_{k+1,j'})\zeta_{k,j} , Q\zeta_{k+1,j'}\rangle =\langle P_{k,j,j'},Q\zeta_{k+1,j'}\rangle
        \]
        for every $Q\in \Pc^{(x_{k,j})}_{\mi^-}$. 
        If  $\lambda \meg \dd_0$, then we have $\Pc^{(x)}_{\lambda^-}=\C \chi_G$ for every $x\in G$, so that  it suffices to show that $\langle h_{k,j},1\rangle=0$. Then, notice that 
		\[
		\langle h_{k,j}, 1\rangle=\langle f , \zeta_{k,j}\rangle-\langle P_{k,j},\zeta_{k,j}\rangle-\sum_{j'\in \N}(\langle (f-P_{k+1,j'})\zeta_{k,j} , \zeta_{k+1,j'}\rangle -\langle P_{k,j,j'},\zeta_{k+1,j'}\rangle)=0
		\]
		by the definition of $P_{k,j,j'}$.
		 
		If, otherwise, $G$ is nilpotent and $\lambda > \dd_0$, then by the Baker--Campbell--Hausdorff formula, the left translations on $G$ are polynomials of degree at most $\min(1,\ell-1)$ in the Euclidean sense (in exponential coordinates). It follows that, if $Q\in \Pc^{(x_{k,j})}_{\lambda^-}$, then there is   $Q'\in \Pc^{(x_{k+1,j'})}_{\mi^{-}}$ such that  $Q=Q'$ on 
        \[
        x_{k,j}\mathbf U\cap x_{k+1,j'}\mathbf U\supseteq B(x_{k,j}, \kappa_2 r_{k,j})\cap B(x_{k+1,j'}, \kappa_2 r_{k+1,j'}),
        \]
        noticing that, if $\kappa_2 r_{j,k}< \rho$, then $\kappa_2r_{k+1,j'}\meg c' \kappa_2 r_{k+1,j'}< \rho $ by~\textsc{step II}. Therefore,
		\[
		\langle (f-P_{k+1,j'})\zeta_{k,j} , Q\zeta_{k+1,j'}\rangle= \langle (f-P_{k+1,j'})\zeta_{k,j} , Q'\zeta_{k+1,j'}\rangle =\langle P_{k,j,j'},Q'\zeta_{k+1,j'}\rangle=\langle P_{k,j,j'},Q\zeta_{k+1,j'}\rangle 
		\]
		by the definition of $P_{k,j,j'}$, whence our assertion also in this case.
	
		Thus, if we set $a_{k,j}\coloneqq (C_4+C_5)2^{k+1} \beta(B(e, \kappa_2 r_{k,j}))^{1/p}$ and $f_{k,j}\coloneqq h_{k,j}/a_{k,j}$, then $f_{k,j}$ is a $(p,\infty,\rho,\lambda)$-atom for every $k\in K$ and $j\in \N$, 
		\[
		f=\sum_{k\in K, j\in\N} a_{k,j} f_{k,j}
		\]
		in $\Sr'(G;\Banach)$, and
		\[
		\begin{split}
		\sum_{k\in K, j\in\N} a_{k,j}^p&\meg (C_4+C_5)^p \sum_{k\in K, j \in\N} 2^{(k+1)p}\beta(B(x_{k,j},\kappa_2 r_{k,j}))\\
			&\meg (C_4+C_5)^p N \sum_{k\in K} 2^{(k+1)p}\beta(\Omega_k)\\
			&\meg (C_4+C_5)^p N \frac{4^p}{\log 2} \int_0^{+\infty} \alpha^p \beta\Big( \Set{x\in G\colon (\widetilde \Mc^{(\lambda',\kappa)} f)(x)>\alpha}\Big) \frac{\dd \alpha}{\alpha}\\
			&= (C_4+C_5)^p N \frac{4^p}{p\log 2} \norm{\widetilde \Mc^{(\lambda',\kappa)} f}_{L^p(G)}^p.
		\end{split}
		\]
		Consequently,
        \[
        f=\sum_{k\in K, j\in \N} a_{k,j} f_{k,j}
        \]
        in $h^p_{\lambda'}(G;\Banach)$ by Proposition~\ref{prop:41} and  Corollary~\ref{cor:26}, whence the result.
	\end{proof}
	
	\begin{cor}\label{cor:27}
		Take $p\in (0,1]$, a \BanachSpace\ $\Banach$ such that $\Lp^1_\loc(G;\Banach)=L^1_\loc(G;\Banach)\cdot \beta$, $\lambda>Q_*(1/p-1)$ so that either $\lambda \meg \dd_0$ or $G$ is nilpotent. Define $\mi$ and take $ \lambda'$ as in Theorem~\ref{teo:18}. Then, $C^\infty_c(G;\Banach)$ is dense in $h^p_{\lambda'}(G;\Banach)$.
	\end{cor}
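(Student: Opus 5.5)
The plan is to use the atomic decomposition of Theorem~\ref{teo:18} with $q=\infty$, and then to approximate each single atom by elements of $C^\infty_c(G;\Banach)$ in the quasi-norm of $h^p_{\lambda'}(G;\Banach)$. Fix $f\in h^p_{\lambda'}(G;\Banach)$ and write $f=\sum_j a_j f_j$, with $\sum_j a_j^p\meg C\norm{f}^p$. The convergence is in $h^p_{\lambda'}$ and the quasi-norm is $p$-subadditive. It therefore suffices to find, for every $\eps>0$ and every $(p,\infty,\rho,\lambda)$-atom $a$, some $\phi\in C^\infty_c(G;\Banach)$ with $\norm{a-\phi}_{h^p_{\lambda'}}\meg\eps$. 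Truncating the sum then gives density.

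Let $a$ be such an atom, concentrated in $B(x,r)$. Since $\Lp^1_\loc=L^1_\loc\cdot\beta$, the atom has a genuine density $a\in L^\infty(G;\Banach)$. The easiest case is $r>\rho$, where there are no moment conditions. Here I set $\phi_t\coloneqq a*\phi_t'$, where $(\phi'_t)$ is the family of Lemma~\ref{lem:55}, so that $\phi_t$ is smooth and supported in $B(x,r+t)$. I then need $a-a*\phi_t\to0$ in $h^p_{\lambda'}$. This difference is supported in $B(x,r+1)$ and tends to $0$ in $L^s(G;\Banach)$ for every $s<\infty$, since $\Banach$-valued integrable functions are approximated by translates (Bochner integrability). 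The normalized difference $(a-a*\phi_t)/\norm{a-a*\phi_t}_{L^s}$ is a multiple of a $(p,s,\rho,\lambda)$-atom, for any fixed $s\in(1,\infty)$, supported in a fixed ball. Corollary~\ref{cor:26} bounds its $h^p_{\lambda'}$ quasi-norm, with constants uniform in $t$. Consequently $\norm{a-a*\phi_t}_{h^p_{\lambda'}}\meg C\norm{a-a*\phi_t}_{L^s}\beta(B)^{1/p-1/s}\to0$.

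The main obstacle is small atoms, $r\meg\rho$, where mollification destroys the moment conditions $\langle\cdot,P\rangle=0$ for $P\in\Pc^{(x)}_{\lambda^-}$. I would correct the mollified function by projecting it back. Let $\tau\in C^\infty_c(B(x,2r))$ be a fixed cutoff, with $2r$ still inside the domain where $\Pc^{(x)}$ makes sense; if necessary, shrink $\rho$ in Theorem~\ref{teo:18} so that $\overline B(e,2\rho)\subseteq\mathbf U$. Let $(p_i)$ be a basis of $\Pc^{(x)}_{\lambda^-}$ that is dual, with respect to $\tau\cdot\beta$, to a basis $(q_i)$ of the same space; such a basis exists by the non-degeneracy used in Corollary~\ref{cor:22}. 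Define
\[
\phi\coloneqq a*\phi_t-\sum_i\langle a*\phi_t,q_i\rangle\,p_i\tau.
\]
This $\phi$ is smooth, compactly supported in $B(x,2r)$, and annihilates $\Pc^{(x)}_{\lambda^-}$. The moments $\langle a*\phi_t,q_i\rangle$ tend to $\langle a,q_i\rangle=0$ as $t\to0$, because the $q_i$ are continuous on a neighbourhood of $\overline B(x,2r)$. Hence $a-\phi$ tends to $0$ in $L^s(G;\Banach)$, and it is supported in $B(x,2r)$ with vanishing moments. It is therefore a small multiple of a $(p,s,\rho',\lambda)$-atom for a suitable $\rho'$. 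Corollary~\ref{cor:26}, applied to this atom, gives $\norm{a-\phi}_{h^p_{\lambda'}}\to0$ as $t\to0$, which concludes the argument.
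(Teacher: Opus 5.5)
Your argument is correct, and its skeleton is the paper's: reduce via Theorem~\ref{teo:18} to a single $(p,\infty,\rho,\lambda)$-atom, then approximate that atom by elements of $C^\infty_c(G;\Banach)$. You differ in how you control the approximation error. The ``main obstacle'' you identify for small atoms is not an obstacle in the local setting, because the approximants need not have any vanishing moments.

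The paper simply takes $f_j\in C^\infty_c(G;\Banach)$ supported in $B(x,r)$ with $f_j\to a$ in $L^2(G;\Banach)$. It then invokes Proposition~\ref{prop:25}, which gives the continuous inclusion $\chi_{B(x,r)}\cdot L^2(G;\Banach)\subseteq h^p_{\lambda'}(G;\Banach)$ with no cancellation needed. The reason is that $\widetilde\Mc^{(\lambda',1)}$ only involves scales $t\meg 1$, so the maximal function of a function supported in $B(x,r)$ is supported in $\overline B(x,r+1)$. H\"older's inequality and the $L^2$-boundedness in Proposition~\ref{prop:42} then suffice.

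This removes your case distinction $r>\rho$ versus $r\meg\rho$ and the moment-correcting projection. It also avoids Corollary~\ref{cor:26}, which rests on the Calder\'on--Zygmund and derivative-extraction machinery (Proposition~\ref{prop:31}, Lemma~\ref{lem:73}). Corollary~\ref{cor:26} is stated under $\lambda'\Meg \dd([a/\dd]+1)$, which is not among the hypotheses here. The paper's proof of Theorem~\ref{teo:18} uses Corollary~\ref{cor:26} the same way, so this is no worse than the paper. Still, Proposition~\ref{prop:25} holds for every $\lambda'\Meg 0$.

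What your route buys is that the approximants keep the cancellation of the atom, so each error is a small multiple of an atom. That is the argument one would need for a global Hardy space, where compactly supported functions without vanishing moments do not belong to the space. For $h^p$ it is superfluous.
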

	
	The assumption on $\Banach$ is satisfied when $\Banach$ has the Radon--Nikod\'ym property, in particular when $\Banach$ is reflexive.
	
	\begin{proof}
		Take $\rho,\lambda$ as in Theorem~\ref{teo:18}.
		By Theorem~\ref{teo:18}, we may reduce to proving that each $(p,\infty,\rho,\lambda)$-atom $f$ may be approximated in $h^p_{\lambda'}(G;\Banach)$ by a sequence of elements of $C^\infty_c(G;\Banach)$. Take $x\in G$ and $r>0$ so that $f$ is supported in $B(x,r)$. Observe that, by the assumption on $\Banach$, we have $f\in L^\infty(G;\Banach)$. Then, there is a sequence $(f_j)$ of elements of $C^\infty_c(G;\Banach)$, supported in $B(x,r)$, which converges to $f$ in (say) $L^2(G;\Banach)$. Since, by Proposition~\ref{prop:25},  $\chi_{B(x,r)}\cdot L^2(G;\Banach)\subseteq h^p_{\lambda'}(G;\Banach)$ continuously, the assertion follows.
	\end{proof}

	\begin{cor}\label{cor:28}
		Take $p\in (0,1]$, $\kappa>0$, $a>D/p$, $\lambda>Q_*(1/p-1)$  such that either $\lambda \meg \dd_0$ or $G$ is nilpotent, and define $\mi$ as in Theorem~\ref{teo:18}. Take $\lambda',\lambda''\Meg \dd_M([\mi/\dd_0]+1), \dd([a/\dd]+1)$. There exists a constant $C>1$ such that
		\[
		\frac 1 C\norm{\widetilde \Mc^{(\lambda',\kappa)}f}_{L^p(G)}\meg   \norm{\Mc^{(\lambda',a)}f}_{L^p(G)}\meg C  \norm{\widetilde \Mc^{(\lambda'',\kappa)}f}_{L^p(G)}
		\] 
		for every \BanachSpace\ $\Banach$ and for every $f\in \Sr'(G;\Banach)$.
	\end{cor}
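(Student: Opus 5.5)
The first inequality is elementary and holds pointwise, with no atomic decomposition needed. For $\phi\in\widetilde B_{\lambda',t}$ with $t\in(0,\kappa]$, set $t'\coloneqq\min(t,1)$. Because $\phi$ is supported in $\overline B(e,t)$ and its derivatives obey $\abs{X^R\phi}\meg\abs{X}t^{-Q_*-\deg(X)}$, a suitable constant multiple of $\phi$ lies in $B_{\lambda',a,t'}$. This is exactly the observation $\widetilde\Mc^{(\lambda,\kappa)}f\meg(1+\max(1,\kappa))^a\Mc^{(\lambda,a)}f$ recorded after the definition of the maximal functions. It gives $\widetilde\Mc^{(\lambda',\kappa)}f\meg C\Mc^{(\lambda',a)}f$ pointwise, hence the left-hand estimate.

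For the second inequality, I would first use Lemma~\ref{lem:85} to replace $\widetilde\Mc^{(\lambda'',\kappa)}$ by $\widetilde\Mc^{(\lambda'',1)}$ on the right-hand side, at the cost of a constant. We may then assume $f\in h^p_{\lambda''}(G;\Banach)$, since otherwise there is nothing to prove. Since $\lambda''\Meg\dd_M([\mi/\dd_0]+1)$, Theorem~\ref{teo:18} (applied with $\lambda''$ in place of $\lambda'$ and with $q=\infty$) yields a decomposition $f=\sum_j a_jf_j$. Here the $f_j$ are $(p,\infty,\rho,\lambda)$-atoms, $\sum_j a_j^p\meg C\norm{f}_{h^p_{\lambda''}}^p$, and the series converges in $h^p_{\lambda''}(G;\Banach)$, hence in $\Sr'(G;\Banach)$ by Proposition~\ref{prop:41}.

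Convergence in $\Sr'$ implies that $(f*\phi)(x)=\sum_j a_j(f_j*\phi)(x)$ for every fixed $\phi\in\Sr(G)$ and $x\in G$. Taking suprema over $\phi\in B_{\lambda',a,t}$ and $t\in(0,1]$ gives the pointwise bound $\Mc^{(\lambda',a)}f\meg\sum_j\abs{a_j}\Mc^{(\lambda',a)}f_j$. Since $p\meg1$, the $p$-triangle inequality then yields
\[
\norm{\Mc^{(\lambda',a)}f}_{L^p(G)}^p\meg\sum_j\abs{a_j}^p\norm{\Mc^{(\lambda',a)}f_j}_{L^p(G)}^p .
\]
Corollary~\ref{cor:26} applies because $a>D/p$, $\lambda>Q_*(1/p-1)$ and $\lambda'\Meg\dd([a/\dd]+1)$; it bounds each $\norm{\Mc^{(\lambda',a)}f_j}_{L^p}$ uniformly. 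Combining these gives $\norm{\Mc^{(\lambda',a)}f}_{L^p}^p\meg C\sum_j a_j^p\meg C'\norm{\widetilde\Mc^{(\lambda'',1)}f}_{L^p}^p$.

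The main point needing care is the interchange of the sum with the maximal function. Pointwise convergence of $\sum_j a_j(f_j*\phi)(x)$ follows from $\Sr'$-convergence of the series, as shown above. The bound then follows for each individual $\phi$ and $x$ before the supremum is taken, so no uniformity in $\phi$ is required.

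A second check concerns the parameter $\lambda''$. Theorem~\ref{teo:18} requires only $\lambda''\Meg\dd_M([\mi/\dd_0]+1)$, which is assumed. The condition $\lambda''\Meg\dd([a/\dd]+1)$ is not needed for the right-hand inequality; it is presumably included so that the statement is symmetric in $\lambda'$ and $\lambda''$.
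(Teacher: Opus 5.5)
Your argument is correct and follows the paper's proof. The lower bound is the elementary comparison $\widetilde\Mc^{(\lambda',\kappa)}f\meg(1+\max(1,\kappa))^a\Mc^{(\lambda',a)}f$: the paper routes it through Lemmas~\ref{lem:85} and~\ref{lem:71}, while you apply the pointwise remark directly. The upper bound is Theorem~\ref{teo:18} combined with Corollary~\ref{cor:26}, and your pointwise interchange of sum and supremum, followed by the $p$-triangle inequality, supplies exactly the details the paper leaves implicit.

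On your side remark: the hypothesis $\lambda''\Meg\dd([a/\dd]+1)$ is probably there because the last step of the proof of Theorem~\ref{teo:18} (convergence in $h^p_{\lambda'}$) invokes Corollary~\ref{cor:26}, not for symmetry. Your argument does not need it, since it only uses the convergence in $\Sr'(G;\Banach)$, which that proof establishes directly.
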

	
	\begin{proof}
		The first inequality follows from Lemmas~\ref{lem:85} and~\ref{lem:71}, while the second one follows from Theorem~\ref{teo:18} and Corollary~\ref{cor:26}. 
	\end{proof}
	
Now notice that if we take $p\in (0,1]$ and assume that either $p>Q_*/(Q_*+\dd_0)$ or $G$ is nilpotent, then Corollary~\ref{cor:28} shows that $h_{\lambda'}^p(G;\Banach)=h^p_{\lambda''}(G;\Banach)$ for every  $\lambda',\lambda''\Meg \dd_M([\mi/\dd_0]+1), \dd([a/\dd]+1)$ (for some $a>D/p$) and for every \BanachSpace\ $\Banach$. Therefore, the following definition is well-posed.
	\begin{deff}
		 Let $p\in (0,1]$ be such that either $p>Q_*/(Q_*+\dd_0)$ or $G$ is nilpotent. We define $h^p(G;\Banach)$ as $h^p_{\lambda'}(G;\Banach)$ for any $\lambda'\Meg \dd_M([\mi/\dd_0]+1), \dd([a/\dd]+1)$, with $a$ and $\mi$ as in Corollary~\ref{cor:28}. 
	\end{deff}

\section{Identification with the Triebel--Lizorkin space}\label{Section:Identification}

In this section, we provide an alternative characterization of the local Hardy space introduced above by identifying it with the Triebel--Lizorkin space (see Subsection~\ref{subsec:TLspaces}). This characterization is motivated by the classical theory, where local Hardy spaces admit equivalent descriptions in terms of Triebel--Lizorkin spaces.

    The following result combines Proposition~\ref{prop:31} with the atomic decomposition established in Theorem~\ref{teo:18} to show that Calder\'on–Zygmund operators remain bounded on Hardy spaces in the present setting.

	\begin{teo}\label{teo:22bis}
		Take two \BanachSpace s $\Banach_1,\Banach_2$, $p \in (0,\infty)$, $q\in (1,\infty]$, $\lambda>Q_*(1/p-1)_+, D/\min(1,p)-Q_*$, $a>D/\min(1,p)$. Assume that either $\lambda \meg \dd_0$ or $G$ is nilpotent.
		Then,  there is a constant $C>0$ such that
		\[
		\norm{f*K}_{h^p(G;\Banach_2)}\meg C   \norm{f}_{h^p(G;\Banach_1)}
		\]
		for every $f\in \Sr(G;\Banach_1)$ and for every $(\lambda,a,q, \Banach_1,\Banach_2)$-standard kernel $K$.
	\end{teo}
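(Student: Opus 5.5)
Two regimes. For $p>1$, Proposition~\ref{prop:42} gives $h^p(G;\Banach)=\Lp^p(G;\Banach)$ with equivalent norms, so we must show that $f\mapsto f*K$ is bounded on $L^p$ for $1<p<\infty$. For $p\meg 1$, we combine the atomic decomposition of Theorem~\ref{teo:18} with the uniform atom estimate of Proposition~\ref{prop:31}.

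\emph{Case $p\in(0,1]$.} Fix $\mi,\lambda'$ as in Theorem~\ref{teo:18}, enlarged so that also $\lambda'\Meg \dd([a/\dd]+1)$; by Corollary~\ref{cor:28} the quasi-norms defining $h^p$ through $\widetilde\Mc^{(\lambda',1)}$ and through $\Mc^{(\lambda',a)}$ are then equivalent. Given $f\in\Sr(G;\Banach_1)\subseteq h^p(G;\Banach_1)$ (Proposition~\ref{prop:25}), Theorem~\ref{teo:18} gives $f=\sum_j a_jf_j$ with $(p,q,\rho,\lambda)$-atoms $f_j$, convergence in $h^p$, and $\sum_j a_j^p\meg C\norm{f}_{h^p}^p$. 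Since $\lambda>Q_*(1/p-1)$, Proposition~\ref{prop:31} gives $\norm{\Mc^{(\lambda',a)}(f_j*K)}_{L^p}\meg C$ uniformly in $j$ and $K$. By $p$-subadditivity, $\sum_j a_j\, f_j*K$ then converges in $h^p(G;\Banach_2)$ (Proposition~\ref{prop:41}) with quasi-norm at most $C\norm{f}_{h^p}$.

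The remaining point is that this sum equals $f*K$. Since $h^p$ embeds continuously in $\Lp^1$ for $p=1$, and since the decomposition in the proof of Theorem~\ref{teo:18} also converges in $\Sr'$, we pass to the limit using the $L^q$ boundedness of $K$: for $q<\infty$ the atoms converge in $L^q_\loc$, and the $L^q$ bound in the definition of a standard kernel gives convergence of $\sum_j a_j\, f_j*K$ to $f*K$ in $\Sr'$. For $q=\infty$ we first reduce to finite $q$ via Hölder. Alternatively, we could approximate $f$ by finite sums of atoms, bound $f_j*K$ pointwise on compacts, and argue by duality against $\Sr$. This identification of the limit is the main technical nuisance for $p\meg1$.

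\emph{Case $p\in(1,\infty)$.} We need $L^p$ boundedness of a kernel that is bounded on $L^q$ and satisfies Hörmander-type smoothness. Local smoothness comes from the bound $\abs{X^RK}\meg \abs{X}\abs{x}_*^{-Q_*-\deg X}$ near $e$; at infinity the decay $\abs{x}_*^{-a}$ with $a>D$ makes $K\chi_{G\setminus B(e,1)}$ integrable, so that part is bounded on every $L^p$ by Young's inequality. The local part is then handled on the doubling space $(G,d,\beta)$ by classical Calderón--Zygmund theory, using the mean value estimate for $X^R$ to obtain the Hörmander condition. This gives weak $(1,1)$; interpolating with $L^q$ covers $1<p\meg q$, and duality (the transposed kernel $\check K^*$ satisfies the same hypotheses) covers $p>q$.

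Alternatively, $h^1\to h^1$ and $L^q\to L^q$ can be interpolated directly, treating $h^1$ as the endpoint. Either way, I expect the main obstacle to be the range $p>q$, which needs duality or interpolation with the $bmo$-type endpoint. The most robust route is to prove weak $(1,1)$ for both $K$ and its transpose and then apply Marcinkiewicz interpolation to each.
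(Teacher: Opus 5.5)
Your overall architecture matches the paper's: for $p\meg 1$, the atomic decomposition of Theorem~\ref{teo:18} combined with the uniform atom estimate of Proposition~\ref{prop:31}; for $p>1$, splitting off the integrable tail and applying Calder\'on--Zygmund theory on the doubling space $(G,d,\beta)$. The genuine gap is the step you yourself flag for $p\meg1$, namely showing that the $h^p$-limit of $\sum_j a_j\,(f_j*K)$ is $f*K$. None of your three suggestions establishes this.

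\emph{Local convergence.} Convergence of the atomic series in $L^q_\loc$ is not enough, because $u\mapsto u*K$ is non-local and is only known to be bounded on $L^q(G)$, so you need global $L^q$ convergence. That does hold for the specific decomposition built in the proof of Theorem~\ref{teo:18} when $f\in\Sr(G;\Banach_1)$ and $q<\infty$. In the notation of that proof, $\sum_{k,j}\abs{h_{k,j}}\meg C\sum_k 2^k\chi_{\Omega_k}\meg C'\widetilde\Mc^{(\lambda',\kappa)}f\in L^q(G)$. But this is not part of the theorem's statement, and you do not argue it.

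\emph{The case $q=\infty$.} ``Reduce to finite $q$ via H\"older'' is not valid. H\"older changes the exponent of the atoms, not the exponent on which $K$ is known to be bounded. Getting $L^{q_0}$-boundedness of $K$ for some finite $q_0$ would need the Calder\'on--Zygmund argument from your $p>1$ case.

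\emph{Finite sums plus duality.} This runs into the same obstruction. For non-compactly supported $K$, convolution with $K$ is not known to be continuous on $\Sr'(G;\Banach_1)$, and it need not even be defined on all of $h^p$.

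The missing idea is to truncate the kernel. Take $(\tau_t)$ as in Corollary~\ref{cor:21} with $\kappa_1=1$ and $\kappa_2=2$. Then $c\,\tau_tK$ is a $(\lambda,a,q,\Banach_1,\Banach_2)$-standard kernel uniformly in $t\Meg1$. The derivative bounds are immediate. The $L^q$ bound follows from Young's inequality, since $\norm{(1-\tau_t)K}_{\Mc^1(G;\Lf(\Banach_1;\Banach_2))}$ stays bounded for $t\Meg1$; this is where $a>D$ enters.

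Because $\tau_tK$ is compactly supported, $f*(\tau_tK)=\sum_j a_j\,[f_j*(\tau_tK)]$ in $\Sr'(G;\Banach_2)$. Theorem~\ref{teo:18} and Proposition~\ref{prop:31} then give $\norm{f*(\tau_tK)}_{h^p}\meg C\norm{f}_{h^p}$ with $C$ independent of $t$. Finally, $f*(\tau_tK)\to f*K$ in $\Sr'(G;\Banach_2)$, and the sequential lower semicontinuity of the $h^p$ quasi-norm under $\Sr'$-convergence (proof of Proposition~\ref{prop:41}) concludes.

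For $p>1$ your route agrees with the paper's, with one imprecision. The transposed kernel does not literally satisfy ``the same hypotheses'': the definition only controls right-invariant derivatives of $K$, while the second H\"ormander condition needs left-invariant ones. The paper obtains these for the localized piece $\eta K$ from Lemma~\ref{lem:72}, after first reducing to the smooth kernels $K*\psi_j$.
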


	\begin{proof}
		We start with the case $p\in(0,1]$. Take $(\tau_t)_{t>0}$ as in Corollary~\ref{cor:21} (with the choice $\kappa_1=1$ and $\kappa_2=2$), and let us prove that there is a constant $c>0$ such that $c\tau_t K$ is a $(\lambda,a,q, \Banach_1,\Banach_2)$-standard kernel for every $t\Meg 1$. On the one hand, it is clear that there is a constant $C_1>0$ such that
		\[
		\abs{X(\tau_t K)}(x)\meg \abs{X K}(x)+C_1\chi_{B(e,2t)\setminus B(e,t)}(x)\sup_{Y\in U_\lambda, \abs{Y}\meg 1} t^{\deg(Y)-\deg(X)}\abs{YK}(x)
		\]
		for every $X\in U_\lambda$ with $\abs{X}\meg 1$, whence the required estimates. In addition, since $\norm{(1-\tau_t) K}_{\Mc^1(G;\Lf(\Banach_1;\Banach_2))}$ is bounded for $t\Meg 1$, by Young's inequality there is a constant $C_2>0$ such that
        \[
        \norm{f*[(1-\tau_t) K]}_{L^q(G;\Banach_2)}\meg C_2 \norm{f}_{L^q(G;\Banach_1)}
        \]
        for every $f\in \Sr(G;\Banach_1)$ and for every $t\Meg 1$. Therefore, by Proposition~\ref{prop:31} and Theorem~\ref{teo:18} we see that there is a constant $C_3>0$ such that
		\[
		\norm{f*(\tau_t K)}_{h^p(G;\Banach_2)}\meg C_3   \norm{f}_{h^p(G;\Banach_1)}
		\]
		 for every $f\in \Sr(G;\Banach_1)$. Notice that we used the fact that $\tau_t K$ is compactly supported in order to ensure that $f*(\tau_t K)=\sum_j \lambda_j [f_j *(\tau_t K)]$ in $\Sr'(G;\Banach_2)$ for every atomic decomposition $f=\sum_j \lambda_j f_j$.
         Since $f*(\tau_t K)$ converges to $f*K$ in $\Sr'(G;\Banach_2)$ for $t\to +\infty$, by the lower semicontinuity of the quasi-norm of $ h^p(G;\Banach_2)$ for the topology of $\Sr'(G;\Banach_2)$ (see~the proof of Proposition~\ref{prop:41}) the assertion follows in this case.

		 It remains to consider the case $p\in (1,\infty)$. 
		Take $(\psi_j)$ as in the proof of Proposition~\ref{prop:31} and observe that the proof of Proposition~\ref{prop:31} shows that there is a constant $c'>0$ such that $c' K*\psi_j$ is a $(\lambda,a,q, \Banach_1,\Banach_2)$-standard kernel for every  $j\in\N$. Since
        \[
        f*(K*\psi_j)\to f*K\text{ in }\Sr'(G;\Banach_2),
        \]
        for $j\to \infty$ and for every $f\in \Sr(G;\Banach_1)$
        we may reduce to the case in which $K$ is in $C^\infty$.
		 
		We then take $\eta\in C^\infty_c(G)$ so that $\chi_{B(e,1)}\meg \eta \meg \chi_{B(e,2)}$, we split $K$ as the sum of $\eta K$ and $(1-\eta)K$ and we check the usual Calder\'on--Zygmund conditions  for $\eta K$ (see, e.g.,~\cite[Corollary 17]{Singular}). Observe first that, since $(1-\eta)K\in L^1(G;\Lf(\Banach_1;\Banach_2))$ by Lemma~\ref{lem:37}, by Young's inequality there is a constant $C_1>0$ such that
        \[
        \norm{f*[(1-\eta )K]}_{L^p(G;\Banach_2)}\meg C_1\norm{f}_{L^p(G;\Banach_1)}
        \]
        for every $p\in [1,\infty]$. In particular
        \[
        \norm{f*(\eta K)}_{L^q(G;\Banach_2)}\meg (1+C_1)\norm{f}_{L^q(G;\Banach_1)}.
        \]
        Then, observe that the estimates on the right-invariant derivatives of $K$ (combined with Lemma~\ref{lem:72} to obtain analogous estimates for the left-invariant derivatives of $\eta K$) and~\cite[Lemma 7.5]{Calzi2}, and Lemmas~\ref{lem:37}, and~\ref{lem:37b} show that there is a constant $C_2>0$ such that
		\[
		\int_{G\setminus B(x,2 \abs{x}_*)} \abs{(\eta K)(y)-(\eta K)(y x)}\,\dd \beta(y), \int_{G\setminus B(x,2 \abs{x}_*)} \abs{(\eta K)(y)-(\eta K)(x y)}\,\dd \beta(y)\meg C_2
		\]
		for every $x\in G\setminus \Set{e}$. By the Calder\'on--Zygmund theory on (locally) doubling spaces, the assertion follows.
	\end{proof}

    We may now apply the previous result to provide the identification of the local Hardy spaces $h^p(G;H)$ as Triebel--Lizorkin spaces, when $H$ is a Hilbert space. 
    
	\begin{teo}
		Take $p\in (0,\infty)$ and   assume that either $p>Q_*/(Q_*+\dd_0)$ or $G$ is nilpotent. Then,
	\[ 
	h^p(G;H)=F^{p,2}_0(G;H).
	\]
	for every Hilbert space $H$.
	\end{teo}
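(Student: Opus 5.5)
We would split into $p>1$ and $p\le 1$. For $p\in(1,\infty)$, Proposition~\ref{prop:42}(3) gives $h^p(G;H)=\Lp^p(G;H)=L^p(G;H)$, since $H$ is reflexive. It then suffices to recall that $F^{p,2}_0(G;H)=L^p(G;H)$ for $1<p<\infty$. If this is not already available from~\cite{Calzi4}, we would prove it by a vector-valued Littlewood--Paley argument. Take $(\psi_j)\in\Psi_{2^{-\grado}}$ built from a smooth dyadic partition of unity in the spectrum of $\Lc$. Then $f\mapsto (f*\psi_j)_j$ is convolution with an $\ell^2(\N;H)$-valued kernel satisfying the standard kernel estimates of Proposition~\ref{cor:12}, and it is an isometry up to constants on $L^2$. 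The Calder\'on--Zygmund theory then gives $L^p$ boundedness, and the reverse inequality follows by duality with the adjoint operator.

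For $p\le 1$, we would prove the two inclusions separately on the dense subspace $\Sr(G;H)$. Density holds in $h^p$ by Corollary~\ref{cor:27}, since $H$ has the Radon--Nikod\'ym property, and in $F^{p,2}_0$ by~\cite{Calzi4}. We would then extend by the lower semicontinuity of both quasi-norms in $\Sr'$.

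\emph{Inclusion $h^p\subseteq F^{p,2}_0$.} Consider the $\ell^2(\N;H)$-valued kernel $K=(\psi_j)_j$, that is, $f\mapsto (f*\psi_j)_j$. We check that a constant multiple of $K$ is a $(\lambda,a,2,H,\ell^2(H))$-standard kernel:
\begin{itemize}
\item the $L^2$ bound is the almost-orthogonality above;
\item near $e$, the pointwise bounds on $X^R K$ follow from Proposition~\ref{cor:12} by summing $\bigl(\sum_j |X^R\psi_j(x)|^2\bigr)^{1/2}$ over the dyadic scales $2^{-j}\sim|x|_*$, which gives $|x|_*^{-Q_*-\deg X}$;
\item for $|x|_*\ge1$, only $j\ge0$ occurs and the Schwartz decay of each $\psi_j$ (rapid decay in $|x|_*/2^{-j}$) gives $|x|_*^{-a}$ for any $a$.
\end{itemize}
Theorem~\ref{teo:22bis} then gives $\|(f*\psi_j)_j\|_{h^p(G;\ell^2)}\lesssim\|f\|_{h^p(G;H)}$. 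Since $\ell^2(H)$ is a Hilbert space, Proposition~\ref{prop:42}(1) yields $|(f*\psi_j)_j|_{\ell^2}\lesssim\widetilde\Mc(f*K)$ pointwise, and the $L^p$ norm of the left-hand side is exactly $\|f\|_{F^{p,2}_0}$.

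\emph{Inclusion $F^{p,2}_0\subseteq h^p$.} By Lemma~\ref{lem:71} and Corollary~\ref{cor:28}, $\|f\|_{h^p}\lesssim\bigl\|\sup_j |f*\psi_j|\bigr\|_{L^p}$, where $\psi_j=\Kc_{2^{-j\grado}\Lc}(\phi)$ and $\phi$ equals $1$ near $[-1,1]$. Each $f*\psi_j$ is a finite-from-below telescoping sum of Littlewood--Paley pieces $f*\psi'_k$, $k\le j$, where $\psi'_k$ has spectrum in an annulus. Hence $\sup_j|f*\psi_j|\le \sum_k |f*\psi'_k|$. This sum is an $\ell^1$ rather than an $\ell^2$ quantity, so a direct comparison with the $F^{p,2}_0$ norm fails.

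The better route is to bound $\Nc_{\infty,a,2^{-j}}(f*\psi_j)$ by an $\ell^2$ square function using the Peetre-type maximal functions of~\cite{Calzi4} and the Fefferman--Stein inequality. I expect this to be the main obstacle, because the low-frequency term is an average rather than a cancellative piece. My first attempt would be to use the discrete (atomic/molecular) characterization of $F^{p,2}_0$ from~\cite[Section 4]{Calzi4}. It writes $f$ as a sum of coefficients times molecules with $\ell^p$-summable coefficients. After multiplying by $\beta(B)^{1/p}$, the molecules at scale $2^{-j}\le1$ are of the ``extracted derivatives'' type, and the coarse ones are merely localized. We would then show that each molecule has uniformly bounded $h^p$ quasi-norm by an argument analogous to Proposition~\ref{prop:31}: weak atoms are handled via Lemma~\ref{lem:73}, integrating the derivatives by parts onto the test function. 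The $p$-triangle inequality of Proposition~\ref{prop:41} then concludes.
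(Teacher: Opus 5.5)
Your treatment of $p>1$ and of the inclusion $h^p\subseteq F^{p,2}_0$ for $p\meg 1$ is fine. The latter is a slightly more direct variant of the paper's argument, which goes through $\psi''_j=\Kc_{2^{-j\grado}\Lc}(\tau)$, Lemma~\ref{lem:71} and Corollary~\ref{cor:28}. Your pointwise bound $\abs{f*K}\lesssim\widetilde\Mc(f*K)$ is legitimate because $f*K$ is bounded and continuous when $f\in\Sr(G;H)$.

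The gap is the reverse inclusion $F^{p,2}_0\subseteq h^p$: you leave it as a plan, and the molecular route breaks at its last step. A Frazier--Jawerth-type discrete characterization of $F^{p,2}_0$ controls the coefficients only in the Triebel--Lizorkin sequence quasi-norm $\norm{(\sum_{j,k}\abs{s_{j,k}}^2\chi_{B_{j,k}})^{1/2}}_{L^p}$, not in $\ell^p$. The $p$-triangle inequality of Proposition~\ref{prop:41} instead needs $\sum_{j,k}\abs{s_{j,k}}^p\beta(B_{j,k})<\infty$. That is the $F^{p,p}_0$-type sequence condition, strictly stronger for $p<2$. So even granting uniform $h^p$ bounds for each molecule (plausible via the argument of Proposition~\ref{prop:31}), $\sum_{j,k}\abs{s_{j,k}}^p\norm{m_{j,k}}_{h^p}^p$ can diverge. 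This is the same $\ell^1$-versus-$\ell^2$ obstruction you found for $\sup_j\abs{f*\psi_j}$, only relocated. Repairing it your way would need a stopping-time regrouping into $\ell^p$-summable blocks plus an $h^p$ estimate for each block, and neither is supplied.

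The missing idea is to keep the $\ell^2$ structure in the \emph{target} space and let Theorem~\ref{teo:22bis} do the synthesis.
\begin{itemize}
\item Take $(\psi'_j)\in\Psi_{2^{-\grado}}$ with $\psi_j=\psi_j*\psi'_j$ and $\sum_j\psi_j=\delta_e$. The kernel $K'=(\psi'_j)_j$, viewed as $\Lf(\ell^2(\N;H);H)$-valued, obeys the same pointwise estimates as your $K$ and is $L^2$-bounded by duality. Hence $\norm{F*K'}_{h^p(G;H)}\lesssim\norm{F}_{h^p(G;\ell^2(\N;H))}$.
\item Apply this to $\widetilde f_k\coloneqq(\chi_{\{0,\dots,k\}}(j)\,f*\psi_j)_j$, for which $\widetilde f_k*K'=\sum_{j\meg k}f*\psi_j\to f$ in $\Sr'(G;H)$.
\item Apply Lemma~\ref{lem:71} to the $\ell^2(\N;H)$-valued function $\widetilde f_k$, with $\psi''_i=\Kc_{2^{-i\grado}\Lc}(\tau)$ and $\tau=1$ near $0$. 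Since $\psi_j*\psi''_i=\psi_j$ for $j\meg i$ and $\psi_j*\psi''_i=0$ for $j\Meg i+N$ (fixed $N$), the supremum over $i$ passes inside the $\ell^2_j$-norm.
\item Thus $\sup_i\norm{\widetilde f_k*\psi''_i}_{\ell^2(\N;H)}$ is dominated by $\bigl(\sum_j\Nc_{\infty,a,2^{-j}}(f*\psi_j)^2\bigr)^{1/2}$. Its $L^p$ norm is $\lesssim\norm{f}_{F^{p,2}_0(G;H)}$ by the Peetre and Fefferman--Stein lemmas of~\cite{Calzi4}.
\item Lower semicontinuity of the $h^p$ quasi-norm on $\Sr'$ and density of $\Sr(G;H)$ in $F^{p,2}_0(G;H)$ conclude.
\end{itemize}
The low-frequency piece $f*\psi_0$ causes no difficulty here, since it is just one coordinate of $\widetilde f_k$.
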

	
	\begin{proof}
		When $p\in(1,\infty)$, the assertion follows from~\cite[Theorem 8.2]{BCP} and Proposition~\ref{prop:42}.\footnote{The former result has to be extended to the case $H\neq \C$. When $p=2$, the assertion follows trivially by tensorization. The general case then follows using the same arguments.}  Then, assume that $p\in(0,1]$. Take $(\psi_j),(\psi'_j)\in \Psi_{2^{-\grado}}(\Lc)$, in such a way that
        \[
        \sum_{j\in\N} \psi_j=\delta_e\text{ and }\psi_j=\psi_j*\psi'_j\text{ for every }j\in \N.
        \]
        Let $K\in \Sr'(G;\Lf(\C;\ell^2(\N)))\cong \Sr'(G;\ell^2(\N))$ and $K'\in \Sr'(G;\Lf(\ell^2(\N);\C))\cong \Sr'(G;\ell^2(\N)) $ be so that
        \[
        \langle K,\phi\rangle \coloneqq (\langle \psi_j, \phi\rangle)_{j\in\N},\,\,\langle K',\phi\rangle \coloneqq (\langle \psi'_j, \phi\rangle)_{j\in \N}
        \]
        for every $\phi\in \Sr(G)$. Observe first that $K$ and $K'$ are well defined: by Proposition~\ref{cor:12} there is a constant $C_1>0$ such that
		\[
		\norm{\Lc^{-1}\psi_j }_{L^1(G)}, \norm{\Lc^{-1}\psi'_j}_{L^1(G)}\meg C_1 2^{-j\grado}
		\]
		for every $j\Meg 1$, and such that $\norm{\psi_0}_{L^1(G)},\norm{\psi'_0}_{L^1(G)}\meg C_1$. Therefore, since $\Lc$ is formally self-adjoint,
		\[
		\begin{split}
		\abs{\langle K,\phi\rangle}^2, \abs{\langle K', \phi\rangle}^2&\meg C_1^2 \norm{\phi}_{L^\infty(G)}+ C_1^2\norm{\Lc \overline\phi}_{L^\infty(G)}\sum_{j\Meg 1} 2^{-j\grado}
		\end{split}
		\]
		for every $\phi\in \Sr(G)$. Moreover, given $\lambda>0$ and $a>Q_*+\lambda$, by Proposition~\ref{cor:12} again we may find a constant $C_2>0$ such that
		\[
		\begin{split}
			\abs{ X^R K(x)}, \abs{X^R K'(x)}&\meg C_2\norm*{ \frac{2^{j(Q_*+\deg(X))}}{(1+2^j\abs{x}_*)^a}}_{\ell^2_j(\N)}
		\end{split} 
		\]
		for every $X\in U_\lambda$ with $\abs{X}\meg 1$ and for every $x\in G\setminus \Set{e}$. Here and below we write $\norm{a_j}_{\ell^2_j(\N)}$ instead of $\norm{(a_j)_j}_{\ell^2(\N)}$, for simplicity. Now, if $\abs{x}_*\Meg 1$, then
		\[
		\begin{split}
			\norm*{ \frac{2^{j(Q_*+\deg(X))}}{(1+2^j\abs{x}_*)^a}}_{\ell^2_j(\N)}&\meg \frac{1}{\abs{x}_*^a} \norm{2^{j(Q_*+\lambda-a)}}_{\ell^2_j(\N)},
		\end{split}
		\]
		where $\norm{2^{j(Q_*+\lambda-a)}}_{\ell^2_j(\N)}$ is finite since $a>Q_*+\lambda$. If, otherwise, $\abs{x}_*<1$, then
		\[
		\begin{split}
			\norm*{ \frac{2^{j(Q_*+\deg(X))}}{(1+2^j\abs{x}_*)^a}}_{\ell^2_j(\N)}^2&=\sum_{j\in\N}\frac{2^{2j(Q_*+\deg(X)-a)}}{(2^{-j}+\abs{x}_*)^{2a}}\\
				&\meg \sum_{j\meg [\log_{1/2} \abs{x}_*]}2^{2j(Q_*+\deg(X))}+\sum_{j> [\log_{1/2} \abs{x}_*]} \frac{2^{2j(Q_*+\deg(X)-a)}}{ \abs{x}_*^{2a}}\\
				&\meg \frac{\abs{x}_*^{-2(Q_*+\deg(X))}  }{1-2^{-2 (Q_*+\deg(X))}} +\frac{ \abs{x}_*^{2(a-Q_*-\deg(X))} }{(1-2^{2(Q_*+\deg(X)-a)})\abs{x}_*^{2 a}},
		\end{split}
		\]
		so that there is a constant $C_3>0$ such that
		\[
		\norm*{ \frac{2^{j(Q_*+\deg(X))}}{(1+2^j\abs{x}_*)^a}}_{\ell^2_j(\N)}\meg \frac{C_3}{\abs{x}_*^{Q_*+\deg(X)}}.
		\]
        Observe that we may also consider $K$ and $K'$ as elements of $\Sr'(G; \Lf(H;\ell^2(\N;H)))$ and $\Sr'(G; \Lf(\ell^2(\N;H);H))$, respectively, simply tensorizing with the identity of $H$. The previous estimates then extend trivially.
		By the case $p=2$ there is a constant $C_4>0$ such that
		\[
		\norm{f*K}_{L^2(G;\ell^2(\N;H))}\meg C_4 \norm{f}_{L^2(G;H)}
		\]
		for every $f\in \Sr(G;H )$ and such that
		\[
		\begin{split}
			\norm{f*K'}_{L^2(G;H)}&= \sup_{\norm{g}_{L^2(G;H)}\meg 1}\abs{\langle f*K'\vert g \rangle}\\
				&\meg \sup_{\norm{g}_{L^2(G;H)}\meg 1}\abs{\langle f\vert g*K'^* \rangle}\\
				&\meg C_4 \sup_{\norm{g}_{L^2(G;H)}\meg 1}\norm{f}_{L^2(G;\ell^2(\N;H))} \norm{g}_{L^2(G;H)}\\
				&\meg C_4\norm{f}_{L^2(G;\ell^2(\N;H))}
		\end{split}
		\]
		for every $f\in \Sr(G;\ell^2(\N;H))$ since $(\psi_j^*)\in \Psi_{2^{-\grado}}(\Lc)$. Since we may take $\lambda,a$ as large as we please, by Theorem~\ref{teo:22bis} we see that there is a constant $C_5>0$ such that
		\[
		\norm{f*K}_{h^p(G;\ell^2(\N;H))}\meg C_5 \norm{f}_{h^p(G;H)}
		\]
		for every $f\in \Sr(G;H)$, and such that
		\[
		\norm{f*K'}_{h^p(G;H)}\meg C_5 \norm{f}_{h^p(G;\ell^2(\N;H))}
		\]
		for every $f\in \Sr(G;\ell^2(\N;H))$. Now, take $\tau\in C^\infty_c(\R)$ such that $\tau=1$ on a neighbourhood of $0$ and such that, if we set $\psi''_j\coloneqq \Kc_{2^{-j\grado} \Lc}(\tau)$, then $\psi'_{j'}=\psi'_{j'}*\psi''_j$ (so that $\psi_{j'}=\psi_{j'}*\psi''_j$) for every $j,j'\in\N$ with $j'\meg j$. Then, 
		\[
		\abs{f*K*\psi''_j}\Meg \norm{f*\psi_{j'}}_{\ell^2_{j'}(\Set{1,\dots,j})}
		\]
		so that by Lemma~\ref{lem:71} there is a constant $C_6>0$ such that
		\[
		\norm{(f*\psi_j)(x)}_{L^{2,p}_{j,x}(\N,G;H)}\meg \Bigl\lVert\sup_{j\in \N} \abs{f*K*\psi''_j}  \Bigr\rVert_{L^p(G)}\meg C_6 \norm{f*K}_{h^p(G;\ell^2(\N;H))}\meg C_5 C_6 \norm{f}_{h^p(G;H)}
		\]
		for every $f\in \Sr(G;H)$. By the density of $\Sr(G;H)$ in $h^p(G;H)$ (see~Corollary~\ref{cor:27}), this proves the inclusion $h^p(G;H)\subseteq F^{p,2}_0(G;H)$.
		Conversely, take $f\in \Sr(G;H)$ and define  $\widetilde f_k\in \Sr(G;\ell^2(\N;H))$ so that $\widetilde f_k(x)= (\chi_{\Set{0,\dots, k}}(j)(f*\psi_j)(x))_{j\in \N}$ for every $x\in G$ and for every $k\in \N$. Observe that there is $N\in\N$ such that $\psi_{j'}*\psi''_j=0$ for $j\Meg j'+N$, and that $\widetilde f_k*K'= \sum_{j\meg k} f*\psi_j$ converges to $f$ in $\Sr'(G;H)$ for $k\to \infty$. Take $a>D/p$. By Lemma~\ref{lem:71}, Proposition~\ref{cor:12}, and~\cite[Lemma 2.31 and Corollary 2.30]{Calzi4},  there is a constant $C_7>0$ such that
		\[
		\begin{split}
			\norm{f}_{h^p(G;H)}&\meg \liminf_{k\to \infty} \norm{\widetilde f_k *K'}_{h^p(G;H)}\\
				&\meg C_5 \liminf_{k\to \infty} \norm{\widetilde f_k  }_{h^p(G;\ell^2(\N;H))}\\
				&\meg C_5 C_7\liminf_{k\to \infty}\Bigl\lVert\sup_{j\in \N} \norm{\widetilde f_k*\psi''_j}_{\ell^2(\N;H)}  \Bigr\rVert_{L^p(G)}\\
                &\meg C_5 C_7 \Bigl\lVert\sup_{j\in \N} \norm{f*\psi_{j'}*\psi''_j}_{\ell^2_{j'}(\N;H)}  \Bigr\rVert_{L^p(G)}\\
                &\meg C_5 C_7^2 \Bigl\lVert  \norm{\Nc_{\infty,a,2^{-j\grado}}(f*\psi_{j})}_{\ell^2_{j}(\N;H)}  \Bigr\rVert_{L^p(G)}\\
                &\meg C_5 C_7^3 \Bigl\lVert  \norm{\Nc_{p/2,a,2^{-j\grado}}(f*\psi_{j})}_{\ell^2_{j}(\N;H)}  \Bigr\rVert_{L^p(G)}\\
				&\meg C_5 C_7^4 \norm{(f*\psi_j)(x)}_{L^{2,p}_{j,x}(\N,G;H)},
		\end{split}
		\]
        where in order to get the fifth inequality, we use the estimates \[
        \abs{f*\psi_{j'}*\psi''_j}=\abs{f*\psi_{j'}}\meg C_7\Nc_{\infty,a,2^{-j'\grado}}(f*\psi_{j'})
        \]
        when $j'\meg j$, and
        \[
        \abs{f*\psi_{j'}*\psi''_j}\meg C_7\Nc_{\infty,a,2^{-j'\grado}}(f*\psi_{j'})
        \]
        when $j<j'<j+N$.
		By the density of $\Sr(G;H)$ in $F^{p,2}_0(G;H)$ (see~\cite[Proposition 3.16]{Calzi4}), this proves the inclusion $F^{p,2}_0(G;H)\subseteq h^p(G;H) $.
	\end{proof}

\section{The $\bmo$ Space}\label{Section:bmoSpaces}

In this section, we introduce local $\bmo$ spaces on groups of polynomial growth and establish their duality with the local Hardy space introduced above. We also provide an equivalent characterization in terms of the Triebel–Lizorkin space $F_0^{\infty,2}$.
\begin{deff}
	Take $q\in [1,\infty)$, $\rho>0$, and a \BanachSpace\ $\Banach$, and define $\bmo^{q,\rho}(G;\Banach)$ as the space of $f\in \Lp^q_\loc(G;\Banach)$ such that, with some abuse of notation, 
	\[
	\sup_{t>0} \sup_{x\in G} \left( \dashint_{B(x,t)} \abs*{f-m_{\rho}(t,f)}^q \,\dd \beta\right) ^{1/q}
	\]
	is finite, where $m_\rho(t,f)=\dashint_{B(x,t)} f\,\dd \beta$ if $t\meg \rho$, while $m_\rho(t,f)\coloneqq 0$ if $t>\rho$. We endow $\bmo^{q,\rho}(G;\Banach)$ with the corresponding norm.
	
	In addition, define $\mathring h^1(G;\Banach)$ as the closure of $\Sr(G;\Banach)$ in $h^1(G;\Banach)$.
\end{deff}

By inspection of the proof of Theorem~\ref{teo:18}, one may show that, if $f\in \mathring h^1(G;\Banach)$, then there is $(\lambda_j)\in \ell^1(\N)$ and a family $(f_j)$ of $(1,\infty,\dd_0,\rho)$-atoms such that $f=\sum_j \lambda_j f_j$ and such that $f_j\in L^\infty(G;\Banach)$ for every $j\in\N$ (for $\rho$ sufficiently small).

\begin{lem}\label{lem:100}
Take $q\in [1,\infty)$, $\rho_1,\rho_2>0$, and a \BanachSpace\ $\Banach$. Then, $\bmo^{q,\rho_1}(G;\Banach)=\bmo^{q,\rho_2}(G;\Banach)$ with equivalence of the norms.
\end{lem}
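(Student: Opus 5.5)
By symmetry it suffices to treat $\rho_1<\rho_2$ and prove the two inequalities between $N_{\rho}(f)\coloneqq \sup_{t>0}\sup_{x\in G}\bigl(\dashint_{B(x,t)}\abs{f-m_\rho(t,f)}^q\,\dd\beta\bigr)^{1/q}$ for $\rho=\rho_1$ and $\rho=\rho_2$. The two quantities involve the same averages except for balls of radius $t\in(\rho_1,\rho_2]$. There $N_{\rho_1}$ uses $m=0$, so it controls $(\dashint_{B(x,t)}\abs{f}^q)^{1/q}$. $N_{\rho_2}$ instead uses the mean $m_{x,t}\coloneqq\dashint_{B(x,t)}f\,\dd\beta$.

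\emph{Step 1: $N_{\rho_2}\lesssim N_{\rho_1}$.} For $t\in(\rho_1,\rho_2]$, Minkowski and H\"older give
\[
\Bigl(\dashint_{B(x,t)}\abs{f-m_{x,t}}^q\Bigr)^{1/q}\meg 2\Bigl(\dashint_{B(x,t)}\abs{f}^q\Bigr)^{1/q}\meg 2N_{\rho_1}(f).
\]
Here H\"older is used in the form $\abs{m_{x,t}}\meg \dashint\abs{f}\meg(\dashint\abs{f}^q)^{1/q}$, valid for the total variation $\abs{f}$ of an element of $\Lp^q_\loc$. The other ranges of $t$ coincide, so $N_{\rho_2}\meg 2N_{\rho_1}$.

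\emph{Step 2: $N_{\rho_1}\lesssim N_{\rho_2}$.} For $t\in(\rho_1,\rho_2]$ we must bound $(\dashint_{B(x,t)}\abs{f}^q)^{1/q}$. First write
\[
\Bigl(\dashint_{B(x,t)}\abs{f}^q\Bigr)^{1/q}\meg N_{\rho_2}(f)+\abs{m_{x,t}},
\]
so it remains to control $\abs{m_{x,t}}$. Since $\rho_2<3\rho_2$, the ball $B(x,3\rho_2)$ is a large ball for $N_{\rho_2}$, hence
\[
\Bigl(\dashint_{B(x,3\rho_2)}\abs{f}^q\Bigr)^{1/q}\meg N_{\rho_2}(f).
\]
Because $B(x,t)\subseteq B(x,3\rho_2)$ and $\beta$ is doubling with $t>\rho_1$, we get
\[
\dashint_{B(x,t)}\abs{f}\meg \frac{\beta(B(e,3\rho_2))}{\beta(B(e,\rho_1))}\dashint_{B(x,3\rho_2)}\abs{f}.
\]
The ratio $\beta(B(e,3\rho_2))/\beta(B(e,\rho_1))$ is a constant depending only on $\rho_1,\rho_2$. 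Combining with H\"older, $\abs{m_{x,t}}\meg C\,N_{\rho_2}(f)$, and therefore $N_{\rho_1}(f)\meg (1+C)N_{\rho_2}(f)$.

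No step is a real obstacle, since one only compares comparable balls at a fixed scale range. The one point needing care is that for $\Banach$-valued measures the averages must be taken with the total variation $\abs{f}\in L^q_\loc$. The inequalities $\abs{\dashint f}\meg\dashint\abs{f}$ and Minkowski hold in this setting, so the scalar argument applies verbatim.
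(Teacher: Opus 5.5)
Your proof is correct and follows essentially the same route as the paper: for $t\in(\rho_1,\rho_2]$ you use Minkowski--H\"older in one direction and compare $B(x,t)$ with a larger ball in the other. Your choice of radius $3\rho_2>\rho_2$ is in fact slightly more careful than the paper's comparison with $B(x,\rho_2)$, because at radius exactly $\rho_2$ the $\bmo^{q,\rho_2}$ norm only controls the oscillation around the mean, not the average of $\abs{f}^q$.
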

\begin{proof}
Assume $0<\rho_1<\rho_2$ and fix $x\in G$.
For $t\in (0,\rho_1]\cup (\rho_2,+\infty)$ we have
$m_{\rho_1}(t,f)=m_{\rho_2}(t,f)$,
so that there is nothing to prove.
Now, let $t\in (\rho_1,\rho_2]$, in this case $m_{\rho_1}(t,f)=0$ and $m_{\rho_2}(t,f)=\dashint_{B(x,t)} f\,\dd \beta$.
Using Minkowski's inequality and H\"older's inequality we get
\begin{align*}
\Bigg(
\frac{1}{\beta(B(x,t))}&\int_{B(x,t)}
|f-\textstyle\dashint_{B(x,t)} f\,\dd \beta|^q \dd\beta
\Bigg)^{1/q}\\
\meg &
\left(
\frac{1}{\beta(B(x,t))}\int_{B(x,t)} |f|^q\dd\beta
\right)^{1/q}
+ \left(
\frac{1}{\beta(B(x,t))}\int_{B(x,t)} |\textstyle\dashint_{B(x,t)} f\,\dd \beta|^q\dd\beta
\right)^{1/q}\\
\meg& 
2\left(
\frac{1}{\beta(B(x,t))}\int_B |f|^q\dd\beta
\right)^{1/q}.
\end{align*}
Therefore
\[
\|f\|_{\bmo^{q,\rho_2}(G;\Banach)}
\meg 2
\|f\|_{\bmo^{q,\rho_1}(G;\Banach)}.
\]
Conversely, observe that 
\[
\dashint_{B(x,t)} \abs{f}^q\,\dd \beta \meg C\dashint_{B(x,\rho_2)} \abs{f}^q\,\dd \beta\meg \|f\|_{\bmo^{q,\rho_2}(G;\Banach)}^q
\]
where $C=\frac{\beta(B(e,\rho_2))}{\beta(B(e,\rho_1))}$. Consequently, 
\[
\|f\|_{\bmo^{q,\rho_1}(G;\Banach)}\meg C^{1/q}\|f\|_{\bmo^{q,\rho_2}(G;\Banach)}
\]
whence the conclusion.

\end{proof}

\begin{deff}
		 Take $q\in [1,\infty]$. We define $\bmo^q(G;\Banach)$ as $\bmo^{q,\rho}(G;\Banach)$ for any $\rho>0$.
	\end{deff}

\begin{teo}\label{teo:26}
	Take $q\in [1,\infty)$ and a \BanachSpace\ $\Banach$. Then, there is a unique continuous bilinear mapping
	\[
	\mathring h^1(G;\Banach)\times \bmo^{q}(G;Z')\to \C
	\]
	which extends the canonical continuous bilinear mapping on $\Sr(G;\Banach)\times \Sr'(G;Z')$.\footnote{Notice that  we defined $\Sr'(G;Z')$ as $\Lf(\Sr(G);Z')$ and that $\Sr(G)$ is a nuclear Fréchet space (see~\cite[Theorem 2.15]{Calzi4}), so that one has $\Sr'(G;Z')\cong (\Sr(G)\widehat \otimes Z)'$, thanks to~\cite[Propositions 50.4, 50.6 and 50.7]{Treves}. In addition, $\Sr(G)\widehat \otimes Z\cong \Sr(G;Z)$ (see~\cite[Exercise 44.6]{Treves} for the case $G=\R^n$).} In addition, this mapping induces an isomorphism of $ \bmo^{q}(G;Z')$ onto $\mathring h^1(G;\Banach)'$.
\end{teo}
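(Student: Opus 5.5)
We follow the Fefferman--Stein/Coifman--Weiss duality argument, using the atomic decomposition of $\mathring h^1(G;\Banach)$ noted after the definition of $\bmo^{q,\rho}$: every $f\in \mathring h^1(G;\Banach)$ can be written as $f=\sum_j \lambda_j f_j$ with $(\lambda_j)\in\ell^1(\N)$, $\sum_j\abs{\lambda_j}\meg C\norm{f}_{h^1}$, and each $f_j$ a bounded $(1,\infty,\rho,\dd_0)$-atom. For $p=1$ we have $\lambda>0=Q_*(1/p-1)$ and $\lambda\meg \dd_0$, so only the zero-average condition is needed; no nilpotency is required. By H\"older's inequality, an $\Lp^\infty$ atom with radius $r$ is a multiple of a $(1,q',\rho,\dd_0)$-atom, which suits pairing with $\bmo^q$.

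\textbf{Step 1: boundedness of the pairing on atoms.} Let $g\in\bmo^{q,\rho}(G;\Banach')$ and let $a$ be an atom concentrated in $B(x,r)$ with $\norm{a}_{\Lp^{q'}}\meg \beta(B(x,r))^{1/q'-1}$. If $r\meg\rho$, the zero average lets us replace $g$ by $g-m_\rho(r,g)$. In both cases H\"older's inequality gives
\[
\abs{\langle a,g\rangle}\meg \norm{a}_{\Lp^{q'}}\Bigl(\int_{B(x,r)}\abs{g-m_\rho(r,g)}^q\,\dd\beta\Bigr)^{1/q}\meg \norm{g}_{\bmo^{q,\rho}}.
\]
The pairing $\langle a,g\rangle$ makes sense because $a$ is bounded with compact support and $g\in\Lp^q_\loc$.

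\textbf{Step 2: definition and uniqueness.} For $f\in\Sr(G;\Banach)$ we set $\langle f,g\rangle$ to be the canonical pairing, which we must first make sense of. Since $\bmo^q$ functions grow at most polynomially in average (each ball average is controlled by chaining unit balls), $g$ is a tempered distribution and $\bmo^q(G;\Banach')\subseteq\Sr'(G;\Banach')$. We then show $\abs{\langle f,g\rangle}\meg C\norm{f}_{h^1}\norm{g}_{\bmo^q}$. Here the atomic decomposition of $f$ converges in $h^1$, hence in $\Sr'$, but exchanging the sum with the pairing against $g$ requires more. The plan is to truncate $g$ by $g_N=g\chi_{B(e,N)}$ (or a smooth cutoff), for which the interchange is legitimate, and control the error uniformly. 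Alternatively, we can check that for Schwartz $f$ the decomposition from Theorem~\ref{teo:18} converges in $\Lp^1$ with enough decay; it is built from $\Lp^\infty$-pieces $h_{k,j}$ whose sum converges in $\Lp^1$ weighted by $(1+\abs{\cdot}_*)^{N}$, because $f$ decays rapidly. \emph{I expect this interchange to be the main obstacle.} Once the bound holds, the pairing extends by density to $\mathring h^1$, and uniqueness is immediate from the density of $\Sr$.

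\textbf{Step 3: surjectivity.} Let $\ell\in\mathring h^1(G;\Banach)'$. For a ball $B=B(x,r)$, let $\Lp^{q'}_0(B)$ be the subspace of elements of $\Lp^{q'}(G;\Banach)$ concentrated in $B$, with zero mean when $r\meg\rho$. By Proposition~\ref{prop:25} and Corollary~\ref{cor:26}, $\norm{\cdot}_{h^1}\meg C\beta(B)^{1-1/q'}\norm{\cdot}_{\Lp^{q'}}$ on this space, and it lies in $\mathring h^1$ after approximation by test functions. So $\ell$ restricts to a bounded functional on it, represented (by Hahn--Banach and the duality $\Lp^{q'}(B;\Banach)'\supseteq L^q(B;\Banach')$ with $q'>1$) by some $g_B\in L^q(B;\Banach')$, unique modulo constants when $r\meg\rho$ and unique otherwise. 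Using the nested balls $B(e,n)$ with $n\Meg\rho$, where there is no mean condition, the $g_B$ are compatible and glue to a single $g\in\Lp^q_\loc(G;\Banach')$. Testing $\ell$ against $\pm$ normalized atoms of the form $\beta(B)^{-1/q}\,\abs{g-m}^{q-1}\cdot(\text{direction})$, suitably centered when $r\meg\rho$, yields $\norm{g}_{\bmo^{q,\rho}}\meg C\norm{\ell}$.

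\textbf{Step 4: conclusion.} Finally, $\ell$ and $\langle\cdot,g\rangle$ agree on finite combinations of atoms. These are dense in $\mathring h^1$ (by the atomic decomposition), so the two functionals coincide. Injectivity of the map $g\mapsto\langle\cdot,g\rangle$ follows by pairing $g$ with atoms, giving the claimed isomorphism.
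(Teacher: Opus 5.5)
Your Step 1 is exactly the paper's estimate on atoms, and the overall architecture is sound. Step 3 takes a different route from the paper. The paper does not glue local representatives: it uses Proposition~\ref{prop:25} to embed $(1+\abs{\,\cdot\,}_*)^{-a}L^{q'}(G;\Banach)$ continuously into $\mathring h^1(G;\Banach)$, gets at once a global $g\in (1+\abs{\,\cdot\,}_*)^{a}\Lp^q(G;\Banach')$ representing $\ell$, and then tests on atoms. Your gluing along $B(e,n)$, $n>\rho$, works equally well. There is no mean condition there, so the representative is unique and Hahn--Banach is not needed; small balls only enter the $\bmo$ estimate. Two points, however, must be fixed.

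\textbf{The interchange.} You need $\langle f,g\rangle=\sum_j\lambda_j\langle f_j,g\rangle$ for $f\in\Sr(G;\Banach)$. The paper passes over this silently, so you are right to flag it, but neither of your remedies closes it as stated.
A sharp cutoff $g\chi_{B(e,N)}$ need not stay in $\bmo$. On small balls $B$ of radius $r$ straddling $\partial B(e,N)$, its oscillation is controlled only by $\dashint_B\abs{g}$, which may grow like $\log(\rho/r)$.
Convergence of the series in weighted $\Lp^1$ alone does not let you pair with $g$, which is merely in $\Lp^q_\loc$.
What works is a pointwise majorant, which the construction in Theorem~\ref{teo:18} provides. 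One has $\abs{h_{k,j}}\meg C2^k\chi_{B(x_{k,j},\kappa_2 r_{k,j})}$, and these balls lie in $\Omega_k$ and overlap at most $N$ times (Lemma~\ref{lem:81}). Hence $\sum_{k,j}\abs{h_{k,j}}\meg C'\sum_{2^k<\widetilde\Mc^{(\lambda',\kappa)}f}2^k\meg 2C'\,\widetilde\Mc^{(\lambda',\kappa)}f$.
Moreover, $(\widetilde\Mc^{(\lambda',\kappa)}f)(x)\meg C\sup_{d(x,y)\meg\kappa}\abs{f(y)}$, which decays rapidly for Schwartz $f$.
Finally, $\int_{B(e,R)}\abs{g}\,\dd\beta\meg\beta(B(e,R))\norm{g}_{\bmo^{q,\rho}}$ for $R>\rho$ directly from the definition, with no chaining needed. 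Dominated convergence then yields the interchange.

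\textbf{The representation in Step 3 is incorrect as written.} The inclusion $L^q(B;\Banach')\subseteq L^{q'}(B;\Banach)'$ is the trivial direction. Representing every functional on $L^{q'}(B;\Banach)$ by a Bochner function in $L^q(B;\Banach')$ requires $\Banach'$ to have the Radon--Nikod\'ym property, and for $q=1$ (i.e.\ $q'=\infty$) it fails outright.
For $1<q'<\infty$, the dual of $L^{q'}(B;\Banach)$ is the space of $\Banach'$-valued measures whose variation has an $L^q$ density, that is, $\Lp^q(B;\Banach')$. This is why $\bmo$ is defined via $\Lp^q_\loc$, and it is what the paper uses.
Accordingly, you should make two changes.
\begin{enumerate}
\item Restrict $\ell$ to Bochner functions in $L^{q'}(B;\Banach)$. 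As the paper notes, $C^\infty_c(G;\Banach)$ need not be dense in $\Lp^{q'}(G;\Banach)$, so your ``approximation by test functions'' fails for general elements of $\Lp^{q'}$.
\item Get the $\bmo$ bound by duality instead of testing against $\abs{g-m}^{q-1}\cdot(\text{direction})$, which presupposes a strongly measurable density. For $B$ of radius $\meg\rho$, $g_B=\dashint_B g\,\dd\beta$ and $\phi\in L^{q'}(B;\Banach)$, one has $\langle\phi,g-g_B\rangle=\ell(\phi-\phi_B\chi_B)$. Here $\phi-\phi_B\chi_B=c\,a$ with $a$ an atom and $\abs{c}\meg 2\beta(B)^{1/q}\norm{\phi}_{L^{q'}}$. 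Taking the supremum over $\norm{\phi}_{L^{q'}}\meg 1$ gives $(\dashint_B\abs{g-g_B}^q\,\dd\beta)^{1/q}\meg C\norm{\ell}$. Large balls need no centering.
\end{enumerate}
For $q=1$, use that $\ell$ is also continuous for the $L^2(B)$-norm on functions supported in $B$ (atoms with exponent $2$). Represent $\ell$ in $\Lp^2_\loc(G;\Banach')$ first, then test against bounded $\phi$. The paper's own argument needs the same adjustment at $q=1$.
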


\begin{proof}
	Observe that, by Theorem~\ref{teo:18} and the above remarks, there are $\rho,C>0$ such that, if $f\in \Sr(G;\Banach)$, then there is   $(\lambda_j)\in \ell^1(\N)$ and a family $(f_j)$ of $(1,q',\dd_0,\rho)$-atoms such that
    \[
    f=\sum_j \lambda_j f_j, \, \sum_j \abs{\lambda_j}\meg C \norm{f}_{h^1(G;\Banach)} \text{ and }f_j\in L^\infty(G;\Banach) \text{ for every }j\in\N.
    \]
    In addition, take $g\in \bmo^{q}(G;Z')$. If $f_j$ is supported in a ball $B(x,t)$ with $t\meg \rho$, then
	\[
	\abs{\langle f_j,g\rangle}= \abs{\langle f_j, g- m_\rho(t,g) \rangle} \meg \beta(B(e,t))^{1/q'-1}\beta(B(e,t))^{1/q} \norm{g}_{\bmo^{q,\rho}(G;Z')}=\norm{g}_{\bmo^{q,\rho}(G;Z')}
	\]
	since $\int_G f_j\,\dd \beta=0$. If, otherwise, $f_j$ is supported in a ball $B(x,t)$ with $t>\rho$, then
	\[
	\abs{\langle f_j,g\rangle}\meg  \beta(B(e,t))^{1/q'-1}\beta(B(e,t))^{1/q} \norm{g}_{\bmo^{q,\rho}(G;Z')}=\norm{g}_{\bmo^{q,\rho}(G;Z')}.
	\]
	It then follows that 
	\[
	\abs{\langle f,g\rangle}\meg C  \norm{f}_{h^1(G;\Banach)}\norm{g}_{\bmo^{q,\rho}(G;Z')},
	\]
	whence the first assertion. 
	In order to conclude, we need to show that, if $v\in \mathring h^1(G;\Banach)'$, then there is a (necessarily unique) $g\in \bmo^{q }(G;Z')$ such that $v(f)=\langle f, g \rangle$ for every $f\in \mathring h^1(G;\Banach)$. Observe first that, since by Proposition~\ref{prop:25} we have $(1+\abs{\,\cdot\,}_*)^{-a} L^{q'}(G;\Banach)\subseteq \mathring h^1(G;\Banach)$ continuously (for $a>Q_*/q'$), we may find a unique $g\in (1+\abs{\,\cdot\,}_*)^{a} \Lp^{q}(G;Z')$ such that $v(f)=\langle f, g \rangle$ for every $f\in (1+\abs{\,\cdot\,}_*)^{-a} L^{q'}(G;\Banach)$. In addition, we may take a constant $C'>0$ such that $\abs{v(f)}\meg C'$ for every $(1,q',\dd_0,\rho)$-atom $f\in L^{q'}(G;Z)$. Then, take $x\in G$ and $t>0$. If $t>\rho$, then taking $\phi\in C_c(G;\Banach)$ with $0\meg \abs{\phi}\meg \chi_{B(e,t)}$ we see that
	\[
	\abs{\langle \phi, g \rangle}=\abs{v(\phi)}\meg C'  \beta(B(e,t))^{1/q} \norm{\phi}_{L^{q'}(G;\Banach)}
	\]
	since $ \phi/( \beta(B(e,t))^{1/q} \norm{\phi}_{L^{q'}(G;\Banach)}) $ is a $(1,q',\dd_0,\rho)$-atom. By the arbitrariness of $\phi$, this proves that  
	\[
	\int_{B(x,t)} \abs{g}^q\,\dd \beta\meg C'  \beta(B(e,t))^{1/q}.
	\]
	If, otherwise, $t\meg \rho$, then taking $\phi\in L^{q'}(G;\Banach)$ with $0\meg \abs{\phi}\meg \chi_{B(e,t)}$   we see that
	\[
    \begin{split}
	\abs{\langle \phi, g -m_{\rho}(t,g)\rangle}&=\abs{\langle \phi-m_{\rho}(t,\phi), g -m_{\rho}(t,g)\rangle}\\
        &=\abs{v(\phi-m_\rho(t,\phi))}\\
        &\meg 2C'  \beta(B(e,t))^{1/q} \norm{\phi}_{L^{q'}(G;\Banach)}
    \end{split}
	\]
	since $(\phi-m_\rho(t,\phi))/( 2\beta(B(e,t))^{1/q} \norm{\phi}_{L^{q'}(G;\Banach)}) $ is a $(1,q',\dd_0,\rho)$-atom. By the arbitrariness of $\phi$, this proves that
	\[
	\int_{B(x,t)} \abs{g-m_\rho(t,g)}^q\,\dd \beta\meg C'  \beta(B(e,t))^{1/q}.
	\]
	We have therefore proved that
	\[
	\norm{g}_{\bmo^{q,\rho}(G;Z')}\meg C',
	\]
	whence the conclusion.
\end{proof}

By combining Lemma \ref{lem:100} and Theorem \ref{teo:26}, we get the following result. The assertion is clear if $\Banach$ is the dual of some Banach space, hence also if we replace $\Banach$ with $\Banach''$. Since the embedding $\Banach\subseteq \Banach''$ is isometric, the assertion follows also in the general case.

\begin{cor}
Take $q_1,q_2\in [1,\infty)$, $\rho_1,\rho_2>0$, and a \BanachSpace\ $\Banach$. Then, $\bmo^{q_1,\rho_1}(G;\Banach)$ and $\bmo^{q_2,\rho_2}(G;\Banach)$ are isomorphic.
\end{cor}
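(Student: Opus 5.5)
The plan is to pass through duality. Theorem~\ref{teo:26} identifies each $\bmo^{q}(G;\Banach')$ with the same Banach space $\mathring h^1(G;\Banach)'$, and this space does not depend on $q$. Lemma~\ref{lem:100} removes the dependence on $\rho$. The remaining work is to pass from dual spaces $\Banach'$ to an arbitrary $\Banach$ by means of the bidual.

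\textsc{Step 1} (independence of $\rho$). By Lemma~\ref{lem:100}, $\bmo^{q_i,\rho_i}(G;\Banach)=\bmo^{q_i}(G;\Banach)$ with equivalent norms for $i=1,2$. It therefore suffices to compare $\bmo^{q_1}(G;\Banach)$ with $\bmo^{q_2}(G;\Banach)$.

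\textsc{Step 2} (dual coefficient spaces). Suppose first that $\Banach=W'$ for some Banach space $W$. Theorem~\ref{teo:26}, applied with $W$ in place of $\Banach$, gives isomorphisms $\bmo^{q_i}(G;W')\cong \mathring h^1(G;W)'$ for $i=1,2$. These isomorphisms are induced by the same pairing, namely the extension of the canonical pairing on $\Sr(G;W)\times\Sr'(G;W')$. Both maps therefore send an element $g$ to the functional $f\mapsto\langle f,g\rangle$. Composing the first with the inverse of the second yields an isomorphism $\bmo^{q_1}(G;\Banach)\cong\bmo^{q_2}(G;\Banach)$. In fact it is the identity map on the underlying distributions: the element $g\in\Lp^{q}_\loc$ is determined by its action on $C_c$ test functions, and these belong to $\mathring h^1$. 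So the two spaces coincide as sets and have equivalent norms.

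\textsc{Step 3} (general $\Banach$). Apply Step 2 with $W=\Banach'$, so that $\Banach''=W'$. This gives $\bmo^{q_1}(G;\Banach'')=\bmo^{q_2}(G;\Banach'')$ with equivalent norms. Now embed $\Banach\subseteq\Banach''$ isometrically. Then $\Lp^q_\loc(G;\Banach)$ sits inside $\Lp^q_\loc(G;\Banach'')$, and the $\bmo$ quantities are unchanged, since the averages $m_\rho(t,f)$ and the pointwise norms $\abs{f-m_\rho(t,f)}$ are computed in $\Banach$ and are preserved by the isometry. Consequently $\bmo^{q_i}(G;\Banach)=\bmo^{q_i}(G;\Banach'')\cap \Lp^{q_i}_\loc(G;\Banach)$ isometrically.

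The main obstacle is showing that $f\in\bmo^{q_1}(G;\Banach)$ lies in $\Lp^{q_2}_\loc(G;\Banach)$ and not merely in $\Lp^{q_2}_\loc(G;\Banach'')$. Because the measure $f$ is $\Banach$-valued, $f$ takes values in $\Banach$ regardless of $q$. Membership in $\Lp^{q_2}_\loc$ only requires that $\abs{f}$ be locally $L^{q_2}$. That follows from the $\Banach''$ statement, because $\abs{f}$ is the same whether computed in $\Banach$ or in $\Banach''$. Hence the identifications restrict to isomorphisms $\bmo^{q_1}(G;\Banach)\cong\bmo^{q_2}(G;\Banach)$, and combining with Step 1 concludes the proof.
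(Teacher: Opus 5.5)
Your proposal is correct and follows the same route as the paper: Lemma~\ref{lem:100} removes the dependence on $\rho$, Theorem~\ref{teo:26} settles the case of dual coefficient spaces (in particular $\Banach''$), and the isometric embedding $\Banach\subseteq\Banach''$ transfers the result to a general $\Banach$. You supply details that the paper's two-line argument leaves implicit: that the resulting isomorphism is the identity on the underlying distributions, and that both $\Lp^{q}_\loc$-membership and the $\bmo$ quantities are unchanged under the isometric embedding.
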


\begin{deff}
		We define $\bmo(G;\Banach)$ as $\bmo^{q}(G;\Banach)$ for any $q\in[1,\infty)$. 
	\end{deff}

As a consequence of Theorem~\ref{teo:26} and of the definition of $F^{\infty,2}_0(G;H)$ as the dual of $F^{1,2}_0(G;H)$ (canonically embedded in $\Sr'(G;H)$), we obtain the following result. 

\begin{cor}
	Let $H$ be a Hilbert space. Then, $\bmo(G;H)= F^{\infty,2}_0(G;H)$ with equivalence of norms.
\end{cor}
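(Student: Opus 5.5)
The plan is to deduce the identification $\bmo(G;H)=F^{\infty,2}_0(G;H)$ by duality from two facts already established. By definition, $F^{\infty,2}_0(G;H)$ is the set of elements of $\Sr'(G;H)$ (recall $H'\cong H$, antilinearly, via Riesz) that extend to continuous linear functionals on $F^{1,2}_0(G;H)$. Here the embedding comes from the transpose of the dense inclusion $\Sr(G;H)\subseteq F^{1,2}_0(G;H)$. On the other side, Theorem~\ref{teo:26} identifies $\bmo(G;H)$ with the dual of $\mathring h^1(G;H)$, again through the pairing extending the canonical one on $\Sr\times\Sr'$. So it suffices to show that $\mathring h^1(G;H)=F^{1,2}_0(G;H)$ with equivalent norms, with $\Sr(G;H)$ dense in both, and that the two pairings agree on $\Sr(G;H)$.

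Step one: the theorem of Section~\ref{Section:Identification}, applied with $p=1$, gives $h^1(G;H)=F^{1,2}_0(G;H)$ with equivalent quasi-norms. Its hypothesis holds because $1>Q_*/(Q_*+\dd_0)$ always. Since $\Sr(G;H)$ is dense in $F^{1,2}_0(G;H)$ by~\cite[Proposition 3.16]{Calzi4}, its closure in $h^1(G;H)$ is everything, so $\mathring h^1(G;H)=h^1(G;H)=F^{1,2}_0(G;H)$. Alternatively one may invoke Corollary~\ref{cor:27}, since Hilbert spaces have the Radon--Nikod\'ym property.

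Step two: transpose. Both dual spaces are realized inside $\Sr'(G;H)$ by restricting functionals to $\Sr(G;H)$. This restriction map is injective because $\Sr$ is dense, and it is the same map in both cases because the norms are equivalent on $\Sr$. Hence the element $g\in\bmo(G;H)$ associated with a functional $v$ by Theorem~\ref{teo:26} is exactly the distribution representing $v$ in $F^{\infty,2}_0(G;H)$. Norm equivalence follows: $\norm{g}_{\bmo}\asymp\norm{v}_{(\mathring h^1)'}\asymp\norm{v}_{(F^{1,2}_0)'}=\norm{g}_{F^{\infty,2}_0}$, the last equality being the definition (consistent with~\cite[Theorem 5.2]{Calzi4}).

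The main point to check carefully is the compatibility of the pairings. In Theorem~\ref{teo:26} the pairing is bilinear between $\Banach$ and $\Banach'$, while $F^{\infty,2}_0(G;H)$ is defined as the dual of $F^{1,2}_0(G;H')$. With $\Banach=H$ and $\Banach'\cong\overline H$ this is only a matter of bookkeeping: apply Theorem~\ref{teo:26} with $\Banach=H'$ and identify $H''=H$ isometrically. The remaining requirement is that the $\bmo$ element, which is a priori only in $\Lp^q_\loc$, defines a tempered distribution. This holds because Proposition~\ref{prop:25} embeds $\Sr(G;H)$ continuously into $\mathring h^1$, so $g$ restricts to a continuous functional on $\Sr$.
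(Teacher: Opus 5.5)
Your proposal is correct and takes the same route as the paper. The paper deduces the corollary directly from Theorem~\ref{teo:26}, from the identification $h^1(G;H)=F^{1,2}_0(G;H)$ of Section~\ref{Section:Identification}, and from the definition of $F^{\infty,2}_0(G;H)$ as the dual of $F^{1,2}_0$ embedded in $\Sr'$; you simply make explicit what it leaves implicit: $\mathring h^1(G;H)=h^1(G;H)$ by density of $\Sr(G;H)$, the $H$ versus $H'$ bookkeeping in the pairing, and the agreement of the two realizations of the dual inside $\Sr'(G;H)$.
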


\section{Proof of Lemma~\ref{lem:73}}\label{sec:app2}
		\textsc{Step I} Take $(\rho_j)\in (0,+\infty)^n$ and $g\in \Lp^p(\R^n;\Banach)$ supported in  $R=\prod_{j=1}^n [-\rho_j,\rho_j]$, and assume that
		\[
		\langle g, 1\rangle=0.
		\]
		Define  $h_1\in \Mc(\R^{n-1};\Banach)$ setting
		\[
		\langle h_1, \phi\rangle \coloneqq \frac{1}{2 \rho_1}\langle g, 1\otimes \phi\rangle
		\]
		for every $\phi\in C_c(\R^{n-1})$, so that $h_1$ is supported in $R_1\coloneqq \prod_{j=2}^n [-\rho_j,\rho_j]$ and
		\[
		\abs{h_1}\meg  \frac{1}{2 \rho_1}\int_{\R} \abs{g}(x_1,\,\cdot\,)\,\dd x_1.
		\]  
		Consequently, $\abs{h_1}\in L^1_\loc(\R^{n-1})$ and, setting $\widetilde h_1\coloneqq  (\chi_{[-\rho_1,\rho_1]}\cdot \Hc^1) \otimes h_{1} $, 
		\[
		\begin{split}
			\norm{\widetilde h_1 }_{\Lp^p (\R^n;\Banach)}&\meg (2\rho_1)^{1/p}\norm{h_{1}}_{\Lp^p(\R^{n-1};\Banach)}\\
			&\meg (2\rho_1)^{1/p-1}\norm*{\norm{\abs{g}(x_1,x')}_{L^1_{x_1}(\R)}}_{L^{p}_{x'}(\R^{n-1} )}\\
			&\meg   \norm{g}_{L^p(\R^n;\Banach)}.
		\end{split}
		\]
		In addition,
		\[
		\langle g-\widetilde h_{1}, 1\otimes \phi\rangle=0
		\] 
		for every $\phi \in C_c(\R^{n-1})$. Consequently, if we define
		\[
		g_1\coloneqq (g-\widetilde h_1) *[(\chi_{[0,+\infty)}\cdot \Hc^1)\otimes \delta_0],
		\]
		then $g_1\in \Mc(\R^n;\Banach)$ and
		\[
		\abs{g_1}\meg \abs{\widetilde h_1}*[(\chi_{[0,+\infty)}\cdot \Hc^1)\otimes \delta_0],
		\]
		so that $\abs{g_1}\in L^1_\loc(\R^n)$. In addition, $\supp g_1\subseteq R$,\footnote{This is related to the fact that, if $\phi\in C_c(\R^{n-1})$ and we consider the measures $g^{(\phi)}$, $h^{(\phi)}_{1 }$ and $g_{1 }^{(\phi)}$  defined so that $\langle g^{(\phi)}, \psi\rangle\coloneqq \langle g, \psi\otimes \phi\rangle$ for every $\psi\in C_c(\R)$ (the other being defined analogously), then $g_1^{(\phi)}\in C(\R;\Banach) $ and $g_1^{(\phi)}(x)=\int_{(-\infty,x]}\,\dd  (g^{(\phi)}-h_1^{(\phi)})$ for every $x\in \R$, the integral being a well defined element of $\Banach$ thanks to~\cite[Proposition 6 of Chapter VI, \S\ 2, No.\ 3]{BourbakiInt1}.}
		\[
		g =  \partial_1 g_1 +\widetilde h_1 
		\] 
		and, arguing as above,
		\[
		\begin{split}
			\norm{g_1}_{\Lp^p(\R^n;\Banach)}&\meg 2 \rho_1 \norm{ g-\widetilde h_{1} }_{\Lp^{p}(\R^n;\Banach)}\meg 2(2\rho_1) \norm{g}_{\Lp^p(\R^n;\Banach)}.
		\end{split}
		\]
		Now, observe that $\langle \widetilde h_1, 1\rangle=\langle h_1,1\rangle=\frac{1}{2\rho_1}\langle g,1\rangle=0$. Consequently, arguing inductively one may find $g_2,\dots, g_n\in \Lp^p(\R^n;\Banach)$, supported in $R$, such that
		\[
		g=\sum_{j=1}^n \partial_j g_j
		\]
		and such that
		\[
		\norm{g_j}_{\Lp^p(\R^n;\Banach)}\meg 4\rho_j \norm{g}_{\Lp^p(\R^n;\Banach)}.
		\]
		In fact, $ g_j$ only depends on $x_{j},\dots, x_n$ on $R$ (but not on $\R^n$).
		
		\textsc{Step II} Recall that  $(Y_j)$ denotes the   homogeneous basis of $\gf_*$ induced by $(X_j)$,  that $d_j=\deg(Y_j)=\deg(X_j)$ for every $j\in J$, and that $\dd_M= \max_{j\in J} d_j$, while $\dd_0=\min_{j\in J} d_j$. Endow $\gf_*$ with the homogeneous norm $x\mapsto \max_{j=1}^n \abs{\langle  x, Y_j\rangle}^{1/d_j}$, and denote with $\overline B_\rho$ the corresponding closed ball of center $0$ and radius $\rho$.   Take $\rho>0$ and $g\in \Lp^p(\gf_*;\Banach)$ supported in $\overline B_\rho$ such that
		\[
		\langle g, 1\rangle=0
		\]
		Let $(\partial_j)$ be the partial derivatives on $\gf_*$ corresponding to the identification of $\gf_*$ with $\R^J$ by means of the homogeneous basis $(Y_j)$, and observe that~\textsc{step I} shows that there are   $\widetilde g_j\in \Lp^p(\gf_*;\Banach)$, $j\in J$, such that $\supp \widetilde g_j\subseteq\overline B_\rho$, such that
		\[
		g=\sum_{j=1}^n \partial_j\widetilde g_j,
		\]
		and such that
		\[
		\norm{\widetilde g_j}_{\Lp^p(\gf_*;\Banach)}\meg 4\rho^{d_j} \norm{g}_{\Lp^p(\gf_*;\Banach)}.
		\]
		Observe that there is a family $(P_{j,k})_{j,k\in J}$ of homogeneous polynomials on $\gf_*$ such that $\deg P_{j,k}= d_j-d_k$ (so that $P_{j,k}=0$ if $d_j<d_k$) for every $j,k\in J$, and such that
		\[
		\partial_k =\sum_{j\in J} P_{j,k}X_j
		\]
		for every $k\in J$.
		Since Lebesgue measure is invariant both under the translations of the vector space $\gf_*$ and of the group $G_*$ (identified with $\gf_*$ by means of the exponential map), by transposition we get
		\[
		\partial_k h= \sum_{j\in J} X_j(P_{j,k} h)
		\]
		for every $h\in \Dc'(\gf_*;\Banach)$ and for every $k\in J$. In particular, if we set
		\[
		g_j\coloneqq \sum_{k\in J} P_{j,k} \widetilde g_k ,
		\]
		then
		\[
		g=\sum_{j\in J} X_j g_j
		\]
		and there is a constant $C_1>0$ such that
		\[
		\norm{ g_j}_{\Lp^p(\gf_*;\Banach)}\meg C_1\rho^{d_j} \norm{g}_{\Lp^p(\gf_*;\Banach)}
		\]
		for every $j\in J$, using that $P_{j,k}=0$ if $d_j<d_k$, while $\max_{\overline B_\rho} \abs{P_{j,k}}= \rho^{d_j-d_k} \max_{z\in \partial \overline B_1} \abs{P_{j,k}}$ if $d_j\Meg d_k$.
		
		\textsc{Step III} Keep the notation of~\textsc{step II} and set $\Dc=\Set{\lambda \Meg0\colon U_\lambda\neq U_{\lambda^-}}$, but this time assume that $\lambda\in \Dc$, with $\lambda>0$, and that
		\[
		\langle g, P\rangle=0
		\]
		for every   $P\in \Pc_\lambda$.  Set   $\lambda'\coloneqq \max \Dc \cap [0,\lambda)$. Recall that $q(t)=[t/\dd_0]+1$ for every $t\Meg 0$. Assume, by induction (on $\Dc$) that there are a constant $C_2>0$ and a family $(g_\alpha)_{\lambda'<d_\alpha \meg  \lambda'+\dd_M}$ of elements of $\Lp^p(\gf_*;\Banach)$ supported in $\overline B_\rho$ such that $g_\alpha=0$ if $\abs{\alpha}\Meg \min(q(d_\alpha),q(\lambda')+1)$, such that
		\[
		g=\sum_{\lambda'< d_\alpha\meg \lambda'+\dd_M } \vect Y^\alpha g_\alpha= \sum_{\lambda\meg d_\alpha\meg \lambda'+\dd_M } \vect Y^\alpha g_\alpha
		\]
		and such that
		\[
		\norm{g_\alpha}_{\Lp^p(\gf_*;\Banach)}\meg C_2\rho^{d_\alpha} \norm{g}_{\Lp^p(\gf_*;\Banach)}
		\]
		for every $\alpha$ with $\lambda\meg d_\alpha\meg \lambda'+\dd_M $ (the case $\lambda'=0$ follows from \textsc{step I}).
		Take $\alpha$ with $d_\alpha=\lambda$ and $g_\alpha \neq 0$ and observe that there is a homogeneous polynomial $P$ on $\gf_*$ with (homogeneous) degree $\lambda$ such that $(\vect Y^{\alpha} P)(0)=1$ while $(\vect Y^{\alpha'}P)(0)=0$ for every $\alpha'\neq \alpha$ with $d_{\alpha'}=\lambda$. 
		Since, however, $ \vect Y^{\alpha} P $ and  $ \vect Y^{\alpha'}P $ are constant functions by homogeneity, this shows that  $ \vect Y^{\alpha} P =1$ and  $ \vect Y^{\alpha'}P =0$ for  every $\alpha'$ as above, while clearly $\vect Y^{\alpha''}P=0$ for every $\alpha''$ with $d_{\alpha''}>\lambda$. Consequently,
		\[
		0=\langle g, P\rangle= \sum_{\lambda\meg d_{\alpha'}\meg \lambda'+\dd_M } \langle  \vect Y^{\alpha'} g_{\alpha'}, P\rangle= (-1)^{\abs{\alpha}}\langle g_{\alpha}, 1 \rangle
		\]
		so that by~\textsc{step II} there are $g_{\alpha,j}\in \Lp^p(\gf_*;\Banach)$ ($j\in J$), supported in $\overline B_\rho$, such that
		\[
		g_\alpha= \sum_{j\in J} Y_j g_{\alpha,j}
		\]
		and
		\[
		\norm{g_{\alpha,j}}_{\Lp^p(\gf_*;\Banach)}\meg C_1\rho^{d_j} \norm{g_\alpha}_{\Lp^p(\gf_*;\Banach)}\meg C_1 C_2\rho^{d_\alpha+d_j} \norm{g}_{\Lp^p(\gf_*;\Banach)}
		\]
		for every $j\in J$. Observe that $\abs{\alpha}+1 \meg q(d_\alpha)=q(\lambda)<q(\lambda+\dd_0)$ since $d_j\Meg \dd_0$ for every $j\in J$.
		After expressing $\vect X^\alpha X_j$ as a finite linear combination of the $\vect X^{\alpha'}$ with $\lambda+\dd_0\meg\dd_{\alpha'}\meg \lambda+\dd_M$ and $\abs{\alpha'}\meg \abs{\alpha}+1$, and applying the previous arguments to every $\alpha$ such that $d_\alpha=\lambda$ and $g_\alpha\neq 0$, one can see that there are $C_3>0$ and a family $(g'_\alpha)_{\lambda < d_\alpha \meg \lambda+ \dd_M}$ such that $g'_\alpha =0$ if $\abs{\alpha}\Meg \min(q(d_\alpha), q(\lambda)+1)$, such that
		\[
		g= \sum_{\lambda< d_\alpha\meg \lambda+\dd_M } \vect Y^\alpha g'_\alpha
		\]
		and such that
		\[
		\norm{g'_\alpha}_{\Lp^p(\gf_*;\Banach)}\meg C_3\rho^{d_\alpha} \norm{g}_{\Lp^p(\gf_*;\Banach)}
		\]
		for every $\alpha$ with $\lambda< d_\alpha\meg \lambda+\dd_M $. 
		
		\textsc{Step IV} Let us now prove the statement. Identify $\gf$ and $\gf_*$, as vector spaces, by means of the bases $(X_j)$ and $(Y_j)$. By~\cite[Proposition 6.1]{ElstRobinson}, we may find $C_4>1$ and  $\eps'\in (0,\mathrm{inj}(G)]$ so that  
		\[
		\frac{1}{C_4}\abs{\exp_G(y)}_*\meg \max_{j\in J} \abs{\langle y, Y_j\rangle}^{1/d_j}\meg C_4 \abs{\exp_G(y)}_*
		\]
		for every $y\in B_\gf(0,\eps')$. Take $\eps\in (0,1)$ so that $B(e,\eps)\subseteq \exp_G(B_\gf(0,\eps'))$ and $\overline  B_{C_4 \eps}\subseteq B_\gf(0,\eps')$.
		Let $b$ be the nowhere-vanishing real analytic function on $B_\gf(0,\eps')$ such that  
\[(\exp_G)_*(b\cdot \beta_{\gf})=\chi_{\exp_G(B_\gf(0,\eps'))}\cdot\beta,\]
 where $\beta_\gf$ denotes the Lebesgue measure on $\gf$. 
		Denote with 
\[\theta\colon B_\gf(0,\eps')\to x\exp_G(B_\gf(0,\eps'))\]
 the diffeomorphism induced by $x\exp_G$, and set $g\coloneqq \theta^*(f)$, extended by $0$ on $\gf$. Then, $g$  is supported in $\overline B_{C_4\rho}$ and
		\[
		\abs{g}(y)= \abs{f}(x\exp_G(y))b(y)
		\]
		for $\beta_\gf$-almost every $y\in B_\gf(0,\eps')$. Notice that we are identifying $\abs{g}$ with its density with respect to $\beta_\gf$, and $\abs{f}$ with its density with respect to $\beta$. This ensures that $g\in \Lp^p(\gf;\Banach)$ and that
		\[
		\norm{g b^{-1/p'}}_{\Lp^p(\gf;\Banach)}=\norm{f}_{\Lp^p(G;\Banach)}.
		\]
		Notice that the assumptions on $f$ mean that
		\[
		\langle g, P \rangle=0
		\]
		for every polynomial $P$ on $\gf$ with $\deg P<\lambda$, that is, $\deg P\meg \lambda'$ with the notation of~\textsc{step III}. 
		Then, by~\textsc{step III}, we may find a family $(g_\alpha)_{\lambda \meg d_\alpha \meg \lambda' +\dd_M}$ of elements of $\Lp^p(\gf;\Banach)$ supported in $B_{C_4\rho} $ such that $g_\alpha=0$ if $\abs{\alpha}\Meg \min(q(d_\alpha),q(\lambda')+1 )$, such that
		\[
		g= \sum_{ \lambda \meg d_\alpha \meg \lambda'+\dd_M} \vect Y^\alpha g_\alpha
		\]
		and such that
		\[
		\norm{g_\alpha}_{\Lp^p(\gf;\Banach)}\meg C_3\rho^{d_\alpha} \norm{g }_{\Lp^p(\gf;\Banach)}\meg C_3\rho^{d_\alpha}\norm{b}_{L^\infty(\overline B_{C_4\eps})}^{1/p'}\norm{f}_{\Lp^p(G;\Banach)}
		\]
		for every $\alpha$ such that  $\lambda \meg d_\alpha \meg \lambda'+d_M$.
		Observe that, by Lemma~\ref{lem:72}, we may find a constant $C_5>0$ and a family of real analytic functions $(a_{\alpha,\gamma})$ on $B_\gf(0,\eps')$ such that
		\[
		\vect Y^\alpha= \sum_{\abs{\gamma}\meg \abs{\alpha}} a_{\alpha, \gamma} \vect X^\gamma,
		\]
		with some abuse of notation, and such that
		\[
		\abs{a_{\alpha, \gamma}(x)}\meg C_5 \abs{\exp_G(x)}_*^{(d_\gamma-d_\alpha)_+}
		\]
		for every $x\in B_\gf(0,\eps')$. In addition, $a_{\alpha,0}=0$ for every $\alpha\neq 0$. 
		If we define $f_{\alpha,\gamma}\coloneqq \theta_*(g_\alpha a_{\alpha,\gamma})$ (extended by $0$ on $G$), then, arguing as above we see that $f_{\alpha,\gamma}\in \Lp^p(G;\Banach)$ and that  
		\[
		\norm{f_{\alpha,\gamma}}_{\Lp^p(G;\Banach)}\meg C_3C_5\rho^{\max(d_\alpha,d_\gamma)}\norm{b}_{L^\infty(\overline B_{C_4\eps})}^{1/p'}\norm{1/b}_{L^\infty(\overline B_{C_4\eps})}^{1/p'} \norm{f}_{\Lp^p(G;\Banach)}.
		\]
		In addition,\footnote{Notice that, for a distribution $u$ on $\gf$ and a differential operator $X$ of class $C^\infty$ on $\gf$, we define $Xu$ so that $\langle Xu, \phi\rangle= \langle u, X^\dag \phi\rangle$ where $X^\dag$ denotes the formal transpose of $X$ with respect to $\beta_\gf$, whereas for distributions and differential operators on $G$ we use the formal transpose with respect to $\beta$. One therefore has to be quite careful in handling these computations. }
		\[
		\begin{split}
			\langle f,\phi\rangle&= \langle g, \phi\circ \theta \rangle\\
				&=\sum_{ \lambda \meg d_\alpha \meg \lambda'+\dd_M} (-1)^{\abs{\alpha}}\langle g_\alpha, \vect Y^\alpha(\phi\circ \theta)\rangle\\
				&=\sum_{ \lambda \meg d_\alpha \meg \lambda'+\dd_M}\sum_{0<\abs{\gamma}\meg \abs{\alpha}} (-1)^{\abs{\alpha}}\langle a_{\alpha,\gamma} g_\alpha, (\vect X^\gamma \phi)\circ \theta\rangle\\
				&=\sum_{ \lambda \meg d_\alpha \meg \lambda'+\dd_M}\sum_{0<\abs{\gamma}\meg \abs{\alpha}}(-1)^{\abs{\alpha}} \langle f_{\alpha,\gamma} ,  \vect X^\gamma \phi \rangle\\
				&=\sum_{ \lambda \meg d_\alpha \meg \lambda'+\dd_M}\sum_{0<\abs{\gamma}\meg \abs{\alpha}}(-1)^{\abs{\alpha}+\abs{\gamma}} \langle \vect X^\gamma f_{\alpha,\gamma} ,   \phi \rangle
		\end{split}
		\]
		for every $\phi\in C^\infty_c(x\exp_G(B_\gf(0,\eps')))$, so that 
		\[
		f=  \sum_{ \lambda \meg d_\alpha \meg \lambda'+\dd_M}\sum_{0<\abs{\gamma}\meg \abs{\alpha}} (-1)^{\abs{\alpha}+\abs{\gamma}} \vect X^\gamma f_{\alpha,\gamma}.
		\]
		The assertion follows easily.

\section{Proof of Proposition~\ref{prop:31}}\label{sec:app}

Before we pass to the proof, we need a couple of auxiliary results. Recall that $(X_j)_{j\in J}$ denotes an \emph{ordered} basis of $\gf$ which induces a homogeneous basis $(Y_j)$ of $\gf_*$, and that $d_\alpha= \sum_{j\in J} \alpha_j \deg(X_j)$ for every $\alpha\in \N^J$.

\begin{lem}\label{lem:76}
	Take $q\in [1,\infty]$ and $k\in\N$ such that $k>\dim G/q$ if $q<\infty$.  Define  $\Wp^{k,q}_t(G;\Banach)$, for every $t\in (0,1]$ and for every \BanachSpace\ $\Banach$, as the space of $f\in \Lp^q(G;\Banach)$ such that
    \[
    \max_{\abs{\alpha}\meg k} t^{d_\alpha-Q_*/q}\norm{\vect X^\alpha f}_{\Lp^q(G;\Banach)}<\infty,
    \]
    endowed with the corresponding norm. There exists a constant $C>0$ such that
	\[
	\norm{f}_{\Lp^\infty(G;\Banach)}\meg C   \norm{f}_{\Wp^{k,q}_t(G;\Banach)}
	\]
	for every \BanachSpace\ $\Banach$, $f\in \Wp^{k,q}_t(G;\Banach)$,  and $t\in (0,1]$.
\end{lem}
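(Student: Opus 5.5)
This is a scale-invariant Sobolev embedding, and I will prove it by reducing to a local estimate and then transferring a fixed-scale estimate to scale $t$ through the dilations of $G_*$. The case $q=\infty$ is trivial with $C=1$, since the $\alpha=0$ term is already $\norm{f}_{\Lp^\infty}$. So assume $q<\infty$ and $k>\dim G/q$.

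\emph{Reduction.} It suffices to show that, for $\beta$-a.e.\ $x\in G$,
\[
\abs{f}(x)\meg C \max_{\abs{\alpha}\meg k} t^{d_\alpha-Q_*/q}\norm{\vect X^\alpha f}_{\Lp^q(B(x,ct);\Banach)}
\]
with $c$ fixed. By left invariance of $\beta$ and of the $\vect X^\alpha$, I may take $x=e$, or more precisely work in a fixed neighbourhood of $e$. For the Banach-valued case I first pass to $\langle f,v'\rangle$ with $v'$ in the unit ball of $\Banach'$. This gives the estimate for $\abs{\langle f,v'\rangle}$ with the right-hand side controlled by $\abs{\vect X^\alpha f}$. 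I then take the supremum over a countable norming family on a separable subspace. Since $\Lp^q$ consists of measures, I prefer to regularize instead: apply the estimate to $f*\phi_s$, with $\phi_s$ as in Lemma~\ref{lem:55}, and let $s\to0$. Here the right-invariant convolution commutes with the left-invariant $\vect X^\alpha$, and $\abs{f*\phi_s}\to\abs f$ a.e. This also shows that $f$ has a bounded density.

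\emph{Fixed scale.} For $t=1$ (or $t$ in a compact range $[t_0,1]$), the bound is the classical Sobolev embedding $W^{k,q}\subseteq L^\infty$ on a fixed ball. I work in exponential coordinates, where the $X_j$ span the tangent space at each point and are smooth with smooth inverse coefficients. The embedding holds because $k>\dim G/q$, and it costs only a constant depending on the ball.

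\emph{Small $t$; main obstacle.} Uniformity as $t\to0^+$ is the hard part. I transfer to $\gf$ through $\exp_G$ near $e$, identify $\gf$ with $\gf_*$ through the bases $(X_j)$ and $(Y_j)$, and rescale by the dilation $\delta_t$ of $\gf_*$. Set $f_t\coloneqq f\circ\exp_G\circ\delta_t$ on a fixed ball $B_1$. The Lebesgue measure scales by $t^{Q_*}$, and the density $b$ of $\beta$ is bounded above and below near $0$. By Lemma~\ref{lem:72} (the estimates used in its proof, $t^{d_\alpha}(\delta_t)^*\vect X^\alpha\to\vect Y^\alpha$, with coefficients $a_{\alpha,\gamma}$ bounded by $\abs{x}_*^{(d_\gamma-d_\alpha)_+}$), the pulled-back operators $t^{d_\alpha}\delta_t^*\vect X^\alpha$ are combinations $\sum_{\abs\gamma\meg\abs\alpha} \widetilde a_{\alpha,\gamma,t}\,\vect Y^\gamma$ with coefficients uniformly bounded on $B_1$. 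Conversely, the Euclidean derivatives of order $\meg k$ on $B_1$ are combinations of the $\vect Y^\gamma$ with $\abs\gamma\meg k$ and bounded coefficients. Moreover, each $t^{d_\gamma}\vect X^\gamma$-derivative of $f$ rescales to the corresponding term, so that
\[
\norm{\partial^\beta f_t}_{L^q(B_1)}\lesssim \max_{\abs\gamma\meg k} t^{d_\gamma-Q_*/q}\norm{\vect X^\gamma f}_{\Lp^q}.
\]
Here I must check that the inverse expression of $\vect Y^\gamma$ in terms of $t^{d}\delta_t^*\vect X$ has uniformly bounded coefficients. This is where the second family $a_{2,\alpha,\gamma}$ of Lemma~\ref{lem:72} enters. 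Finally, the Euclidean Sobolev embedding on $B_1$ gives $\abs{f_t(0)}\lesssim\norm{f_t}_{W^{k,q}(B_1)}$, which is the claim.
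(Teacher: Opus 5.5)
Your proposal is correct and follows essentially the paper's route. You transfer to $\gf$ via $\exp_G$, rescale with the dilations of $\gf_*$, apply the Euclidean Sobolev embedding, convert Euclidean derivatives into the terms $t^{d_\gamma}\vect X^\gamma$ through the coefficient bounds of Lemma~\ref{lem:72}, and reduce the Banach-valued case to $\langle f,v'\rangle$. The paper differs only in localizing with the cutoffs $\tau_t$ of Corollary~\ref{cor:21} instead of using Sobolev on a ball, and in covering $t\in[\rho,1]$ by comparing the $\Wp^{k,q}_\rho$ and $\Wp^{k,q}_t$ norms.

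There are two small slips. First, the regularization must be $\phi_s*f$, not $f*\phi_s$, since $\vect X^\alpha(f*\phi_s)=f*\vect X^\alpha\phi_s$. Second, the argument bounds $\abs{f}$ but does not give $f$ a bounded density, which fails without the Radon--Nikod\'ym property. This is harmless, because membership in $\Lp^\infty$ only concerns $\abs{f}$.
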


Note that $\Wp^{k,q}_t(G;\Banach)=\Wp_1^{k,q}(G;\Banach)$ for every $t\in (0,1]$ as topological vector spaces, only the norm depends on $t$. 
We also observe that the statement appears to be somewhat  peculiar, since a `Riemannian' condition (the control on the order of $\vect X^\alpha$ instead of its degree) is mixed with a condition which is based on the filtration of $\gf$. This is related to the structure of the proof, but is also better adapted to the application of this result in the proof of Proposition~\ref{prop:31}. In any case, this reflects the behaviour of the usual Sobolev embeddings in the Euclidean setting when perturbed by anysotropic dilations.

\begin{proof}
	\textsc{Step I} Assume first that $\Banach=\R$ or $\C$. Take $\rho\in (0,1]$ so that $\overline B(e,\rho)\subseteq \mathbf U$, and let us prove the statement for $t\in (0,\rho]$ and for $f$ supported in $B(e,t)$. 
	We may therefore transfer the problem to $\gf$, and replace $\beta$ with the Lebesgue measure $\beta_\gf$ on $\gf$, since the density of $\beta$ with respect $\beta_\gf$ is a positive and nowhere vanishing real analytic function (and is therefore bounded from above and from below on $\overline B(e,\rho)$). 
	Now, by the Euclidean Sobolev embeddings we see that, identifying $\gf$ with $\R^J$ by means of the basis $(X_j)$,  there is a constant $C_1>0$ such that
	\[
	\norm{f}_{L^\infty(\gf)}\meg C_1 \max_{\abs{\alpha}\meg k}\norm{\partial^\alpha f }_{L^q(\gf)}
	\]
	for every $f\in \Wp^{k,q}(\gf)$. Applying the previous inequality to $f(t\,\cdot\,)$ (using the dilations induced by $\gf_*$ and \emph{not} the Euclidean ones), we then see that
	\[
	\norm{f}_{L^\infty(\gf)}\meg C_1 \max_{\abs{\alpha}\meg k}t^{d_\alpha}\norm{(\partial^\alpha f)(t\,\cdot\,) }_{L^q(\gf)}=C_1 \max_{\abs{\alpha}\meg k}t^{d_\alpha-Q_*/q}\norm{\partial^\alpha f }_{L^q(\gf)}
	\]
	for every $t>0$ and for every $f\in \Wp^{k,q}(\gf)$. Now, observe that, by Lemma~\ref{lem:72} (together with~\cite[Proposition 1.29]{FollandStein}), we may find a constant $C_2>0$ and a family $(a_{\alpha,\gamma})_{\abs{\gamma}\meg \abs{\alpha}\meg k}$ of real analytic functions on $\mathbf U$ such that 
    \[
    \partial^\alpha=\sum_{\abs{\gamma}\meg \abs{\alpha}} a_{\alpha,\gamma} \vect X^\gamma \,\text{  and  }\,\abs{a_{\alpha,\gamma}(x)}\meg C_2\abs{x}_*^{(d_\gamma-d_\alpha)_+}
    \]
    for every $x\in \overline B(e,\rho)$ and for every $\alpha,\gamma$ with $\abs{\gamma}\meg \abs{\alpha}\meg k$. Then,
	\[
	t^{d_\alpha}\norm{\partial^\alpha f }_{L^q(\gf)}\meg C_2 t^{d_\alpha} \sum_{\abs{\gamma}\meg k} \norm*{\abs{\,\cdot\,}_*^{(d_\gamma-d_\alpha)_+}( \vect X^\gamma f)}_{L^q(\gf)}\meg C_2 t^{d_\gamma}\sum_{\abs{\gamma}\meg k} \norm{  \vect X^\gamma f}_{L^q(\gf)}
	\]
	for every $t\in (0,\rho]$ and for every $f\in \Wp^{k,q}(\gf)$ supported in $B(e,t)$. The assertion follows.
	
	\textsc{Step II} Assume  that $\Banach=\R$ or $\C$. We shall now prove the assertion for $t\in (0,\rho]$ and for general $f$, with $\norm{f}_{L^\infty(G)}$ replaced by $\norm{\chi_{B(e,t/2)}f}_{L^\infty(G)}$. To this aim, take $(\tau_t)_{t>0}$ as in Corollary~\ref{cor:21} with $\kappa_1=1/2$ and $\kappa_2=1$, so that $\chi_{B(e,t/2)}\meg \tau_t \meg \chi_{B(e,t)}$ and there is a constant $C_3>0$ such that
	\[
	\norm{\vect X^\alpha\tau_t}_{L^\infty(G)}\meg C_3 t^{-d_\alpha} 
	\]
	for every $t\in (0,1]$ and for every $\alpha$ with length $\meg k$. Then, by~\textsc{step I} there is a constant $C_4>0$ such that
	\[
	\begin{split}
		\norm{f}_{L^\infty(B(e,t/2),\beta)}&\meg \norm{f \tau_t}_{L^\infty(G)}\\
		&\meg C_4 t^{-Q_*/q}\max_{\abs{\alpha}\meg k} t^{d_\alpha} \norm{\vect X^\alpha (\tau_t f)}_{L^q(G)}\\
		&\meg C_3C_4 t^{-Q_*/q}\max_{\abs{\alpha}\meg k} \sum_{\alpha_1+\alpha_2=\alpha} \frac{\alpha!}{\alpha_1!\alpha_2!} t^{d_\alpha-d_{\alpha_1}}\norm{\vect X^{\alpha_2} f}_{L^q(G)}\\
		&\meg 2^k C_3 C_4\max_{\abs{\alpha}\meg k} t^{d_\alpha-Q_*/q}\norm{\vect X^\alpha f}_{L^q(G)}
	\end{split}
	\]
	for every $t\in (0,\rho]$ and for every $f\in \Wp^{k,q}_t(G)$.
	
	\textsc{Step III} We now consider the general case. Applying~\textsc{step II} to $\langle f(x^{-1}\,\cdot\,), v'\rangle$\footnote{Notice that $\langle f, v'\rangle\in L^1_\loc(G)$ since $\abs{\langle f,v'\rangle}\meg \abs{f}$, so that we may adopt the usual notation for functions.} for every $x\in G$ and for every $v'\in \Banach'$ with $\norm{v'}_{\Banach'}\meg 1$, we then see that
	\[
	\norm{\langle f,v'\rangle}_{L^\infty(B(x,t/2))}\meg 2^k C_3 C_4 \norm{\langle f,v'\rangle}_{\Wp^{k,q}_t(G)}\meg 2^k C_3 C_4 \norm{ f}_{\Wp^{k,q}_t(G;\Banach)}
	\]
	for every $t\in (0,\rho]$ and for every $f\in \Wp^{k,q}_t(G)$, so that the assertion follows for $t\in (0,\rho]$. The case of $t\in [\rho,1]$ follows from the fact that $\norm{f}_{\Wp^{k,q}_\rho(G;\Banach)}\meg \rho^{-Q_*/q} \norm{f}_{\Wp^{k,q}_t(G;\Banach)}$ for every $f\in \Wp^{k,q}_t(G;\Banach)$.
\end{proof}

\begin{prop}\label{prop:43}
		Take two \BanachSpace s  $\Banach_1, \Banach_2 $, $p,q\in [1,\infty]$, and
		 $K\in \Dc'(G;\Lf(\Banach_1;\Banach_2))$. If there is a constant  $C>0$ such that
		\[
		\norm{f*K}_{L^q(G;\Banach_2)}\meg C \norm{f}_{L^p(G;\Banach_1)}
		\]
		for every $f\in C^\infty_c(G;\Banach_1)$, then  
		\[
		\norm{f*K}_{\Lp^q(G;\Banach_2)}\meg  C \norm{f}_{\Lp^p(G;\Banach_1)}
		\]
		for every $f\in \Lp^p(G;\Banach_1)$ with compact support.
	\end{prop}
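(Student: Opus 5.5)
The plan is to approximate a compactly supported $f\in\Lp^p(G;\Banach_1)$ by test functions, in a way that only uses the scalar quantity $\abs{f}$ and duality, since $f$ may lack a density. Fix $f\in\Lp^p(G;\Banach_1)$ supported in a compact set $K_0$. Take $(\phi_t)_{t\in(0,1]}$ as in Lemma~\ref{lem:55} and set $f_t\coloneqq \phi_t*f$ (convolution on the left, so that $f_t*K=\phi_t*(f*K)$ by associativity, $K$ acting on the right). Then $f_t\in C^\infty_c(G;\Banach_1)$, with support in $\overline B(e,1)K_0$, and $\abs{f_t}\meg \phi_t*\abs{f}$ pointwise. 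Since $\phi_t\Meg0$ and $\int\phi_t\,\dd\beta=1$, Young's inequality gives $\norm{f_t}_{L^p(G;\Banach_1)}\meg \norm{\abs f}_{L^p(G)}=\norm{f}_{\Lp^p(G;\Banach_1)}$. The hypothesis then yields $\norm{f_t*K}_{L^q(G;\Banach_2)}\meg C\norm{f}_{\Lp^p(G;\Banach_1)}$ uniformly in $t$.

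Next, pass to the limit $t\to0^+$. The family $f_t$ converges to $f$ in $\Dc'(G;\Banach_1)$, and since all $f_t$ are supported in a fixed compact set, $f_t*K\to f*K$ in $\Dc'(G;\Banach_2)$. (Here $f*K$ makes sense because $f$ is compactly supported.) It remains to show that a distribution $u=f*K$ which is the $\Dc'$-limit of a family $u_t$ bounded in $L^q(G;\Banach_2)$ lies in $\Lp^q(G;\Banach_2)$, with $\norm{u}_{\Lp^q}\meg\liminf\norm{u_t}_{L^q}$. For every $\psi\in C^\infty_c(G)$ and $v'\in\Banach_2'$ with $\norm{v'}\meg1$ we have
\[
\abs{\langle u,\psi\rangle\cdot v'}\meg \limsup_{t\to0^+}\norm{u_t}_{L^q}\norm{\psi}_{L^{q'}}.
\]
Taking the supremum over $v'$ gives $\norm{\langle u,\psi\rangle}_{\Banach_2}\meg C\norm f_{\Lp^p}\norm\psi_{L^{q'}}$.

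The main step is turning this into $\abs{u}\in L^q$. Apply the same bound to vector test functions $\Psi=\sum_k \psi_k v'_k\in C_c(G;\Banach_2')$ to get $\abs{\langle u,\Psi\rangle}\meg C\norm f\,\norm{\Psi}_{L^{q'}(G;\Banach_2')}$. When $q>1$ this holds because $\langle u_t,\Psi\rangle\to\langle u,\Psi\rangle$ and Hölder's inequality applies to $u_t$. Since the total variation $\abs{u}$ of a vector measure satisfies $\int\psi\,\dd\abs u=\sup\{\abs{\langle u,\Psi\rangle}:\abs\Psi\meg\psi\}$ for positive $\psi\in C_c(G)$, we obtain $\int\psi\,\dd\abs u\meg C\norm f\norm\psi_{L^{q'}}$. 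In particular $u$ is a measure, and by $L^{q'}$–$L^q$ duality, $\abs u\in L^q(G)\cdot\beta$ with the stated bound.

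For $q=1$, the dual exponent is $q'=\infty$, and one must use $C_c$ functions directly and the Riesz representation in the form $\abs u\meg$ (a measure of total mass $\meg C\norm f$). Absolute continuity then comes from the uniform integrability of $\abs{f_t*K}$, which I expect to be the delicate point. I would handle it by dominating $\abs{f_t*K}$ through convolution of the fixed compactly supported $\abs{f}$ with $\phi_t$, using left-invariance. Alternatively, for $q=1$ one can argue by density: approximate $\abs f$ in $L^p$ via the Radon–Nikodým-free decomposition of $f$ into simple vector measures with $L^p$ densities times vectors. On such pieces the hypothesis applies after mollification and the limit is norm-convergent.
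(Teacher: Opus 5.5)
For $q\in(1,\infty]$ your argument is correct and follows the paper's proof. You mollify on the left with the $\phi_t$ of Lemma~\ref{lem:55}, use $\abs{\phi_t*f}\meg\phi_t*\abs f$ and Young's inequality, apply the hypothesis to $\phi_t*f\in C^\infty_c(G;\Banach_1)$, and let $t\to0^+$ in $\Dc'(G;\Banach_2)$. The paper only asserts that a $\Dc'$-limit of a family bounded in $L^q$ lies in $\Lp^q$ with the same bound. Your duality argument justifies that step: pair with $C^\infty_c(G)\otimes\Banach_2'$, extend to $C_c(G)\otimes\Banach_2'$ using the $L^{q'}$ bound (possible because $q'<\infty$), then apply the variation formula and $L^{q'}$--$L^q$ duality.

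The case $q=1$ is a genuine gap, and it cannot be filled, because the statement is false there. Neither of your suggestions works. First, $\abs{\phi_t*f*K}$ cannot be dominated by $\phi_t*\abs f$, since $K$ is not a positive scalar kernel. Second, the $\Lp^p$-closure of sums $\sum_k g_k v_k$ with $g_k\in L^p(G)$ and $v_k\in\Banach_1$ is contained in $L^p(G;\Banach_1)\cdot\beta$, so no $f$ without a Bochner density can be reached this way.

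Here is a counterexample. Take $G=\mathbb{T}\coloneqq\R/\Z$ with normalized Haar measure, $\Banach_1=L^1(\mathbb{T})$, $\Banach_2=\C$, and $K$ defined by $\langle K,\phi\rangle\coloneqq\check\phi\in L^\infty(\mathbb{T})=\Lf(\Banach_1;\C)$. For $g\in C^\infty(\mathbb{T};\Banach_1)$, set $\tilde g(x,w)\coloneqq g(x)(w)$. Then $(g*K)(z)=\int_{\mathbb{T}}\tilde g(x,x-z)\,\dd x$, so $\norm{g*K}_{L^1(\mathbb{T})}\meg\norm{g}_{L^1(\mathbb{T};\Banach_1)}\meg\norm{g}_{L^p(\mathbb{T};\Banach_1)}$ for every $p\in[1,\infty]$, and the hypothesis holds. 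Now let $f\in\Mc(\mathbb{T};\Banach_1)$ be the measure $\psi\mapsto\psi$. Then $\abs f=\beta$, so $f\in\Lp^p(\mathbb{T};\Banach_1)$ for every $p$. On the other hand, $(\phi_t*f)(x)=\phi_t(x-\,\cdot\,)$, so $(\phi_t*f)*K=\phi_t$ and $f*K=\delta_e\notin\Lp^1(\mathbb{T})$. Note that here $\abs{\phi_t*f*K}=\phi_t$, whereas $\phi_t*\abs f=1$.

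So the proposition must be restricted to $q\in(1,\infty]$. The paper's proof has the same lacuna at $q=1$: there, a $\Dc'$-limit of an $L^1$-bounded family is only a measure of finite variation. However, the only uses of the proposition (in the proof of Proposition~\ref{prop:31}) have $q>1$, and for those your proof is complete.
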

	
	Notice that we consider only compactly supported functions only to provide simple conditions so that the convolution $f*K$ be defined. In addition, observe that $C^\infty_c(G;\Banach_1)$ need \emph{not} be dense in $\Lp^p(G;\Banach_1)$ even when $p<\infty$.
	
	\begin{proof}
		Take $(\phi_t)_{t\in (0,1]}$ as in Lemma~\ref{lem:55} and a compactly supported $f\in \Lp^p(G;\Banach_1)$, and observe that $\phi_t*f\in C^\infty_c(G;\Banach_1)$ and that $\abs{\phi_t*f}\meg \phi_t*\abs{f}$, so that 
		\[
		\norm{\phi_t*f}_{L^p(G;\Banach_1)}\meg \norm{f}_{\Lp^p(G;\Banach_1)}
		\]
		for every $t>0$. In addition, $(\phi_t*f)*K=\phi_t*(f*K)$ converges to $f*K$ in $\Dc'(G;\Banach_2)$ for $t\to 0^+$ and
\[
\norm{(\phi_t*f)*K}_{L^q(G;\Banach_2)}\meg C \norm{\phi_t*f}_{L^p(G;\Banach_1)}\meg C \norm{f}_{\Lp^p(G;\Banach_1)}
\]
 for every $t\in (0,1]$. This proves that $f*K\in \Lp^q(G;\Banach_2)$ and that \[
 \norm{f*K}_{\Lp^q(G;\Banach_2)}\meg  C \norm{f}_{\Lp^p(G;\Banach_1)}, 
 \]
 whence the result.
	\end{proof}

\begin{proof}[Proof of Proposition~\ref{prop:31}.]
	\textsc{Step I} Fix $\phi\in C^\infty_c(\R)$ so that $\chi_{[-1,1]}\meg \phi\meg \chi_{[-2,2]}$ and set $\psi_j\coloneqq \Kc_{2^{-j\grado }\Lc}(\phi )$ for every $j\in\N$. Let us prove that there is a constant $C_1>0$ such that, for every $j\in\N$ and for every $X\in U(\gf)$ with order $\meg q(\lambda)$,
	\[
	\abs{(X^R K*\psi_j)(x)} \meg \frac{ C_1\abs{X}}{\abs{x}_*^{Q_*+\deg(X)} }
	\]
	for every $x\in B(e,1)$, while
	\[
	\abs{(X^R K*\psi_j)(x)}\meg \frac{ C_1\abs{X}}{(1+\abs{x}_*)^{a} }
	\]
	for every $x\in G\setminus B(e,1)$.  
	To this aim, take $(\tau_t)_{t>0}$ as in Corollary~\ref{cor:21} with $\kappa_1=1$ and $\kappa_2=2$, and define $\eta_j\coloneqq\tau_{2^{-j}}$, so that $\chi_{B(e,2^{-j})}\meg \eta_j\meg \chi_{B(e,2^{1-j})}$ and there is a constant $C_2\Meg 1$ such that
	\[
	\abs{X Y^R \eta_j}\meg C_2\abs{X}\abs{Y} 2^{j(\deg(X)+\deg(Y))}
	\] 
	for every $X,Y\in U(\gf)$ with order $\meg q(\lambda)$, and for every $j\in\N$.  
	Now, take $x\in G\setminus B(e,1)$, and observe that, by Lemmas~\ref{cor:12} and~\ref{lem:37}, there is a constant $C_3>0$ such that
	\[
	\begin{split}
		\abs{[((1-\eta_2) X^R K)*\psi_j](x)}	&\meg \abs{X} \int_{G\setminus B(e,1/4)}\frac{1}{\abs{y }_*^a} \frac{C_3 2^{jQ_*}}{(1+2^j\abs{y^{-1}x}_*)^{2a }} \,\dd \beta(y)\\
		&\meg \abs{X} \int_{G\setminus B(e,1/4)}\frac{ 2^{(3+j)a}}{(1+2^j\abs{y }_*)^a} \frac{C_3 2^{jQ_*}}{(1+2^j\abs{y^{-1}x}_*)^{2a }} \,\dd \beta(y)\\
		&\meg  \abs{X} \frac{2^{(3+j)a}  C_3}{(1+2^j\abs{x}_*)^a}\int_{G }  \frac{2^{jQ_*}}{ (1+2^j\abs{y^{-1}x}_*)^{a }} \,\dd \beta(y)\\
		&\meg \abs{X} \frac{ C_3^2 }{\abs{x}_*^a}\\
		&\meg \abs{X} \frac{2^{a} C_3^2 }{(1+\abs{x}_*)^a}
	\end{split}
	\]
	for every $j\in\N$. 
	Now, observe that $\abs{\vect X^\alpha}\meg 1$  and that
	\[
	\begin{split}
		[( \eta_2 (\vect{X}^\alpha)^R K)*\psi_j](x)&=\langle \eta_2 (\vect{X}^\alpha)^R K, \psi_j( \,\cdot\,^{-1}x)\rangle \\
        &=[[\check \eta_2\psi_j(\,\cdot\,x)]* (\vect{X}^\alpha)^R K](e)\\
        &=[[\vect{X}^\alpha(\check \eta_2\psi_j(\,\cdot\,x))]* K](e)
	\end{split}
	\]
    for every  $\alpha\in\N^J$.
	Keep the notation of Lemma~\ref{lem:76}, and consider the spaces $\Wp^{k,q}_t(G^\circ;\Banach)$, where $G^\circ$ and $\Banach$ denote the opposite group of $G$ and a \BanachSpace, respectively. In other words, 
	\[
	\norm{f}_{\Wp^{k,q}_t(G^\circ;\Banach)}= \max_{\abs{\alpha}\meg k} t^{d_\alpha-Q_*/q}\norm{(\vect X^\alpha)^R f}_{\Lp^q(G;\Banach)}
	\]
	for every $f\in \Sr'(G;\Banach)$.
	By Lemma~\ref{lem:76}, if we take an integer $k>\dim G/q,q(\lambda)$, then we may find a constant $C_4>0$ such that
    \[
    \norm{f}_{\Lp^\infty(G;\Banach_2)}\meg C_4  \norm{f}_{\Wp^{k,q}_t(G^\circ;\Banach_2)}
    \]
    for every $t\in (0,1]$ and for every $f\in \Wp^{k,q}_t(G^\circ;\Banach_2)$. 
	In addition, since right-invariant differential operators commute with convolution on the right with $K$,  it is also clear that  \[
    \norm{f*K}_{\Wp^{k,q}_t(G^\circ;\Banach_2)}\meg   \norm{f}_{\Wp^{k,q}_t(G^\circ;\Banach_1)}
    \]
    for every compactly supported $f\in \Wp^{k,q}_t(G^\circ;\Banach_1)$ and for every $t\in (0,1]$ (see~Proposition~\ref{prop:43}).	
	Observe that there is a family $(a_{\alpha ,\gamma})_{\abs{\alpha}\Meg\abs{\gamma}}$ of real analytic functions on $G$ such that $\vect X^\alpha=\sum_{\abs{\gamma}\meg \abs{\alpha}} a_{\alpha,\gamma} (\vect X^\gamma)^R$ for every $\alpha\in\N^J$.
	
	The previous remarks then show that
	\[
	\begin{split}
		\abs{ [( \eta_2 (\vect{X}^\alpha)^R K )*(\psi_j v_1)](x)  } &\meg \sum_{\abs{\gamma}\meg \abs{\alpha}} 
		\abs{  [[a_{\alpha,\gamma}(\vect{X}^\gamma)^R(\check \eta_2\psi_j(\,\cdot\,x))v_1]* K ](e) }\\
		&\meg C_4 \sum_{\abs{\gamma}\meg \abs{\alpha}}  \norm{  [a_{\alpha,\gamma}(\vect{X}^\gamma)^R(\check \eta_2\psi_j( \,\cdot\,x))v_1]* K   }_{\Wp^{k,q}_1(G^\circ;\Banach_2)}\\
		&\meg C_4 \sum_{\abs{\gamma}\meg \abs{\alpha}}    \norm{  a_{\alpha,\gamma}(\vect{X}^\gamma)^R (\check \eta_2\psi_j( \,\cdot\,x)) v_1 }_{\Wp^{k,q}_1(G^\circ;\Banach_1)}\\
		&\meg  C_4 \sum_{\abs{\gamma}\meg \abs{\alpha}}    \norm{  a_{\alpha,\gamma}(\vect{X}^\gamma)^R (\check \eta_2\psi_j( \,\cdot\,x)) }_{\Wp^{k,q}_1(G^\circ)}
	\end{split}
	\]
	for every $v_1\in \Banach_1$ with $\norm{v_1}_{\Banach_1} \meg 1$, and for every $j\in\N$.  Consequently, using Proposition~\ref{cor:12} we see that there is a constant $C_5>0$ such that
	\[
	\begin{split}
		\abs{    [( \eta_2 (\vect{X}^\alpha)^R  K)*\psi_j](x) }&\meg C_5 \max_{\abs{\gamma}\meg k} \norm{ \chi_{B(e,1/2)} [(\vect{X}^\gamma)^R \psi_j]( \,\cdot\,x)}_{L^q(G)}\\
		&\meg  C_5^2 \max_{\abs{\gamma}\meg k} \sup_{y\in B(e,1/2)} \frac{2^{j(Q_*+d_\gamma) }}{(1+2^j\abs{yx}_*)^{a+d_\gamma+Q_* }}\\
		&\meg  C_5^2 \max_{\abs{\gamma}\meg k} \frac{2^{j(Q_* +d_\gamma)}}{(2^{j-1}\abs{x}_*)^{a+d_\gamma+Q_*}}\\
		&\meg  \frac{ C_5^2    }{ \abs{x}_* ^{a}}\max_{\abs{\gamma}\meg k}  2^{a+d_\gamma+Q_*}\\
		&\meg \frac{ C_5^2    }{ (1+\abs{x}_*) ^{a}}\max_{\abs{\gamma}\meg k}  2^{3a+d_\gamma+Q_*}
	\end{split}
	\]
	for every  $j\in\N$, since $\abs{y x}_*\Meg \abs{x}_*-1/2\Meg \abs{x}_*/2$ for every $y\in B(e,1/2)$. This concludes the proof of the first assertion.

	Now, take $x\in B(e,1 )\setminus \Set{e}$. Observe that, for $\alpha$ with length $\meg q(\lambda)$, 
	\[
	\begin{split}
		\abs{(\vect X^\alpha)^R [((1-\eta_{j+1})K)(y)}&\meg \sum_{\alpha_1+\alpha_2=\alpha}\frac{\alpha!}{\alpha_1!\alpha_2!} \abs{  ((\vect{X}^{\alpha_1})^R(1-\eta_{j+1}) (\vect{X}^{\alpha_2} )^R K)(y)}\\
		&\meg C_2\chi_{B(e,2^{-j})\setminus B(e, 2^{-j-1})}(y)\sum_{\alpha_1+\alpha_2=\alpha, \alpha_1\neq 0}\frac{\alpha!}{\alpha_1!\alpha_2!} \frac{2^{j d_{\alpha_1}}}{\abs{y}_*^{Q_*+d_{\alpha_2}}}\\
		&\qquad+ \chi_{G\setminus B(e, 2^{-j-1})}(y)\frac{1}{\abs{y}_*^{Q_*+d_{\alpha}}}\\
		&\meg C_2\chi_{G\setminus B(e, 2^{-j-1})}(y)2^{\abs{\alpha}} \frac{1}{\abs{y}_*^{Q_*+d_{\alpha}}}
	\end{split}
	\]
	for every $y\in B(e,1)$, while 
	\[
	\begin{split}
		\abs{(\vect X^\alpha)^R ((1-\eta_{j+1})K)(y)}&=  \abs{(\vect{X}^{\alpha} )^R K)(y)} \meg    \frac{1}{\abs{y}_*^{a}}
	\end{split}
	\]
	for every $y\in G\setminus B(e,1)$. Therefore, by means of Proposition~\ref{cor:12} we see that there is a constant $C_6>0$ such that, for $\alpha$ with  $\abs{\alpha} \meg q(\lambda)$,
	\[
	\begin{split}
		&\abs{(\vect X^\alpha)^R [((1-\eta_{j+1})K)*\psi_j](x)}\\
        &\qquad\meg C_6 \int_{B(e,1)\setminus B(e,2^{-j-1})} \frac{1}{\abs{y}_*^{Q_*+d_\alpha}} \frac{2^{jQ_*}}{(1+2^j\abs{y^{-1}x}_*)^{Q_*+d_\alpha+a}}\,\dd \beta(y)  +C_6\int_{G\setminus B(e,1)} \frac{\abs{\psi_j(y)}}{\abs{y}_*^{a}}  \,\dd \beta(y)\\
		&\qquad\meg C_6 \int_{B(e,1)\setminus B(e,2^{-j-1})} \frac{2^{(j+2)(Q_*+d_\alpha)}}{  (1+ 2^j\abs{y}_*)^{Q_*+d_\alpha}} \frac{2^{jQ_*}}{(1+2^j\abs{y^{-1}x}_*)^{Q_*+d_\alpha+a}} \,\dd \beta(y) +C_6^2\\
		&\qquad\meg C_6  \frac{2^{(j+2)(Q_*+d_\alpha)}}{  (1+ 2^j\abs{x}_*)^{Q_*+d_\alpha}}\int_G\frac{2^{jQ_*}}{(1+2^j\abs{y^{-1}x}_*)^{a}} \,\dd \beta(y)   +C_6^2\\
		&\qquad\meg C_6^2  \frac{2^{2(Q_*+d_\alpha)}}{  \abs{x}_*^{Q_*+d_\alpha}}+C_6^2\\
	\end{split}
	\]
	thanks to Lemma~\ref{lem:37}. Next, take a family $(a'_{\alpha,\gamma})_{\abs{\alpha}\Meg \abs{\gamma}}$ of real analytic functions on $G$ such that $(\vect X^\alpha)^R =\sum_{\abs{\gamma}\meg \abs{\alpha}} a'_{\alpha,\gamma} \vect X^\gamma$, and observe that, by Lemma~\ref{lem:72}, there is a constant $C_7>0$ such that $\abs{a'_{\alpha,\gamma}(y)}\meg C_7\abs{y}_*^{(d_\gamma-d_\alpha)_+}$ for every $y\in B(e,1)$ and for every  $\alpha,\gamma$ with length $\meg k$. Then,  
	\[
	\begin{split}
		[   (\vect{X}^\alpha)^R ( \eta_{j+1} K)*\psi_j](x)&=\sum_{\abs{\gamma}\meg \abs{\alpha}} a'_{\alpha,\gamma}(x) [    ( \eta_{j+1} K)*\vect X^{\gamma}\psi_j](x)\\
		&= 	\sum_{\abs{\gamma}\meg \abs{\alpha}} a'_{\alpha,\gamma}(x)\langle \eta_{j+1}  K, \eta_j(\vect{X}^\gamma\psi_j)( \,\cdot\,^{-1}x)\rangle \\
		&=\sum_{\abs{\gamma}\meg \abs{\alpha}} a'_{\alpha,\gamma}(x)[[\check\eta_j(\vect{X}^\gamma\psi_j)(\,\cdot\,x)]* ( \eta_{j+1}  K)](e) .
	\end{split}
	\]
	Arguing as above, we then see that
	\[
	\begin{split}
		&\abs{[[\check\eta_j(\vect{X}^\gamma\psi_j)(\,\cdot\,x)v_1]* ( \eta_{j+1}  K)](e)} \meg C_4  \norm{[\check\eta_j(\vect{X}^\gamma\psi_j)(\,\cdot\,x)v_1]* ( \eta_{j+1}  K)}_{\Wp^{k,q}_{2^{-j}}(G^\circ;\Banach_2)} \\
		&\qquad\meg C_4  \norm{[\check\eta_j(\vect{X}^\gamma\psi_j)(\,\cdot\,x)v_1]*   K}_{\Wp^{k,q}_{2^{-j}}(G^\circ;\Banach_2)}		\\
		&\qquad\qquad + C_4  \norm{[\check\eta_j(\vect{X}^\gamma\psi_j)(\,\cdot\,x) v_1]*  ((1- \eta_{j+1} ) K)}_{\Wp^{k,q}_{2^{-j}}(G^\circ;\Banach_2)}		\\
		&\qquad\meg C_4   \norm{ \check\eta_j(\vect{X}^\gamma\psi_j)(\,\cdot\,x) }_{\Wp^{k,q}_{2^{-j}}(G^\circ)}\\
		&\qquad\qquad+C_4  2^{jQ_*/q}\norm{ \check\eta_j(\vect{X}^\gamma\psi_j)(\,\cdot\,x)]}_{\Wp^{k,1}_{2^{-j}}(G^\circ) } \norm{(1- \eta_{j+1} )\abs{ K}}_{L^q(G)}
	\end{split}
	\]
	for every $v_1\in \Banach_1$ with $\norm{v_1}_{\Banach_1} \meg 1$.
	Now, using Proposition~\ref{cor:12} we see that there is a constant $C_8>0$ such that 
	\[
	\begin{split}
		\norm{ \check\eta_j(\vect{X}^\gamma\psi_j)(\,\cdot\,x) }_{\Wp^{k,r}_{2^{-j}}(G^\circ)}&\meg C_8\max_{\abs{\alpha'}\meg k} 2^{ -j (d_{\alpha'}-Q_*/r)}\norm{(\vect{X}^\gamma(\vect X^{\alpha'})^R\psi_j)(\,\cdot\,x) }_{L^r(B(e,2^{1-j}))}\\
		&\meg C_8^2 2^{-j d_{\alpha'}}\sup_{\abs{y}_* \meg 2^{1-j}}\frac{ 2^{j(d_{\alpha'}+d_\gamma+Q_*)} }{(1+2^j\abs{y x}_*)^{Q_*+d_\gamma}}\\
		&\meg C_8^2 2^{(j+1)(d_{\gamma}+Q_*)} \frac{ 1 }{(2^j\abs{x}_*)^{Q_*+d_\gamma}}\\
		&=C_8^2 \frac{ 2^{ d_{\gamma}+Q_* } }{ \abs{x}_*^{Q_*+d_\gamma}}
	\end{split}
	\]
	for every $r\in [1,\infty]$. In addition,   Lemmas~\ref{lem:37} and~\ref{lem:37b} show that there is a constant $C_9>0$ such that
	\[
	\norm{(1- \eta_{j+1} ) \abs{K}}_{L^q(G)}\meg C_9 2^{-jQ_*/q}
	\]
	since $q>1$.  Therefore,
	\[
	\begin{split}
		\abs{[   (\vect{X}^\alpha)^R ( \eta_{j+1} K)*\psi_j](x)}&\meg C_7 \sum_{\abs{\gamma}\meg \abs{\alpha}} \abs{x}_*^{(d_\gamma-d_\alpha)_+} \frac{ C_4   C_8^2 2^{d_\gamma+Q_*}+ C_4 C_8^2 C_9}{ \abs{x}_*^{Q_*+d_\gamma}}\\
		&\meg C_7 \sum_{\abs{\gamma}\meg \abs{\alpha}}  \frac{ C_4   C_8^2 2^{d_\gamma+Q_*}+ C_4 C_8^2 C_9}{ \abs{x}_*^{Q_*+d_\alpha}}.
	\end{split}
	\]
	This completes the proof of our assertions.  
	
	\textsc{Step II}  We now prove the statement.    
    By Lemma~\ref{lem:71}, it will suffice to show that there is a constant $C_{10}>0$ such that
	\[
	\Bigl\lVert \sup_{j\in \N} \abs{f*K*\psi_j}\Bigr\rVert_{L^p(G)}\meg C_{10}
	\]
	for every $\Banach_1$-valued  $(p,q, \rho,\lambda)$-atom $f$.
    Take $c\Meg 1$ and $\eps>0$ as in  Lemma~\ref{lem:73}, and observe that we may assume that $\rho \meg \eps$.
	Observe that $f$ is concentrated on $B(x,r)$ for some $x\in G$ and for some $r>0$, and that $\norm{f}_{\Lp^q(G;\Banach_1)}\meg \beta(B(x,r))^{1/q-1/p}$, so that by Propositions~\ref{prop:42} and~\ref{prop:43}, there is a constant $C_{11}>1$ such that
	\[
	\begin{split}
		\Bigl\lVert\chi_{B(x,2cr)}\sup_{j\in \N} \abs{f*K*\psi_j}\Bigr\rVert_{L^p(G)} &\meg \Bigl\lVert\sup_{j\in \N} \abs{f*K*\psi_j}\Bigr\rVert_{L^q(G)} \beta(B(x,2cr))^{1/p-1/q}\\
		&\meg C_{11} \norm{f*K}_{\Lp^q(G;\Banach_2)} \beta(B(x,2cr))^{1/p-1/q}\\
		&\meg   C_{11} \norm{f}_{\Lp^q(G;\Banach_1)} \beta(B(x,2cr))^{1/p-1/q}\\
		&\meg C_{11}^2
	\end{split}
	\]
	since $q>1$ and $\beta $ is doubling.  
	
	In order to deal with $\norm*{(1-\chi_{B(x,2cr)})\sup_{j\in \N} \abs{f*K*\psi_j}}_{L^p(G)}$, assume first that $r>\rho$. Then, there is a constant $C_{12}>0$ such that, for every $y\in G\setminus B(x,2cr)$ and for every $j\in\N$,
	\[
	\begin{split}
		\abs{(f*K*\psi_j)(y)}&\meg C_1\int_{B(x,r)}\frac{ \abs{f(z)}}{\abs{z^{-1}y}_*^a}\,\dd \beta(z)\\
		&\meg 3^a C_1\int_{B(x,r)}\frac{ \abs{f(z)}}{(2r+\abs{z^{-1}y}_*)^a}\,\dd \beta(z)\\
		&\meg 3^a C_1\beta(B(x,r))^{1/q-1/p}\frac{\beta(B(x,r))^{1/q'} }{(r+\abs{y^{-1}x}_*)^a}\\
		&\meg \frac{C_{12} r^{Q_G(1-1/p)}}{(\rho+\abs{y^{-1}x}_*)^a},
	\end{split}
	\]
	so that by Lemma~\ref{lem:37} there is a constant $C_{13}>0$ such that
	\[
	\Bigl\lVert(1-\chi_{B(x,2cr)}) \sup_{j\in \N} \abs{(f*K*\psi_j)(y)}\Bigr\rVert_{L^p(G)}\meg C_{13}
	\]
	since $p\meg 1$.
	Then, assume that $r\meg \rho$. 
	Then, by Lemma~\ref{lem:73}  there are a constant $C_{14}>1$ and a family $(f_\alpha)_{0<\abs{\alpha}\meg q(\lambda)}$ of elements of $\Lp^q(G;\Banach_1)$ such that $f=\sum_{0<\abs{\alpha}\meg q(\lambda)} \vect X^\alpha f_\alpha$, such that $\supp f_\alpha \subseteq B(x,cr)$, and such that 
	\[
	\norm{f_\alpha}_{\Lp^q(G;\Banach_1)}\meg C_{14} r^{\max(\lambda, d_\alpha)} \norm{f}_{\Lp^q(G;\Banach_1)}\meg C_{14} r^{\max(\lambda, d_\alpha)} \beta(B(x,r))^{1/q-1/p}
	\]
	for every $\alpha$.  
	Consequently, there is a constant $C_{15}>0$ such that,   for every $j\in\N$ and for every $y\in B(x,2c\rho)\setminus B(x,2cr)$ (so that $\abs{y^{-1}x}_*/2\meg \abs{z^{-1}y}_*\meg 3c\rho$ for every $z\in B(x,cr)$),  
	\[
	\begin{split}
		&\abs{(f*K*\psi_j)(y)}\meg  \sum_{0<\abs{\alpha}\meg q(\lambda)} \abs{[f_\alpha*(\vect X^\alpha)^R(K*\psi_j)](y)}\\
		&\qquad\meg C_1 \sum_{0<\abs{\alpha}\meg q(\lambda)}\max(3c\rho,1)^{Q_*+d_\alpha} \int_{B(x,cr)}\frac{ \abs{f_\alpha(z)}}{ \abs{z^{-1}y}_*^{Q_*+d_\alpha}}\,\dd \beta(z)\\
		&\qquad\meg C_1  C_{14}  \sum_{0<\abs{\alpha}\meg q(\lambda)}\max(3c\rho,1)^{Q_*+d_\alpha} r^{\max(\lambda, d_\alpha)} \beta(B(x,r))^{1/q-1/p}\frac{2^{Q_*+d_\alpha}\beta(B(x,cr))^{1/q'} }{ \abs{y^{-1}x}_*^{Q_*+d_\alpha}}\\
		&\qquad\meg C_{15}  \sum_{0<\abs{\alpha}\meg q(\lambda)}   \frac{ r^{Q_*(1-1/p)+\max(\lambda,d_\alpha )}}{\abs{y^{-1}x}_*^{Q_*+d_\alpha}}.
	\end{split}
	\]
	Then, by Lemma~\ref{lem:37b} there is a constant $C_{16}>0$ such that\footnote{Notice that here we use the assumption $\lambda>0$: if $\lambda=0$ (which implies $p=1$ in view of the condition $\lambda \Meg Q_*(1/p-1)$), then there would be no decomposition $f=\sum \vect X^\alpha f_\alpha$ and here we would need to estimate $\norm{(\chi_{B(x,2\rho)}-\chi_{B(x,2r)}) \abs{\,\cdot\,}_*^{-Q_*}}_{L^1(G)}$, which behaves like $\log (1/r)$ for $r\to 0^+$ and cannot therefore stay bounded.}
	\[
	\begin{split}
		\Bigl\lVert(\chi_{B(x,2c\rho)}-\chi_{B(x,2cr)}) \sup_{j\in \N} \abs{ f*K*\psi_j }\Bigr\rVert_{L^p(G)}&\meg C_{16} \sum_{0<\abs{\alpha}\meg q(\lambda)} r^{Q_*(1-1/p)+\max(\lambda,d_\alpha )-Q_*(1-1/p)-d_\alpha}\\
		&\meg C_{16}^2.
	\end{split}
	\]
	Analogously, we may find a constant $C_{17}>0$ such that,   for every $j\in\N$ and for every $y\in G\setminus B(x,2c\rho)$,  
	\[
	\begin{split}
		\abs{(f*K*\psi_j)(y)}&\meg  \sum_{0<\abs{\alpha}\meg q(\lambda)} \abs{[f_\alpha*(\vect X^\alpha)^R(K*\psi_j)](y)}\\
		&\meg C_{17}  \sum_{0<\abs{\alpha}\meg q(\lambda)}\int_{B(x,cr)}\frac{ \abs{f_\alpha(z)}}{ \abs{z^{-1}y}_*^{a}}\,\dd \beta(z)\\
		&\meg C_{17}  C_{14}  \sum_{0<\abs{\alpha}\meg q(\lambda)}r^{\max(\lambda, d_\alpha)} \beta(B(x,r))^{1/q-1/p}\frac{2^{a}\beta(B(x,cr))^{1/q'} }{ \abs{y^{-1}x}_*^{a}}\\
		&\meg C_{17}^2 \sum_{0<\abs{\alpha}\meg q(\lambda)} \frac{   r^{Q_*(1-1/p)+\max(\lambda,d_\alpha )}}{\abs{y^{-1}x}_*^{a}}.
	\end{split}
	\]
	By means of Lemma~\ref{lem:37} we may then find a constant $C_{18}>0$ such that
	\[
	\Bigl\lVert(1-\chi_{B(x,2c\rho)}) \sup_{j\in \N} \abs{ f*K*\psi_j }\Bigr\rVert_{L^p(G)}\meg C_{18} \sum_{0<\abs{\alpha}\meg q(\lambda)} r^{Q_*(1-1/p)+\max(\lambda,d_\alpha )}\meg C_{18}^2
	\]
	since $\lambda \Meg Q_*(1/p-1)$.		
	The assertion follows. 
\end{proof}

\end{document}